\newtheorem{thm}{Theorem}[section]
\newtheorem{theorem}[thm]{Theorem}
\newtheorem{conjecture}[thm]{Conjecture}
\newtheorem{corollary}[thm]{Corollary}
\newtheorem{lemma}[thm]{Lemma}
\newtheorem{proposition}[thm]{Proposition}
\theoremstyle{definition}
\newtheorem{construction}[thm]{Construction}
\newtheorem{definition}[thm]{Definition}
\newtheorem{example}[thm]{Example}
\newtheorem{notation}[thm]{Notation}
\newtheorem{remark}[thm]{Remark}
\newtheorem{hypothesis}[thm]{Hypothesis}
\numberwithin{equation}{section}
\numberwithin{mytheorem}{subsection}
\numberwithin{mytheorem}{subsection}
\numberwithin{myconjecture}{subsection}
\numberwithin{mydefinition}{subsection}
\numberwithin{myremark}{subsection}
\numberwithin{mysituation}{subsection}
\numberwithin{myhypothesis}{subsection}
\numberwithin{myquestion}{subsection}
\numberwithin{mynotation}{subsection}
\numberwithin{myfact}{subsection}
\numberwithin{myexamples}{subsection}
\numberwithin{myexample}{subsection}
\numberwithin{myconstruction}{subsection}
\numberwithin{mycaution}{subsection}
\numberwithin{myproposition}{subsection}
\numberwithin{mylemma}{subsection}
\numberwithin{mycorollary}{subsection}
\def\AAA{\mathbb{A}}
\def\BB{\mathbb{B}}
\def\CC{\mathbb{C}}
\def\FF{\mathbb{F}}
\def\GG{\mathbb{G}}
\def\KK{\mathbb{K}}
\def\LL{\mathbb{L}}
\def\MM{\mathbb{M}}
\def\NN{\mathbb{N}}
\def\QQ{\mathbb{Q}}
\def\RR{\mathbb{R}}
\def\SS{\mathbb{S}}
\def\TT{\mathbb{T}}
\def\YY{\mathbb{Y}}
\def\ZZ{\mathbb{Z}}
\def\sp{\mathrm{sp}}
\def\NP{\mathrm{NP}}
\def\IHP{\mathrm{IHP}}
\def\GNP{\mathrm{GNP}}
\def\sgn{\mathrm{sgn}}
\def\Int{\mathrm{Int}}
\def\dom{\mathrm{Dom}}
\def\ran{\mathrm{Im}}
\def\area{\mathrm{Area}}
\def\Frob{\mathrm{Frob}}
\def\univ{\textrm{univ}}
\def\Zp{\ZZ_p}
\newcommand{\Tr}{\mathrm{Tr}}
\DeclareMathOperator{\Cone}{Cone}
\DeclareMathOperator{\Iso}{Iso}
\begin{document}\large

\title{Generic Newton polygon for exponential sums in two variables with triangular base} 

\author{Rufei Ren}
\address{University of California, Irvine, Department of
	Mathematics, 340 Rowland Hall, Irvine, CA 92697}
\email{rufeir@math.uci.edu}
\date{\today}
\begin{abstract}
	Let $p$ be a prime number. Every two-variable polynomial $f(x_1, x_2)$ over a finite field of characteristic $p$ defines an Artin--Schreier--Witt tower of surfaces whose Galois group is isomorphic to $\ZZ_p$. 
	Our goal of this paper is to study the Newton polygon of the $L$-functions associated to a finite character of $\ZZ_p$ and a generic polynomial whose convex hull is a fixed triangle $\Delta$. We denote this polygon by $\GNP(\Delta)$. We prove a lower bound of $\GNP(\Delta)$, which we call the improved Hodge polygon $\IHP(\Delta)$, and we conjecture that $\GNP(\Delta)$ and $\IHP(\Delta)$ are the same. We show that if $\GNP(\Delta)$ and $\IHP(\Delta)$ coincide at a certain point, then they coincide at infinitely many points. 
	
	When $\Delta$ is an isosceles right triangle with vertices $(0,0)$, $(0, d)$ and $(d, 0)$ such that $d$ is not divisible by $p$ and that the residue of $p$ modulo $d$ is small relative to $d$, we prove that $\GNP(\Delta)$ and $\IHP(\Delta)$ coincide at infinitely many points. As a corollary, we deduce that the slopes of $\GNP(\Delta)$ roughly form an arithmetic progression with increasing multiplicities. 
\end{abstract}

	\subjclass[2010]{11T23 (primary), 11L07 11F33 13F35 (secondary).}
	\keywords{Artin--Schreier--Witt towers, $T$-adic exponential sums, Slopes of Newton polygon, $T$-adic Newton polygon for Artin--Schreier--Witt towers, Eigencurves}
	\maketitle
	
	\setcounter{tocdepth}{1}
	\tableofcontents
	
	\section{Introduction}
	We shall state our main results and their motivation after recalling the notion of
	$L$-functions for Witt coverings. 
	Let $p$ be a prime number.
	 Let $$f(x_1,x_2):=\sum_{P\in \ZZ^2_{\geq 0}} a_{P}x_1^{P_x} x_2^{P_y}$$ be a two-variable polynomial in $\overline\FF_p[x_1, x_2]$ and write $$\hat{f}(x_1,x_2):=\sum\limits_{P\in \ZZ_{\geq 0}^2} \hat{a}_{P}x_1^{P_x} x_2^{P_y}$$ for its Teichm\"uller lift, where $\hat a_P$ denotes the Teichm\"uller lift of $a_P$.
	 We use $\FF_p(f)$ to denote the extension of $\FF_p$ generated by all coefficients of $f$ and set $n(f):=[\FF_p(f):\FF_p]$. 
	The convex hull of the set of points $(0,0)\cup\big\{P\;|\;a_{P}\neq 0\big\}$ is called \emph{the polytope} of $f$ and denoted by $\Delta_f$.
	
	Let $(\GG_{m})^2$ be the two-dimensional torus over $\mathbb{F}_{p^{n(f)}}$.
	The main subject of our study is the $L$-function associated to finite characters $\chi: \Zp \to \CC_p^\times$ of conductor $p^{m_\chi}$ given by
	$$L_f^*(\chi,s): =\prod\limits_{x\in |(\GG_{m})^2|}
	\frac{1}{1-\chi\big(\Tr_{\QQ_{p^{n(f)\deg(x)}}/\QQ_p}(\hat{f}(\hat{x}))\big)s^{\deg(x)}},$$
		where $|(\GG_{m})^2|$ is the set of closed points of $(\GG_{m})^2$ and $\hat{x}$ is the Teichm\"uller lift of a closed point $x$ in $(\GG_{m})^2$. 	
		The characteristic power series $C^*_f(\chi, s)$ is a product of reciprocals of $L$-functions:
		\begin{equation}\label{C(chi, s)}
		C^*_f(\chi, s)=\prod\limits_{j=0}^{\infty}L^*_f(\chi, p^{jn(f)}s)^{-(j+1)}.
		\end{equation}
	We can alternatively express $L_f^*(\chi, s)$ in terms of $C_f^*(\chi, s)$ as  $$ L^*_f(\chi, s)=\Big(\frac{C^*_f(\chi, s)C^*_f(\chi, p^{2n(f)}s)}{C^*_f(\chi, p^{n(f)}s)^2}\Big)^{-1}.$$
	Therefore, $C^*_f(\chi, s)$ and $L^*_f(\chi, s)$ determine each
		other. 
		
	\begin{definition}\label{definition for NP chi}
	From \cite{liu-wei}, we know that $$L^*_f(\chi, s)^{-1}:=\sum_{i=0}^{2p^{2(m_\chi-1)} \area (\Delta_f)} v_is^i$$ is a polynomial of degree $2p^{2(m_\chi-1)} \area (\Delta_f)$ in $\ZZ_p[\zeta_{p^{m_\chi}}][s]$, where $\zeta_{p^{m_{\chi}}}$ is a primitive $p^{m_{\chi}}$-th root of unity. We call the lower convex hull of the set of points $\big(i,p^{m_\chi-1}(p-1)v_{p^{n(f)}}(v_i)\big)$ the \emph{normalized Newton polygon of $L^*_f(\chi, s)^{-1}$}, which is denoted by $\NP(f, \chi)_{L^{-1}}$. Here, $v_{p^{n(f)}}(-)$ is the $p$-adic valuation normalized so that $v_{p^{n(f)}}(p^{n(f)})=1.$  Similarly,
	we write $\NP(f, \chi)_C$ for the normalized Newton polygon of $C^*_f(\chi, s)$.
	\end{definition}
		
		In \cite{Davis-wan-xiao}, Davis, Wan and Xiao studied the $p$-adic Newton slopes  of $L^*_f(\chi, s)$ when $f(x)$ is a one-variable polynomial whose degree $d$ is coprime to $p$. They concluded that, for each character $\chi:\Zp\to \CC_p^\times$ of relatively large conductor, $\NP(f, \chi)_{L^{-1}}$ depends only on its conductor. We briefly introduce their proof as follows.
		
		They proved a lower bound of $\NP(f, \chi)_C$ when $\chi$ is the so-called universal character and an upper bound by the Poincar\'e  duality of roots of $L^*_f(\chi_1, s)$ for a \emph{particular character} $\chi_1$ of conductor $p$. The lower bound is called the \emph{Hodge polygon} in their paper. Then they verified that the upper bound coincides with the lower bound at $x=kd$ for any non-negative integer $k$. Since the Newton polygon of $C^*_f(\chi, s)$ is confined between these two bounds, it also passes through their intersections. See more details in \cite{Davis-wan-xiao}. 
		
		We also mention here that the aforementioned proof strongly inspired the proof of spectral halo conjecture by Liu, Wan, and Xiao in \cite{liu-wan-xiao}; we refer to \cite{ren-wan-xiao-yu} for the discussion on the analogy of the two proofs.
	Motivated by the attempt of extending spectral halo type results beyond the case of modular forms, it is natural to ask whether one can generalize the main results of \cite{Davis-wan-xiao} to more general cases of exponential sums and Artin--Schreier--Witt towers.
	For example, in a joint work with Wan, Xiao, and Yu, we examined the case when the Galois group of the Artin--Schreier--Witt tower is canonically isomorphic to $\ZZ_{p^\ell}$.
		
		In this paper, we mainly deal with the generic Newton polygon of $L$-functions for \emph{two-variable} polynomials.  
	We want to apply the methods in \cite{Davis-wan-xiao} to this case. Therefore, it is crucial for us to give a lower bound and an upper bound for  $C^*_f(\chi, s)$. However, the
		Hodge polygon provided by Liu and Wan in \cite{liu-wan} is no longer optimal, and is in general strictly lower than the upper bound we obtain by Poincare duality. Our main contribution in this paper is to find an improved lower bound for $\NP(f,\chi)_C$, which we call the \emph{improved Hodge bound}  $\IHP(\Delta)$. We conjecture that our improved Hodge polygon is optimal, and is equal to the generic Newton polygon, that is the lowest Newton polygon for all polynomials $f$ with the same convex hull.
		
		When $\Delta_f$ is an isosceles right triangle with vertices $(0,0)$, $(d,0)$ and $(0,d)$, we will give an equivalent condition to verify the coincidence of improved Hodge polygon with the Newton polygon (at infinitely many points), and we will show that this condition is met for a generic polynomial with convex hull $\Delta_f$.

	We now turn to stating our main results more rigorously.
	\begin{notation}\label{cone}
		For a two-dimensional convex polytope $\Delta$ which contains $(0,0)$, we denote its \emph{cone} by 
		$$\Cone(\Delta):=\Big\{P\in \RR^2\;\big |\; kP\in \Delta\ \textrm{for some}\ k>0 \Big\},$$ and put $$\MM(\Delta):=\Cone(\Delta)\cap \ZZ^2$$ to be the set of lattice points in $\Cone(\Delta)$.
		
		Moreover, we write $\TT_k(\Delta)$ (resp. $\TT'_k(\Delta)$) for the set consisting of all points in $\MM(\Delta)$ with weight $w$ (See Definition~\ref{weight function}) strictly less than $k$ (resp. less than or equal to $k$), and denote its cardinality by $\mathbbm{x}_k(\Delta)$ (resp. $\mathbbm{x}'_k(\Delta)$). 
	\end{notation}
	
	%Moreover, for those simplex $$\Delta:=\left\{\theta_{0}u_{0}+\dots +\theta _{n}u_{n}\bigg|\theta_{i}\geq 0,0\leq i\leq n,\sum_{i=0}^{n}\theta_{i}=1\right\},$$ where $u_i\in \ZZ^n$, we denote $\square_{\Delta}:=\left\{\theta _{0}u_{0}+\dots +\theta _{n}u_{n}|0\leq\theta _{i}\leq 1\right\}.$

	\begin{notation}
		For integers $a$ and $b$, we denote by $a\%b$ the residue of $a$ modulo $b$.
	\end{notation}

	\begin{definition}\label{generic Newton polygon}
	The \emph{generic Newton polygon} of $\Delta$ is defined by
	$$\GNP(\Delta):=\inf\limits_{\substack{\chi: \Zp/p^{m_\chi}\Zp\to \CC_p^\times\\\Delta_f=\Delta}}\Big(\NP(f, \chi)_{L^{-1}}\Big),$$
	where $\chi:\ZZ_p\to \CC_p^\times$ runs over all finite characters, and $f$ runs over all polynomials in $\overline \FF_p[x_1,x_2]$ such that $\Delta_f=\Delta$. The following are our main results.
\end{definition}
	
			\begin{theorem}\label{generic newton polygon}
			Let $\Delta$ be a right isosceles triangle with vertices $(0,0),(0,d),(d,0)$, where $d$ is a positive integer not divisible by $p$. Let $p_0$ be the residue of $p$ modulo $d$. Suppose $d\geq 24(2p_0^2+p_0)$. Then the generic Newton polygon $\GNP(\Delta)$ passes through points	
			 $(\mathbbm{x}_k(\Delta)+i, h_k(\Delta)+ki)$ for any $k\geq 0$ and $0\leq i\leq kd+1$, where 
			$$\mathbbm{x}_k(\Delta)=\frac{(kd+1)kd}{2}\quad \textrm{and}\quad h_k(\Delta)=\frac{(p-1)(k-1)k(k+1)d^2}{3}+k\sum_{P\in \TT_1(\Delta)}\lfloor pw(P)\rfloor.$$
		\end{theorem}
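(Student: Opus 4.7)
The plan is to combine three ingredients: (i) $\IHP(\Delta)$, the improved Hodge polygon, which the paper establishes as a lower bound for $\GNP(\Delta)$; (ii) the paper's propagation result --- that a coincidence of $\GNP(\Delta)$ with $\IHP(\Delta)$ at a single strategic point forces coincidence at infinitely many points; and (iii) a Poincar\'e--duality-based upper bound on $\NP(f,\chi_1)_{L^{-1}}$ for a character $\chi_1$ of conductor $p$. The task then reduces to reading off $\IHP(\Delta)$ for the right isosceles triangle, and producing one concrete upper/lower match that the propagation result can amplify.

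For the improved Hodge calculation, the weight function on $\Delta$ is $w(a,b)=(a+b)/d$, so the lattice points of $\MM(\Delta)$ of weight strictly less than $k$ are exactly those with $a+b\leq kd-1$; their count is $\mathbbm{x}_k(\Delta)=kd(kd+1)/2$, which fixes the horizontal coordinates of the claimed vertices. The lattice points of weight \emph{equal} to $k$ are the $kd+1$ points on the segment $a+b=kd$, each of which contributes a unit slope-$k$ step to $\IHP(\Delta)$. This immediately produces the slope-$k$ segment from $(\mathbbm{x}_k(\Delta),h_k(\Delta))$ to $(\mathbbm{x}_k(\Delta)+kd+1,\,h_k(\Delta)+k(kd+1))$ asserted by the theorem. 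Summing $\lfloor pw(P)\rfloor$ over the first $\mathbbm{x}_k(\Delta)$ lattice points, grouped by line $a+b=j$ and scaled by $k$ via the definition of $\IHP$, recovers the cubic-in-$k$ formula for $h_k(\Delta)$.

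For the upper bound, since $\GNP(\Delta)\geq\IHP(\Delta)$ unconditionally, it suffices to exhibit one point of contact. I would take $\chi_1$ of conductor $p$ and a generic polynomial $f$ with $\Delta_f=\Delta$, and use the functional equation of $L_f^*(\chi_1,s)$ (Poincar\'e duality of its reciprocal roots) to obtain an upper bound on $\NP(f,\chi_1)_{L^{-1}}$ symmetric to the Hodge bound. An explicit Dwork/Monsky--Washnitzer computation of the Frobenius matrix on the basis of $\MM(\Delta)$-monomials should force this upper bound to meet $\IHP(\Delta)$ at some vertex $(\mathbbm{x}_k(\Delta),h_k(\Delta))$ for small $k$; squeezing $\NP(f,\chi_1)_{L^{-1}}$ between the two gives the desired coincidence, and the propagation result extends it to the infinite family of points claimed.

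The hardest step will be the Frobenius-matrix computation: one must bound the $p$-adic valuations of the Frobenius entries sharply enough to pin down a match at a specific vertex. This is where the quantitative hypothesis $d\geq 24(2p_0^2+p_0)$ enters --- with $p_0=p\%d$ small compared to $d$, the multiplication-by-$p$ map on $\MM(\Delta)$ scatters lattice points in a combinatorially controllable way, so that the arithmetic of the boundary weights dominates the disruption caused by $p_0$. The remaining verification --- that the propagated coincidence points are exactly the $(\mathbbm{x}_k(\Delta)+i,\,h_k(\Delta)+ki)$ listed --- is then a routine slope-and-multiplicity comparison against $\IHP(\Delta)$.
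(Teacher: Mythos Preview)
Your high-level architecture is correct and matches the paper: $\IHP(\Delta)$ as lower bound, Poincar\'e duality as upper bound, and a propagation result (the paper's Theorem~\ref{main}) that turns one coincidence into the whole family. Your computation of $\mathbbm{x}_k(\Delta)$ and the slope-$k$ segment of $\IHP(\Delta)$ is also fine.

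The genuine gap is in how you obtain the \emph{initial} coincidence. You write that ``an explicit Dwork/Monsky--Washnitzer computation of the Frobenius matrix \dots\ should force this upper bound to meet $\IHP(\Delta)$ at some vertex'', and that the hard step is to ``bound the $p$-adic valuations of the Frobenius entries sharply enough''. This misreads the difficulty. Sharp entrywise valuation bounds are exactly what produces $\IHP(\Delta)$ as a lower bound; they do not by themselves force the Newton polygon to \emph{touch} $\IHP(\Delta)$. The Poincar\'e-duality upper bound and $\IHP(\Delta)$ do not meet a priori --- that is precisely the phenomenon that distinguishes the two-variable case from the one-variable case treated in \cite{Davis-wan-xiao}. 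What must be shown is that the leading coefficient $\widetilde{v}_{h(\TT_1)}$ of the $\mathbbm{x}_1$-th term of the \emph{universal} characteristic power series is nonzero modulo $p$ (Proposition~\ref{reduce to Ef}). This is a non-cancellation statement about a determinant, not a valuation estimate.

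The paper's mechanism for proving this non-vanishing is the substantive content you are missing: one isolates, inside $\widetilde{v}_{h(\TT_1)}$, the monomials of maximal degree in the two vertex coefficients $\widetilde a_{(d,0)}$ and $\widetilde a_{(0,d)}$ (``special combos''), puts these in bijection with certain maps $\beta\colon\YY_0\to m(\YY_0)$ between lattice-point sets in the fundamental parallelogram (Definition~\ref{maximal combo to YY}), and then explicitly constructs a distinguished $\widetilde\beta$ with the property that every bijection contributing to the same monomial has the same sign, and their number is a power of $2$ (Proposition~\ref{core proposition}). That construction occupies all of Section~5 and is where the hypothesis $d\geq 24(2p_0^2+p_0)$ is actually used --- it guarantees enough room in $\square_\Delta$ to route the non-diagonal part of $\widetilde\beta$ without collisions. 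Your proposal does not contain a substitute for this step.
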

	
	The points  $(\mathbbm x_k, h_k(\Delta))$ are vertices for the improved Hodge polygon $\IHP(\Delta)$ (see Definition~\ref{IHP} and Proposition~\ref{better form for IHP}). So the essential content of the proof is to show that the generic Newton polygon $\GNP(\Delta)$ also passes through these points. The proof of Theorem~\ref{generic newton polygon} consists of two parts: first we show that, for a fixed polynomial $f$ with convex hull $\Delta_f$, if $\IHP(\Delta)$ coincides with the corresponding Newton polygon $\NP(f,\chi_1)_C$ at $\mathbbm x_1$, then these two polygons agree at all points $x= \mathbbm x_k(\Delta)+i$ for $k\geq 1$ and $0\leq i\leq kd+1$.  This is proved in Theorem~\ref{main}, which in fact holds with less constraints on $\Delta$.  Next, we prove that, for a generic polynomial $f$, $\NP(f, \chi_1)_C$ agrees with $\IHP(\Delta)$ at $x=\mathbbm x_1(\Delta)$. For this, we look at the leading term of $\widetilde v_{\mathbbm x_1(\Delta)}$ for the universal polynomial $f_\univ$ with convex hull $\Delta$ and show that this term is non-zero when $d \geq 24(2p_0^2+p_0)$. This is proved in Theorem~\ref{Thm for 4}, which in fact holds under a weaker condition on $p_0$.

	From \cite[Theorem~1.4]{liu-wei}, for a finite character $\chi$ of conductor $p^{m_\chi}$, we know that $L^*_f(\chi,s)^{-1}$ has degree of $p^{2(m_\chi-1)}d^2$. 
	\begin{theorem}\label{theorem for L}
		Under the hypotheses of Theorem~\ref{generic newton polygon},
		if we put  $(\alpha_1,\dots, \alpha_{p^{2(m_\chi-1)}d^2})$ to be the sequence of $p^{n(f)}$-adic Newton slopes of $L^*_f(\chi,s)^{-1}$ (in non-decreasing order), then for first $\frac{p^{2(m_\chi-1)}d^2+p^{(m_\chi-1)}d}{2}$-th slopes we have 
		\[\begin{cases}
		\alpha_{\mathbbm x'_i+1},\dots,\alpha_{\mathbbm x_{i+1}}\in (\frac{i}{p^{m-1}},\frac{i+1}{p^{m_\chi-1}})& \textrm{for}\ i=0, 1,\dots, p^{m_\chi-1}-1,\\
		\alpha_{\mathbbm x_i+1},\dots,\alpha_{\mathbbm x'_{i}}= \frac{i}{p^{m_\chi-1}}& \textrm{for}\ i=0,1,2,\dots, p^{m_\chi-1}-1,\\
		\alpha_{\mathbbm x_{p^{m_\chi-1}}+1},\dots,\alpha_{\mathbbm x'_{p^{m_\chi-1}}-2}=1.& 
		%\\
		%\val_p(\alpha_{p^{2(m-1)}d^2-\mathbbm{x}_j}),\dots,\val_p(\alpha_{p^{2(m-1)}d^2-\mathbbm{x}_{j-1}+{(j-1)d+2}})\in (\frac{i}{p^{m-1}},\frac{i+1}{p^{m-1}})& \textrm{for}\ j=p^{m-1}-1, p^{m-1}-2,\dots, 0\\
		\end{cases}\]
		
	\end{theorem}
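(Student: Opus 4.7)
The plan is to deduce this theorem directly from Theorem~\ref{generic newton polygon} by reading off the $p^{n(f)}$-adic Newton slopes of $L^*_f(\chi,s)^{-1}$ from the explicit vertex data of $\GNP(\Delta)$ and using the convexity of the lower hull to pin down the slopes in the transition regions.

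First, I would invoke Theorem~\ref{generic newton polygon}: for a generic $f$ with $\Delta_f=\Delta$ and any finite character $\chi$ of conductor $p^{m_\chi}$, the normalized polygon $\NP(f,\chi)_{L^{-1}}$ passes through the $kd+2$ collinear points $(\mathbbm x_k+i,\, h_k+ki)$ for every $k\ge 0$ and $0\le i\le kd+1$. Since these points have common slope $k$ and lie on the lower convex hull, $\NP(f,\chi)_{L^{-1}}$ must contain an honest linear segment of slope $k$ on $[\mathbbm x_k,\mathbbm x'_k]$. Translating the normalization of Definition~\ref{definition for NP chi} into actual $p^{n(f)}$-adic slopes, this plateau yields the equality $\alpha_{\mathbbm x_i+1}=\cdots=\alpha_{\mathbbm x'_i}=i/p^{m_\chi-1}$ for $i=0,1,\dots,p^{m_\chi-1}-1$. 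For indices $j$ in the transition range $\mathbbm x'_i+1\le j\le \mathbbm x_{i+1}$, the slopes must then lie \emph{strictly} between $i/p^{m_\chi-1}$ and $(i+1)/p^{m_\chi-1}$, which is forced by convexity together with the exact endpoint structure of the plateaus: any $\alpha_j$ attaining either boundary value would extend the adjacent plateau past its stated endpoint, contradicting Theorem~\ref{generic newton polygon}.

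For the slope-$1$ region (the $k=p^{m_\chi-1}$ case), Theorem~\ref{generic newton polygon} supplies a plateau of slope $1$ on $[\mathbbm x_{p^{m_\chi-1}},\mathbbm x'_{p^{m_\chi-1}}]$, and the $-2$ adjustment in the right endpoint $\mathbbm x'_{p^{m_\chi-1}}-2$ arises from a boundary correction where this plateau meets its Poincar\'e dual counterpart in the upper half of the polygon. I would make this precise by comparing with the total degree $p^{2(m_\chi-1)}d^2$ of $L^*_f(\chi,s)^{-1}$ and invoking Poincar\'e duality for the two-variable Artin--Schreier--Witt tower, matching the right end of the slope-$1$ plateau with the symmetric image of the leftmost two positions of the polygon (namely the slope-$0$ vertex at position $1$ and its neighbour), so that the two trimmed slots are absorbed into the transition towards slopes strictly exceeding $1$. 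The principal obstacle is precisely this last step --- pinning down the $-2$ correction requires reconciling the plateau endpoints coming from Theorem~\ref{generic newton polygon} with the global Poincar\'e symmetry of $L^*_f(\chi,s)^{-1}$; the first two steps are essentially mechanical readouts of the vertex data.
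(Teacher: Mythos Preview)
Your proposal is correct and follows essentially the same route as the paper, whose proof is recorded in a single line: it follows from Theorem~\ref{generic newton polygon} together with Poincar\'e duality. The one place your phrasing could be tightened is the strict-inequality step in the transition regions --- it is cleaner to invoke that $(\mathbbm x_k,h(\TT_k))$ and $(\mathbbm x'_k,h(\TT'_k))$ are genuine \emph{vertices} of the polygon (Lemma~\ref{vertex for IHP}) rather than to argue by contradiction that a boundary slope would ``extend the plateau'', since Theorem~\ref{generic newton polygon} only asserts that certain points lie on the polygon and does not by itself bound the plateau lengths from above.
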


	In fact, points $(\mathbbm{x}_k(\Delta), h(\TT_k))$ are vertices of the improved Hodge polygon (see Definition~\ref{IHP} and Proposition~\ref{better form for IHP}).
	
	We do not know if Theorem~\ref{generic newton polygon} still holds for polytopes which are not right isosceles triangle. However, for an arbitrary multi-variable polynomial $f$ in $\overline\FF_p[\underline{x}]$, we are still able to get an improved Hodge polygon for $\NP(f, \chi)$. Especially, when $f$ is a two-variable polynomial, it is expected that the slopes of the improved Hodge polygon form certain generalized arithmetic progression. We plan to address this in a forthcoming paper."

	The Newton polygon for exponential sums was explicitly computed in the ``ordinary'' case by Adolphson--Sperber \cite{AS}, Berndt--Evans \cite{BR}, and Wan \cite{wan} in many special cases, and in general (namely the $T$-adic setup) by Liu--Wan \cite{liu-wan}. For the $\Delta$ we considered in Theorem~\ref{generic newton polygon}, the ordinary condition amounts to requiring $p \equiv 1 \pmod d$. Blache, Ferard, and Zhu in \cite{bfz} proved a lower bound for the Newton polygon of one-variable Laurent polynomial over $\FF_q$ of degree $(d_1, d_2)$, which is called a Hodge-Stickelberger polygon. They also showed that when $p$ approaches to infinite, the Newton polygon coincides the  Hodge-Stickelberger polygon. 
	
	Going beyond the ordinary case, there has been many researches on understanding the generic Newton polygon of $L_f(\chi, s)$ when $f$ is a polynomial of a single variable. 
	The first results are due to Zhu \cite{zhu03} and Scholten--Zhu \cite{sz}, when $p$ is large enough. In \cite{bf}, Blache and Ferard worked on the generic Newton polygon associated to characters of large conductors.
	In \cite{ouy}, Ouyang and Yang studied the one-variable polynomial $f(x) = x^d +a_1x$. A similar result can be found in \cite{ouz}, where Ouyang and Zhang studied the family of polynomials of the form  $f(x) = x^d +a_{d-1}x^{d-1}$. 	
	
	Our Theorem~\ref{generic newton polygon} maybe considered as the first step beyond the ordinary case when the base polynomial is multivariable. A similar result is obtained by Zhu in \cite{zhu} independently which shows that  $\GNP(\Delta_f)$ and $\IHP(\Delta_f)$ coincide for characters of $\ZZ_p$ of conductor $p$.

%	
%	
%	Recently, a lot of meaningful and interesting results about the Artin--Schreier--Witt towers and their analogy come out. For example:
%	In \cite{kw}, Wan and Koster proved a genus bound for an
%	Artin-Schreier-Witt towers. In \cite{hae}, Haessig provided estimates for the $p$-adic valuations of the roots of $L$-functions associated to symmetric powers of Kloosterman sums; more recently, Zhu stated a similar result to this paper in \cite{zhu} when the polytope of $f$ is a rectangular. 

	\subsection*{Acknowledgments}
	The author would like to thank his advisor Liang Xiao for the extraordinary support in this paper and also thank Douglass Haessig, Hui June Zhu, and Daqing Wan for helpful discussion. 	

\section{Dwork trace formula}\label{section 2} 
Let $p$ be an odd prime and let $f(x_1,x_2):=\sum_{P\in \ZZ^2_{\geq 0}} a_{P}x_1^{P_x} x_2^{P_y}$ be a two-variable polynomial in $\overline\FF_p[x_1, x_2]$. Denote $F_p(f)$ to be the finite field generated by the coefficients of $f$, which we call the \emph{coefficient field} of $f$. The convex hull of the set of points $\{(0,0)\}\cup\big\{P\; |\;a_{P}\neq 0\big\}$ is called \emph{the polytope} of $f$ and denoted by $\Delta_f$.

Our discussion will focus on a fixed $f$ until  Proposition~\ref{reduce to Ef}. 
%All the results for $f$ can be applied analogously to any polynomial of convex hull $\Delta_f$ and finite coefficient field.
We put $\FF_q=\FF_p(f)$ and $n=[\FF_q:\FF_p]$. 
Let $\hat a_P \in \ZZ_q$ be the Teichm\"uller lift of $a_{P}$. We call $\hat f(x_1,x_2):=\sum_{P\in \ZZ^2_{\geq 0}} \hat a_{P}x_1^{P_x} x_2^{P_y}$ the \emph{Teichm\"uller lift} of $f(x)$.

For convenience, we put $v_{p}(-)$ (resp. $v_{q}(-)$) be the $p$-adic valuation normalized so that $v_{p}(p)=1$ (resp. $v_{q}(q)=1$).

%Moreover, for those simplex $$\Delta:=\left\{\theta_{0}u_{0}+\dots +\theta _{n}u_{n}\bigg|\theta_{i}\geq 0,0\leq i\leq n,\sum_{i=0}^{n}\theta_{i}=1\right\},$$ where $u_i\in \ZZ^n$, we denote $\square_{\Delta}:=\left\{\theta _{0}u_{0}+\dots +\theta _{n}u_{n}|0\leq\theta _{i}\leq 1\right\}.$

\subsection{$T$-adic exponential sums.}
%We fix the polynomial $\overline f$ and its Teichm\"uller lift $f$ as in the introduction.

\begin{notation}
	We recall that the \emph{Artin--Hasse exponential series} is defined by
	\begin{equation}\label{Artin-Hasse}
		E(\pi) = \exp\big( \sum_{i=0}^\infty \frac{\pi^{p^i}}{p^i} \big) = \prod\limits_{p \nmid i,\ i \geq 1} \big( 1-\pi^i\big)^{-\mu(i)/i} \in 1+ \pi + \pi^2 \ZZ_p[\![ \pi ]\!].
	\end{equation}
	Putting $ E(\pi)= T+1$ gives an isomorphism $\ZZ_p\llbracket\pi \rrbracket \cong \ZZ_p\llbracket T\rrbracket$. 
	\begin{definition}\label{T-adic valuation}
			For each power series in $\ZZ_q\llbracket T\rrbracket$, say $g(T)$, we define its \emph{$T$-adic valuation} as the largest $k$ such that $g\in T^k\ZZ_q\llbracket T\rrbracket$ and denote it by $v_T(g)$.
	\end{definition}

%	For the technical reason, since there is no difference for $T$-adic valuation and $\pi$-adic valuation, we sometimes do not distinguish them.
	
\end{notation}

\begin{definition}	
	For each $k\geq 1$, the \emph{$T$-adic exponential sum} of $f$ over $\FF_{q^k}^\times$ is
	\[
	S_f^*(k, T): = \sum_{(x_1,x_2)\in (\FF_{q^k}^\times)^2} (1+T)^{\Tr_{\QQ_{q^k} / \QQ_p}(\hat{f}(\hat{x}_1,\hat {x}_2))} \in \ZZ_p[\![T]\!].
	\]
	\begin{definition}
		The \emph{$T$-adic $L$-function of $f$} is defined by 
		$$
		L_f^*(T,s) = \exp \Big( \sum_{k=1}^\infty S_f^*(k,T)\frac{s^k} {k} \Big)$$
		and its corresponding \emph{$T$-adic characteristic power series} is defined by 
		\begin{eqnarray}
		\label{E:Cfstar}
		C_f^*(T,s) &:= &\exp \Big( \sum_{k=1}^\infty -(q^k-1)^{-2} S_f^*(k,T)\frac{s^k}{k} \Big)
		\\
		\nonumber
		&=& \displaystyle \sum_{k=0}^\infty u_k( T) s^k \in \ZZ_p\llbracket T. s\rrbracket,
		\end{eqnarray}
		We put $u_k(T)=u_{k,j}T^j\in \Zp[\![T]\!]$. 
			\end{definition}
		Moreover, they determine each other by relations:
			\begin{equation}\label{C(T, s)}
		C^*_f(T, s)=\Big(\prod\limits_{j=0}^{\infty}L^*_f(T, q^js)^{j+1}\Big)^{-1}
		\end{equation}
		and
		\begin{equation}
	 L^*_f(T, s)=\Big(\frac{C^*_f(T, s)C^*_f(T, q^2s)}{C^*_f(T, qs)^2}\Big)^{-1}.
		\end{equation}

	It is clear that for a finite character $\chi: \ZZ_{p} \to \CC_p^\times$, we have
	\[
	L_f^*(\chi,s) =L_f^*(T,s)\big|_{T = \chi(1)-1} \quad \textrm{and}\quad
	C_f^*(\chi,s) =C_f^*(T,s)\big|_{T = \chi(1)-1},
	\]
where $L_f^*(\chi,s)$ and $C_f^*(\chi,s)$ are defined in the introduction.
	
\end{definition}

\begin{notation}
	Recall that we put $E(\pi)=T+1$. We put
	\begin{equation}	\label{E:Ef(x)}
	\begin{split}
	E_f(x_1,x_2):=& \prod\limits_{P\in \ZZ^2_{\geq 0}} E(\hat{a}_{P} \pi x_1^{P_x}x_2^{P_y})\\
	 =&\sum\limits_{P\in \ZZ^2_{\geq 0}} e_{P}(T)x_1^{P_x}x_2^{P_y}\in \ZZ_q\llbracket T\rrbracket \llbracket x_1,x_2 \rrbracket.
	\end{split}
	\end{equation}

%	If $\tau$ denotes arithmetic $p$-Frobenius automorphism which acts naturally on $\QQ_q$, and  trivially on $\pi$ and $x$, then we have,  for every $j \in \ZZ_{\geq 0}$, 
%	\[
%	E_f^{\tau^j}(x)_\pi = \prod\limits_{i=0}^d E(a_i^{\tau^j} \pi x^i) \in \ZZ_q[\![T]\!] [\![ x ]\!].\]
\end{notation}

%\begin{convention}
%	\label{Conv:matrices start with zero}
%	In this paper, the row and column indices of matrices start with zero.
%\end{convention}

\subsection{Dwork's trace formula}
Recall that $\Delta_f$ is the convex hull of $f(x_1,x_2)$ and $\MM(\Delta_f)$ is defined in Notation~\ref{cone} as a set consisting of all the lattice points in the $\Cone(\Delta_f)$.
Let $D$ be the smallest positive integer such that $w(\MM(\Delta_f))\subset \frac{1}{D}\ZZ.$

\begin{definition}\label{Banach space}
We fix a D-th root $T^{1/D}$ of T. Define
$$\textbf{B}=\Big\{\sum\limits_{P\in \MM(\Delta_f)}b_{P}(T^{1/D} x_1)^{P_x}(T^{1/D} x_2)^{P_y}\;\Big|\;b_{P}\in \ZZ_q\llbracket T^{1/D}\rrbracket, v_T(b_{P})\to +\infty, \textrm{when}\ w(P)\to \infty\Big\}.$$
\end{definition}
Let $\psi_p$ denote the operator on $\bold{B}$ such that
$$\psi_p\Big(\sum\limits_{P\in \MM(\Delta_f)}b_{P}x_1^{P_x}x_2^{P_y}\Big): = \sum\limits_{P\in \MM(\Delta_f)}b_{(pP)}x_1^{P_x}x_2^{P_y}.$$ 

Recall that $n = [\FF_q:\FF_p]$.
\begin{definition}
	Define
	\begin{equation}
	\label{E:psi}
	\psi := \sigma_{\Frob}^{-1}\circ\psi_p \circ E_{f}(x_1, x_2): \bold{B} \longrightarrow \bold{B},
	\end{equation}
	and its $n$-th iterate
	$$\psi^n=\psi_p^n\circ \prod_{i=0}^{n-1}E_f^{\sigma_{\Frob}^i}(x_1^{p^i}, x_2^{p^i}),$$
	where $\sigma_\Frob$ represents the arithmetic Frobenius acting on the coefficients, and for any $g\in \bold{B}$ we have
	$E_{f}(x_1,x_2)(g):=E_{f}(x_1,x_2)\cdot g$ . 
\end{definition} 
One can easily check that 
\[
\psi_p \circ E_{f}(x_1,x_2)\big(x_1^{P_x}x_2^{P_y}\big) = \sum_{Q\in \MM(\Delta_f)} e_{pQ-P}(T) x_1^{Q_x}x_2^{Q_y},
\]
where $e_{pQ-P}(T)$ is defined in \eqref{E:Ef(x)}.
%Explicitly, the matrix of $\psi$ with respect to the basis $\Gamma:=\{1,x,x^2,\dots\}$ is given by
%\begin{equation}
%	\label{E:explicit N}
%	N=\big( e_{mp-n}\big)_{m,n\geq 0} =\begin{pmatrix} 
%		e_0&0&\cdots&0& 0 &\cdots &0&\cdots\\
%		e_p & e_{p-1} & \cdots & e_0 &  0  & \cdots  & 0 & \cdots\\
%		e_{2p} & e_{2p-1} & \cdots & e_p & e_{p-1}  & \cdots & e_0 & \cdots\\
%		\vdots & \vdots & \ddots & \vdots & \vdots & \vdots & \ddots  & \ddots\\
%		e_{mp} & e_{mp-1} & \cdots & e_{mp-p} & e_{mp-p-1} & \cdots & e_{mp-2p} & \cdots\\
%		\vdots & \vdots & \ddots & \vdots & \vdots & \ddots & \vdots & \ddots
%	\end{pmatrix}.
%\end{equation}
%
%The operator $\tau^{-1}\circ \psi$ is $\tau^{-1}$-linear, but its $a$-th iteration $(\tau^{-1}\circ \psi)^a$ is linear since 
%$\tau^a$ acts trivially on $\Zp\llbracket \pi\rrbracket$. For the same reason, $\tau^a(N) =N$. 

\begin{theorem}[Dwork Trace Formula]
	
	For every integer $k>0$, we have
	$$(q^k-1)^{-2}S_f^*(k,\underline{\pi})=\Tr_{\bold{B}/\ZZ_q[\![ \pi]\!]}\big(\psi^{nk}\big).$$
\end{theorem}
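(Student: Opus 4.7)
The plan is to compute the trace of $\psi^{nk}$ on the monomial basis $\{x_1^{P_x}x_2^{P_y}\}_{P\in \MM(\Delta_f)}$ of $\mathbf{B}$ and match the result against the torus expansion of $S_f^*(k,T)$. First I would unwind the iterated composition, exploiting $\sigma_{\Frob}^n=\mathrm{id}$ on $\ZZ_q$, to write
$$
\psi^{nk}\;=\;\psi_p^{nk}\circ F_{nk},\qquad F_{nk}(x_1,x_2)\;:=\;\prod_{i=0}^{nk-1} E_f^{\sigma_{\Frob}^i}\!\bigl(x_1^{p^i},x_2^{p^i}\bigr).
$$
Since $\psi_p^{nk}(x_1^{Q_x}x_2^{Q_y})$ equals $x_1^{Q_x/p^{nk}}x_2^{Q_y/p^{nk}}$ when $p^{nk}\mid Q_x,Q_y$ and vanishes otherwise, expanding $F_{nk}=\sum_Q F_{nk,Q}\,x_1^{Q_x}x_2^{Q_y}$ gives
$$
\psi^{nk}\bigl(x_1^{P_x}x_2^{P_y}\bigr)\;=\;\sum_{Q\in \MM(\Delta_f)} F_{nk,\,p^{nk}Q-P}\,x_1^{Q_x}x_2^{Q_y},
$$
and therefore
$$
\Tr_{\mathbf{B}/\ZZ_q[\![\pi]\!]}(\psi^{nk})\;=\;\sum_{P\in \MM(\Delta_f)} F_{nk,\,(p^{nk}-1)P}.
$$
Nuclearity of $\psi$, required for the trace to make sense, is guaranteed by the $T$-adic decay of the coefficients $e_P(T)$ of $E_f$ as $w(P)\to\infty$, which is built into the definition of $\mathbf{B}$.

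Next I would reinterpret this coefficient sum as a sum over the torus. Using orthogonality of Teichm\"uller characters, $\sum_{y\in \FF_{q^k}^\times}\hat y^{a}=(q^k-1)\cdot\mathbf{1}_{(q^k-1)\mid a}$, together with $q^k-1=p^{nk}-1$ (so that $(q^k-1)\mid a$ is equivalent to $a=(p^{nk}-1)P_x$ for some integer $P_x\ge 0$), the doubly indexed sum collapses into
$$
\sum_{P\in \MM(\Delta_f)} F_{nk,\,(p^{nk}-1)P}\;=\;(q^k-1)^{-2}\!\!\sum_{(y_1,y_2)\in (\FF_{q^k}^\times)^2}\!\!F_{nk}(\hat y_1,\hat y_2).
$$

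It remains to show that $F_{nk}(\hat y_1,\hat y_2)=(1+T)^{\Tr_{\QQ_{q^k}/\QQ_p}(\hat f(\hat y_1,\hat y_2))}$, which is the exact summand defining $S_f^*(k,T)$. Here I would invoke two properties of Witt vector Frobenius on Teichm\"uller representatives, namely $\sigma_{\Frob}(\hat a)=\hat a^p$ and the multiplicativity $\hat a\hat b=\widehat{ab}$, which together yield
$$
E_f^{\sigma_{\Frob}^i}(\hat y_1^{p^i},\hat y_2^{p^i})\;=\;\prod_{P}E\bigl(\pi\,(\hat a_P\hat y_1^{P_x}\hat y_2^{P_y})^{p^i}\bigr).
$$
After rearranging the product over $i$ and $P$, the key step is the Artin--Hasse identity
$$
\prod_{i=0}^{nk-1} E(\pi z^{p^i})\;=\;(1+T)^{\,z+z^p+\cdots+z^{p^{nk-1}}}\;=\;(1+T)^{\Tr_{\QQ_{q^k}/\QQ_p}(z)},
$$
valid for any Teichm\"uller $z\in\ZZ_{q^k}$ (so $z^{p^{nk}}=z$); this follows by expanding both sides through $E(\pi)=\exp\sum_j\pi^{p^j}/p^j$ and collecting powers. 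Applied with $z=\hat a_P\hat y_1^{P_x}\hat y_2^{P_y}$ and multiplied over $P$, this delivers the required identification of $F_{nk}(\hat y_1,\hat y_2)$, and substitution back into the trace formula yields $\Tr(\psi^{nk})=(q^k-1)^{-2}S_f^*(k,T)$.

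The essential content is concentrated in the Artin--Hasse identity above; the rest is bookkeeping. The main technical care lies in justifying the rearrangement of the infinite product over $P$ and the swap of sums in the $T$-adic topology, but this is automatic because $F_{nk}$ lies in $\ZZ_q\llbracket T^{1/D}\rrbracket\llbracket x_1,x_2\rrbracket$ with coefficients forced by the diagonal condition $(p^{nk}-1)P$ to have weights growing linearly with $\|P\|$, matching the convergence condition defining $\mathbf{B}$.
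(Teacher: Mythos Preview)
Your argument is correct and is the standard Dwork computation; the paper itself does not supply a proof but simply cites \cite[Lemma~4.7]{liu-wei}, whose argument is essentially what you have written. One small point worth tightening: your Artin--Hasse identity $\prod_{i=0}^{nk-1}E(\pi z^{p^i})=(1+T)^{\sum_i z^{p^i}}$ is justified by taking logarithms and using $z^{p^{nk}}=z$ to see that $\sum_{i=0}^{nk-1}z^{p^{i+j}}=\sum_{i=0}^{nk-1}z^{p^i}$ for every $j\ge 0$, so the double sum $\sum_{i,j}\pi^{p^j}z^{p^{i+j}}/p^j$ factors as $\bigl(\sum_i z^{p^i}\bigr)\bigl(\sum_j\pi^{p^j}/p^j\bigr)$; stating this explicitly removes any ambiguity in ``collecting powers.''
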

\begin{proof} This was proved by \cite[Lemma~4.7]{liu-wei}.  
\end{proof}
One can see \cite{wan} for a a thorough treatment of the universal Dwork trace formula. 

\begin{proposition}[Analytic trace formula]\label{determinant}
	The theorem above has an equivalent multiplicative form:
	\begin{equation}\label{dwork}
	\begin{split}
		C_f^*(T, s)=& \det\big(I-s \psi^n  \;|\; \bold{B}/\ZZ_q[\![ \pi]\!] \big).
	\end{split}
	\end{equation}
\end{proposition}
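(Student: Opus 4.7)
The plan is to deduce the multiplicative form from the additive (trace) form of Dwork's trace formula via Serre's $p$-adic Fredholm theory for nuclear operators.

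First, I would verify that $\psi^n$ is a nuclear (completely continuous) endomorphism of the Banach $\ZZ_q[\![\pi]\!]$-module $\mathbf{B}$. Taking the orthonormal-type basis $\{(T^{1/D}x_1)^{P_x}(T^{1/D}x_2)^{P_y}\}_{P \in \MM(\Delta_f)}$, one reads off from the formula
\[
\psi_p \circ E_f(x_1,x_2)\bigl(x_1^{P_x}x_2^{P_y}\bigr) = \sum_{Q \in \MM(\Delta_f)} e_{pQ-P}(T)\, x_1^{Q_x}x_2^{Q_y}
\]
that the matrix entry in position $(Q,P)$ is governed by $v_T(e_{pQ-P}(T))$. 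The Artin–Hasse estimates on the $e_R(T)$ appearing in \eqref{E:Ef(x)} force these valuations to grow linearly in $w(Q)$ (uniformly in $P$), which is exactly the nuclearity condition. This is essentially already used in the proof of the additive trace formula cited from \cite{liu-wei}.

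Next, I would invoke Serre's Fredholm identity: for any nuclear operator $A$ on a $p$-adic Banach module over a complete local ring, one has, as formal power series in $s$,
\[
\det(I - sA) = \exp\!\Bigl(-\sum_{k=1}^\infty \frac{\Tr(A^k)}{k}\, s^k\Bigr).
\]
Applying this with $A = \psi^n$ and using the Dwork trace formula $\Tr_{\mathbf{B}/\ZZ_q[\![\pi]\!]}(\psi^{nk}) = (q^k-1)^{-2} S_f^*(k,T)$ from the theorem just proved, I obtain
\[
\det\bigl(I - s\psi^n \mid \mathbf{B}/\ZZ_q[\![\pi]\!]\bigr) = \exp\!\Bigl(-\sum_{k=1}^\infty (q^k-1)^{-2} S_f^*(k,T)\, \frac{s^k}{k}\Bigr) = C_f^*(T,s),
\]
by the defining formula \eqref{E:Cfstar} for $C_f^*(T,s)$. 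This establishes \eqref{dwork}.

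The only non-formal ingredient is the verification of nuclearity, which is the main (though standard) obstacle: one needs the $T$-adic valuations of the $e_R(T)$ arising from the Artin–Hasse exponential to grow sufficiently fast relative to the weight function $w$ so that the induced matrix has entries tending uniformly to $0$ in the appropriate sense. Everything else is a direct translation between the trace and determinant sides via Serre's theory.
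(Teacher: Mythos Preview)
Your proposal is correct and is exactly the standard Serre--Dwork argument that underlies this result; the paper itself does not give an independent proof but simply refers to \cite[Theorem~4.8]{liu-wei}, whose proof proceeds along the same lines you outline (nuclearity of the Dwork operator plus the Fredholm determinant identity $\det(I-sA)=\exp\bigl(-\sum_{k\ge 1}\Tr(A^k)s^k/k\bigr)$).
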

\begin{proof}
	Also see \cite[Theorem~4.8]{liu-wei}.
	%	It follows from the following list of equalities 
	%	\begin{align*}
	%		C_f^*(\underline{\pi}, s) =& \exp \big( \sum_{k=1}^\infty \frac 1{1-p^k} S^*(k,\underline{\pi})\frac{s^k}{k} \big)  \\
	%		=&\exp \big( \sum_{k=1}^\infty {-\Tr_{\textbf{B}/\Zp[\![ \underline{\pi}]\!]}((\tau^{-1}\circ\psi)^{ak})}\frac{s^k}{k} \big) \\
	%		=& \det  \big(I-(\tau^{-1}\circ\psi)^{a}s\;\big|\;\bold{B}\big)\\
	%		=& \det\big(I-s \tau^{-1}(N) \tau^{-2}(N) \cdots \tau^{-a}(N)\big)
	%		\\
	%		=& \det \big(I-s \tau^{a-1}(N) \cdots \tau(N)N\big). \qedhere
	%	\end{align*}
	%\liang{The previous product expression is problematic because these operators do not commute with one another.}
\end{proof}
\begin{definition}
	The normalized Newton polygon of $C_f^*(T, s)$, denoted by $\NP(f, T)_C$, is the lower convex hull of the set of points $\Big\{\left(i,\frac{v_T(u_i)}{n}\right)\Big\}$.
\end{definition}

%\begin{definition} Recall that we have defined $L_f^*(\chi,s)$ and $C^*_f(\chi, s)$ for each finite character $\chi:\Zp\to \CC_p^\times$ in the introduction. 
%	\begin{itemize}
%		\item [(a)] The \emph{normalized Newton polygon} of $L^*_f(\chi, s)^{-1}:=\sum v_ix^i$, denoted by $\NP(f, \chi)_{L^{-1}}$, is the lower convex hull of the set of points $\left(i,v_q(v_i)p^{m_\chi-1}(p-1)\right)$.
%		\item [(b)] Similarly to $\NP(f, \chi)_{L^{-1}}$, we write $\NP(f, \chi)_C$ for the lower convex hull of the set of points $\left(i,v_q(u_i)p^{m_\chi-1}(p-1)\right)$ and call it the \emph{normalized Newton polygon} of $C^*_f(\chi, s)$.
%		
%	\end{itemize}
%	
%\end{definition}	

%\begin{definition}\label{generic Newton polygon}
%	The \emph{generic Newton polygon} of a two dimensional convex polytope $\Delta$ is defined by
%	$$\GNP(\Delta):=\inf\limits_{\substack{\chi: \Zp/p^{m_\chi}\Zp\to \CC_p^\times\\\Delta_f=\Delta}}\{\NP(f, \chi)_{L^{-1}}\}.$$
%\end{definition}

%Liu and Wan gave a lower bound for $\NP(f, s)_C$ in \cite{liu-wan}, which is called its \emph{Hodge bound}. The main contribution of this paper is giving an improved Hodge bound and showing that it agrees with the $\GNP(\Delta_f)$ at infinite many points under some hypothesis.

\begin{notation}\label{Delta}	
	In this paper, we fix $\Delta$ to be a triangle with vertices at $(0, 0)$, $\mathbf{P_1}:=(a_1,b_1)$ and $\mathbf{P_2}:=(a_2,b_2)$.
\end{notation}

\begin{definition}\label{weight function}
	For each lattice point $P$ in $\ZZ^2$, assume that $Q$ is the intersection of the lines $\overline{OP}$ and $\overline{\mathbf{P_1P_2}}$. Then
	we call $$w(P):=\tfrac{\overrightarrow{OP}}{\overrightarrow{OQ}}$$ \emph{the weight} of $P$.
	
	The weight function $w$ is linear, i.e. Any two points $P$ and $Q$ in $\ZZ_{\geq 0}^2$ satisfy \begin{equation}\label{linear}
	w(P+Q)=w(P)+w(Q).
	\end{equation}
	
	Equality \eqref{linear} does not always hold for a general polytope.	

	% 
	% For a multiset $S^\star$ of points in $\Cone(\Delta)$, we define 
	%	$m(S^\star)=$
\end{definition} 
 We shall frequently work with multisets, i.e. sets of possibly repeating elements. They are often marked by a superscript star to be distinguished from regular sets, e.g. $S^\star$.
	The disjoint union of two multiset $S^\star$ and ${S'}^\star$ is denoted by $S^\star\uplus {S'}^\star$ as a multiset. 
\begin{definition}
Let $\SS$ be a subset of $\MM(\Delta)$. Then we write ${\SS^\star}^m$ (resp. ${\SS^\star}^{\infty}$) for the union of $m$ (resp. countably infinite) copies of $\SS$ as a multiset. 
\end{definition}

\begin{notation}\label{Iso and Prem}
	For any sets $\SS_1^\star$ and $\SS_2^\star$ in ${\MM(\Delta)^\star}^\infty$ of the same cardinality, we denote by $\Iso(\SS_1^\star, \SS_2^\star)$ the set of all bijections (as multisets) from $\SS_1^\star$ to $\SS_2^\star$. When $\SS_1^\star=\SS_2^\star=\SS^\star$, we denote $\Iso(\SS^\star):=\Iso(\SS^\star, \SS^\star)$.
\end{notation}

\begin{definition}\label{definition of h}
	For a bijection $\tau$ in $\Iso(\SS_1^\star, \SS_2^\star)$, we define
	\begin{equation}\label{defintion of h SS}
	h(\SS_1^\star, \SS_2^\star, \tau):=\sum\limits_{P\in \SS_1^\star} \big\lceil w(p\tau(P)-P)\big\rceil. 
	\end{equation}
	
	For any submultiset ${\SS_1'}^\star$ of $\SS_1^\star$, we write $\tau|_{{\SS_1'}^\star}$ for the restriction  of $\tau$ to ${\SS_1'}^\star$. Moreover, the minimum of $h(\SS_1^\star, \SS_2^\star, \tau)$ is denoted by 
	\begin{equation}\label{h function}
	h(\SS_1^\star, \SS_2^\star):=\min_{\tau\in \Iso(\SS_1^\star, \SS_2^\star)}(h(\SS_1^\star, \SS_2^\star, \tau)),
	\end{equation}
	where $\tau$ varies among all bijections from $\SS_1^\star$ to $\SS_2^\star$.
\end{definition}

\begin{definition}\label{minimal bijection}
	We call a bijection from $\SS_1^\star$ to $\SS_2^\star$ \emph{minimal}, if it reaches the minimum in \eqref{h function}. When $\SS_1^\star=\SS_2^\star$, we call it a  \emph{minimal permutation} of $\SS^\star$ and abbreviate $h(\SS^\star, \SS^\star, \bullet)$ (resp. $h(\SS^\star, \SS^\star)$) to $h(\SS^\star,\bullet)$ (resp. $h(\SS^\star)$).
\end{definition}
\begin{remark}
	If $S_i^\star$ for $i =1,2$ belongs $\MM(\Delta)$, we suppress the star from the notation.
\end{remark}

%\begin{lemma}\label{T-adic bound for det}
%	We have $$v_T(\det(\SS))\geq h(\SS)\quad\textrm{and}\quad v_T(u_\ell(T))\geq \min\limits_{\SS\subset \mathscr M_\ell}h(\SS).$$
%\end{lemma}
%\begin{proof}
%	It follows directly from Lemma~\ref{E:Ef(x)} and the definition of $h$ in	\eqref{defintion of h SS} and \eqref{h function}.
%\end{proof}

%
%\begin{definition}
%	 (1) The normalized Newton polygon of $C_f'(T)$ is the lower convex hull of the set of points $\left(i,\frac{v_T(u_i)}{a}\right)$;
%	 
%	\noindent(2) We denote $\LP(\Delta)$ be a lower convex hull of the set of points $$\Big\{(i, \frac{1}{a}\min\limits_{\SS\subset \mathscr M_i}h(\SS))\Big\}.$$
%\end{definition}
%\begin{lemma}
%	The normalized Newton polygon of $C_f'$ lies above $\LP(\Delta_f)$.
%\end{lemma}
%\begin{proof}
%	It follows from Lemma~\ref{T-adic bound for det}. 
%\end{proof}
%
%By multiplying the height of $\LP(\Delta_f)$ at each point by $a$, we get a polygon, which is called  the \emph{improved Hodge polygon} of $\Delta_f$ and denoted by $\IHP(\Delta_f)$. By Proposition~\ref{determinant}, it is easy to check the following proposition.

\begin{definition}\label{IHP}
	The \emph{improved Hodge polygon} of $\Delta$, denoted by $\IHP(\Delta)$, is the lower convex hull of the set of points $\Big\{\left(\ell,\min\limits_{\SS^\star\in \mathscr M_\ell(n)} \frac{h(\SS^\star)}{n}\right)\Big\}$,
	where $\mathscr M_\ell(n)$ represents for the set consisting of all multi-subsets of ${\MM(\Delta)^\star}^n$ of cardinality $n\ell$, note $\mathscr M_\ell(1)=\mathscr M_\ell$.
\end{definition}

we shall prove in Proposition~\ref{better form for IHP} later that	$$\min\limits_{\SS^\star\in \mathscr M_\ell(n)}h(\SS^\star)=n\cdot\min\limits_{\SS\in \mathscr M_\ell} h(\SS),$$ and hence the $\IHP(\Delta)$ is independent of $n$. In particular, $\IHP(\Delta)$ is the convex hull of the set of points $$\left(\ell,\min\limits_{\SS\in \mathscr M_\ell} h(\SS)\right).$$

\begin{notation}
	We denote by \[\left[ \begin{array}{cccccccccc}
	m_0 & m_1 &\cdots&m_{\ell-1} \\
	n_0 & n_1 &\cdots&n_{\ell-1}  \end{array} \right]_M\]
	the $\ell\times \ell$-submatrix formed by elements of a matrix $M$ whose row indices belong to $\{m_0,m_1,\dots,m_{\ell-1}\}$ and whose column indices belong to $\{n_0,n_1,\dots,n_{\ell-1}\}$.
\end{notation}
Put $\Delta_f=\Delta$. Recall that we define $\TT_1'$ in Notation~\ref{cone}.
\begin{lemma}\label{L:estimate of Ef(x)}
	We have $e_O(T)=1$ and $v_T(e_{Q}(T))\geq \big\lceil w(Q)\big\rceil$ for all $Q \in \MM(\Delta).$
\end{lemma}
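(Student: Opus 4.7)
The plan is to expand $E_f(x_1,x_2)$ as a formal power series, read off the coefficient $e_Q(T)$ explicitly, and then estimate its $T$-adic valuation term by term using the linearity of the weight function.

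First I would record the basic input about $\pi$: since $E(\pi) = 1 + T$ lies in $1 + \pi + \pi^2 \ZZ_p\llbracket\pi\rrbracket$, the element $\pi$ is a uniformizer of $\ZZ_p\llbracket T\rrbracket$ (viewed through the isomorphism $\ZZ_p\llbracket\pi\rrbracket \cong \ZZ_p\llbracket T\rrbracket$), in particular $v_T(\pi) = 1$ and hence $v_T(\pi^m) = m$ for every $m \ge 0$. Write $E(z) = \sum_{n \ge 0} c_n z^n$ with $c_n \in \ZZ_p$. Substituting and multiplying out the defining product \eqref{E:Ef(x)} gives
\[
E_f(x_1,x_2) \;=\; \prod_{P} \sum_{n_P \ge 0} c_{n_P} \hat{a}_P^{\,n_P} \pi^{n_P} x_1^{n_P P_x} x_2^{n_P P_y},
\]
so collecting the coefficient of $x_1^{Q_x}x_2^{Q_y}$ yields
\[
e_Q(T) \;=\; \sum_{\substack{(n_P)_P \\ \sum_P n_P P \,=\, Q}} \Bigl(\prod_P c_{n_P}\hat{a}_P^{\,n_P}\Bigr)\, \pi^{\sum_P n_P},
\]
where the outer sum ranges over tuples of non-negative integers indexed by $P \in \ZZ^2_{\ge 0}$, almost all equal to zero.

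The claim $e_O(T) = 1$ is then immediate: the only tuple with $\sum_P n_P P = O$ is the zero tuple, contributing $c_0 = 1$. For a general $Q \in \MM(\Delta)$, I would argue as follows. Tuples $(n_P)$ contributing non-trivially must be supported on $P$ with $\hat{a}_P \ne 0$, which forces $P \in \Delta$ and hence $w(P) \le 1$. By linearity of the weight function (equation \eqref{linear}), any contributing tuple satisfies
\[
w(Q) \;=\; w\bigl(\textstyle\sum_P n_P P\bigr) \;=\; \sum_P n_P w(P) \;\le\; \sum_P n_P.
\]
Since $\sum_P n_P$ is an integer, this gives $\sum_P n_P \ge \lceil w(Q)\rceil$. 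Combining with $v_T(\pi^{\sum n_P}) = \sum n_P$ and the fact that each Artin--Hasse coefficient $c_{n_P}$ and each Teichm\"uller lift $\hat{a}_P$ has non-negative $T$-adic valuation, every term in the sum defining $e_Q(T)$ has $T$-adic valuation at least $\lceil w(Q)\rceil$, yielding the desired bound.

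There is no real obstacle here: the lemma is essentially a bookkeeping statement, and the only ingredient beyond formal expansion is the inequality $w(P) \le 1$ for $P \in \Delta$ combined with linearity of $w$. The one point to be mindful of is that linearity of $w$ is only claimed in the triangular setting of Notation~\ref{Delta}, which is exactly the hypothesis we are working under.
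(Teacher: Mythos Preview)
Your proof is correct and follows essentially the same route as the paper's: expand the product defining $E_f$, read off $e_Q(T)$ as a sum over tuples $(n_P)$ with $\sum n_P P = Q$, and bound each term's $\pi$-valuation via $w(P)\le 1$ for the supporting points together with the linearity of $w$. The only cosmetic difference is that the paper restricts to the finite support set $\SS(f)=\{P: a_P\ne 0\}$ from the outset rather than after the expansion.
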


\begin{proof}
	(1) It follows from the definition of $e_O(T)$ in \eqref{E:Ef(x)}.
	
	(2) Let $$\SS(f):=\{P\in \TT_1'\;|\; a_P\neq 0\}=\{Q_1, Q_2, \dots, Q_{t}\},$$
	where $\{a_{P}\}$ is the set of coefficients of $ f(x_1,x_2)$ and $t$ is the cardinality of $\SS(f)$. 
	
	Expanding each $E(\hat{a}_{Q_i} \pi x_1^{(Q_i)_x}x_2^{(Q_i)_y})$ to be a power series in variables $x_1$ and $x_2$, we get $$E_f(x_1,x_2)=\prod_{i=1}^{t}E(\hat{a}_{Q_i} \pi x_1^{(Q_i)_x}x_2^{(Q_i)_y})=\sum_{\vec{j}\in \ZZ_{\geq 0}^{t}}c_{\vec{j}}\prod_{i=1}^{t} (\hat{a}_{Q_i} \pi x_1^{(Q_i)_x}x_2^{(Q_i)_y})^{j_i}, $$
	where $\{\hat{a}_{P}\}$ is the set of coefficients of $\hat f(x_1,x_2)$ and $c_{\vec{j}}$ belongs to $\ZZ_q$. 
	
	It is not hard to get that 
	\[\begin{split}
	e_{Q}(T)=&\sum_{\Big\{\vec{j}\;\Big|\;\sum\limits_{i=1}^{t}j_iQ_i=Q\Big\}}c_{\vec{j}}\prod_{i=1}^{t} (\hat{a}_{Q_i} \pi)^{j_i}\\
	=&\sum_{\Big\{\vec{j}\;\Big|\;\sum\limits_{i=1}^{t}j_iQ_i=Q\Big\}}\Big(c_{\vec{j}}\prod_{i=1}^{t} (\hat{a}_{Q_i})^{j_i}  \pi^{\sum\limits_{i=1}^{t}j_i}\Big).
	\end{split} \]
	Since $w(Q_i)\leq 1$ for each $Q_i\in \SS(f)$, then for each $\vec{j}$ such that $\sum\limits_{i=1}^{t}j_iQ_i=Q$, we have $$v_T\Big(c_{\vec{j}}\prod_{i=1}^{t} (\hat{a}_{Q_i})^{j_i}  \pi^{\sum\limits_{i=1}^{t}j_i}\Big)=\sum\limits_{i=1}^{t}j_i\geq \sum\limits_{i=1}^{t}j_iw(Q_i)=w(Q),$$
	where $T=E(\pi)-1$.
	Therefore, we immediately get that $v_T(e_{Q}(T))\geq w(Q)$. Since $v_T(e_{Q}(T))$ is an integer, we have 
	\[v_T(e_{Q}(T))\geq \big\lceil w(Q)\big\rceil.\qedhere\]
\end{proof}

\begin{notation}\label{Pi}
	We label points in $\MM(\Delta)$ such that $\MM(\Delta)=\{P_1, P_2, \dots\}$.
\end{notation}
\begin{proposition}\label{IHP is below NPchi}
The normalized Newton polygon $\NP(f, T)_C$ lies above $\IHP(\Delta_f)$. 
\end{proposition}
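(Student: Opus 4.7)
The plan is to invoke Dwork's trace formula (Proposition~\ref{determinant}), which identifies the coefficient $u_k(T)$ of $C_f^*(T,s)$ as $(-1)^k$ times the sum, over $k$-element subsets $\mathcal I \subseteq \MM(\Delta)$, of the principal minors $\det([M]_\mathcal I)$ of the matrix $M$ representing $\psi^n$ on $\bold{B}$ in the natural monomial basis indexed by $\MM(\Delta)$. Since $v_T$ of a sum is bounded below by the minimum of $v_T$'s of its summands, it suffices to show that for every such $\mathcal I$,
\[
v_T\bigl(\det([M]_\mathcal I)\bigr) \;\geq\; \min_{\SS^\star \in \mathscr M_k(n)} h(\SS^\star).
\]

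First I would compute explicit matrix entries. Using the factorization $\psi^n = \psi_p^n \circ \prod_{i=0}^{n-1} E_f^{\sigma^i}(x_1^{p^i},x_2^{p^i})$ and the reparametrization $R_i = pP_{i+1} - P_i$ (with $P_0 = P$, $P_n = Q$), one finds
\[
M_{Q,P} \;=\; T^{(|P|-|Q|)/D}\!\!\!\sum_{\substack{P = P_0, P_1, \ldots, P_n = Q\\ P_j \in \MM(\Delta)}} \prod_{j=1}^n e_{pP_j - P_{j-1}}^{\sigma^{j-1}}(T),
\]
where the overall factor $T^{(|P|-|Q|)/D}$ arises from the basis scaling of $\bold{B}$ and cancels identically in any principal minor. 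Lemma~\ref{L:estimate of Ef(x)}, together with the Frobenius-invariance of $v_T$, then yields
\[
v_T(M_{Q,P}) \;\geq\; \min_{\text{chain}} \sum_{j=1}^n \bigl\lceil w(pP_j - P_{j-1})\bigr\rceil.
\]

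Next I would expand the principal minor $\det([M]_\mathcal I)=\sum_{\tau\in\Iso(\mathcal I)}\sgn(\tau)\prod_{P\in\mathcal I}M_{\tau(P),P}$ and apply the entry-wise bound factor by factor, obtaining
\[
v_T\bigl(\det([M]_\mathcal I)\bigr) \;\geq\; \min_{\tau,\{P_j^P\}} \sum_{P\in\mathcal I}\sum_{j=1}^n\bigl\lceil w(pP_j^P-P_{j-1}^P)\bigr\rceil,
\]
the minimum being over $\tau \in \Iso(\mathcal I)$ together with chains $P_0^P = P,\ldots,P_n^P = \tau(P)$ in $\MM(\Delta)$ for each $P$. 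For each such choice I would assemble the multiset $\SS^\star := \{P_j^P : P\in\mathcal I,\, 0\leq j\leq n-1\}$ of cardinality $nk$ inside ${\MM(\Delta)^\star}^n$, and define a bijection $\tilde\tau$ on $\SS^\star$ by the cyclic shift $\tilde\tau(P_j^P) = P_{j+1}^P$ for $0\leq j\leq n-2$ and $\tilde\tau(P_{n-1}^P) = \tau(P) = P_0^{\tau(P)}$; the right-hand side above then equals exactly $h(\SS^\star,\tilde\tau) \geq h(\SS^\star)$ in the sense of Definition~\ref{definition of h}, and minimizing over $\SS^\star$ closes the estimate.

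The main obstacle I anticipate is the bookkeeping to verify that the assembled $\SS^\star$ genuinely lies in $\mathscr M_k(n)$, i.e.\ that each point of $\MM(\Delta)$ appears with multiplicity at most $n$; although this holds for ``honest'' chains (where the $n$ intermediate slots correspond bijectively to the $n$ copies of $\MM(\Delta)$), in degenerate configurations the same point may appear across many chains simultaneously and exceed the cap. I would handle this either by a direct combinatorial argument that the minimizing choice of chains may always be taken to respect the multiplicity cap, or by invoking the forthcoming Proposition~\ref{better form for IHP} (which asserts $\min_{\SS^\star \in \mathscr M_\ell(n)} h(\SS^\star) = n\min_{\SS\in\mathscr M_\ell} h(\SS)$) to reduce the comparison to the uncapped single-multiset formulation, where the bound is transparent.
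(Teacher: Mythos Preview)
Your framework is sound up to the point you flag: the principal-minor expansion of $u_k$, the chain expansion of each entry $M_{Q,P}$, and the term-by-term estimate via Lemma~\ref{L:estimate of Ef(x)} all go through. But the obstacle you identify is a genuine gap, and neither proposed fix closes it. Fix~(b) fails outright: Proposition~\ref{better form for IHP} equates $\min_{\SS^\star\in\mathscr M_\ell(n)}h(\SS^\star)$ with $n\min_{\SS\in\mathscr M_\ell}h(\SS)$, but neither side is an ``uncapped'' minimum over arbitrary multisets of size $n\ell$---that uncapped minimum is $0$ (take $n\ell$ copies of the origin), so relaxing the multiplicity cap destroys the bound entirely. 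Fix~(a) is the right instinct, but you have not supplied the argument, and intermediate chain points genuinely can collide across many chains while the associated terms still carry small $T$-adic valuation.

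The paper resolves this by organizing the expansion one level higher. Instead of expanding each entry of $M=\sigma_{\Frob}^{n-1}(N)\cdots\sigma_{\Frob}(N)\,N$ into chains and \emph{then} forming minors, it applies a Cauchy--Binet identity (quoted from \cite{ren-wan-xiao-yu}) to the principal minors of the product directly, obtaining
\[
u_\ell(T)=\sum_{\SS_0,\ldots,\SS_{n-1}\in\mathscr M_\ell}\ \prod_{j=0}^{n-1}\det\bigl(\text{the }(\SS_{j+1},\SS_j)\text{-submatrix of }\sigma_{\Frob}^j(N)\bigr),\qquad \SS_n:=\SS_0.
\]
The point is that Cauchy--Binet forces each intermediate $\SS_j$ to be a \emph{genuine} $\ell$-element subset of $\MM(\Delta)$: contributions with repeated intermediate indices cancel by antisymmetry of the determinant before any valuation is taken. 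Bounding each determinant factor now gives $v_T\ge\sum_j h(\SS_{j+1},\SS_j)\ge h\bigl(\biguplus_j\SS_j^\star\bigr)$, and $\biguplus_j\SS_j^\star$ lies in $\mathscr M_\ell(n)$ by construction. In other words, Cauchy--Binet \emph{is} the combinatorial realization of your fix~(a): it groups your chain terms so that the multiplicity-violating configurations cancel among themselves rather than spoil the estimate.
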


\begin{proof}
	We write $N$ for the standard matrix of $\psi_p\circ E_f$ corresponding to the basis  $$\{x_1^{(P_1)_x}x_2^{(P_1)_y}, x_1^{(P_2)_x}x_2^{(P_2)_y},\cdots\}$$ of the Banach space $\mathbf B$. By \cite[Corollary~3.9]{ren-wan-xiao-yu}, we know that the standard matrix of $\psi^n$ corresponding to the same basis is equal to $\sigma_{\Frob}^{n-1}(N)\circ \sigma_{\Frob}^{n-2}(N)\circ\cdots \circ N.$ Then by \cite[Proposition~4.6]{ren-wan-xiao-yu}, for every $\ell \in \NN$ we have \begin{equation}
	\label{E:expression of char power series}
	\begin{split}
	u_\ell(T)
	&=\sum_{\substack{\{P_{m_{0,0}}, P_{m_{0,1}}, \dots, P_{m_{0,\ell-1}}\}\in \mathscr M_\ell\\\{P_{m_{1,0}}, P_{m_{1,1}}, \dots, P_{m_{1,\ell-1}}\}\in \mathscr M_\ell\\\vdots\\\{P_{m_{{n-1},0}}, P_{m_{{n-1},1}}, \dots, P_{m_{{n-1},\ell-1}}\}\in \mathscr M_\ell}} \det\bigg(\prod\limits_{j=0}^{n-1}\left[ \begin{array}{cccccccccc}
	m_{j+1,0} & m_{j+1,1} &\cdots&m_{j+1,\ell-1} \\
	m_{j,0} & m_{j,1} &\cdots&m_{j,\ell-1}  \end{array} \right]_{\sigma_{\Frob}^{j}(N)}\bigg),     \\
	\end{split}
	\end{equation}
	where $m_{n,i}:=m_{0,i}$ for each $0\leq i\leq \ell-1$.
	
	Then for $\SS_j=\{P_{m_{j,0}}, P_{m_{j,1}}, \dots, P_{m_{j,\ell-1}}\}$, we have
	\begin{equation}\label{explicit of u}
	\begin{split}
	&v_T\Big(\det\big(\prod\limits_{j=0}^{n-1}\left[ \begin{array}{cccccccccc}
	m_{j+1,0} & m_{j+1,1} &\cdots&m_{j+1,\ell-1} \\
	m_{j,0} & m_{j,1} &\cdots&m_{j,\ell-1}  \end{array} \right]_{\sigma_{\Frob}^{j}(N)}\big)\Big)\\
	=&v_T\Big(\prod_{j=0}^{n-1}\sum_{\tau_j=\Iso(\SS_{j+1},\SS_j)}\sgn(\tau_j)\prod_{P\in \SS_{j+1}}\sigma_{\Frob}^j(e_{p\tau_j(P)-P})\Big) \\
	\geq& \sum_{j=0}^{n-1}h(\SS_{j+1},\SS_j)\\
	\geq& h(\biguplus_{j=0}^{n-1} \SS_{j}^\star),
	\end{split}
	\end{equation} 
	where $\SS_n=\SS_0$.
	Therefore, it is easily seen that 
	\begin{equation}\label{xk}
	v_T(u_\ell(T))\geq \min_{\SS^\star\in \mathscr M_\ell(n)} h(\SS^\star).\qedhere
	\end{equation}

\end{proof}

	 \section{Improved Hodge polygon for a triangle $\Delta$}
Recall that $\Delta$ is a triangle with vertices $(0, 0)$, $\mathbf{P_1}:=(a_1,b_1)$ and $\mathbf{P_2}:=(a_2,b_2)$ and as we defined in Notation~\ref{cone}, $\mathbbm x_k = \mathbbm x_k(\Delta)$ (resp. $\mathbbm x'_k = \mathbbm x'_k(\Delta)$) is the number of lattice points in $\MM(\Delta)$ whose weight is strictly less than (resp. less than or equal to) $k$. For the rest of this paper, we restrict $p$ to be a prime satisfying $$p\nmid a_2b_1-a_1b_2\quad \textrm{and}\quad p>\frac{2(b_1a_2-b_2a_1)}{\gcd(a_1-a_2,b_1-b_2)}+1.$$
The goal of this section is to show that if $\NP(f, \chi)_C$ (See Definition~\ref{definition for NP chi}) and $\IHP(\Delta)$ coincide at a certain point, then they will coincide at infinitely many points. More precisely, we have the following.
\begin{theorem}\label{main}
Let $f(x_1, x_2)$ be a two-variable polynomial with convex hull $\Delta$. Suppose  that there exists a nontrivial finite character $\chi_{1}:\Zp\to \CC_p^\times$ and an integer $k_1>0$ such that $\NP(f, \chi_{1})_C$ coincides with $\IHP(\Delta)$ at $x=\mathbbm{x}_{k_1}$. Then for any finite character $\chi$ and positive integer $k$, $\IHP(\Delta)$ and $\NP(f, \chi)_C$ coincide at $\mathbbm{x}_k+i_k$ for all $0\leq i_k\leq\mathbbm x_k'-\mathbbm x_k$. 

Moreover, the leading coefficients  $ u_{\mathbbm{x}_k, nh(\TT_k)}$ $(\mathrm{resp. }\ u_{\mathbbm{x}'_k, nh(\TT'_k)})$ of the $\mathbbm x_k$-th $(\mathrm{resp. }\ \mathbbm x'_k\textrm{-th})$ terms of the characteristic power series (see (2.2) for precise definition) are $\ZZ_p$-units.

\end{theorem}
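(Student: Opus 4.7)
The proof splits naturally into two halves: establishing the ``moreover'' clause that the leading coefficients $u_{\mathbbm{x}_k, nh(\TT_k)}$ and $u_{\mathbbm{x}'_k, nh(\TT'_k)}$ are $\ZZ_p$-units, and then deducing the coincidence of $\NP(f, \chi)_C$ with $\IHP(\Delta)$ by convexity of the Newton polygon. First I would reinterpret the hypothesis: the equality $\NP(f, \chi_1)_C(\mathbbm{x}_{k_1}) = \IHP(\Delta)(\mathbbm{x}_{k_1})$ says that after specializing $T \mapsto \pi_{\chi_1} := \chi_1(1) - 1$ (which has strictly positive $p$-adic valuation because $\chi_1$ is nontrivial) and renormalizing, $u_{\mathbbm{x}_{k_1}}(\pi_{\chi_1})$ achieves the lower bound of Proposition~\ref{IHP is below NPchi}; this forces $v_T(u_{\mathbbm{x}_{k_1}}(T)) = nh(\TT_{k_1})$ with the leading $T$-coefficient $u_{\mathbbm{x}_{k_1}, nh(\TT_{k_1})}$ being a $\ZZ_p$-unit.

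The technical core is a factorization of these leading coefficients. Starting from the explicit formula \eqref{E:expression of char power series} for $u_\ell(T)$ together with the chain of inequalities in \eqref{explicit of u}, only the choices $\SS_0 = \cdots = \SS_{n-1} = \TT_k$ paired with minimal permutations of $\TT_k$ can contribute to $T^{nh(\TT_k)}$. The key combinatorial step is to show that every minimal permutation $\tau$ of $\TT_k$ in fact restricts to a minimal permutation on each weight shell $\TT_j \setminus \TT_{j-1}$ for $1 \leq j \leq k$; the assumption $p > 2(b_1 a_2 - b_2 a_1)/\gcd(a_1 - a_2, b_1 - b_2)+1$ is used precisely to rule out ``shell-crossing'' bijections whose ceiling contributions could rival the within-shell ones. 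Summing the signed contributions then yields a multiplicative factorization of the form
\begin{equation*}
u_{\mathbbm{x}_k, nh(\TT_k)} \equiv c_k \cdot \bigl(u_{\mathbbm{x}_1, nh(\TT_1)}\bigr)^{m_k} \pmod{p},
\end{equation*}
for an explicit nonzero integer $c_k$ and positive exponent $m_k$, with a parallel formula for $u_{\mathbbm{x}'_k, nh(\TT'_k)}$ obtained by extending a minimal permutation of $\TT_k$ by the identity on $\TT'_k \setminus \TT_k$ (the extra contributions being $\lceil w((p-1)P) \rceil = (p-1)k$ on each added weight-$k$ lattice point).

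Combining the pieces, the factorization applied to $k = k_1$ together with the unit property from the first step forces $u_{\mathbbm{x}_1, nh(\TT_1)}$ to be a $\ZZ_p$-unit, and then the factorizations for every $k$ propagate this to $u_{\mathbbm{x}_k, nh(\TT_k)}$ and $u_{\mathbbm{x}'_k, nh(\TT'_k)}$. For any finite character $\chi$, the unit property of these $T$-leading coefficients means their $T$-adic valuations survive specialization, so $\NP(f, \chi)_C$ passes through both $(\mathbbm{x}_k, h(\TT_k)/n)$ and $(\mathbbm{x}'_k, h(\TT'_k)/n)$. Since $\NP(f, \chi)_C$ lies above $\IHP(\Delta)$ by Proposition~\ref{IHP is below NPchi} and the segment of $\IHP(\Delta)$ between these two vertices has constant slope $k$, coincidence at the endpoints forces coincidence on the entire interval $[\mathbbm{x}_k, \mathbbm{x}'_k]$, giving the claim at every intermediate point $\mathbbm{x}_k + i_k$. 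I expect the factorization lemma to be the main obstacle: the set of minimal permutations of $\TT_k$ is generally larger than $\{\mathrm{id}\}$, and one must classify these permutations, verify that they respect the weight-shell filtration, and show that the resulting signed sum is a nonzero polynomial modulo $p$ in the Teichm\"uller-lifted coefficients $\hat{a}_P$ of $f$.
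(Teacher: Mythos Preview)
Your approach diverges from the paper's at the crucial step, and the divergence is a genuine gap rather than an alternative route. The factorization
\[
u_{\mathbbm{x}_k, nh(\TT_k)} \equiv c_k \cdot \bigl(u_{\mathbbm{x}_1, nh(\TT_1)}\bigr)^{m_k} \pmod{p}
\]
rests on the claim that \emph{every} minimal permutation of $\TT_k$ restricts to minimal permutations on the weight shells $\TT_j\setminus\TT_{j-1}$. The paper never proves this, and its own structural results point elsewhere: Lemma~\ref{linear for h2} computes $h_2(\TT_k)=kh_2(\TT_1)$ via the decomposition \eqref{Sn decomposition} into shifted copies of $\TT_1$ \emph{and} shifted fundamental parallelograms, not into weight shells. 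Proposition~\ref{Thm 2} only shows that the greedy construction produces \emph{one} minimal permutation; it does not classify all of them. Even granting a block structure along \eqref{Sn decomposition}, you would still need the parallelogram determinants to be $\ZZ_p$-units and to rule out cross-block minimal permutations whose signed contributions could cancel the product---none of which is addressed. Without the factorization you have no mechanism to transport the unit property from $k_1$ to $k=1$ and back out to all $k$.

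The paper's actual argument bypasses this entirely by using an \emph{upper} bound for $\NP(f,\chi_0)_C$ at $x=\mathbbm{x}_k$ coming from the product expansion \eqref{C(chi, s)} and the Weil-number structure of the roots of $L_f^*(\chi_0,s)^{-1}$ for a conductor-$p$ character $\chi_0$ (Lemma preceding \eqref{upper bound}). Sandwiching $\NP(f,\chi_0)_C$ between $\IHP(\Delta)$ below and this explicit upper bound above yields the family of inequalities \eqref{inequality1}, and the algebraic miracle is that after simplification they all collapse to the single $k$-independent inequality $h(\TT_1)\leq (p-1)\Sigma(\chi_0)$. The hypothesis at $k=k_1$ forces this to be an equality (via Lemma~\ref{coincide then unit} and an analysis of which slopes of $L_f^*$ can lie below $k_1$), hence \eqref{inequality1} is an equality for every $k$, and the unit property of $u_{\mathbbm{x}_k,nh(\TT_k)}$ follows. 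The extension to $\mathbbm{x}'_k$ and the intermediate points then uses Poincar\'e duality on the slopes $\alpha_i$. You should replace the factorization strategy with this sandwich argument.
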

The proof of this theorem will occupy the rest of this section.

\begin{notation}
		 Without loss of generality, we can assume that $a_2b_1-a_1b_2>0$.
		  We call the parallelogram with vertices $O, \mathbf P_1, \mathbf P_2\ \textrm{and}\ \mathbf P_1+\mathbf P_2$ (excluding the upper and right sides) the \emph{fundamental parallelogram} of $\Delta$, and denote it by $\square_{\Delta}$, i.e. the shadow region in Figure~\ref{fig: triangle}.
		  
		  We put $\square_{\Delta}^\Int$ to be the set of lattice points in $\square_{\Delta}$, which contains $a_2b_1-a_1b_2$ points.
		  Let $\Lambda$ be the lattice generated by $\bf{P_1}$ and $\bf{P_2}$. For each point $P$ in $\Cone(\Delta)$, we write $P\%$ for its residue in $\square_{\Delta}$ modulo $\Lambda$. 
\end{notation}
		\begin{figure}[h]
		\centering
		\begin{tikzpicture}[scale=0.8]
		\filldraw[color=blue!20] 
		(0,0) -- (3,6) -- (9,8)--(6,2)--cycle;
		\draw[->] (-0,0) -- (10,0) node[right] {$x$};
		\draw[->] (0,-0) -- (0,10) node[above] {$y$};
		\draw[dashed] (3,6) -- (9,8)--(6,2) node[right] {};
		\draw(6,1.8)node[below]{$\bf{P_2}:=(a_2,b_2)$};
		\draw(3,6)node[left]{$\bf{P_1}:=(a_1,b_1)$};
		\end{tikzpicture}
		\caption{The fundamental parallelogram.} \label{fig: triangle}
	\end{figure}
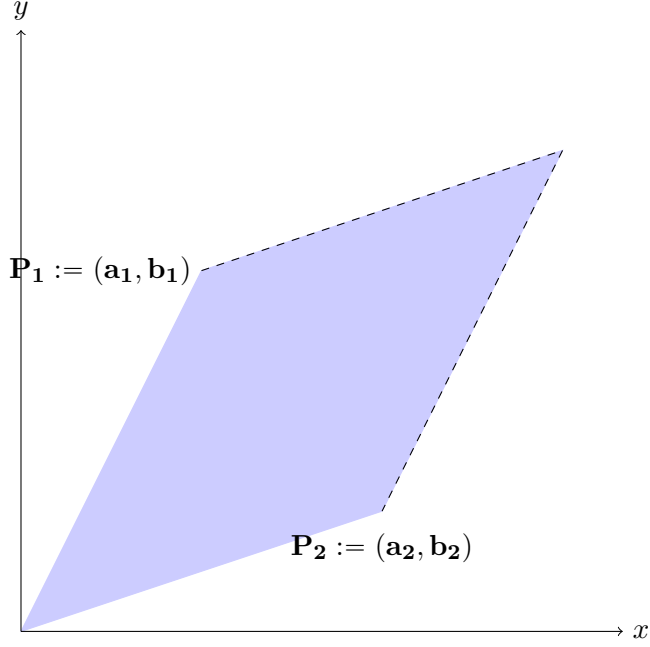

		\begin{lemma}\label{counting points in a parallelogram}
		 The map  \begin{eqnarray*}
			\eta:&\square_{\Delta}^\Int\to \square_{\Delta}^\Int\\
			&P\mapsto (pP)\% 
		\end{eqnarray*}
	 is a permutation.
	\end{lemma}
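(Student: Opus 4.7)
The plan is to interpret the statement as a fact about the finite abelian group $\ZZ^2/\Lambda$. The half-open parallelogram $\square_\Delta$ is a fundamental domain for the translation action of $\Lambda=\ZZ\mathbf{P_1}+\ZZ\mathbf{P_2}$ on $\RR^2$, so every point of $\RR^2$, and in particular every point of $\ZZ^2$, has a unique $\Lambda$-translate in $\square_\Delta$. Restricting this to $\ZZ^2$ shows that $\square_\Delta^{\Int}=\square_\Delta\cap\ZZ^2$ is a complete set of coset representatives for $\ZZ^2/\Lambda$; in particular
\[
|\square_\Delta^{\Int}|=[\ZZ^2:\Lambda]=|\det(\mathbf{P_1},\mathbf{P_2})|=a_2b_1-a_1b_2,
\]
consistent with the cardinality asserted earlier in the section.

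Under the resulting bijection $\square_\Delta^{\Int}\leftrightarrow \ZZ^2/\Lambda$, the map $\eta$ corresponds to multiplication by $p$: by definition $(pP)\%$ is the unique representative in $\square_\Delta^{\Int}$ of the class $pP+\Lambda$, and $pP$ lies in $\Cone(\Delta)$ since $\Cone(\Delta)$ is closed under positive scaling, so the operation $(-)\%$ is well-defined on $pP$. The standing hypothesis $p\nmid a_2b_1-a_1b_2$ from the opening of the section says precisely that $p$ is coprime to $|\ZZ^2/\Lambda|$, so multiplication by $p$ is an automorphism of the finite abelian group $\ZZ^2/\Lambda$. Transporting this automorphism back via the bijection, $\eta$ is a bijection on the finite set $\square_\Delta^{\Int}$, i.e.\ a permutation.

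There is essentially no genuine obstacle here; the lemma is a combination of a standard lattice-geometry fact (lattice points in a half-open fundamental parallelogram form a complete system of residues modulo $\Lambda$) with the coprimality hypothesis on $p$. I would keep the write-up brief and emphasize the group-theoretic viewpoint, since this makes the claim almost tautological and also clarifies why the condition $p\nmid a_2b_1-a_1b_2$ is imposed globally in this section: it is exactly what makes the Frobenius-like map $\eta$ on the fundamental domain invertible.
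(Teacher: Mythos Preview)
Your proof is correct and follows essentially the same approach as the paper: both identify $\square_\Delta^{\Int}$ with $\ZZ^2/\Lambda$ and use that $p$ is coprime to $[\ZZ^2:\Lambda]=a_2b_1-a_1b_2$ to conclude that multiplication by $p$ is invertible modulo $\Lambda$. The paper's version is slightly more concrete, writing down an explicit inverse $P\mapsto (p'P)\%$ via B\'ezout's identity $pp'-1=(a_2b_1-a_1b_2)n_1$ and checking $(pp'-1)P\in\Lambda$ by hand, whereas you invoke the general fact that multiplication by a unit is an automorphism of a finite abelian group; these are the same argument at different levels of abstraction.
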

	\begin{proof}
	since $p$ and $a_2b_1-a_1b_2$ are coprime, there exist integers $p'$ and $n_1$ such that $pp'-1= (a_2b_1-a_1b_2)n_1$. 
		For a point $P=(P_x,P_y)$, we have $$(pp'-1)P=n_1(b_2P_x-a_2P_y)\mathbf{P_1}+n_1(-b_1P_x+a_1P_y)\mathbf{P_2}\in \Lambda.$$
		It implies that composite $$ \square_{\Delta}^\Int\xrightarrow{P\mapsto pP\%}\square_{\Delta}^\Int\xrightarrow{P\mapsto p'P\%}\square_{\Delta}^\Int$$ is the identity map. 
		\end{proof}

%	 Recall that $\mathscr M_\ell$ represents the set consisting of all the subsets of  $\MM(\Delta)$ of cardinality $\ell$.

The key to proving Theorem~\ref{main} is to gain precise control of the improved Hodge polygon in Proposition~\ref{IHP is below NPchi}. In \cite{wan}, Wan made use of the following coarser estimate of this Hodge polygon, for each multiset $\SS^\star$ of ${\MM(\Delta)^\star}^\infty$ we have $$h_1(\SS^\star)\geq h_1(\SS^\star):=(p-1)\sum_{P\in \SS^\star}w(P).$$
It is however important for our method to understand the difference between $h_1(\SS^\star)$ and $h(\SS^\star)$ (or more generally $h(\SS^\star, \tau)$.
			
	 	 For each $r\in \RR$, we put $R(r):=\lceil r\rceil -r$; and for a permutation $\tau$ of $\SS^\star$, we set \begin{equation}\label{U star}
	 	 U^\star(\SS^\star, \tau):=\Big\{R\left(w\left(p\tau(P)-P\right)\right)\;|\;P\in \SS^\star\Big\}^\star,
	 	 \end{equation}	
	 	 \begin{equation}\label{U star1}
	 	  U^\star(\SS^\star, \tau)^{\leq a}:=\Big\{r\in U^\star(\SS^\star, \tau)\;|\;r\leq a\Big\}^\star\quad\textrm{and}\quad  U^\star(\SS^\star, \tau)^{< a}:=\Big\{r\in U^\star(\SS^\star, \tau)\;|\;r< a\Big\}^\star
	 	 \end{equation}	
	 	 to measure the distance of these weights to the next integer values.
	 	 
	 	 Write
	 	 \begin{equation}\label{definition of h1 and h2}
	 	 h_2(\SS^\star,\tau):=\sum\limits_{r\in U^\star(\SS^\star,\tau)} r\quad \textrm{and}\quad h_2(\SS^\star)=\min_{\tau\in \Iso(\SS^\star)}\Big\{h_2(\SS^\star, \tau)\Big\}
	 	\end{equation}
	where $\Iso(\SS^\star)$ is set consisting all permutations of $\SS^\star$ (see Notation~\ref{Iso and Prem}). 
	 	\begin{lemma}\label{h=h1+h2}
	 		We have
	 		\begin{equation}\label{equivalent equation for h} 
	 		h(\SS^\star,\tau)=h_1(\SS^\star)+h_2(\SS^\star,\tau) \quad \textrm{and}\quad h(\SS^\star)=h_1(\SS^\star)+h_2(\SS^\star).
	 		\end{equation}
	 	\end{lemma}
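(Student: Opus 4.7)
The lemma is a bookkeeping identity decomposing $h$ into a $\tau$-invariant piece $h_1$ and the combinatorial piece $h_2$. The proof relies only on the tautology $\lceil r\rceil = r + R(r)$ for $r\in\RR$ and on the linearity of the weight function $w$. The plan is to prove the equality at fixed $\tau$ first, and then pull $h_1$ outside the minimization to obtain the second equality.

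For the fixed-$\tau$ statement, I would substitute $\lceil w(p\tau(P)-P)\rceil = w(p\tau(P)-P) + R(w(p\tau(P)-P))$ into the definition of $h(\SS^\star,\tau)$ in \eqref{defintion of h SS} and split the resulting sum into two pieces. The sum of the $R$-terms is, by the construction of the multiset $U^\star(\SS^\star,\tau)$ in \eqref{U star}, exactly $h_2(\SS^\star,\tau)$. It then remains to identify the sum of the $w$-terms with $h_1(\SS^\star)$.

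For this identification I use that $w$ is the restriction to lattice points of the unique linear functional on $\RR^2$ that takes value $1$ on the line through $\mathbf{P_1}$ and $\mathbf{P_2}$ (the geometric ratio $\overrightarrow{OP}/\overrightarrow{OQ}$ in Definition~\ref{weight function} gives this extension at once). In particular, $w(p\tau(P)-P) = p\,w(\tau(P)) - w(P)$, even though $p\tau(P)-P$ need not lie in $\ZZ_{\geq 0}^2$, so the additivity stated over $\ZZ_{\geq 0}^2$ in Definition~\ref{weight function} suffices. Summing over $P\in\SS^\star$ and using that $\tau\in\Iso(\SS^\star)$ permutes $\SS^\star$, so $\sum_{P\in\SS^\star} w(\tau(P)) = \sum_{P\in\SS^\star} w(P)$, the sum collapses to $(p-1)\sum_{P\in\SS^\star} w(P) = h_1(\SS^\star)$.

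The second equality is then immediate: since $h_1(\SS^\star)$ is independent of $\tau$, minimizing the identity $h(\SS^\star,\tau) = h_1(\SS^\star) + h_2(\SS^\star,\tau)$ over $\tau\in\Iso(\SS^\star)$ pulls $h_1$ outside the minimum and yields $h(\SS^\star) = h_1(\SS^\star) + h_2(\SS^\star)$. I do not anticipate any real obstacle; this lemma is an elementary identity whose purpose is to let subsequent estimates on $h$ be reduced to estimates on the genuinely combinatorial quantity $h_2$, which measures how far the fractional parts $R(w(p\tau(P)-P))$ can be pushed toward $0$ by choosing $\tau$ optimally.
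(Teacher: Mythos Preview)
Your proposal is correct and follows essentially the same argument as the paper: split $\lceil r\rceil = r + R(r)$, use linearity of $w$ together with the fact that $\tau$ permutes $\SS^\star$ to collapse the $w$-sum to $(p-1)\sum_{P\in\SS^\star} w(P)$, and then pull $h_1$ outside the minimum. Your explicit remark that $w$ extends to a linear functional on $\RR^2$ (so the identity $w(p\tau(P)-P)=pw(\tau(P))-w(P)$ holds even when $p\tau(P)-P\notin\ZZ_{\ge 0}^2$) is a point the paper leaves implicit.
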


	 	\begin{proof}
	 		By the definition of $h(\SS^\star, \tau)$ in \eqref{defintion of h SS}, we have \[\begin{split}
	 		h(\SS^\star, \tau)&=\sum_{P\in \SS^\star} \big\lceil w\big(p\tau(P)-P\big)\big\rceil\\
	 		& =\sum_{P\in \SS^\star} w\big(p\tau(P)-P\big)+\sum_{P\in \SS^\star}\Big\{\big\lceil w\big(p\tau(P)-P\big)\big\rceil- w\big(p\tau(P)-P\big)\Big\}\\
	 		&=\sum_{P\in \SS^\star} pw\big(\tau(P)\big)-\sum_{P\in \SS^\star} w(P)+ \sum_{P\in \SS^\star}R\big( w\big(p\tau(P)-P\big)\big)\\
	 		&=(p-1)\sum_{P\in \SS^\star}w(P)+h_2(\SS^\star, \tau)\\
	 		&=h_1(\SS^\star)+h_2(\SS^\star, \tau).
	 		\end{split}\]
	 		
	 		Taking the minimal over all $\tau \in \Iso(\SS^\star)$ implies \[h(\SS^\star)=h_1(\SS^\star)+h_2(\SS^\star).\qedhere\]
	 	\end{proof}

\begin{lemma}\label{U star and h2}
	For any two permutations $\tau_1, \tau_2$ of $\SS^\star$, if $U^\star(\SS^\star, \tau_1)=U^\star(\SS^\star,\tau_2),$ then  $$h_2(\SS^\star,\tau_1)=h_2(\SS^\star,\tau_2)\quad \textrm{and}\quad h(\SS^\star,\tau_1)=h(\SS^\star,\tau_2).$$
\end{lemma}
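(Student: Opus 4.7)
The statement is essentially a direct bookkeeping consequence of the definitions combined with Lemma~\ref{h=h1+h2}, so my plan is simply to unwind the definitions in the correct order rather than to introduce any new combinatorial structure.

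First I would observe that, by definition \eqref{definition of h1 and h2}, the quantity $h_2(\SS^\star,\tau)$ depends on $\tau$ only through the multiset $U^\star(\SS^\star,\tau)$ introduced in \eqref{U star}: it is literally the sum of the elements of that multiset (with multiplicity). Hence the hypothesis $U^\star(\SS^\star,\tau_1)=U^\star(\SS^\star,\tau_2)$ yields, termwise,
\[
h_2(\SS^\star,\tau_1)=\sum_{r\in U^\star(\SS^\star,\tau_1)} r=\sum_{r\in U^\star(\SS^\star,\tau_2)} r=h_2(\SS^\star,\tau_2),
\]
which gives the first desired equality.

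Next I would invoke Lemma~\ref{h=h1+h2}, which decomposes $h(\SS^\star,\tau)=h_1(\SS^\star)+h_2(\SS^\star,\tau)$. The summand $h_1(\SS^\star)=(p-1)\sum_{P\in \SS^\star}w(P)$ depends only on the multiset $\SS^\star$, not on the chosen permutation $\tau$. Combining this with the equality of the $h_2$-values above immediately yields $h(\SS^\star,\tau_1)=h(\SS^\star,\tau_2)$, which is the second desired equality.

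There is no real obstacle here: the only substantive content is the definitional identity $h_2(\SS^\star,\tau)=\sum_{r\in U^\star(\SS^\star,\tau)}r$ and the additive splitting provided by Lemma~\ref{h=h1+h2}. The point of isolating this lemma, I expect, is that later arguments will want to replace one permutation by another whose values $R(w(p\tau(P)-P))$ rearrange the same multiset, and this lemma guarantees that no $h$- or $h_2$-information is lost under such a replacement. I would therefore keep the write-up to a few lines, matching the length of the statement.
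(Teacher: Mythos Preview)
Your proposal is correct and follows exactly the same approach as the paper, which simply remarks that the lemma follows from the definition of $h_2$ and Lemma~\ref{h=h1+h2}. Your version is just a slightly more expanded write-up of that one-line proof.
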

		\begin{proof}
			This lemma follows from the definition of $h_2$ and Lemma~\ref{h=h1+h2}.
		\end{proof}

	\begin{lemma}\label{set to h2}
		Suppose $\SS^\star_1, \SS^\star_2\subset {\MM(\Delta)^\star}^\infty$ satisfy 	\begin{equation}\label{same multisets}
		\Big\{w(P)\;|\; P\in\SS^\star_1\Big\}^\star=\Big\{w(P)\;|\; P\in\SS^\star_2\Big\}^\star
		\end{equation} as multisets. Then we have 
		\begin{eqnarray}\label{same h}
		h_2(\SS^\star_1)=h_2(\SS^\star_2)\quad \textrm{and}\quad h(\SS^\star_1)=h(\SS^\star_2).
		\end{eqnarray}
	\end{lemma}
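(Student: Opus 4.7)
The plan is to reduce both claims to invariants of the weight multiset alone. First I would observe that, because $\Delta$ is a triangle, the weight function is linear (equation (3.1)), so for any permutation $\tau$ of a multiset $\SS^\star \subset {\MM(\Delta)^\star}^\infty$ and any $P \in \SS^\star$ we have
\[
w(p\tau(P) - P) = p\,w(\tau(P)) - w(P).
\]
In particular, $R\bigl(w(p\tau(P)-P)\bigr)$ depends on $P$ and $\tau(P)$ only through their weights, and $h_1(\SS^\star) = (p-1)\sum_{P\in\SS^\star} w(P)$ is a function of the multiset $\{w(P) : P \in \SS^\star\}^\star$ alone. Hypothesis \eqref{same multisets} therefore gives $h_1(\SS_1^\star) = h_1(\SS_2^\star)$ immediately, which by Lemma~\ref{h=h1+h2} reduces the problem to proving $h_2(\SS_1^\star) = h_2(\SS_2^\star)$.

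Next I would use the equality of weight multisets to pick an (arbitrary) bijection of multisets $\phi \colon \SS_1^\star \to \SS_2^\star$ satisfying $w(\phi(P)) = w(P)$ for every $P \in \SS_1^\star$. Given any $\tau_1 \in \Iso(\SS_1^\star)$, transport it by conjugation to $\tau_2 := \phi \circ \tau_1 \circ \phi^{-1} \in \Iso(\SS_2^\star)$. Using $w \circ \phi = w$ one checks, for every $Q \in \SS_2^\star$, that
\[
p\,w(\tau_2(Q)) - w(Q) \;=\; p\,w(\tau_1(\phi^{-1}(Q))) - w(\phi^{-1}(Q)),
\]
so taking $R(-)$ and running $Q$ over $\SS_2^\star$ yields the equality of multisets $U^\star(\SS_1^\star, \tau_1) = U^\star(\SS_2^\star, \tau_2)$. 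Lemma~\ref{U star and h2} then forces $h_2(\SS_1^\star, \tau_1) = h_2(\SS_2^\star, \tau_2)$.

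Finally, conjugation by $\phi$ is a bijection $\Iso(\SS_1^\star) \xrightarrow{\sim} \Iso(\SS_2^\star)$, so minimizing over $\tau_1$ on the left and over the corresponding $\tau_2$ on the right yields $h_2(\SS_1^\star) = h_2(\SS_2^\star)$; combined with $h_1(\SS_1^\star) = h_1(\SS_2^\star)$ and Lemma~\ref{h=h1+h2}, this gives $h(\SS_1^\star) = h(\SS_2^\star)$. There is no serious obstacle: the only point requiring any attention is the existence of the weight-preserving bijection $\phi$, but that is an immediate consequence of the hypothesis \eqref{same multisets} that the two multisets of weights agree.
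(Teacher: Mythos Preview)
Your proof is correct and follows essentially the same approach as the paper: both pick a weight-preserving bijection between $\SS_1^\star$ and $\SS_2^\star$, conjugate permutations across it, and use that $h_2(\SS^\star,\tau)$ depends only on the multiset $U^\star(\SS^\star,\tau)$ (hence only on the weights). One cosmetic point: Lemma~\ref{U star and h2} as stated compares two permutations of the \emph{same} multiset, whereas you apply it across $\SS_1^\star$ and $\SS_2^\star$; this is harmless since the conclusion you need follows directly from the definition $h_2(\SS^\star,\tau)=\sum_{r\in U^\star(\SS^\star,\tau)} r$.
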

	\begin{proof}
	Let $\xi: \SS^\star_1\to\SS^\star_2$ be a bijection such that the induced map of weights from $\Big\{w(P)\;|\; P\in\SS^\star_1\Big\}^\star$ to $\Big\{w(P)\;|\; P\in\SS^\star_2\Big\}^\star$ realizes the equality~\eqref{same multisets}. Then $\xi$ induces a bijection from $\Iso(\SS^\star_1)$ to $\Iso(\SS^\star_2)$. Moreover, we have$$ h_2(\SS^\star_1, \tau) = h_2(\SS^\star_2, \xi^{-1}\tau \xi).$$ 
		
	We immediately get the first equality in \eqref{same h}. Combining it with Lemma~\ref{h=h1+h2}, we can easily check the second equality.
	\end{proof}

	We prove Theorem~\ref{main} in two steps.
	
	\noindent\textbf{Step I.}
	 Recall that a permutation $\tau$ of $\SS^\star$ that achieves the minimum of $h(\SS^\star)$ or equivalently the minimum of $h_2(\SS^\star)$ is called minimal.
	The first core result in this section is Proposition~\ref{Thm 2}. It shows for a given $\SS^\star\subset{\MM(\Delta)^\star}^\infty$ how to construct an explicit minimal permutation $\overline{\tau}$ of $\SS^\star$. 
	
	%The generalized arithmetic progression of $\IHP(T, s)$ is induced from the relations of $\{\overline{\tau}_k\}$. 

	First, we construct a minimal permutation inductively for a general subset $\SS^\star$ of ${\MM(\Delta)^\star}^\infty$ as follows.
	\begin{construction}\label{construction for overline sigma}
		
		We choose a pair of points $(P_0, Q_0)$ in $\SS^\star\times \SS^\star$ such that $R(w(pQ_0-P_0))$ reaches the minimum among all pairs $(P, Q)$ in $\SS^\star\times \SS^\star$. Define $\overline{\tau}(P_0):=Q_0$. 
		
		Then we take out $P_0$ from the first $\SS^\star$ and $Q_0$ from the second $\SS^\star$. We pick another pair of points $(P_1, Q_1)$ from $\SS^\star\backslash\{P_0\}\times \SS^\star\backslash\{Q_0\}$ such that $R(w(pQ_1-P_1))$ reaches the minimum among all pairs $(P, Q)$ in $\SS^\star\backslash\{P_0\}\times \SS^\star\backslash\{Q_0\}$, and define $\overline{\tau}(P_1):=Q_1$. Similarly, we pick a ``minimal'' pair of points $(P_2,Q_2)$ from $\SS^\star\backslash\{P_0, P_1\}\times \SS^\star\backslash\{Q_0, Q_1\}$. Define $\overline{\tau}(P_2):=Q_2$.
		Iterating this process defines $\overline{\tau}$. 
		
	\end{construction}
	
%	\begin{remark}
%		Since we may have two pairs of points $(P_i, Q_i)$ and $(P'_i, Q'_i)$ both making $R(w(pQ-P))$ minimal among all pairs in $$\SS^\star\backslash\{P_1,P_2,\dots,P_{i-1}\}\times
%		\SS^\star\backslash\{Q_1,Q_2,\dots,Q_{i-1}\},$$ the permutation $\overline \tau$ constructed above may not be canonical. However, by Lemma~\ref{can be odered better} below, we know that the multiset $U^\star(\SS^\star,\overline{\tau})$ defined in \eqref{U star} does not depend on how we choose them.
%		
%		
%	\end{remark}
	\begin{lemma}\label{can be odered better}
		Let $\overline \tau$ be a minimal permutation constructed in Construction~\ref{construction for overline sigma}, and let $\tau$ be an arbitrary permutation of $\SS^\star$. Suppose  
		\begin{equation}\label{Ustar equal}
		U^\star(\SS^\star, \overline \tau)^{<r_0}=U^\star(\SS^\star, \tau)^{<r_0}\quad \textrm{for some rational number}\ r_0.
		\end{equation}
		Then by taking finite times of the following operations:
		\begin{enumerate}
			\item swapping the images of two points of the same weight; and 
			\item  swapping the preimages of two points of the same weight,
		\end{enumerate}
		we obtain a permultation $\tau'$ from $\tau$ such that $\tau'(P)=\overline\tau(P)$ for all $P\in \SS^\star$ satisfying $R\big(w(p\tau'(P)-P)\big)\leq r_0$ and $U^\star(\SS^\star, \tau') = U^\star(\SS^\star, \tau)$. 
		
		In particular, $U^\star (\SS^\star, \overline \tau)$ defined in \eqref{U star} is independent of the choice made in Construction~\ref{construction for overline sigma}.
	\end{lemma}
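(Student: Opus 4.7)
The plan is to build $\tau'$ by mirroring the greedy Construction~\ref{construction for overline sigma}: I process the sequence $(P_0,Q_0), (P_1,Q_1),\ldots$ in order, and at each step $i$ with $s_i := R(w(pQ_i - P_i)) \le r_0$, I modify the current permutation by operations (1), (2) to send $P_i$ to $Q_i$. The starting observation is that operations (1) and (2) each preserve the multiset of weight pairs $\{(w(P), w(\tau(P)))\}^\star$: swapping the images of two same-weight preimages leaves every pair intact because their first coordinates coincide, and symmetrically for preimages. Consequently both operations preserve $U^\star(\SS^\star, \tau)$, which depends only on the weight-pair multiset, giving the invariant $U^\star(\SS^\star, \tau_i) = U^\star(\SS^\star, \tau)$ automatically.

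The central structural input I would use is the elementary identity $R(x) = R(y) \iff x - y \in \ZZ$. Applied to $x = pb - a$, $y = pb' - a$, it says that the tie $R(pb - a) = R(pb' - a)$ between weight-pairs $(a,b)$ and $(a,b')$ is equivalent to $p(b-b') \in \ZZ$, a condition independent of $a$; symmetrically for the first coordinate. This ``uniform-tie'' property has two consequences. First, the multiset of values produced by Construction~\ref{construction for overline sigma} is independent of the choices made at ties, which yields the ``in particular'' statement that $U^\star(\SS^\star, \overline\tau)$ is well-defined. Second, at any inductive step, within the restricted bijection $\tau_{i-1}: \SS^\star \setminus \{P_0,\ldots,P_{i-1}\} \to \SS^\star \setminus \{Q_0,\ldots,Q_{i-1}\}$, at least one pair attaining the minimum value $s_i$ must have first coordinate equal to $w(P_i)$ and second coordinate equal to $w(Q_i)$. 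Granted such a pair $(P^*, \tau_{i-1}(P^*))$, I apply op~(1) to the preimages $P_i, P^*$ (if $P_i \neq P^*$) and then op~(2) to the images of $P_i$ and of the preimage of $Q_i$, yielding $\tau_i$ with $\tau_i(P_i) = Q_i$.

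The main obstacle is the existence claim in the preceding paragraph: knowing only that \emph{some} pair in the restriction attains $R$-value $s_i$ is insufficient, since distinct weight-pairs can share an $R$-value. I would upgrade value matching to weight matching by a pigeonhole argument on the integer-translation coset structure from the uniform-tie property: within each coset of weight-pairs, count how many pairs $\overline\tau$ has consumed at steps $0,\ldots,i-1$ versus how many $\tau_{i-1}$ retains in its restriction. Because operations (1), (2) preserve the entire weight-pair multiset and because hypothesis~\eqref{Ustar equal} combined with the uniform-tie property translates the $U^\star$-matching below $r_0$ into a coset-by-coset matching of pair counts, the relevant coset $(w(P_i), w(Q_i))$ must still host a pair in $\tau_{i-1}$, forcing the desired $P^*$. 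Iterating through all $i$ with $s_i \le r_0$ produces the required $\tau'$, and the final assertion about independence of $U^\star(\SS^\star, \overline\tau)$ falls out of the first consequence of the uniform-tie property established above.
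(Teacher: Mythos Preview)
Your overall strategy—track the greedy steps and use operations (1),(2) to align $\tau$ with $\overline\tau$—is sound, and your observation that both operations preserve the full weight-pair multiset (hence $U^\star$) is correct and useful. However, the argument has a genuine gap at exactly the point you flag as the ``main obstacle'': the existence of $P^\ast$ with $w(P^\ast)=w(P_i)$ and $w(\tau_{i-1}(P^\ast))=w(Q_i)$.

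Your pigeonhole argument on ``cosets'' does not go through as written. The hypothesis \eqref{Ustar equal} matches \emph{$R$-value} multiplicities below $r_0$, not weight-pair multiplicities, and distinct weight-pairs $(a,b)$ and $(a',b')$ can share an $R$-value without $a-a'\in\ZZ$ or $p(b-b')\in\ZZ$ (you only need $p(b-b')-(a-a')\in\ZZ$). So there is no reason the restricted $\tau_{i-1}$ must contain a pair with the \emph{specific} weights $(w(P_i),w(Q_i))$; it is only guaranteed to contain a pair with $R$-value $s_i$, and you correctly note that this alone is insufficient. The ``coset-by-coset matching of pair counts'' you invoke would require a finer invariant than $U^\star$, which the hypothesis does not provide. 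A related issue: you process all $\overline\tau$-steps with $s_i\le r_0$, but at level exactly $r_0$ the multiset $U^\star(\SS^\star,\tau)$ may have strictly fewer elements than $U^\star(\SS^\star,\overline\tau)$, so you cannot force agreement on all such $P_i$.

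The paper avoids this by organizing the argument differently. It first inducts on $r_0$ to assume $\tau(P)=\overline\tau(P)$ whenever $R(w(p\overline\tau(P)-P))<r_0$, then iterates over points $P$ with $R(w(p\tau(P)-P))=r_0$ (i.e.\ on the $\tau$-side, not over $\overline\tau$-steps). For each such $P$ with $\tau(P)\ne\overline\tau(P)$, the greedy minimality at the step where $\overline\tau$ assigned $P$ forces a clean dichotomy: either $R(w(p\overline\tau(P)-P))=r_0$ as well, or $\tau(P)=\overline\tau(P_1)$ for some $P_1$ with $w(P_1)=w(P)$. Each branch immediately names a single swap that increases agreement at level $r_0$. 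This sidesteps the need for your weight-pair existence claim entirely. Finally, the paper derives the independence of $U^\star(\SS^\star,\overline\tau)$ as a \emph{corollary} of the main statement (apply it with two greedy permutations in the roles of $\overline\tau$ and $\tau$), not as a direct consequence of the uniform-tie identity; your claim that it ``falls out'' of that identity is too quick.
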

	\begin{proof}
		Assuming \eqref{Ustar equal} holds for every $r\leq r_0$. Then we induce a permutation $\widetilde\tau$ from $\tau$ by taking finite times of operations (1) and (2) such that $\widetilde\tau(P)=\overline\tau(P)$ for all $P\in \SS^\star$ satisfying $R\big(w(p\widetilde\tau(P)-P)\big)< r_0$ and $U^\star(\SS^\star, \widetilde\tau) = U^\star(\SS^\star, \tau).$ It is not hard to check that it is enough for us to show this lemma works for $\widetilde{\tau}$. In other words, we can assume that \begin{equation}\label{extra assumption}
		\tau(P)=\overline\tau(P) \textrm{ for each } P\in \SS^\star \textrm{ satisfying }  R\big(w(p\overline\tau(P)-P)\big)< r_0.
		\end{equation}
		Suppose that $\tau(P)=\overline\tau(P)$ for all $P\in \SS^\star$ satisfying $R\big(w(p\tau(P)-P)\big)\leq r_0$. Then we are done. Otherwise there exists a point $P$ in $\SS^\star$ such that $R\big(w(p\tau(P)-P)=r_0$, but $\tau(P)\neq\overline\tau(P)$.  By Construction~\ref{construction for overline sigma}, it is not hard to see that at least one of the following cases will happen:
		\begin{itemize}
			\item[(a)] $R\big(w(p\overline\tau(P)-P\big)=r_0$.
			\item[(b)] There exists a point $P_1$ in $\SS^\star$ such that $w(P)=w(P_1)$ and $\overline\tau(P_1)=\tau (P)$.
		\end{itemize}
		When case (a) happens, we put $Q_1=\overline\tau(P)$ and define $\tau_1:\SS^\star\to \SS^\star$ to be the same permutation as $\tau$ except we swap the preimages of $Q_1$ and $\tau(P)$. 
		
		Otherwise, we define $\tau_1$ to be the same permutation as $\tau$ except we swap the images of $P$ and $P_1$, where $P_1$ is defined in (b). 
		
		By \eqref{extra assumption}, we know that either $$R\big(w(pQ_1-\tau^{-1}(Q_1))\big)\geq r_0\quad \textrm{or}\quad R\big(w(p\tau(P_1)-P_1)\big)\geq r_0,$$ 
		which implies that $\tau_1 $ also satisfies \eqref{extra assumption} and $$\#\Big\{P\in U^\star(\SS^\star, \tau_1)^{=r_0}\;\big|\; \tau_1(P)=\overline\tau(P)\Big\}\geq \#\Big\{P\in U^\star(\SS^\star,  \tau)^{=r_0}\;\big|\; \tau_1(P)=\overline\tau(P)\Big\}+1.$$ If $\tau_1$ does not satisfy the wanted property, then we run the same argument with $\tau_1$ in place of $\tau$ to obtain another permutation $\tau_2$. Iterating this process eventually gives us a permutation $\tau'$ of $\SS^\star$. It is easy to check that $\tau'(P)=\overline\tau(P)$ for all $P\in \SS^\star$ satisfying $R\big(w(p\tau'(P)-P)\big)\leq r_0$. 
		Since both operations (1), (2) do not change the set $U^\star(\SS^\star, \tau)$, we have $U^\star(\SS^\star, \tau') = U^\star(\SS^\star, \tau).$    
		
		We now prove the last statement of the lemma. Let $\overline \tau_1$ and $\overline \tau_2$ be two permutations constructed in Construction~\ref{construction for overline sigma} by different choices of pairs of points. Suppose that $U^\star (\SS^\star, \overline \tau)\neq U^\star (\SS^\star, \overline \tau)$. Then there is a rational number $r_0$, such that   
		$$U^\star(\SS^\star, \overline \tau_1)^{<r_0}=U^\star(\SS^\star, \overline \tau_2)^{<r_0}\quad \textrm{and}\quad U^\star(\SS^\star, \overline \tau_1)^{\leq r_0}\neq U^\star(\SS^\star, \overline \tau_2)^{\leq r_0}.$$ Without loss of generality, we assume that $$U^\star(\SS^\star, \overline \tau_1)^{\leq r_0}\subsetneq U^\star(\SS^\star, \overline \tau_2)^{\leq r_0}.$$
		From the argument in this lemma above, we know that there exists a permutation $\tau'$ of $\SS^\star$ such that $\tau'(P)=\overline\tau_1(P)$ for all $P\in \SS^\star$ satisfying $R\big(w(p\tau'(P)-P)\big)\leq r_0$ and $$U^\star(\SS^\star, \overline \tau_2)=U^\star(\SS^\star, \tau').$$
		Therefore,  we have $$U^\star(\SS^\star, \overline \tau_1)^{\leq r_0}=U^\star(\SS^\star, \tau')^{\leq r_0}=U^\star(\SS^\star, \overline\tau_2)^{\leq r_0},$$ a contradiction.
	\end{proof}
	
	\begin{corollary}
		For $\tau$ and $\tau'$ as in the last lemma, we have  $h_2(\SS^\star,\tau')=h_2(\SS^\star,\tau)$.
	\end{corollary}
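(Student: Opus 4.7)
The plan is to observe that this corollary is essentially an immediate consequence of two results already established in the excerpt, combined mechanically. The conclusion of Lemma~\ref{can be odered better} explicitly states that the permutation $\tau'$ produced from $\tau$ by the finite sequence of swap operations satisfies
\[
U^\star(\SS^\star, \tau') \;=\; U^\star(\SS^\star, \tau),
\]
because each swap of the form (1) or (2) in that lemma merely permutes the underlying multiset of weight-remainders $R(w(p\tau(P)-P))$ without altering it as a multiset.

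Given this, I would simply invoke Lemma~\ref{U star and h2}, which says that whenever two permutations of $\SS^\star$ produce the same multiset $U^\star$, their values of $h_2$ coincide. Applied to the pair $(\tau,\tau')$, this yields $h_2(\SS^\star,\tau')=h_2(\SS^\star,\tau)$ directly. No additional computation is required, and since the corollary does not involve $h_1$, Lemma~\ref{h=h1+h2} is not needed here.

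The main (and only mild) obstacle is to be certain that the swap operations (1) and (2) indeed leave $U^\star$ unchanged as a multiset; this is really built into the statement of Lemma~\ref{can be odered better}, but if clarity demands it, I would spell out that swapping images of two points of the same weight reassigns two entries of $U^\star$ of the form $R(w(p\tau(P)-P))$ and $R(w(p\tau(P')-P'))$, whose values as a multiset are preserved because $w$ is linear and $w(P)=w(P')$; an analogous remark handles case (2). Once this is observed, the proof of the corollary reduces to a single citation of Lemma~\ref{U star and h2}.
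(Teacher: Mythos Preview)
Your proposal is correct and follows essentially the same approach as the paper: both use the conclusion $U^\star(\SS^\star, \tau') = U^\star(\SS^\star, \tau)$ from Lemma~\ref{can be odered better} and then deduce equality of $h_2$ values. The only cosmetic difference is that the paper cites the definition of $h_2$ in \eqref{definition of h1 and h2} directly, whereas you cite Lemma~\ref{U star and h2}, which is itself just a one-line consequence of that definition.
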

 
	\begin{proof}
		Since $U^\star(\SS^\star, \tau') = U^\star(\SS^\star, \tau)$, then it follows directly from the definition of $h_2$ in \eqref{definition of h1 and h2}.
	\end{proof}
	\begin{proposition}\label{Thm 2}
		The permutation $\overline{\tau}$ in Construction~\ref{construction for overline sigma} is a minimal permutation of $\SS^\star$, i.e. $$h(\SS^\star,\overline{\tau})=h(\SS^\star).$$
	\end{proposition}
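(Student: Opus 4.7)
The plan is to show that $\overline{\tau}$ minimizes $h_2(\SS^\star,\cdot)$; since $h_1(\SS^\star)$ does not depend on $\tau$ (Lemma~\ref{h=h1+h2}), this is equivalent to minimizing $h(\SS^\star,\cdot)$. I would induct on $|\SS^\star|$, with a swap lemma as the main engine.

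\emph{Swap lemma.} Let $(P_0,Q_0)$ be a pair minimizing $R(w(pQ-P))$ over all $(P,Q)\in\SS^\star\times\SS^\star$. For any permutation $\tau$ of $\SS^\star$ with $\tau(P_0)\ne Q_0$, let $Q':=\tau(P_0)$ and $P':=\tau^{-1}(Q_0)$, and let $\tau'$ be the permutation obtained from $\tau$ by exchanging the images of $P_0$ and $P'$. Then $h_2(\SS^\star,\tau')\le h_2(\SS^\star,\tau)$.

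To prove the swap lemma I would exploit the linearity of $w$ on $\ZZ^2$ (valid since $\Delta$ has a vertex at the origin; see Definition~\ref{weight function}), so $R(w(pQ-P))=R(pw(Q)-w(P))$ depends only on the fractional parts $\alpha:=\{w(P)\}$ and $\beta:=\{pw(Q)\}$. Writing $a,a',b,b'$ for the fractional parts attached to $P_0,P',Q_0,Q'$, the untouched entries of $\tau$ and $\tau'$ cancel, and a short computation with $R(x)=\lceil x\rceil-x$ yields
\[
h_2(\SS^\star,\tau')-h_2(\SS^\star,\tau) \;=\; [a<b]+[a'<b']-[a<b']-[a'<b],
\]
where $[\cdot]$ is the $\{0,1\}$-valued indicator. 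The value $+2$ is combinatorially impossible---it would require $a<b\le a'$ and $a'<b'\le a$ simultaneously. Each of the four patterns giving $+1$ violates one of the minimality inequalities $R(b-a)\le R(b'-a),\,R(b-a'),\,R(b'-a')$; for instance, the pattern $a<b,\,a'<b',\,a<b',\,a'\ge b$ would force $(a-b)\bmod 1=1+a-b\le a'-b=(a'-b)\bmod 1$, i.e.\ $a'\ge 1+a$, impossible for $a,a'\in[0,1)$. Hence the right-hand side is $\le 0$.

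With the swap lemma in hand the induction is routine. The base case $|\SS^\star|=1$ is trivial. For the inductive step let $\tau$ be an arbitrary permutation and $(P_0,Q_0)$ be the first pair chosen by Construction~\ref{construction for overline sigma}, so $(P_0,Q_0)$ is a global minimizer of $R$. The swap lemma yields $\tau_1$ with $\tau_1(P_0)=Q_0$ and $h(\SS^\star,\tau_1)\le h(\SS^\star,\tau)$. The restriction $\tau_1|_{\SS^\star\setminus\{P_0\}}\colon\SS^\star\setminus\{P_0\}\to\SS^\star\setminus\{Q_0\}$ is a bijection between multisets of equal size, and Construction~\ref{construction for overline sigma} restricted to this smaller problem produces exactly $\overline\tau|_{\SS^\star\setminus\{P_0\}}$. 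By the inductive hypothesis---suitably extended to bijections between multisets of the same cardinality, which Definition~\ref{definition of h} already accommodates---the restricted $\overline\tau$ is a minimal bijection, so adding the shared $(P_0,Q_0)$ term gives $h(\SS^\star,\overline\tau)\le h(\SS^\star,\tau_1)\le h(\SS^\star,\tau)$.

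The main obstacle is the case analysis in the swap lemma: among the $2^4=16$ indicator patterns, one is combinatorially impossible, four (giving $+1$) must be ruled out individually using the minimality inequalities, and the rest automatically satisfy the desired $\le 0$ bound. Each exclusion is a one-line inequality check, but one has to be careful that the minimality of $(P_0,Q_0)$ is only invoked for the four pairs $\{P_0,P'\}\times\{Q_0,Q'\}$ (which is guaranteed since $(P_0,Q_0)$ is the global minimum).
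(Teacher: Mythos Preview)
Your proof is correct and takes a genuinely different route from the paper's. The paper starts from an arbitrary minimal permutation $\tau$ and builds a finite sequence $\tau_0=\tau,\tau_1,\dots,\tau_m$ of permutations, each obtained by a swap that forces agreement with $\overline{\tau}$ on one more pair; the key observation there is that $h_2(\tau_{i+1})-h_2(\tau_i)$ is an \emph{integer} (by linearity of $w$) strictly less than $1$, hence $\le 0$, and minimality of $\tau$ then forces equality throughout. This argument also relies on the preparatory Lemma~\ref{can be odered better} to normalize $\tau$ before each swap. By contrast, you induct on $|\SS^\star|$ and isolate a single swap lemma, which you prove by reducing $R(w(pQ-P))$ to the fractional parts $\alpha=\{w(P)\}$, $\beta=\{pw(Q)\}$ and checking the indicator identity
\[
h_2(\tau')-h_2(\tau)=[a<b]+[a'<b']-[a<b']-[a'<b]\le 0
\]
case by case. (Your case analysis is sound: the $+2$ pattern is self-contradictory, and each of the four $+1$ patterns is excluded by a single minimality inequality among $R(b-a)\le R(b'-a),R(b-a'),R(b'-a')$.) Your approach is more self-contained and elementary, avoiding Lemma~\ref{can be odered better} entirely; the cost is that you must silently extend the statement to bijections $\SS_1^\star\to\SS_2^\star$ between distinct multisets to make the induction go through, which you correctly note is already covered by Definition~\ref{definition of h}. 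The paper's approach, on the other hand, yields as a byproduct that $U^\star(\SS^\star,\overline{\tau})$ is an invariant of all minimal permutations, a fact used later (e.g.\ in Corollaries~\ref{linear1} and~\ref{power}).
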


	\begin{proof}
		By Lemma~\ref{h=h1+h2}, it is enough to prove that $\overline{\tau}$ minimizes $h_2(\SS^\star, \bullet)$ among all permutations of $\SS^\star$.
		
		Assume that $\tau$ is a minimal permutation of $\SS^\star$. If $U^\star(\SS^\star, \overline{\tau})= U^\star(\SS^\star,\tau)$, we are done by Lemma~\ref{U star and h2}. Otherwise we shall construct below a finite sequence of permutations $(\tau_0=\tau, \tau_1, \dots, \tau_m)$ satisfying 
		\begin{enumerate}[label={(\arabic*)}]
			\item for each $0\leq i\leq m$, we have $h_2(\SS^\star, \tau_i)=h_2(\SS^\star).$
			\item $U^\star(\SS^\star, \tau_m)=U^\star(\SS^\star, \overline{\tau}).$
		\end{enumerate}
		Combining Property~(2) with Lemma~\ref{U star and h2}, we would deduce
		$$h(\SS^\star,\tau_m)=h(\SS^\star,\overline{\tau}),$$ which completes the proof.
		
		Now we come to the inductive construction of the sequence $(\tau_0=\tau, \tau_1, \dots, \tau_m)$. 	We take induction on $i$. First, we put $\tau_0=\tau$, where $\tau$ is a minimal permutation of $\SS$. Hence, it satisfies $h_2(\SS^\star, \tau_0)=h_2(\SS^\star).$	
		
		Suppose that we have defined $\tau_i$. If $U^\star(\SS^\star, \tau_i)=U^\star(\SS^\star, \overline{\tau})$, we terminate this induction. Otherwise let $t_i$ be the smallest number in the multiset $U^\star(\SS^\star, \overline{\tau})\backslash U^\star(\SS^\star, \tau_i)$. 
		Let $\tau_i':\SS^\star\to \SS^\star$  be the permutation constructed from $\tau_i$ in Lemma~\ref{can be odered better}. It is not hard to check that 
		\begin{itemize}
			\item $h_2(\SS^\star,\tau_i)=h_2(\SS^\star,\tau'_i)$.
			\item 	$\tau_i'(P)=\overline\tau(P)$ holds for each $P\in \SS^\star$ satisfying $w(P-\tau_i'(P))\leq t_i$.
			\item $U^\star(\SS^\star, \overline \tau)^{\leq t_i}\supsetneq U^\star(\SS^\star, \tau_i')^{\leq t_i}$.
		\end{itemize}
		Therefore, there exists a point $P_i$ in $\SS^\star$ with 
		$$Q_i=\overline{\tau}(P_i),\quad \tau_i'(P'_i)=Q_i\quad \textrm{and}\quad \overline\tau(P'_i)=Q'_i$$ such that 
		\begin{itemize}
			\item $R(w(pQ_i-P_i))=t_i$,
			\item $R(w(pQ'_i-P'_i))\geq t_i$,
			\item $R(w(pQ_i-P'_i))> t_i$, and
			\item 	$R(w(pQ'_i-P_i))> t_i$.
		\end{itemize}
		We define $\tau_{i+1}$ to be the same permutation as $\tau'_i$ except we  swap the images of $P_i$ and $P_i'$. This process gives us a sequence with elements from $\Iso(\SS^\star)$, whose length, say $m$, is less than or equal to $\#(\SS^\star)$.
		
		Then we check $h_2(\SS^\star,\tau_{i+1})= h_2(\SS^\star,\tau_i)$. From the induction we know that $h_2(\SS^\star, \tau_i)=h_2(\SS^\star)$. Hence, we ha
		$$h_2(\SS^\star, \tau_{i+1})\geq h_2(\SS^\star, \tau_i).$$ 
		
		On the other hand, from the definition of $h_2$, we have
		\begin{equation}\label{difference of h2}
		\begin{split}
		h_2(\SS^\star,\tau_{i+1})-h_2(\SS^\star,\tau_i)=&h_2(\SS^\star,\tau_{i+1})-h_2(\SS^\star,\tau_i')\\
		=&R\big(w(p\tau_{i+1}(P_i)-P_i)\big)+R(w\big(p\tau_{i+1}(P'_i)-P'_i)\big)\\
		-&R\big(w(p\tau_i'(P'_i)-P'_i)\big)-R\big(w(p\tau_i'(P_i)-P_i)\big)\\
		<& t_i+R\big(w(p\tau_{i+1}(P'_i)-P'_i)\big)-t_i-R\big(w(p\tau_i(P_i)-P_i)\big)\\
		\leq &1-R\big(w(p\tau_i(P_i)+P_i)\big)\\
		\leq &1.
		\end{split}
		\end{equation}
		From the linearity of $w$, we have   $$w\big(p\tau_{i+1}(P_i)-P_i\big)+w\big(p\tau_{i+1}(P'_i)-P'_i\big)
		=w\big(p\tau_i'(P'_i)-P'_i\big)+w\big(p\tau_i'(P_i)-P_i\big).$$ It implies that $h_2(\SS^\star,\tau_{i+1})-h_2(\SS^\star,\tau_i)$ is an integer. Combining it with \eqref{difference of h2}, we have  $$h_2(\SS^\star, \tau_{i+1})\leq h_2(\SS^\star, \tau_i).$$

		Combining these inequalities, we obtain \begin{equation}\label{equal for h}
		h_2(\SS^\star,\tau_{i+1})=h_2(\SS^\star,\tau_i)=h_2(\SS^\star).
		\end{equation}

		Finally, from the termination condition of this induction, we know that \[U^\star(\SS^\star, \tau_m)=U^\star(\SS^\star, \overline{\tau}).\qedhere\]
	\end{proof}

	\begin{remark}
		Notice that this result crucially depends on the linearity of $w$ as in \eqref{linear}. For general polytopes, we have a similar definition for the weight of a point, however, it is no longer linear. We will address this case in a future work.
	\end{remark}
	For any subset $\SS$ of $\MM(\Delta)$, we write $\SS+P$ for the shift of $\SS$ by $P$.

	\begin{corollary}\label{linear1}
		Let $\SS_2$ be a subset of $\MM(\Delta)$, and let $Q_1, Q_2, \dots, Q_k$ be points of $\MM(\Delta)$ with integer weights. Put $$\SS^\star_1=\SS_2^\star\uplus \biguplus\limits_{i=1}^{k}(\square_{\Delta}^\Int+Q_i)^\star,$$ then $h_2(\SS^\star_1)= h_2(\SS^\star_2)$.
	\end{corollary}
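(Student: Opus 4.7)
The plan is to combine Proposition~\ref{Thm 2} with the key observation that each shifted parallelogram block $\square_{\Delta}^\Int + Q_i$ admits an internal permutation achieving $R = 0$ on every pair. Specifically, for $P_0 \in \square_{\Delta}^\Int$, let $\eta^{-1}(P_0) \in \square_{\Delta}^\Int$ denote the preimage under the map of Lemma~\ref{counting points in a parallelogram}; by construction $p\eta^{-1}(P_0) - P_0 \in \Lambda$, so its weight is an integer. I define $\tau_i$ on $\square_{\Delta}^\Int + Q_i$ by $\tau_i(P_0 + Q_i) := \eta^{-1}(P_0) + Q_i$. Then
\[
p\tau_i(P_0 + Q_i) - (P_0 + Q_i) = \bigl(p\eta^{-1}(P_0) - P_0\bigr) + (p-1)Q_i,
\]
whose weight is an integer (the first summand has integer weight, and $(p-1)w(Q_i) \in \ZZ$ by hypothesis). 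Hence $R\bigl(w(p\tau_i(P) - P)\bigr) = 0$ for every $P$ in the block, so $h_2(\square_{\Delta}^\Int + Q_i, \tau_i) = 0$.

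For the upper bound $h_2(\SS_1^\star) \leq h_2(\SS_2^\star)$, I combine a minimal permutation $\tau_2^{\min}$ of $\SS_2^\star$ (furnished by Proposition~\ref{Thm 2}) with the $\tau_i$'s to form a permutation $\tau$ of $\SS_1^\star$ whose cost is exactly $h_2(\SS_2^\star, \tau_2^{\min}) + \sum_{i=1}^{k} 0 = h_2(\SS_2^\star)$.

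For the reverse bound, I apply Construction~\ref{construction for overline sigma} to $\SS_1^\star$ with the following structured sequence of choices: first, I process each block $\square_{\Delta}^\Int + Q_i$ entirely by successively picking the pairs $(P_0 + Q_i,\ \eta^{-1}(P_0) + Q_i)$ as $P_0$ ranges over $\square_{\Delta}^\Int$. Each such choice is a legitimate greedy pick because it realizes the global minimum $R = 0$ (which is always attainable while an unprocessed block remains, since $R \geq 0$ everywhere), and bijectivity of $\eta^{-1}$ ensures the chosen images stay mutually distinct throughout this block phase. Once all blocks are exhausted, the unpaired preimages and images are both exactly $\SS_2^\star$, and I complete the construction greedily on $\SS_2^\star$, obtaining a permutation $\tau_2'$. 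By Proposition~\ref{Thm 2} applied to both $\SS_1^\star$ and $\SS_2^\star$, the assembled $\overline{\tau}_1$ is minimal for $\SS_1^\star$ and $\tau_2'$ is minimal for $\SS_2^\star$, whence
\[
h_2(\SS_1^\star) = h_2(\SS_1^\star, \overline{\tau}_1) = \sum_{i=1}^{k} 0 + h_2(\SS_2^\star, \tau_2') = h_2(\SS_2^\star).
\]

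The main subtlety to confirm is that the structured execution above is truly admissible inside Construction~\ref{construction for overline sigma}: one must check that at each intermediate stage the chosen internal block pair achieves the global minimum (immediate, since $R \geq 0$ universally and the chosen pair attains $0$), and that the targeted $\eta^{-1}$-image has not yet been consumed earlier (which is automatic from injectivity of $\eta^{-1}$ within the block). Everything else is formal bookkeeping together with Proposition~\ref{Thm 2}.
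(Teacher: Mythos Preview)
Your proof is correct and follows essentially the same route as the paper's. Both arguments define the block permutation $\tau_i(P_0+Q_i)=\eta^{-1}(P_0)+Q_i$, verify that it contributes $R=0$ on every pair, and then observe that combining these block permutations with a greedy permutation on $\SS_2^\star$ yields a permutation of $\SS_1^\star$ that is itself a valid output of Construction~\ref{construction for overline sigma}; minimality then follows from Proposition~\ref{Thm 2}. Your version is a bit more explicit about why the block-first execution is admissible in the greedy construction, but the underlying idea is identical.
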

	\begin{proof}
		Recall that for each point $P\in \MM(\Delta)$, we denote by $P\%$ its residue in $\square_\Delta$. Lemma~\ref{counting points in a parallelogram} defines a permutation of $\square_{\Delta}^\Int$ such that $\eta(P)=(pP)\%$.  
		We write \begin{eqnarray*}
			\eta_i:\square_{\Delta}^\Int+Q_i\to \square_{\Delta}^\Int+Q_i\\
			P\mapsto \eta^{-1}(P-Q_i)+Q_i.
		\end{eqnarray*}
		It is easy to check that $R(p\eta_i(P)-P)=0$ for all point $P$ in $\square_{\Delta}^\Int+Q_i.$
		
		Then we apply Construction~\ref{construction for overline sigma} to $\SS_2$ and get a minimal permutation $\tau_{\SS_2}$ of $\SS_2$. It is not hard to see that $\tau_{\SS_2}, \eta_1,\dots,\eta_k$ together construct a permutation $\tau$ of $\SS^\star_1$ which agrees with Construction~\ref{construction for overline sigma}. Therefore, we have 
		\begin{align*}
		h_2(\SS^\star_1)=&h_2(\SS_2,	\tau_{\SS_2})+\sum_{i=1}^kh_2(\square_{\Delta}^\Int+Q_i, \eta_{i})\\
		=&h_2(\SS_2,	\tau_{\SS_2})\\
		=&h_2(\SS_2).\qedhere
		\end{align*} 
	\end{proof}
	\begin{corollary}\label{power}
		Let $\SS_2$ be a subset of $\MM(\Delta)$, and let $Q_1, Q_2, \dots, Q_k$ be points of $\MM(\Delta)$ of integer weights. Put
		$\SS^\star_1=\biguplus\limits_{i=1}^{k} (\SS_2+Q_i)$. Then we have $h_2(\SS^\star_1)= kh_2(\SS_2)$.
	\end{corollary}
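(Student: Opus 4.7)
The plan is to follow the template of Corollary~\ref{linear1}: construct an explicit permutation $\overline\tau$ of $\SS_1^\star$ that respects the block decomposition into the shifted copies $\SS_2+Q_i$, verify it is a minimal permutation produced by Construction~\ref{construction for overline sigma}, and then read off $h_2$.

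For the construction, let $\overline\tau_{\SS_2}$ be a minimal permutation of $\SS_2$ obtained from Construction~\ref{construction for overline sigma}, and set $\overline\tau(P+Q_i):=\overline\tau_{\SS_2}(P)+Q_i$ for $P\in\SS_2$ and $1\le i\le k$. The crucial observation is that, since $w$ is linear on $\Delta$ (see \eqref{linear}) and each $w(Q_i)\in\ZZ$, for any pair $(P_a+Q_i,P_b+Q_j)\in\SS_1^\star\times\SS_1^\star$ one has
$$w\big(p(P_b+Q_j)-(P_a+Q_i)\big)=w(pP_b-P_a)+\big(p\,w(Q_j)-w(Q_i)\big),$$
with the second summand an integer. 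Hence $R\big(w(p(P_b+Q_j)-(P_a+Q_i))\big)=R\big(w(pP_b-P_a)\big)$, so the multiset of fractional residues achievable in $\SS_1^\star\times\SS_1^\star$ is just a $k$-fold copy of the corresponding multiset for $\SS_2\times\SS_2$; in particular $U^\star(\SS_1^\star,\overline\tau)$ consists of $k$ copies of $U^\star(\SS_2,\overline\tau_{\SS_2})$.

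Next, I would verify that $\overline\tau$ arises from Construction~\ref{construction for overline sigma} applied to $\SS_1^\star$ by a suitable choice of pairs. Since Construction on $\SS_2$ produces pairs $(P_a^{(s)},P_b^{(s)})$ in non-decreasing order of their $R$-values, one runs Construction on the $k$ copies in parallel: at stage $(s-1)k+i$ pick the pair $(P_a^{(s)}+Q_i,\,P_b^{(s)}+Q_i)$. The displayed invariance of $R$-values guarantees that such a pair attains the current minimum over all still-available pairs in $\SS_1^\star\times\SS_1^\star$, including pairs that mix copies. By Proposition~\ref{Thm 2}, $\overline\tau$ is then minimal, and a direct count yields
$$h_2(\SS_1^\star)=h_2(\SS_1^\star,\overline\tau)=\sum_{i=1}^k\sum_{P\in\SS_2}R\big(w(p\overline\tau_{\SS_2}(P)-P)\big)=k\,h_2(\SS_2,\overline\tau_{\SS_2})=k\,h_2(\SS_2).$$

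The main obstacle is the inductive bookkeeping in the previous paragraph: one must check at every stage that the designated block-diagonal pair indeed attains the minimum $R$-value over all still-available pairs, including pairs crossing between different copies. Since the copies are symmetric up to integer-weight shifts and Construction on $\SS_2$ selects its pairs in non-decreasing order of $R$-value, this reduces to a careful comparison between successive stages of Construction on $\SS_2$ and on $\SS_1^\star$. Should this direct verification prove cumbersome, an alternative and cleaner lower bound $h_2(\SS_1^\star)\ge k\,h_2(\SS_2)$ follows from the Birkhoff--von Neumann theorem: any permutation $\tau$ of $\SS_1^\star$ induces a nonnegative integer $|\SS_2|\times|\SS_2|$ matrix $A$ (counting, for each $P,P'\in\SS_2$, how many indices $i$ satisfy $\tau(P+Q_i)\in P'+\{Q_1,\dots,Q_k\}$) with all row and column sums equal to $k$, which decomposes as a sum of $k$ permutation matrices $\pi_1,\dots,\pi_k$ on $\SS_2$, whence $h_2(\SS_1^\star,\tau)=\sum_{s=1}^k h_2(\SS_2,\pi_s)\ge k\,h_2(\SS_2)$.
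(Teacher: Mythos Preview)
Your proposal is correct and follows essentially the same approach as the paper: build a block-diagonal permutation from a minimal permutation of $\SS_2$, argue it arises from Construction~\ref{construction for overline sigma} (the paper simply asserts this is ``not hard to see''; you spell out the key reason, namely the invariance $R\big(w(p(P_b+Q_j)-(P_a+Q_i))\big)=R\big(w(pP_b-P_a)\big)$), and then invoke Proposition~\ref{Thm 2}. Your Birkhoff--von~Neumann alternative for the lower bound is a genuinely different and self-contained route not present in the paper; it bypasses the inductive verification of Construction~\ref{construction for overline sigma} entirely, at the cost of importing an outside combinatorial tool.
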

	\begin{proof}

		Let $\tau_{0}$ be the minimal permutation of $\SS_2$ constructed in Construction~\ref{construction for overline sigma}. For each $1\leq i\leq k$,  put \begin{eqnarray*}
			\tau_i:\SS_2+Q_i\to \SS_2+Q_i\\
			P\mapsto \tau_0(P-Q_i)+Q_i.
		\end{eqnarray*}
		It is not hard to see that $\tau_1,\tau_2,\dots,\tau_k$ together induce a permutation $\tau$ of $\SS^\star_1$ which agrees with Construction~\ref{construction for overline sigma} and for each $1\leq i\leq k$ we have $$h_2(\SS_2+Q_i, \tau_{i})=h_2(\SS_2, \tau_0)=h_2(\SS_2).$$ Therefore, 
		\begin{align*}
		h_2(\SS^\star_1)=&h_2(\SS^\star_1, \tau)\\ 
		=&\sum_{i=1}^kh_2(\SS_2+Q_i, \tau_{i})\\
		=&kh_2(\SS_2).\qedhere
		\end{align*} 
	\end{proof}
	%\begin{lemma}
\begin{lemma}\label{linear for h2}
	we have
	$$h_2(\TT_k)=kh_2(\TT_1).$$
\end{lemma}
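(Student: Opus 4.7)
The plan is to produce an explicit disjoint-union decomposition
\[
\TT_k \;=\; \biguplus_{\substack{\alpha,\beta\geq 0\\ \alpha+\beta = k-1}} \bigl(\TT_1 + \alpha \mathbf{P_1} + \beta \mathbf{P_2}\bigr) \;\uplus\; \biguplus_{\substack{\alpha,\beta\geq 0\\ \alpha+\beta\leq k-2}} \bigl(\square_\Delta^\Int + \alpha \mathbf{P_1} + \beta \mathbf{P_2}\bigr),
\]
and then to apply the two corollaries just established. Since every shift $\alpha\mathbf{P_1}+\beta\mathbf{P_2}$ has integer weight $\alpha+\beta$, Corollary~\ref{power} applied to the first (disjoint) union gives $h_2$ equal to $k\cdot h_2(\TT_1)$, while Corollary~\ref{linear1} says that adjoining the $\binom{k}{2}$ copies of $\square_\Delta^\Int$ in the second union does not alter $h_2$; the desired equality $h_2(\TT_k)=kh_2(\TT_1)$ then follows.

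To justify the decomposition, I would first observe that $\MM(\Delta)$ admits the canonical disjoint-union decomposition
\[
\MM(\Delta) \;=\; \biguplus_{\alpha,\beta\geq 0}\bigl(\square_\Delta^\Int + \alpha \mathbf{P_1} + \beta \mathbf{P_2}\bigr),
\]
in which each $P\in\MM(\Delta)$ is uniquely written as $P = P\% + \alpha\mathbf{P_1} + \beta\mathbf{P_2}$ with $\alpha,\beta\geq 0$: indeed writing $P\%=s\mathbf{P_1}+t\mathbf{P_2}$ with $s,t\in[0,1)$ and $P=s'\mathbf{P_1}+t'\mathbf{P_2}$ with $s',t'\geq 0$ (the latter because $P\in\Cone(\Delta)$) forces $\alpha=\lfloor s'\rfloor\geq 0$ and $\beta=\lfloor t'\rfloor\geq 0$. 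Now split $\square_\Delta^\Int = \square_{\Delta,1}^\Int \uplus \square_{\Delta,2}^\Int$ according to whether $w(P\%)<1$ or $w(P\%)\geq 1$, and note that $\TT_1 = \square_{\Delta,1}^\Int$ by linearity of $w$. The defining condition $w(P)<k$, equivalently $\alpha+\beta<k-w(P\%)$, then translates into $\alpha+\beta\leq k-1$ on $\square_{\Delta,1}^\Int$-cells and $\alpha+\beta\leq k-2$ on $\square_{\Delta,2}^\Int$-cells. Regrouping: the $\binom{k}{2}$ cells $\square_{\Delta,1}^\Int + \alpha\mathbf{P_1}+\beta\mathbf{P_2}$ with $\alpha+\beta\leq k-2$ combine with the matching $\square_{\Delta,2}^\Int$-cells to give $\binom{k}{2}$ full parallelogram translates, leaving exactly the $k$ translates of $\TT_1$ indexed by $\alpha+\beta = k-1$, which is the decomposition above.

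I do not anticipate a serious obstacle: once the identification $\TT_1 = \square_{\Delta,1}^\Int$ and the linearity $w(P)=w(P\%)+\alpha+\beta$ are in place, the decomposition is purely combinatorial, and Corollaries~\ref{power} and~\ref{linear1} close out the argument. The only arithmetic point worth double-checking is that the number of $\TT_1$-translates produced equals $\#\{(\alpha,\beta)\in\mathbb{Z}_{\geq 0}^2 : \alpha+\beta = k-1\} = k$, matching the ``$k$ copies'' input needed by Corollary~\ref{power}.
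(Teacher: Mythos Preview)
Your proposal is correct and follows essentially the same approach as the paper: decompose $\TT_k$ into $k$ translates of $\TT_1$ (by integer-weight shifts $\alpha\mathbf{P_1}+\beta\mathbf{P_2}$ with $\alpha+\beta=k-1$) together with $\binom{k}{2}$ translates of $\square_\Delta^\Int$, then apply Corollaries~\ref{linear1} and~\ref{power}. Your justification of the decomposition via the canonical tiling $\MM(\Delta)=\biguplus_{\alpha,\beta\geq 0}(\square_\Delta^\Int+\alpha\mathbf{P_1}+\beta\mathbf{P_2})$ is in fact more carefully argued than the paper's one-line statement of the analogous identity.
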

\begin{proof}
	We can decompose $\TT_k$ into a disjoint union of sets as follows:
	\begin{equation}\label{Sn decomposition}
	\TT_k=\bigsqcup\limits_{i=0}^{k-1}(\TT_1+(k-1-i)\textbf{P}_\textbf{2}+i\textbf{P}_\textbf{1})\sqcup \bigsqcup\limits_{i=0}^{k-2}\bigsqcup\limits_{j=0}^{i}(\square_{\Delta}^\Int+i\textbf{P}_\textbf{1}+j\textbf{P}_\textbf{2}).
	\end{equation}
	
	Applying Corollary~\ref{linear1} and \ref{power} to \eqref{Sn decomposition}, we complete the proof of this lemma. 
\end{proof}

\begin{lemma}\label{xk and hk}
	Let $l_1+2$ (resp. $l_2+2$)  represent the number of lattice points on closed segment $O\mathbf{P_1}$ (resp. $O\mathbf{P_2}$). For each $k>0$, we have
	\begin{itemize}
		\item[$(1)$] $\mathbbm{x}_k=k\mathbbm{x}_1+\frac{k(k-1)}{2}(a_2b_1-a_1b_2).$
		\item[$(2)$] $h(\TT_k)=(p-1)\sum\limits_{i=0}^{k-2}\big[(a_2b_1-a_1b_2)(i+1)-\tfrac{1}{2}(l_1+l_2)\big](i+1)
		+k\big[h(\TT_1)+(p-1)(k-1)\mathbbm{x}_1\big].$
	\end{itemize}
\end{lemma}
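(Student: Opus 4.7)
Part (1) is a direct corollary of the decomposition of $\TT_k$ already exhibited in the proof of Lemma~\ref{linear for h2}. That decomposition writes $\TT_k$ as a disjoint union of $k$ translates of $\TT_1$ (each of size $\mathbbm x_1$) together with $\frac{k(k-1)}{2}$ translates of $\square_\Delta^\Int$ (each of size $a_2b_1-a_1b_2$); summing cardinalities yields the claimed formula for $\mathbbm x_k$ in a single line.

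For part (2), my plan is to split $h(\TT_k)=h_1(\TT_k)+h_2(\TT_k)$ via Lemma~\ref{h=h1+h2} and handle the two pieces separately. The $h_2$-piece is immediate from Lemma~\ref{linear for h2}: $h_2(\TT_k)=kh_2(\TT_1)$, which I rewrite as $kh(\TT_1)-k(p-1)\sum_{P\in\TT_1}w(P)$ to extract the $kh(\TT_1)$ term of the target. For $h_1(\TT_k)=(p-1)\sum_{P\in\TT_k}w(P)$, I would substitute the decomposition of $\TT_k$ and use the linearity of $w$ together with $w(\mathbf{P_1})=w(\mathbf{P_2})=1$. Each translate of $\TT_1$ by a shift of weight $k-1$ contributes $\sum_{P\in\TT_1}w(P)+(k-1)\mathbbm x_1$, so the $k$ triangular shifts collectively produce $k\sum_{P\in\TT_1}w(P)+k(k-1)\mathbbm x_1$; the first summand cancels the matching term coming from $h_2$, the second builds the $(p-1)k(k-1)\mathbbm x_1$ piece of the target. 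Each translate of $\square_\Delta^\Int$ by a shift of weight $i+j$ contributes $S+(i+j)A$, where $A:=a_2b_1-a_1b_2$ and $S:=\sum_{P\in\square_\Delta^\Int}w(P)$.

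The crux is the explicit evaluation of $S$. I would use the involution $\iota(P)=\mathbf{P_1}+\mathbf{P_2}-P$ on the closed parallelogram, which satisfies $w(P)+w(\iota(P))=2$; this identifies $\sum_{P\in\widetilde\square\cap\ZZ^2}w(P)$ with the number of lattice points in $\widetilde\square$, which Pick's theorem expresses in terms of $A$ and the boundary lattice-point count $2(l_1+l_2)+4$. Subtracting the contribution of the two excluded sides (each an arithmetic progression in $w$ from $1$ to $2$ with $l_1+2$, respectively $l_2+2$, terms) then yields $S$ as an explicit affine function of $A$ and $l_1+l_2$. Substituting this $S$ into the parallelogram contribution and collecting terms via the elementary identities $\sum_{i=0}^{k-2}(i+1)=\tfrac{k(k-1)}{2}$ and $\sum_{i=0}^{k-2}(i+1)^2=\tfrac{(k-1)k(2k-1)}{6}$ should reorganize everything into the form $(p-1)\sum_{i=0}^{k-2}\big[A(i+1)-\tfrac12(l_1+l_2)\big](i+1)$.

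The main obstacle is not any single step but the careful bookkeeping: the parallelogram shifts carry a whole range of weights $i+j$, and their aggregated contributions must match the target coefficient $(A(i+1)-\tfrac12(l_1+l_2))(i+1)$ term-by-term, which forces the Pick/involution computation of $S$ to land on exactly the right combination of $A$ and $l_1+l_2$. Once that check is in place, the remaining simplifications are routine power-sum manipulations.
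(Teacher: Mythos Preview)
Your approach is essentially the same as the paper's: both use the decomposition of $\TT_k$ from \eqref{Sn decomposition} for part~(1), and for part~(2) both split $h(\TT_k)=h_1(\TT_k)+h_2(\TT_k)$, invoke Lemma~\ref{linear for h2} for the $h_2$-piece, and compute $h_1(\TT_k)$ directly from the decomposition via the linearity of $w$. The only difference is cosmetic: you spell out the evaluation of $S=\sum_{P\in\square_\Delta^\Int}w(P)$ through Pick's theorem and the involution $P\mapsto\mathbf{P_1}+\mathbf{P_2}-P$, whereas the paper just records the resulting formula for $h_1(\TT_k)$ as a ``tautological computation.''
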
 
\begin{proof}
	(1) Since there are totally $a_2b_1-a_1b_2$ points in $\square_{\Delta}^\Int$, (1) follows directly from \eqref{Sn decomposition} above. 
	
	(2) A tautological computation shows that
	\[\begin{split}
	h_1(\TT_k)&=k\big[h_1(\TT_1)+(p-1)(k-1)\mathbbm{x}_1\big]\\
	&+(p-1)\sum_{i=0}^{k-2}\big[a_2b_1-a_1b_2-\tfrac{1}{2}(l_1+l_2)-1+i(a_2b_1-a_1b_2)\big](i+1).
	\end{split}\]
	For each $k\geq 1$,  we know from Lemma~\ref{linear for h2} that
	\[\begin{split}
	h(\TT_k)=&h_1(\TT_k)+h_2(\TT_k)\\
	=&k\big[h_1(\TT_1)+(p-1)(k-1)\mathbbm{x}_1\big]+kh_2(\TT_1)\\
	+&(p-1)\sum_{i=0}^{k-2}\big[(a_2b_1-a_1b_2)(i+1)-\tfrac{1}{2}(l_1+l_2)-1\big](i+1).\\
	\end{split}\] 
	Combining it with $h(\TT_1)=h_1(\TT_1)+h_2(\TT_1)$, we complete the proof.
\end{proof}

	\noindent\textbf{Step II.}
	The following proposition is the second core result of studying the improved Hodge polygon $\IHP(\Delta)$ at $x=\mathbbm{x}_k$. 

\begin{proposition}\label{better form for IHP}
	We have \begin{equation}\label{u times}
	\min\limits_{\SS^\star\in \mathscr M_\ell(n)}h(\SS^\star)=n\cdot\min\limits_{\SS\in \mathscr M_\ell} h(\SS).
	\end{equation}
	Therefore, we give a simpler expression of $\IHP(\Delta)$ as the lower convex hull of the set of points $$\left(\ell,\min\limits_{\SS\in \mathscr M_\ell} h(\SS)\right),$$ which is independent of $n$.
\end{proposition}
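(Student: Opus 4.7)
The plan is to prove both inequalities. For $(\le)$, take $\SS_0 \in \mathscr M_\ell$ realizing $\min_{\SS \in \mathscr M_\ell} h(\SS)$ and consider $\SS_0^{\star n} := \biguplus_{i=1}^n \SS_0 \in \mathscr M_\ell(n)$. Applying Corollary~\ref{power} with $Q_1 = \cdots = Q_n = O$ (the origin has integer weight $0$) yields $h_2(\SS_0^{\star n}) = n\, h_2(\SS_0)$. Combined with the trivial additivity $h_1(\SS_0^{\star n}) = n\, h_1(\SS_0)$ and Lemma~\ref{h=h1+h2}, this gives $h(\SS_0^{\star n}) = n\, h(\SS_0)$, which settles this direction.

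For $(\ge)$, fix $\SS^\star \in \mathscr M_\ell(n)$ and let $\overline\tau$ be a minimum permutation from Construction~\ref{construction for overline sigma}. The strategy is to produce a decomposition $\SS^\star = \biguplus_{j=1}^n \SS_j$ with each $\SS_j \in \mathscr M_\ell$ (a genuine $\ell$-element subset of $\MM(\Delta)$) such that $\overline\tau$ restricts to a permutation of each $\SS_j$. Once we have this,
\[
h(\SS^\star) = \sum_{j=1}^n h(\SS_j, \overline\tau|_{\SS_j}) \ge \sum_{j=1}^n h(\SS_j) \ge n \min_{\SS \in \mathscr M_\ell} h(\SS).
\]

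To build the decomposition, I would first refine $\overline\tau$ so that each cycle consists of pairwise distinct lattice points: if a cycle contains two copies $P^{(a)}, P^{(b)}$ of the same $P$, then swapping $\overline\tau(P^{(a)}) \leftrightarrow \overline\tau(P^{(b)})$ preserves $h(\SS^\star,\overline\tau)$ (because $P^{(a)} = P^{(b)}$ as lattice points) and splits the cycle in two. After iterating this surgery, each cycle of $\overline\tau$ identifies with a bona fide subset of $\MM(\Delta)$. Next, I would distribute these cycles into $n$ bins of total size $\ell$ such that cycles within a single bin are pairwise disjoint; since each lattice point belongs to at most $n$ cycles (its multiplicity in $\SS^\star$ is $\le n$) and the total cardinality is $n\ell$, such a balanced proper partition should follow from a bipartite matching / König-type argument.

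The main obstacle is this final partitioning step: cycles can share lattice points in intricate patterns, and some may have length exceeding $\ell$, so a naive coloring will not yield bins of size exactly $\ell$. Making this work will likely require interleaving further cycle splits—using the equal-$h$ swaps at repeated points and at points of equal weight supplied by Lemma~\ref{can be odered better}—with the bin assignment, exploiting the multiplicity bound $m_P \le n$ as the essential structural input that guarantees a balanced proper $n$-coloring exists.
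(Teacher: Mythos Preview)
Your $(\le)$ direction is correct and is exactly what the paper does (take $n$ copies of a minimizing $\SS'\in\mathscr M_\ell$ and invoke Corollary~\ref{power}).

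The $(\ge)$ direction, however, has a genuine gap that you yourself identify. You need to partition the cycles of a minimal permutation $\overline\tau$ into $n$ bins, each a true $\ell$-subset of $\MM(\Delta)$ and each $\overline\tau$-invariant. The available $h$-preserving surgeries are swaps at repeated points or at points of equal weight (Lemma~\ref{can be odered better}); but a cycle of $\overline\tau$ may have length exceeding $\ell$ while consisting of points with pairwise distinct weights and no repetitions, and then no such swap is available to split it. The ``K\"onig-type'' argument you sketch handles the multiplicity constraint $m_P\le n$ but does not interact with the cycle structure, so it cannot by itself produce $\overline\tau$-invariant bins. This is not a minor technicality: without $\overline\tau$-invariance of the $\SS_j$, the chain of inequalities $h(\SS^\star)=\sum_j h(\SS_j,\overline\tau|_{\SS_j})\ge\sum_j h(\SS_j)$ collapses at the first step.

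The paper avoids this entirely by going through Lemma~\ref{S}, which (under the standing hypothesis $p>\tfrac{2(b_1a_2-b_2a_1)}{\gcd(a_1-a_2,b_1-b_2)}+1$) says that $h(\SS^\star)$ is minimized over $\mathscr M_\ell(n)$ precisely when $\sum_{P\in\SS^\star}w(P)$ is, and likewise over $\mathscr M_\ell$. Since the minimum total weight over size-$n\ell$ multisets with multiplicity $\le n$ is visibly $n$ times the minimum over size-$\ell$ subsets, Lemma~\ref{S} forces $h(\widetilde\SS^\star)=h((\SS'^\star)^n)$, and Corollary~\ref{power} finishes. The key point you are missing is this weight-sum characterization of the minimizers, which makes the combinatorial partitioning unnecessary.
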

We will prove this proposition after two lemmas.

\begin{lemma}\label{basic lemma}
	For any two distinct points $P, Q\in \MM(\Delta)$ if $w(P)\neq w(Q)$, then $$|w(P-Q)|\geq\frac{\gcd(a_1-a_2,b_1-b_2)}{b_1a_2-b_2a_1}.$$
\end{lemma}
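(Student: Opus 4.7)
The plan is to realize $w$ explicitly as a $\QQ$-linear functional on $\RR^2$ and then read off the lower bound from a gcd argument on the numerator. First I would derive an equation for the edge $\overline{\mathbf{P_1 P_2}}$: expanding the collinearity determinant gives
\[
(b_1 - b_2) x + (a_2 - a_1) y = a_2 b_1 - a_1 b_2.
\]
For a nonzero $P = (P_x, P_y) \in \Cone(\Delta)$, the ray $\overrightarrow{OP}$ meets this line at $tP$ where $t[(b_1 - b_2)P_x + (a_2 - a_1)P_y] = a_2 b_1 - a_1 b_2$, and unwinding Definition~\ref{weight function} yields the closed form
\[
w(P) \;=\; \frac{(b_1 - b_2)\, P_x + (a_2 - a_1)\, P_y}{a_2 b_1 - a_1 b_2}.
\]
This formula manifestly extends $w$ to a $\QQ$-linear functional on $\RR^2$, which is consistent with (and reconfirms) the linearity asserted in Definition~\ref{weight function}.

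Next I would apply the formula to $P - Q \in \ZZ^2$. By linearity, $w(P - Q) = w(P) - w(Q)$, and its numerator
\[
N \;:=\; (b_1 - b_2)(P_x - Q_x) + (a_2 - a_1)(P_y - Q_y)
\]
is an integer $\ZZ$-linear combination of $b_1 - b_2$ and $a_2 - a_1$, hence a multiple of $g := \gcd(a_1 - a_2, b_1 - b_2)$.

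Finally, the hypothesis $w(P) \neq w(Q)$ means $N \neq 0$, so $|N| \geq g$. Dividing by the positive denominator $b_1 a_2 - b_2 a_1$ yields the claimed inequality
\[
|w(P - Q)| \;\geq\; \frac{g}{b_1 a_2 - b_2 a_1}.
\]
There is no serious obstacle: the only thing to be careful about is that $w$ is \emph{a priori} only defined on $\Cone(\Delta)$ via the geometric ratio, so one should justify evaluating it on $P - Q$ (which may fall outside the cone) by noting that the explicit linear formula provides the canonical extension and is compatible with \eqref{linear}.
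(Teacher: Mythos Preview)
Your proof is correct and follows essentially the same approach as the paper: the paper simply computes $w((1,0))$ and $w((0,1))$ directly and then invokes the linearity of $w$, which is exactly your closed-form formula specialized to the standard basis. Your version is more explicit in deriving the line equation and in noting that the formula canonically extends $w$ to all of $\ZZ^2$, but the underlying gcd-on-the-numerator argument is identical.
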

\begin{proof}
	It is easily known that $$w((1,0))=\frac{a_2-a_1}{b_1a_2-b_2a_1}\quad \textrm{and}\quad w((0,1))=\frac{b_1-b_2}{b_1a_2-b_2a_1}.$$ 
	Since each point in $\ZZ^2$ is a linear combination of $(1,0)$ and $(0,1)$, this lemma follows from the linearity of $w$
\end{proof}

\begin{lemma}\label{S}
	Let $\MM'$ be a subset of ${\MM(\Delta)^\star}^\infty$ and let $\mathscr S_\ell$ be the set consisting of all subsets of $\MM'$ of cardinality $\ell$. Choose a multiset $\SS_\textrm{min}^\star\in \mathscr S_\ell$ such that $$\sum_{P\in\SS_\textrm{min}^\star}w(P)=\min_{\SS^\star\in \mathscr S_\ell}\Big(\sum_{P\in\SS^\star}w(P)\Big).$$
	Suppose that $p>\frac{2(b_1a_2-b_2a_1)}{\gcd(a_1-a_2,b_1-b_2)}+1$. Then if $\widetilde{\SS}^\star\in \mathscr S_\ell$ satisfy
	\begin{equation}\label{a best S0}
	\sum_{P\in\widetilde{\SS}^\star}w(P)=\sum_{P\in\SS_\textrm{min}^\star}w(P)\quad(\textrm{resp.}\ \sum_{P\in\SS_\textrm{min}^\star}w(P)>\min_{\SS^\star\in \mathscr S_\ell}\Big(\sum_{P\in\SS^\star}w(P)\Big)),
	\end{equation}
	we have \begin{equation}\label{minimal of SS0}
	h(\widetilde{\SS}^\star)=h(\SS_\textrm{min}^\star)\quad(\textrm{resp.}\ h(\widetilde{\SS}^\star)>h(\SS_\textrm{min}^\star)).
	\end{equation}
	
	In other words, the minimal $ h(\SS^\star)$ is achieved by exactly those $\SS^\star$ for which the sum of weights of $\SS^\star$ is minimal.
	
\end{lemma}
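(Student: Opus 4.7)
The plan is to handle the two conclusions of \eqref{minimal of SS0} separately, using the decomposition $h = h_1 + h_2$ of Lemma~\ref{h=h1+h2} together with a one-point swap argument. For the equality case, both $\SS_\textrm{min}^\star$ and $\widetilde{\SS}^\star$ minimize $\sum w$ among $\ell$-subsets of $\MM'$. Any such minimizer is obtained greedily: take all points whose weight lies below some threshold, then fill the remaining slots from the threshold weight class. Consequently any two minimizers carry identical multisets of weights, and Lemma~\ref{set to h2} yields $h(\widetilde{\SS}^\star) = h(\SS_\textrm{min}^\star)$ at once.

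For the strict case, I would induct on $k := |A|$, writing $A := \SS_\textrm{min}^\star \setminus \widetilde{\SS}^\star$ and $B := \widetilde{\SS}^\star \setminus \SS_\textrm{min}^\star$ so that $|A| = |B| = k$. Minimality of $\SS_\textrm{min}^\star$ forces $w(a) \leq w(b)$ for every $a \in A$, $b \in B$, as otherwise swapping $a$ out for $b$ inside $\SS_\textrm{min}^\star$ would strictly decrease its weight sum. Combined with $\sum_B w > \sum_A w$, this produces a pair $(a, b) \in A \times B$ with $w(b) > w(a)$. Set $\SS' := (\widetilde{\SS}^\star \setminus \{b\}) \cup \{a\}$; the corresponding $A$-set for $\SS'$ has size $k - 1$, so the inductive hypothesis gives $h(\SS') \geq h(\SS_\textrm{min}^\star)$, and it remains to prove $h(\widetilde{\SS}^\star) > h(\SS')$.

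For this final step, Lemma~\ref{h=h1+h2} gives $h_1(\widetilde{\SS}^\star) - h_1(\SS') = (p-1)(w(b) - w(a)) \geq (p-1)\delta$, where $\delta := \gcd(a_1 - a_2, b_1 - b_2)/(b_1 a_2 - b_2 a_1)$ is the minimum positive weight-difference supplied by Lemma~\ref{basic lemma}, using that $w(b) - w(a)$ is a positive element of $\delta \ZZ$. For the $h_2$-side, take a minimal permutation $\widetilde{\tau}$ of $\widetilde{\SS}^\star$ and build a permutation $\tau$ of $\SS'$ by redirecting through $a$ the cycle of $\widetilde{\tau}$ passing through $b$: set $\tau(a) := \widetilde{\tau}(b)$, $\tau(\widetilde{\tau}^{-1}(b)) := a$, and let $\tau$ equal $\widetilde{\tau}$ elsewhere, with the obvious tweaks when $b$ is a fixed point of $\widetilde{\tau}$ or lies in a $2$-cycle. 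At most two $R$-terms of the defining sum for $h_2$ are affected, and each lies in $[0, 1)$, so $h_2(\SS') - h_2(\widetilde{\SS}^\star) < 2$. Hence $h(\widetilde{\SS}^\star) - h(\SS') > (p-1)\delta - 2 > 0$ by the hypothesis $p > 2/\delta + 1$, which closes the induction. The main obstacle I expect is the careful bookkeeping of this swap argument across the degenerate cycle configurations of $b$ under $\widetilde{\tau}$, to confirm that strictly fewer than $2$ units of $h_2$ can be lost when passing from $\widetilde{\SS}^\star$ to $\SS'$.
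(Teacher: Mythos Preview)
Your proposal is correct and follows essentially the same approach as the paper: both arguments reduce the equality case to Lemma~\ref{set to h2} (your greedy observation that two weight-sum minimizers carry identical weight multisets is exactly what the paper uses implicitly), and both handle the strict case by a one-point swap $b \leadsto a$ together with the gap $\delta$ from Lemma~\ref{basic lemma} and the hypothesis $(p-1)\delta > 2$. The only cosmetic differences are that the paper phrases the swap as an explicit descending sequence $\SS^\star_0, \SS^\star_1, \dots, \SS^\star_m$ rather than an induction on $|A|$, and it estimates $h(\SS^\star_i) - h(\SS^\star_{i+1})$ directly via ceilings (splitting into the fixed-point case $\tau_i(Q_i)=Q_i$ and the non-fixed-point case) rather than through the decomposition $h = h_1 + h_2$; the numerical bounds obtained are identical.
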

\begin{proof}
%	For a subset $\SS^\star\in \mathscr S_\ell$ so that the sum of weights of $\SS^\star$ is minimal, it is necessary and sufficient that, for $r = \max_{P \in S^*} w(P)$ the maximal weight of $\SS^\star$, $\SS^\star$ contains all elements of $\MM'$ whose weight is strictly less than $r$.

	For a subset $\widetilde{\SS}^\star\in \mathscr S_\ell$, if $\Big\{w(P)\;|\; P\in\widetilde{\SS}^\star\Big\}^\star=\Big\{w(P)\;|\; P\in\SS_\textrm{min}^\star\Big\}^\star$, then by Lemma~\ref{set to h2}, we know $$h(\widetilde{\SS}^\star)=h(\SS_\textrm{min}^\star).$$
	Otherwise we construct a sequence  $(\SS^\star_0=\widetilde{\SS}^\star,\SS^\star_1, \dots,\SS^\star_m)$ in $\mathscr S_\ell$ of length less than or equal to $\ell$ such that 
	\begin{itemize}
		\item for any $0\leq i\leq m-1$, $h(\SS_i)>h(\SS_{i+1})$, and
		\item $\Big\{w(P)\;|\; P\in\SS_m^\star\Big\}^\star=\Big\{w(P)\;|\; P\in\SS_\textrm{min}^\star\Big\}^\star$.
	\end{itemize}
%	
%	
%	if it satisfy that $$\Big\{w(P)\;|\; P\in\SS^\star\Big\}^\star=\Big\{w(P)\;|\; P\in\SS_\textrm{min}^\star\Big\}^\star,$$ then by Lemma~\ref{set to h2}, $h(\SS^\star)=h(\SS_\textrm{min}^\star)$
%	
	The following is the construction:
	
	Assume that we have constructed $\SS^\star_i$. If $\Big\{w(P)\;|\; P\in\SS^\star_i\Big\}^\star=\Big\{w(P)\;|\; P\in\SS_\textrm{min}^\star\Big\}^\star$, then we stop. Otherwise there exists a rational number $t_i$ such that $${\Big\{w(P)<t_i\;|\; P\in\SS^\star_i\Big\}^\star}={\Big\{w(P)<t_i\;|\; P\in\SS_\textrm{min}^\star\Big\}^\star}$$ and  $${\Big\{w(P)\leq t_i\;|\; P\in\SS^\star_i\Big\}^\star} \subsetneq{\Big\{w(P)\leq t_i\;|\; P\in\SS_\textrm{min}^\star\Big\}^\star}.$$
	Then there exist points $P_i\in \SS_\textrm{min}^\star-\SS^\star_i$ and  $Q_i \in \SS^\star_i-\SS_\textrm{min}^\star$ such that $$w(P_i)=t_i< w(Q_i).$$
	Put $\SS^\star_{i+1}$ to be the set induced from $\SS^\star_i$ by simply substituting $Q_i$ with $P_i$.
	Then we get a sequence $(\SS^\star_0=\SS^\star,\SS^\star_1, \dots,\SS^\star_m)$ in $\mathscr S_\ell$ of length, say $m$, less than or equal to $\ell$, which satisfies the following conditions.
	\begin{enumerate}[label={(\arabic*)}]
		\item $\SS^\star_0=\widetilde{\SS}^\star$.
		\item $\Big\{w(P)\;|\; P\in\SS^\star_m\Big\}^\star=\Big\{w(P)\;|\; P\in\SS_\textrm{min}^\star\Big\}^\star$.
		\item For each $0\leq i\leq m-1$, we have $\SS^\star_{i+1}-\SS^\star_i=\{P_i\}$ and $\SS^\star_{i}-\SS^\star_{i+1}=\{Q_{i}\}$.
		\item The points above satisfy $w(P_i)<w(Q_i)$.
	\end{enumerate}

	From Lemma~\ref{set to h2}, we know that (2) implies that $$h(\SS^\star_m)=h(\SS_\textrm{min}^\star).$$
	Therefore, it is enough to show that $h(\SS^\star_i)>h(\SS^\star_{i+1})$ holds for each $0\leq i\leq m-1$.

	For $0\leq i\leq m-1$, let $\tau_i\in \Iso(\SS^\star_i)$ be a minimal permutation of $\SS^\star_i$, i.e. $h(\SS^\star_i,\tau_i)=h(\SS^\star_i)$. We denote by $\tau_{i+1}$ a permutation of $\SS^\star_{i+1}$ induced from $\tau_{i}$ by simply substituting $Q_i$ with $P_i$, i.e.
	\[\tau_{i+1}(P)=\begin{cases}
	\tau_i(Q_i)&\textrm{if}\ P= P_i\\
	P_i&\textrm{if}\ \tau_i(P)= Q_i\\
	\tau_i(P)& \textrm{otherwise.}
	\end{cases}\]
	
	Now we claim that $h(\SS^\star_i,\tau_i)-h(\SS^\star_{i+1},\tau_{i+1})> 0.$ We need to consider the following two cases.
	
	\textbf{Case 1:} When $\tau_i(Q_i)=Q_i$, we have
	\begin{align*}
	&h(\SS^\star_i,\tau_i)-h(\SS^\star_{i+1},\tau_{i+1})&  \\
	=& \big\lceil pw(Q_i)-w(Q_i)\big\rceil-\big\lceil pw(P_i)-w(P_i)\big\rceil&\\
	=&\big\lceil (p-1)w(Q_i)\big\rceil-\big\lceil (p-1)w(P_i)\big\rceil&\\
	\geq& \big\lceil (p-1)(w(Q_i)-w(P_i))\big\rceil-1&\\
	\geq& \Big\lceil \frac{(p-1)\gcd(a_1-a_2,b_1-b_2)}{b_1a_2-b_2a_1}\Big\rceil-1&\textrm{Lemma~\ref{basic lemma}}\\
	>&0.&
	\end{align*}

	\textbf{Case 2:} When $\tau_i(Q_i)\neq Q_i$, let $Q_i'=\tau^{-1}_i(Q_i).$ Then we have
	\begin{align*}
	&h(\SS^\star_i,\tau_i)-h(\SS^\star_{i+1},\tau_{i+1})\\
	=& \big\lceil pw\big(\tau_i(Q_i)\big)-w(Q_i)\big\rceil-\big\lceil pw\big(\tau_i(Q_i)\big)-w(P_i)\big\rceil
	+\big\lceil pw(Q_i)-w(Q_{i}')\big\rceil-\big\lceil pw(P_i)-w(Q_{i}')\big\rceil\\
	\geq& -\big\lceil w(Q_i)-w(P_i)\big\rceil+\big\lceil pw(Q_i)-pw(P_i)\big\rceil-1\\
	\geq& \big\lceil(p-1)\big(w(Q_i)-w(P_i)\big)\big\rceil-2\\
	\geq& \Big\lceil\frac{(p-1)\gcd(a_1-a_2,b_1-b_2)}{b_1a_2-b_2a_1}\Big\rceil-2\hspace{5cm}\textrm{Lemma~\ref{basic lemma}}\\
	>&0.
	\end{align*}
	Then this lemma follows from the following \textbf{strict} inequality \[h(\SS^\star_i)=h(\SS^\star_i,\tau_i)> h(\SS^\star_{i+1},\tau_{i+1})\geq h(\SS^\star_{i+1}).\qedhere\]
\end{proof}

\begin{proof}[Proof of Proposition~\ref{better form for IHP}]
	First, we fix a subset $\SS'\in \mathscr M_\ell$ such that  $$h(\SS')=\min_{\SS\in \mathscr M_\ell}\big(h(\SS)\big).$$
	 Let $\widetilde{\SS}^\star$ be an arbitrary submultiset in $\mathscr M_\ell(n)$ such that $$h(\widetilde{\SS}^\star)=\min_{\SS^\star\in \mathscr M_\ell(n)}\big(h(\SS^\star)\big).$$
	 By Lemma~\ref{S}, we know that 
	 $$\sum_{P\in\SS'}w(P)=\min_{\SS\in \mathscr M_\ell}\Big(\sum_{P\in\SS}w(P)\Big)
	 \quad \textrm{and}\quad 
	 \sum_{P\in\widetilde{\SS}^\star}w(P)=\min_{\SS^\star\in \mathscr M_\ell(n)}\Big(\sum_{P\in\SS^\star}w(P)\Big).$$
	 It is not hard to see that   $$\sum_{P\in ({\SS'}^\star)^n}w(P)= \sum_{P\in\widetilde{\SS}^\star}w(P).$$
	 Therefore, by Lemma~\ref{S} again, we have 
	 $h(\widetilde{\SS}^\star)=h(({\SS'}^\star)^n).$
	 By Corollary~\ref{power}, we know that  $$h(({\SS'}^\star)^n)=nh(\SS').$$
	 Combining these equalities above gives us \eqref{u times}.
\end{proof}

%\begin{corollary}\label{Sn}
%	Under the same hypothesis, we have that ${\TT_k^\star}^u$ is the only element in ${\mathscr M_\ell^\star}^u$ that gives the minimal value of $h$, i.e. \begin{equation}\label{uh}
%	\min_{\SS\in {\mathscr M_{\mathbbm x_k}^\star}^u}\{h(\SS)\}=uh(\TT_k).
%	\end{equation}
%	Therefore, the height of $\IHP(T, s)$ at $\mathbbm{x}_k$ is equal to $h(\TT_k)$.
%\end{corollary}
%	\begin{proof}
%		For each $\SS\in {\mathscr M_{\mathbbm x_k}^\star}^u$, if $\SS\neq {\TT_k^\star}^u$, then $$\sum_{P\in\SS}w(P)>\sum_{P\in{\TT_k^\star}^u}w(P).$$ By Proposition~\ref{S}, we get that $$	\min_{\SS\in {\mathscr M_{\mathbbm x_k}^\star}^u}\{h(\SS)\}=h({\TT_k^\star}^u).$$ Since we have an explicit expression of $u_\ell$ as in \eqref{E:expression of char power series}, we know easily that $$h({\TT_k^\star}^u)=uh(\TT_k),$$ which completes the proof of the first statement in this corollary. 
%		
%		It is easy to check that the second statement follows direct from the first one.
%	\end{proof}
\begin{definition}\label{det TTk}
	For any subset $\SS$ of $\MM(\Delta)$, we write
	$$\det(\SS)_f=\sum\limits_{\tau\in \Iso(\SS)}\sgn(\tau) \prod\limits_{P\in \SS}e_{p\tau(P)-P}.$$
\end{definition}
Then as a corollary of Proposition~\ref{better form for IHP}, we get the following.
\begin{proposition}\label{important corollary}
	We have $$v_T\Big(\prod_{j=0}^{n-1}\sigma_\Frob^j(\det(\TT_k)_f)- u_{\mathbbm{x}_k, nh(\TT_k)}T^{nh(\TT_k)}\Big)\geq nh(\TT_k)+1,$$ where $u_{\mathbbm{x}_k, nh(\TT_k)}$ is defined in \eqref{E:Cfstar}.
\end{proposition}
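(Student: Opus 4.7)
The plan is to identify the unique term in the expansion of $u_{\mathbbm{x}_k}(T)$ that contributes at $T$-valuation $nh(\TT_k)$, show that it is exactly $\prod_{j=0}^{n-1}\sigma_\Frob^j(\det(\TT_k)_f)$, and conclude that the remaining higher-valuation contributions account for the difference in the proposition. Concretely, I would start from expression \eqref{E:expression of char power series} and use the bound \eqref{explicit of u} from the proof of Proposition~\ref{IHP is below NPchi}, which tells us that each summand indexed by an $n$-tuple $(\SS_0,\dots,\SS_{n-1})\in(\mathscr{M}_{\mathbbm{x}_k})^n$ has $T$-valuation at least $h\bigl(\biguplus_{j=0}^{n-1}\SS_j^\star\bigr)$. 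Proposition~\ref{better form for IHP} applied to $\ell=\mathbbm{x}_k$ immediately gives the lower bound $nh(\TT_k)$ for this quantity.

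The heart of the argument is to show that the unique tuple saturating this lower bound is $\SS_j=\TT_k$ for every $j$. By Lemma~\ref{S}, the minimum of $h$ over multisets of fixed cardinality is achieved exactly by multisets of minimum weight sum; since $\TT_k$ consists of \emph{all} lattice points in $\MM(\Delta)$ of weight strictly less than $k$, it is the unique size-$\mathbbm{x}_k$ subset of $\MM(\Delta)$ with minimum weight sum. Because each $\SS_j$ is a size-$\mathbbm{x}_k$ set, the additivity
\[
\sum_{P\in\biguplus_j\SS_j^\star}w(P)\;=\;\sum_{j=0}^{n-1}\sum_{P\in\SS_j}w(P)\;\geq\;n\sum_{P\in\TT_k}w(P)
\]
forces $\SS_j=\TT_k$ for every $j$ whenever equality holds.

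For this unique surviving tuple, each matrix $[\TT_k;\TT_k]_{\sigma_\Frob^j(N)}$ is a square $\mathbbm{x}_k\times\mathbbm{x}_k$ matrix, so multiplicativity of the determinant on products of square matrices together with the fact that $\sigma_\Frob^j$ commutes with $\det$ yields
\[
\det\!\left(\prod_{j=0}^{n-1}[\TT_k;\TT_k]_{\sigma_\Frob^j(N)}\right)=\prod_{j=0}^{n-1}\sigma_\Frob^j\bigl(\det([\TT_k]_N)\bigr)=\prod_{j=0}^{n-1}\sigma_\Frob^j\bigl(\det(\TT_k)_f\bigr),
\]
where the second equality uses Definition~\ref{det TTk} together with the explicit form $N_{PQ}=e_{pP-Q}$ of the trace-formula matrix. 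Combining the three steps shows that $u_{\mathbbm{x}_k}(T)$ agrees with $\prod_{j=0}^{n-1}\sigma_\Frob^j\bigl(\det(\TT_k)_f\bigr)$ modulo $T^{nh(\TT_k)+1}$; extracting the $T^{nh(\TT_k)}$-coefficient from both sides produces the desired inequality.

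The main technical obstacle is the uniqueness assertion in the second step. It is ultimately a consequence of Lemma~\ref{S}, which in turn depends crucially on the large-prime hypothesis $p>\frac{2(b_1a_2-b_2a_1)}{\gcd(a_1-a_2,b_1-b_2)}+1$ imposed throughout this section: without it one could imagine an exchange of a point in $\TT_k$ with an outside equal-weight point. Once this rigidity is in place, the determinant computation and the passage from the lattice-point combinatorics to the trace-formula coefficient $u_{\mathbbm{x}_k,nh(\TT_k)}$ are formal.
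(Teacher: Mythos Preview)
Your proposal is correct and follows essentially the same approach as the paper: both use Lemma~\ref{S} to show that $(\TT_k^\star)^n$ is the unique multiset in $\mathscr M_{\mathbbm x_k}(n)$ saturating the bound \eqref{xk}, then identify the corresponding summand in \eqref{E:expression of char power series} with $\prod_{j=0}^{n-1}\sigma_\Frob^j(\det(\TT_k)_f)$. Your write-up supplies more detail than the paper's terse proof (the additivity argument forcing each $\SS_j=\TT_k$, and the determinant multiplicativity), but one aside is slightly off: the large-prime hypothesis is not needed to rule out ``equal-weight exchanges'' (points outside $\TT_k$ all have strictly larger weight), but rather to ensure via Lemma~\ref{S} that the unique weight-sum minimizer is also the unique $h$-minimizer.
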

\begin{proof}
	By Lemma~\ref{S}, we know that ${\TT_k^\star}^n$ is the only element in $ \mathscr M_{\mathbbm x_k}(n)$ which makes \eqref{xk} an equality. Therefore, we have $$v_T\Big(u_{\mathbbm{x}_k, nh(\TT_k)}T^{nh(\TT_k)}-\prod_{j=0}^{n-1}\big(\sum_{\tau_j=\Iso(\TT_k)}\sgn(\tau_j)\prod_{P\in \TT_k}\sigma_{\Frob}^j(e_{p\tau_j(P)-P})\big)\Big)\geq nh(\TT_k)+1.$$
	Then this proposition follows directly from checking the definition of $\det(\TT_k)_f$.
\end{proof}

%In order to prove Lemma \ref{Sn}, we need to prove the following lemmas. 

%\begin{lemma}\label{First lemma}
%	Assume $p>2\delta+1$. Let $\SS$ and $\SS'$ be two subsets of $\MM(\Delta)$ only different by one element, i.e. $\SS-\SS'=\{P_k\}$ and $\SS'-\SS=\{P_0'\}$. If $w(P)<w(P')$, then $$h(\SS)<h(\SS').$$
%\end{lemma}

%	
%\end{lemma}
%\begin{proof}
%	We know that $\TT_k$ is the set of lattice points in the triangle with vertices at $(0,0)$, $(ka_1, kb_1)$, and $(ka_2, kb_2)$. Dividing it into a disjoint union of parallelograms and triangles by the lattice generated by $\bf P_1$ and $\bf P_2$ and counting each number of them, we simply obtain \eqref{Sn decomposition}.
%\end{proof}

\begin{notation}\label{Newton slopes}
	For each character $\chi:\Zp\to \CC_p^\times$ of conductor $p$, from \cite[Theorem~1.4]{liu-wei}, $L_f^*(\chi,s)^{-1}$ is a polynomial of degree $a_2b_1-a_1b_2$.  We denote its $q$-adic Newton slopes by $$\Big(\alpha_1,\alpha_2,\dots,\alpha_{a_2b_1-a_1b_2}\Big)$$ in a non-descending order and put $\Sigma(\chi):=\sum\limits_{j=1}^{\mathbbm{x}_1}\alpha_j$. 
\end{notation}
\begin{lemma}
		For each character $\chi:\Zp\to \CC_p^\times$ of conductor $p$, the normalized Newton polygon of $C_f^*(\chi,s))$, i.e. $\NP(\chi, s)_C$ is not above points $$\Big(\mathbbm{x}_k, \big[(a_2b_1-a_1b_2)\sum\limits_{i=1}^{k-1}i^2-\frac{1}{2}(l_1+l_2)\sum\limits_{i=1}^{k-1}i+\mathbbm{x}_1(k-1)k+k\Sigma(\chi)\big](p-1)\Big)$$ for all integers $k\geq 0$.
\end{lemma}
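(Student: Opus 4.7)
The plan is to upper-bound $\NP(\chi,s)_C$ at $x=\mathbbm{x}_k$ by exploiting the factorization $C^*_f(\chi,s) = \prod_{j\geq 0} L^*_f(\chi, p^{jn(f)} s)^{-(j+1)}$ recorded in \eqref{C(T, s)}, choosing a specific set of $\mathbbm{x}_k$ slopes from this product whose sum is at most the claimed bound. Recall from Notation~\ref{Newton slopes} that $L^*_f(\chi, s)^{-1}$ has degree $a_2b_1-a_1b_2$, with $q$-adic slopes $\alpha_1 \leq \cdots \leq \alpha_{a_2b_1-a_1b_2}$, where $q = p^{n(f)}$. The $j$-th factor $L^*_f(\chi, q^j s)^{-(j+1)}$ then has $q$-adic slopes $\{\alpha_i + j\}$ each with multiplicity $j+1$, so the slope multiset of $\NP(\chi,s)_C$ consists of $(p-1)(\alpha_i+j)$ with these multiplicities.

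Since the Newton polygon of a product at abscissa $m$ is the sum of the smallest $m$ slopes of the combined multiset, any specified selection of $m$ slopes provides a valid upper bound. I would choose: (a) every one of the $(j+1)(a_2b_1-a_1b_2)$ slopes of the $j$-th factor for $j = 0, 1, \ldots, k-2$, producing $(a_2b_1-a_1b_2)\binom{k}{2}$ slopes in total; and (b) the slopes $(k-1)+\alpha_i$ for $1 \leq i \leq \mathbbm{x}_1$, each taken with its natural multiplicity $k$, producing $k\mathbbm{x}_1$ slopes from the $(k-1)$-th factor. By Lemma~\ref{xk and hk}(1) the grand total is exactly $\mathbbm{x}_k$.

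Writing $\Sigma_{\mathrm{tot}}(\chi) := \sum_{i=1}^{a_2b_1-a_1b_2} \alpha_i$ and using the identity $\sum_{j=0}^{k-2}j(j+1) = \sum_{i=1}^{k-1}i^2 - \binom{k}{2}$, the sum of these chosen normalized slopes works out to
\[(p-1)\Big[(a_2b_1-a_1b_2)\sum_{i=1}^{k-1}i^2 + \binom{k}{2}\big(\Sigma_{\mathrm{tot}}(\chi) - (a_2b_1-a_1b_2)\big) + \mathbbm{x}_1 k(k-1) + k\Sigma(\chi)\Big].\]
Comparing this with the claim's bound, their difference is $(p-1)\binom{k}{2}\big[(a_2b_1-a_1b_2) - \tfrac{l_1+l_2}{2} - \Sigma_{\mathrm{tot}}(\chi)\big]$, so the lemma reduces to the estimate $\Sigma_{\mathrm{tot}}(\chi) \leq (a_2b_1-a_1b_2) - \tfrac{l_1+l_2}{2}$.

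To establish this last inequality, I would use that the Newton and Hodge polygons of $L^*_f(\chi, s)^{-1}$ share both endpoints, so $\Sigma_{\mathrm{tot}}(\chi)$ equals the sum of its Hodge slopes. For two-variable exponential sums with polytope $\Delta$, Adolphson--Sperber and Liu--Wan identify these Hodge slopes with the weights $\{w(P) : P \in \square_\Delta^\Int\}$, hence $\Sigma_{\mathrm{tot}}(\chi) = \sum_{P \in \square_\Delta^\Int} w(P)$. Under the involution $P \mapsto \mathbf{P_1} + \mathbf{P_2} - P$ of the closed parallelogram $\square_\Delta$, linearity of $w$ gives $w(P) + w(\mathbf{P_1}+\mathbf{P_2}-P) = 2$, so the weights sum to the cardinality $|\square_\Delta \cap \ZZ^2|$ over the closed parallelogram. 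Subtracting the (explicitly computable) weight contribution of the top and right sides in terms of $l_1$, $l_2$ and $a_2b_1-a_1b_2$ yields the required bound on $\sum_{P \in \square_\Delta^\Int} w(P)$. The main obstacle is this last step, in particular the identification of $\Sigma_{\mathrm{tot}}(\chi)$ with the combinatorial Hodge-slope sum on the fundamental parallelogram and its careful evaluation via the symmetry of $\square_\Delta$.
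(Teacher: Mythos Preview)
Your slope-selection argument is identical to the paper's: both take every slope of $L^*_f(\chi,q^js)^{-(j+1)}$ for $0\le j\le k-2$ together with the first $\mathbbm{x}_1$ slopes (each with multiplicity $k$) from the factor $j=k-1$, and both verify via Lemma~\ref{xk and hk}(1) that this totals $\mathbbm{x}_k$ slopes. The only divergence is in how the total slope $\Sigma_{\mathrm{tot}}(\chi)=\sum_i\alpha_i$ is controlled. The paper computes it \emph{exactly} using Poincar\'e duality: the roots of $L^*_f(\chi,s)^{-1}$ are Weil numbers, with one of weight $0$, exactly $l_1+l_2$ of weight $n$, and the remaining $a_2b_1-a_1b_2-l_1-l_2-1$ of weight $2n$; since $v_q(z)+v_q(\bar z)$ equals the normalized Weil weight, this gives $\Sigma_{\mathrm{tot}}(\chi)=a_2b_1-a_1b_2-\tfrac12(l_1+l_2)-1$ directly (equation~\eqref{slope sum of L}). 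Your route through Hodge--Newton endpoint matching and the central symmetry of $\square_\Delta$ is also valid and, if carried to completion, yields the very same value: the interior lattice points of $\square_\Delta$ average to weight $1$ under $P\mapsto\mathbf{P_1}+\mathbf{P_2}-P$, while the $l_1+l_2+1$ lower-left boundary points contribute $\tfrac{l_1+l_2}{2}$, giving $\sum_{P\in\square_\Delta^\Int}w(P)=a_2b_1-a_1b_2-\tfrac12(l_1+l_2)-1$. The trade-off is that your argument imports the Adolphson--Sperber identification of the Hodge endpoint, whereas the paper's Weil-weight count is a more classical and self-contained input; on the other hand, your combinatorial computation makes the connection to the weight function on $\square_\Delta$ explicit, which is thematically closer to the rest of the paper.
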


\begin{proof}
	First recall that $n = [\FF_q:\FF_p]$ is the degree of the coefficient field of $f$ (see section 2). 
	It is well known that the roots of $L_f^*(\chi,s)^{-1}$ are Weil numbers of weight $0$, $n$, or $2n$. We put them into three classes according to the Weil weights: 
	\begin{center}
		\begin{tabular}{ |c|c|c| } 
			\hline
			Weil weight & the number of roots of $L_f^*(\chi,s)^{-1}$\\ 
			\hline					
			$ 0$ & $1$ \\ 
			\hline
			$n$ & $l_1+l_2$ \\ 
			\hline 
			$2n$ &  $a_2b_1-a_1b_2-l_1-l_2-1$\\ 
			\hline
		\end{tabular}.
	\end{center}
Since $\alpha_i$'s are the $q$-adic Newton slopes of $L_f^*(\chi,s)^{-1}$, we know easily that they belong to $[0,2)$.
	Moreover, an algebraic number, say $z$, and its complex dual $\overline z$ are both roots of $L_f^*(\chi,s)^{-1}$ or both not. Suppose that they are roots of $L_f^*(\chi,s)^{-1}$ and $z$ as Weil weight $t$. Then we have $v_p(z)+v_p(\overline z)=t$. Therefore, the sum of all $q$-adic Newton slopes of $L_f^*(\chi,s)^{-1}$ can be computed as follows:
	\begin{equation}\label{slope sum of L}
	\begin{split}
	&(a_2b_1-a_1b_2-l_1-l_2-1)\times 1+(l_1+l_2)\times\frac{1}{2}+1\times 0.\\
	=&a_2b_1-a_1b_2-\tfrac{1}{2}(l_1+l_2)-1.
	\end{split}
	\end{equation}
	On the other hand, from \eqref{C(chi, s)}, i.e.
	\begin{equation}\label{C-function1}
	C^*_f(\chi, s)=\prod\limits_{j=0}^{\infty}L^*_f(\chi, q^js)^{-(j+1)},
	\end{equation}
	we know that
	\begin{equation}\label{a subset of roots of C*f}
	\big(\biguplus_{i=0}^{k-1}\{\alpha_1+i, \alpha_2+i,\dots, \alpha_{a_2b_1-a_1b_2}+i\}^\star\Big)^{i+1}\uplus\Big(\{\alpha_1+k-1, \alpha_2+k-1,\dots, \alpha_{\mathbbm x_1}+k-1\}^\star\Big)^{k}
	\end{equation}
	is included in the set of $q$-adic Newton slopes of $C^*_f(\chi,s)$ as multisets and its cardinality is equal to $\mathbbm x_k$.
	 Since elements in this set are not necessary to be the smallest $\mathbbm x_k$ Newton slopes of  $C^*_f(\chi,s)$, then the height of $\NP(\chi, s)_C$ at $x=\mathbbm{x}_k$ is not above the sum
	\begin{equation}\label{upper bound}
	\begin{split}
	&(p-1)\Big[\sum_{i=1}^{k-1}i\sum_{j=1}^{a_2b_1-a_1b_2} (i-1+\alpha_j) +k\sum_{j=1}^{\mathbbm{x}_1}(k-1+\alpha_j)\Big]\\
	=&(p-1)\Big[(a_2b_1-a_1b_2)\sum\limits_{i=1}^{k-1}i^2-(\frac{1}{2}(l_1+l_2)+1)\sum\limits_{i=1}^{k-1}i+\mathbbm{x}_1(k-1)k+k\Sigma(\chi)\Big],
	\end{split}
	\end{equation}
	where $p-1$ is from normalization in the definition of $\NP(\chi, s)_C$.
\end{proof}

\begin{lemma}\label{vertex for IHP}
	For each $k\geq 1$, 
	
	\noindent$(1)$ both $(\mathbbm{x}_k, h(\TT_k))$ and $(\mathbbm{x}'_k, h(\TT'_k))$ are vertices of $\IHP(\Delta)$, and 
	
	\noindent $(2)$  the segment with endpoints $(\mathbbm{x}_k, h(\TT_k))$ and $(\mathbbm{x}'_k, h(\TT'_k))$ is contained in
	 $\IHP(\Delta)$. 
\end{lemma}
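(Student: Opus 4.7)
By Proposition~\ref{better form for IHP}, $\IHP(\Delta)$ is the lower convex hull of $\{(\ell, g(\ell))\}_{\ell \ge 0}$, where $g(\ell) := \min_{\SS \in \mathscr{M}_\ell} h(\SS)$. Lemma~\ref{S} identifies the minimizers: any subset of cardinality $\ell$ with the smallest possible total weight realizes $g(\ell)$. For $\mathbbm{x}_k \le \ell \le \mathbbm{x}'_k$, these are exactly the subsets of the form $\TT_k \cup \SS_0$, where $\SS_0 \subseteq \TT'_k \setminus \TT_k$ has cardinality $i := \ell - \mathbbm{x}_k$.

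\textbf{Paragraph 2 (Part (2)).} The key observation is that for any two weight-$k$ points $P,Q \in \TT'_k \setminus \TT_k$, we have $w(pQ-P) = (p-1)k \in \ZZ$. I apply Construction~\ref{construction for overline sigma} to $\TT_k \cup \SS_0$, making the choice to first pair the $i$ elements of $\SS_0$ among themselves (giving $i$ pairs with $R = 0$) and then apply a minimal permutation $\overline{\tau}_{\TT_k}$ on $\TT_k$. By Lemma~\ref{can be odered better},
\begin{equation*}
U^\star(\TT_k \cup \SS_0,\overline{\tau}) = U^\star(\TT_k,\overline{\tau}_{\TT_k}) \uplus \{0\}^i
\end{equation*}
is independent of the choices made, so $h_2(\TT_k \cup \SS_0) = h_2(\TT_k)$ and hence $h(\TT_k \cup \SS_0) = h(\TT_k) + ki(p-1)$. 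These $\mathbbm{x}'_k - \mathbbm{x}_k + 1$ collinear points lie on the graph of $g$ and span the segment from $(\mathbbm{x}_k, h(\TT_k))$ to $(\mathbbm{x}'_k, h(\TT'_k))$, proving (2).

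\textbf{Paragraph 3 (Vertex at $\mathbbm{x}_k$).} For $\ell = \mathbbm{x}_k - j < \mathbbm{x}_k$, Lemma~\ref{S} realizes $g(\ell)$ on some $\SS' \subsetneq \TT_k$ obtained by removing $j$ points of maximal weight. Extending a minimal permutation of $\SS'$ to $\TT_k$ by fixing the $j$ deleted points yields
\begin{equation*}
h(\TT_k) - h(\SS') \le \sum_{P \in \TT_k \setminus \SS'}\lceil (p-1)w(P) \rceil.
\end{equation*}
By Lemma~\ref{basic lemma}, each $w(P) \le k - 1/D'$, where $D' := (b_1a_2 - b_2a_1)/\gcd(a_1-a_2, b_1-b_2)$; the hypothesis $p > 2D'+1$ gives $(p-1)/D' > 2$, so every term is bounded by $k(p-1) - 1$. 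The left-hand secant slope is therefore $\le k(p-1) - 1 < k(p-1)$, confirming that $(\mathbbm{x}_k, h(\TT_k))$ is a vertex.

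\textbf{Paragraph 4 (Vertex at $\mathbbm{x}'_k$).} This is the main obstacle, as one needs a \emph{lower} bound on $g(\mathbbm{x}'_k + j) - h(\TT'_k)$. I obtain it via a cycle-shortcut argument. Let $\SS = \TT'_k \cup \{Q\}$ with $w(Q) \ge k + 1/D'$ and fix any permutation $\overline{\tau}$ of $\SS$. If $\overline{\tau}(Q) = Q$, then $h(\SS,\overline{\tau}) \ge h(\TT'_k) + \lceil (p-1)(k+1/D') \rceil \ge h(\TT'_k) + k(p-1) + 3$. Otherwise let $P_0 = \overline{\tau}^{-1}(Q)$, $P_2 = \overline{\tau}(Q)$, and pass to the shortcut $\overline{\tau}'$ on $\TT'_k$ sending $P_0 \mapsto P_2$ and agreeing with $\overline{\tau}$ elsewhere. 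The linearity identity $w(pQ-P_0) + w(pP_2-Q) = w(pP_2-P_0) + (p-1)w(Q)$ together with $\lceil a \rceil + \lceil b \rceil \ge \lceil a+b \rceil$ and $\lceil x+s \rceil - \lceil x \rceil \ge \lfloor s \rfloor$ bound the contribution difference below by $k(p-1) + \lfloor (p-1)/D' \rfloor \ge k(p-1) + 2$. In either case $h(\SS) \ge h(\TT'_k) + k(p-1) + 2$; iterating on $j$ added points of weight $\ge k + 1/D'$ gives $g(\mathbbm{x}'_k + j) \ge h(\TT'_k) + j(k(p-1) + 2)$. Every secant from $(\mathbbm{x}'_k, h(\TT'_k))$ to a point of the graph of $g$ therefore has slope $\ge k(p-1) + 2 > k(p-1)$, so the slope of the first edge of $\IHP$ past $\mathbbm{x}'_k$ strictly exceeds $k(p-1)$, proving that $(\mathbbm{x}'_k, h(\TT'_k))$ is a vertex.
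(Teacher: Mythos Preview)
Your argument is correct, and it follows the same overall scheme as the paper: identify $g(\ell)=\min_{\SS\in\mathscr M_\ell}h(\SS)$ via Lemma~\ref{S}, show $g$ is linear with slope $k(p-1)$ on $[\mathbbm x_k,\mathbbm x'_k]$, and then bound the secant slopes on either side. One expository point: the sentence at the end of Paragraph~2 (``proving (2)'') is premature, since collinearity of $(\ell,g(\ell))$ on that range does not by itself place the segment on the lower convex hull; that conclusion only follows once Paragraphs~3 and~4 supply the left and right slope bounds. Taken together, however, your four paragraphs do establish both (1) and (2).

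Where you genuinely diverge from the paper is in the mechanism for the right-side lower bound. The paper (at $\mathbbm x_k$, and ``similarly'' at $\mathbbm x'_k$) takes a minimal permutation of $\SS_2\supset\TT_k$ and repeatedly \emph{swaps} images so as to make every point of $\SS_2\setminus\TT_k$ a fixed point, tracking that each swap changes $h_2$ by at most~$1$; this yields only $h(\SS_2)\ge h(\TT_k)+i_2k(p-1)-i_2$, and the paper must then compare the resulting slope $k(p-1)-1$ against the left slope $(p-1)(k-1/D')+1$, invoking $p>2D'+1$ to separate them. Your cycle-shortcut argument instead removes the extra points one at a time using the linearity identity $w(pQ-P_0)+w(pP_2-Q)=w(pP_2-P_0)+(p-1)w(Q)$ together with $\lceil x+s\rceil-\lceil x\rceil\ge\lfloor s\rfloor$, which gives the sharper bound $h(\SS_2)\ge h(\TT'_k)+j\big(k(p-1)+2\big)$. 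This lets you compare each side directly to the exact segment slope $k(p-1)$ rather than to each other, which is cleaner. (For the iteration over $j$ points, note that you are shortcutting the \emph{same} permutation repeatedly, not passing to a new minimal one at each stage; it would be worth saying this explicitly.)
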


\begin{proof}
	(1) Suppose the lemma were false. Then there exists an integer $k$ and a segment in $\IHP(\Delta)$, say $\overline{P_1P_2}$, such that $P_0:=(\mathbbm{x}_k, h(\TT_k))$ is either a point strictly above $\overline{P_1P_2}$ or an interior point on $\overline{P_1P_2}$.
	From Proposition~\ref{better form for IHP}, we know that $$P_1=\Big(\mathbbm x_k-i_1, \min_{\SS\in \mathscr M_{\mathbbm{x}_k-i_1}}(h(\SS)) \Big)\quad\textrm{and}\quad P_2=\Big(\mathbbm x_k+i_2, \min_{\SS\in \mathscr M_{\mathbbm{x}_k+i_2}}(h(\SS)) \Big)$$
for some positive integers $i_1$ and $i_2$.

	Put $\SS_1$ to be an element of $\mathscr M_{\mathbbm{x}_k-i_1}$ such that
	\begin{equation}
	\sum_{P\in\SS_1}w(P)=\min_{\SS\in \mathscr M_{\mathbbm{x}_k-i_1}}\Big(\sum_{P\in\SS}w(P)\Big).
	\end{equation}
	By Lemma~\ref{S}, we get 
	\begin{equation}
	h(\SS_1)=\min_{\SS\in \mathscr M_{\mathbbm{x}_k-i_1}}\big(h(\SS)\big).
	\end{equation}
It is easy to know that $\SS_1$ is a subset of $\TT_k$. We denote its complement in $\TT_k$ by $\SS_1'$, which is of cardinality $i_1$. By Lemma~\ref{basic lemma}, we know that each point $P$ in $\TT_k$ satisfies $$w(P)\leq k-\frac{\gcd(a_1-a_2,b_1-b_2)}{a_2b_1-a_1b_2}.$$ Combining it with $$h_2(\SS_1')\leq \#\SS_1'=i_1,$$ we have
	\[\begin{split}
	h(\TT_k)&\leq h(\SS_1)+h(\SS'_1)= h(\SS_1)+h_1(\SS'_1)+h_2(\SS'_1)\\
	&\leq h(\SS_1)+i_1(p-1)\left[k-\frac{\gcd(a_1-a_2,b_1-b_2)}{a_2b_1-a_1b_2}\right]+i_1.
	\end{split}
	\]
	It simply implies that the slope of $\overline{P_1P_0}$ is less than or equal to $$(p-1)\left[k-\frac{\gcd(a_1-a_2,b_1-b_2)}{a_2b_1-a_1b_2}\right]+1.$$
	
	On the other hand, 	by a similar argument, we choose an element $\SS_2$ from  $\mathscr M_{\mathbbm{x}_k+i_2}$ such that $$\sum_{P\in\SS_2}w(P)=\min_{\SS\in \mathscr M_{\mathbbm{x}_k+i_2}}\big(\sum_{P\in\SS}w(P)\big).$$ By Lemma~\ref{S} again, we have 
	\begin{equation}
	h(\SS_2)=\min_{\SS\in \mathscr M_{\mathbbm{x}_k+i_2}}\big(h(\SS)\big).
	\end{equation} 
	We also easily know that $\TT_k$ is included in $\SS_2$.
	Let $\tau$ be a minimal permutation of $\SS_2$. We shall construct below a finite sequence of permutations of $\SS_2$, denoted by $(\tau_0=\tau, \tau_1,\dots, \tau_m)$, satisfying 
	\begin{itemize}
		\item[(a)] the length $m$ of this sequence is less than or equal to $i_2$,
		\item[(b)] $h(\tau_{i+1})\leq h(\tau_i)+1$, and
		\item[(c)] $\tau_m$ fixes every point in $\SS_2\backslash \TT_k$. 
	\end{itemize}

	Put $\tau_0=\tau$.
Assume that we have $\tau_i$ already. If it fixes each point in $\SS_2\backslash\TT_k$, then we are done. Otherwise put $P_i$ to be a point in $\SS_2\backslash\TT_k$ such that $\tau_i(P_i)\neq P_i$. Then we define $\tau_{i+1}$ the same permutation as $\tau_{i}$ except we swap images of $P_i$ and $\tau_i^{-1}(P_i)$. Iterating this process gives us a sequence of permutations of $\SS_2$. If $\tau_m$ is the last element in this sequence, we know that it fixes each point in $\SS_2\backslash\TT_k$, namely,
	$$\tau_m(P)=P\quad \textrm{for each}\ P\in \SS_2\backslash\TT_k.$$ 
	Since there are at most $i_2$ points in $\SS_2$ whose images are changed by these modifications, we know that $m\leq i_2$. Put $Q_i=\tau_i(P_i)$ and $P_i'=\tau_i^{-1}(P_i)$. We compute 
	\[\begin{split}
	&h(\tau_i)-h(\tau_{i+1})\\=&\lceil w(pQ_i-P_i)\rceil+\lceil w(pP_i-P_i')\rceil- \lceil w(pQ_i-P_i)\rceil-\lceil w(pP_i-P_i')\rceil\\
	\geq &1
	\end{split}\]
	Then we simply prove that the constructed sequence of permutations of $\SS_2$ satisfies conditions (a)-(c) above.
	Moreover, we have $$ h_2(\SS_2, \tau)\geq h_2(\SS_2, \tau_m)-i_2.
	$$ 
	As $\tau$ is minimal, we get
	\begin{equation}\label{lower bound for h2SS}
	\begin{split}
	h(\SS_2)
	=&h_1(\SS_2)+h_2(\SS_2, \tau)\\
	\geq& h_1(\SS_2)+h_2(\SS_2, \tau_m)-i_2\\
	=&h(\SS_2, \tau_m)-i_2.
	\end{split}
	\end{equation}
	Since the restriction of $\tau_m$ on $\TT_k$ is a permutation of $\TT_k$ and $w(P)\geq k$ for any point $P$ in $\SS_2\backslash\TT_k$, we have 	
	\begin{equation}\label{hS2sigma'}
	\begin{split}
	h(\SS_2, \tau_m)&= h\left(\TT_k, \tau_m\big|_{\TT_k}\right)+h\left(\SS_2\backslash\TT_k,\tau_m\big|_{\SS_2\backslash\TT_k}\right)\\
	&\geq h\left(\TT_k\right)+h_1(\SS_2\backslash\TT_k)\\
	&\geq h\left(\TT_k\right)+i_2(p-1)k.
	\end{split}
	\end{equation}
	By \eqref{lower bound for h2SS} and \eqref{hS2sigma'}, the slope of $\overline{P_0P_2}$ is greater than or equal to $$k(p-1)-1.$$
	 Under the assumption $p>\frac{2(a_2b_1-a_1b_2)}{\gcd(a_1-a_2,b_1-b_2)}+1$  in Theorem~\ref{main}, it is easy to check that the slope of $\overline{P_0P_2}$ is strictly greater than the slope of $\overline{P_1P_0}$, which is a contradiction.
	 
	 By a similar argument, we know that $(\mathbbm{x}'_k, h(\TT'_k))$ is also a vertex of $\IHP(\Delta)$. 
	 
	 (2) Let $0\leq i\leq \mathbbm x_k'-\mathbbm x_k$. By Lemma~\ref{S}, there exists $\TT_k\subset\SS_i'\subset \TT_k'$ such that $$h(\SS_i')=\min_{\SS\in \mathscr M_{\mathbbm x_k+i}}(h(\SS)).$$  Since all points in $\TT_k'\backslash \TT_k$ have integer weight $k$, the we have $$h(\SS_i')=h(\TT_k)+ik(p-1),$$ which implies (2) immediately. 
\end{proof}
%Recall that $\widetilde u_{\mathbbm{x}_k}(T)=\sum \widetilde u_{\mathbbm{x}_k, i}T^i$ is the coefficient of term $s^{\mathbbm{x}_k}$ in $C_f^*(f, T)$. 
\begin{lemma}\label{coincide then unit}
	Let $\chi_{1}$ be a nontrivial finite character. Suppose that $\NP(f, \chi_{1})_C$ coincides with $\IHP(\Delta)$ at point $\big(\mathbbm{x}_{k_1}, h(\TT_{k_1})\big)$ for a positive integer $k_1$. Then
\begin{itemize}
	\item[$(1)$] $\big(\mathbbm{x}_{k_1}, h(\TT_{k_1})\big)$ is also a vertex of $\NP(f, \chi_{1})_C$, and
	\item[$(2)$] $u_{\mathbbm{x}_{k_1}, nh(\TT_{k_1})}$ is a $\Zp$-unit. 
\end{itemize}
\end{lemma}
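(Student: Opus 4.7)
The plan is to deduce both parts from a convexity comparison of $\NP(f,\chi_1)_C$ with $\IHP(\Delta)$, combined with the explicit identification of $u_{\mathbbm{x}_{k_1}, nh(\TT_{k_1})}$ as the leading $T$-coefficient of $u_{\mathbbm{x}_{k_1}}(T)$ supplied by Proposition~\ref{important corollary}.

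For part (1), I would first upgrade Proposition~\ref{IHP is below NPchi}, which asserts $\NP(f,T)_C \geq \IHP(\Delta_f)$, to the $p$-adic statement $\NP(f,\chi_1)_C \geq \IHP(\Delta)$. The bound $v_T(u_k) \geq n\cdot\IHP(\Delta)(k)$ on each coefficient of $u_k(T)$, together with the normalization $v_{p^n}(\chi_1(1)-1) = \tfrac{1}{n(p-1)}$, yields $(p-1)v_{p^n}(u_k(\chi_1(1)-1)) \geq \IHP(\Delta)(k)$ for every $k$, and taking the lower convex hull preserves the bound. Now Lemma~\ref{vertex for IHP}(1) gives that $(\mathbbm{x}_{k_1}, h(\TT_{k_1}))$ is a vertex of $\IHP(\Delta)$ with left slope $a$ strictly less than right slope $b$. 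If this point were not a vertex of $\NP(f,\chi_1)_C$, then $\NP(f,\chi_1)_C$ would have a single slope $s$ through it; keeping $\NP(f,\chi_1)_C \geq \IHP(\Delta)$ on both sides would simultaneously require $s \leq a$ and $s \geq b$, contradicting $a < b$.

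For part (2), I will use that part (1) forces the coefficient valuation to equal the polygon height at the vertex, giving
\[
(p-1)\,v_{p^n}\!\bigl(u_{\mathbbm{x}_{k_1}}(\chi_1(1)-1)\bigr) = h(\TT_{k_1}).
\]
Combining Proposition~\ref{important corollary} with $v_T(u_{\mathbbm{x}_{k_1}}) \geq nh(\TT_{k_1})$ allows me to write
\[
u_{\mathbbm{x}_{k_1}}(T) = u_{\mathbbm{x}_{k_1}, nh(\TT_{k_1})}\,T^{nh(\TT_{k_1})} + T^{nh(\TT_{k_1})+1}\,g(T)
\]
for some $g(T) \in \ZZ_q\llbracket T \rrbracket$. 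After substituting $T = \chi_1(1)-1$, the leading term contributes $p$-adic valuation $v_{p^n}(u_{\mathbbm{x}_{k_1}, nh(\TT_{k_1})}) + h(\TT_{k_1})/(p-1)$ while the tail contributes strictly more. Matching valuations forces $v_{p^n}(u_{\mathbbm{x}_{k_1}, nh(\TT_{k_1})}) = 0$, so this coefficient is a $\ZZ_p$-unit; the degenerate case $u_{\mathbbm{x}_{k_1}, nh(\TT_{k_1})} = 0$ is ruled out at once, since it would push the valuation strictly above $h(\TT_{k_1})/(p-1)$, contradicting the equality above.

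The main obstacle I anticipate is the convexity argument in part (1), which relies crucially on the strict separation of left and right slopes at the vertex, i.e.\ on Lemma~\ref{vertex for IHP}(1); once that is in place, the remainder of the proof is bookkeeping between the $T$-adic and $p$-adic normalizations and is routine.
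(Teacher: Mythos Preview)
Your proof is correct and follows essentially the same approach as the paper: part (1) via the convexity comparison with the vertex of $\IHP(\Delta)$ supplied by Lemma~\ref{vertex for IHP}, and part (2) by matching the $p$-adic valuation at that vertex against the $T$-adic expansion of $u_{\mathbbm{x}_{k_1}}(T)$. One small correction: your normalization $v_{p^n}(\chi_1(1)-1)=\tfrac{1}{n(p-1)}$ presumes $\chi_1$ has conductor $p$, whereas the lemma allows any nontrivial finite character; replacing $(p-1)$ throughout by $p^{m_{\chi_1}-1}(p-1)$ fixes this without changing the argument.
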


\begin{proof}
	(1) This follows from the fact that the normalized Newton polygon $\NP(f, \chi_1)_C$ always lies above the improved Hodge polygon $\IHP(\Delta)$ by Proposition~\ref{IHP is below NPchi} and that $\big(\mathbbm{x}_{k_1}, h(\TT_{k_1})\big)$ is a vertex of $\IHP(\Delta)$ by Lemma~\ref{vertex for IHP}.
	
	(2) Suppose that $u_{\mathbbm{x}_{k_1}, nh(\TT_{k_1})}$ is not a $\Zp$-unit. Since we know that $(\mathbbm{x}_{k_1}, h(\TT_{k_1}))$ is a vertex of $\NP(f, \chi_{1})_C$, then specializing $\NP(f, T)_C$ to $T=\chi_{1}(1)-1$ makes $\NP(f, \chi_{1})_C$ strictly higher than $\NP(f, T)_C$ at $x=\mathbbm{x}_{k_1}$. By Lemma~\ref{IHP is below NPchi}, it is also strictly higher than $\IHP(\Delta)$ at this point, which leads to a contradiction.
\end{proof}

%\begin{lemma}\label{control of the newton polygon}
%	Given two sets of points $\{(i, a_i)\}_{i=0}^\infty$ and $\{(i, b_i)\}_{i=0}^\infty$,
%	if $a_i\geq b_i$ for all $i\geq 0$, then $\NP(\{a_i\})$ is not lower than $\NP(\{b_i\})$.
%\end{lemma}
%
%\begin{proof}
%	It is easy to show that if it is not true, then there exists a vertex of $\NP(\{a_i\})$ (denoted by $(k, a_k)$) below $\NP(\{b_i\})$. It is a contradiction to $a_k\geq b_k$.
%\end{proof}

\begin{proof}[Proof of Theorem \ref{main}]
	Let $\chi_{0}:\ZZ_p\to \CC_p^\times$ be a nontrivial character of conductor $p$. 
	Since $\NP(f, \chi_{0})_C$ is not below $\IHP(\Delta)$ and the expression in \eqref{upper bound} is an upper bound for $\IHP(\Delta)$ at $x=\mathbbm x_k$ for each $k\geq 1$, then we have
	\begin{equation}\label{inequality1}
	\begin{split}
	&k\big[\frac{h(\TT_1)}{p-1}+\mathbbm{x}_1(k-1)\big]+\sum_{i=0}^{k-2}[(a_2b_1-a_1b_2)(i+1)-\frac{1}{2}(l_1+l_2)](i+1)\\
	\leq&\mathbbm{x}_1(k-1)k+k\Sigma(\chi_{1})+(a_2b_1-a_1b_2)\sum\limits_{i=1}^{k-1}i^2-\big[\frac{1}{2}(l_1+l_2)+1\big]\sum\limits_{i=1}^{k-1}i.\\
	\end{split}
	\end{equation}
	
	A simplification of these inequalities above shows that they all equivalent to 
	\begin{equation}\label{coincide condition} 
	h(\TT_1)\leq (p-1)\Sigma(\chi_{0}),
	\end{equation}
an equality independent of $k$.
	
	Since $\NP(f, \chi_{1})_C$ coincides with $\IHP(\Delta)$ at $\big(\mathbbm{x}_{k_1}, h(\TT_{k_1})\big)$ for a finite character $\chi_{1}$ and an integer $k_1$, by Lemma~\ref{coincide then unit}, we know that $u_{\mathbbm{x}_{k_1}, nh(\TT_{k_1})}$ is a $\Zp$-unit. It implies that $\NP(f, \chi_{0})_C$ also coincides with $\IHP(\Delta)$ at $(\mathbbm{x}_{k_1}, h(\TT_{k_1}))$. Therefore, by Lemma~\ref{vertex for IHP}~(2), the slopes of segments in $\NP(f, \chi_{0})_C$ after points $x=\mathbbm x_{k_1}$ are all greater than or equal to $(p-1)k_1$. 
	
	One the other hand, recall that in Notation~\ref{Newton slopes} we put $\{\alpha_1, \alpha_2,\dots,\alpha_{a_2b_1-a_1b_2}\}$ (in a non-decreasing order) to be the set of $q$-adic Newton slopes for $L_f^*(\chi_{0}, s)$, which is contained in $[0,2)$. Therefore, each $q$-adic Newton slope of  $L_f^*(\chi_{0},q^is)$ belongs to $[i, i+2)$. Then from the decomposition of $C_f^*(\chi_{0}, s)$  in \eqref{C-function1}, we know that $\alpha_j+k_1-1\geq k_1$ for all $j\geq \mathbbm x_1+1$. For otherwise there are more than $\mathbbm x_{k_1}$ roots of $C_f^*(\chi_{0}, s)$ whose $q$-adic valuations are less than or equal to $k_1$, which is a contradiction to the statement in the previous paragraph.   
	
	From the argument above, we see that \eqref{inequality1} must be an equality, and when $k=k_1$, the height of $\NP(\chi_0, s)_C$ coincides with its upper bound given in \eqref{upper bound}. Hence, we have
	$$h(\TT_1)= (p-1)\Sigma(\chi_{0}).$$
		Notice that \eqref{coincide condition} is independent of $k$. Then the inequalities in \eqref{inequality1} are equalities for all $k\geq 0$. 
		Combining it with Proposition~\ref{coincide then unit}, we have that $u_{\mathbbm{x}_k, nh(\TT_k)}$ is a $\Zp$-unit for each $k\geq 0$.
		Therefore, it is not hard to show that $\NP(f, \chi)_C$ coincides with $\IHP(\Delta)$ at $(\mathbbm{x}_{k}, h(\TT_k))$ for any integer $k$ and nontrivial finite character $\chi$. Then by Lemma~\ref{vertex for IHP} again, we know that $\alpha_i\geq 1$ for $\mathbbm x_1\leq i\leq \mathbbm x'_1$. Combining it with Poincar\'e duality, we have $\alpha_i= 1$ for $\mathbbm x_1+1\leq i\leq \mathbbm x'_1$. It implies that $\IHP(\Delta)$ and $\NP(f, \chi)_C$ coincide at $x=\mathbbm{x}_k+i_k$ for any $0\leq i_k\leq\mathbbm x_k'-\mathbbm x_k$.

		By a similar argument to Lemma~\ref{coincide then unit}~(2), we know that  $u_{\mathbbm{x}'_k, nh(\TT'_k)}$ is also a $\Zp$-unit. 
\end{proof}

\section{The case when $\Delta$ is an isosceles right triangle I.}

In order to apply Theorem~\ref{main} we need that $\NP(f, \chi_{1})_C$ coincides with $\IHP(\Delta)$ at $x=\mathbbm{x}_{k_1}$ for some character $\chi_{1}$ and some integer $k_1$. This however seems to be a very difficult question. We have the following folklore conjecture.

\begin{conjecture}\label{conjecture}
	Let $\Delta$ be a triangle with vertices at $(0,0)$, $\mathbf P_1=(a_1,b_1)$, $\mathbf P_2=(a_2,b_2)$. We assume the hypotheses (as in Theorem~\ref{main}) on the prime $p$.
In the moduli space of all polynomials $f(x_1,x_2)$ of convex hull $\Delta$, there exists an open dense subspace over which the corresponding Newton polygon $\NP(f, \chi)_C$ agrees with $\IHP(\Delta)$ at $x=\mathbbm{x}_k+i_k$ for all finite characters $\chi$, integers $k\geq 1$ and $0\leq i_k\leq \mathbbm x_k'-\mathbbm x_k$.
\end{conjecture}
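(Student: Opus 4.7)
The plan is to reduce the conjecture to a non-vanishing statement for a single leading coefficient at a single point, and then argue that this non-vanishing cuts out the desired Zariski-open dense locus in the moduli of polynomials with polytope $\Delta$. First, Theorem~\ref{main} reduces the conjecture to the following weaker claim: there is a single nontrivial finite character $\chi_1$ (which I would fix to be of conductor $p$) and a single positive integer $k_1$ (which I would fix to be $1$) such that, on a Zariski-open dense subset of the moduli of polynomials $f$ with $\Delta_f=\Delta$, the polygon $\NP(f,\chi_1)_C$ coincides with $\IHP(\Delta)$ at $x=\mathbbm x_1$. Once this is established, Theorem~\ref{main} propagates the coincidence to $x=\mathbbm x_k + i_k$ for all $k\geq 1$ and $0 \leq i_k \leq \mathbbm x_k' - \mathbbm x_k$, uniformly in $\chi$.

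Second, by Lemma~\ref{coincide then unit}(2) together with Proposition~\ref{IHP is below NPchi}, the coincidence condition at $x=\mathbbm x_1$ is equivalent to the leading coefficient $u_{\mathbbm x_1,\,nh(\TT_1)}$ being a $\ZZ_p$-unit, i.e., nonzero modulo $p$. By Proposition~\ref{important corollary}, this coefficient agrees modulo $T^{nh(\TT_1)+1}$ with the leading $T$-coefficient of $\prod_{j=0}^{n-1}\sigma_{\Frob}^j(\det(\TT_1)_f)$. Hence the non-coincidence locus is cut out, inside the moduli of $f$'s, by the mod-$p$ vanishing of a single polynomial in the coefficients $a_P$; its complement, if non-empty, is automatically Zariski-open and dense.

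Third, let $f_{\univ}$ be the universal polynomial with convex hull $\Delta$, whose coefficients are formal indeterminates $A_P$ indexed by the support contained in $\TT_1'$. Unpacking Definition~\ref{det TTk} and the expansion of $e_{p\tau(P)-P}$ from \eqref{E:Ef(x)}, each permutation $\tau \in \Iso(\TT_1)$ contributes a sum of monomials in the $A_P$'s weighted by $T$-powers. By Lemma~\ref{h=h1+h2} and Proposition~\ref{Thm 2}, only the minimal permutations $\overline\tau$ produced by Construction~\ref{construction for overline sigma} contribute at the leading $T$-order $h(\TT_1)$, and each contributes an explicit monomial in the $A_P$'s up to a unit. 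Using Lemma~\ref{can be odered better}, which pins down the multiset $U^\star(\TT_1,\overline\tau)$ uniquely, one would classify all minimal permutations and sum their signed contributions to obtain the leading polynomial in the $A_P$'s.

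Fourth, one must show that this polynomial is not identically zero modulo $p$; this is the main obstacle. The strategy, modeled on Theorem~\ref{Thm for 4}, is to single out a distinguished monomial in the $A_P$'s arising from exactly one (or a tightly controlled finite collection of) minimal permutation(s) and to check that the resulting coefficient is a unit modulo $p$. In the isosceles-right-triangle case under the hypothesis $d \geq 24(2p_0^2+p_0)$, there is enough combinatorial ``room'' in $\TT_1$ to isolate such a monomial; for a general triangle, the difficulty is to find an analogue of this room. Concretely, the hard part will be to rule out unexpected cancellation among competing minimal permutations whose associated $A_P$-monomials coincide, possibly at the expense of imposing a geometric condition on $\Delta$ relative to $p\bmod(a_2b_1-a_1b_2)$ analogous to the bound on $p_0$ in Theorem~\ref{generic newton polygon}; removing such an auxiliary hypothesis entirely is presumably why the statement remains conjectural.
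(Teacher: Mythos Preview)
Your reduction strategy is exactly the one the paper takes, and in fact your steps (1)--(3) are packaged in the paper as Proposition~\ref{reduce to Ef}: the polygons $\GNP(\Delta)$ and $\IHP(\Delta)$ coincide at $(\mathbbm x_1, h(\TT_1))$ if and only if the universal leading coefficient $\widetilde v_{h(\TT_1)}\in\ZZ_p[\underline{\widetilde a}]$ is not divisible by $p$, and when this holds the conjecture follows (via Theorem~\ref{main}) on the Zariski-open locus where the specialization of $\widetilde v_{h(\TT_1)}$ is nonzero. So your framework is correct and matches the paper.

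However, you should be aware that the statement is a \emph{conjecture} in the paper, not a theorem: the paper does not prove it for a general triangle $\Delta$. The obstacle is precisely the one you name in your fourth step --- showing that $\widetilde v_{h(\TT_1)}\not\equiv 0\pmod p$. The paper carries this out only in the isosceles right triangle case under the quantitative hypothesis of Theorem~\ref{Thm for 4}, by an elaborate combinatorial argument (Sections~4--5): one restricts attention to ``special'' optimal combos (Definition~\ref{special combo}), translates them via Lemma~\ref{maximum of exponent} and Definition~\ref{maximal combo to YY} into bijections $\beta:\YY_0\to m(\YY_0)$, and then constructs by hand a particular $\widetilde\beta$ whose related bijections all have the same sign and number exactly $2^i$ (Proposition~\ref{core proposition}), yielding a monomial in $\widetilde v_{h(\TT_1)}$ with unit coefficient. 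Your proposal correctly anticipates that isolating a distinguished monomial is the mechanism, but the actual execution is substantially more delicate than ``one or a tightly controlled collection of minimal permutations'': the paper needs the periodicity of $\YY_0$ modulo $p_0$ (Corollary~\ref{period}), a three-region decomposition $\LL_1\sqcup\LL_2\sqcup\LL_3$, and a careful placement argument for $\widetilde\beta_2$ that consumes the hypothesis $d\geq 24(2p_0^2+p_0)$. For a general triangle no such argument is given, which is why the statement remains conjectural --- as you yourself note in your final sentence.
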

Generically, the Newton polygon of $C_f^*(\chi, s)$ should be as low as possible, namely, coinciding with the improved Hodge bound $\IHP(\Delta)$.

In this section, we will study a special case when $\Delta$ is an isosceles right triangle with vertices at $(0,0)$, $(d,0)$ and $(0,d)$, where $p\nmid d$. We claim that Conjecture~\ref{conjecture} holds true when the residue of $p$ modulo $d$ is small enough. More precisely, we will prove the  following.
\begin{theorem}\label{Thm for 4}
	Let $p_0$ be the residue of $p$ modulo $d$, and let $d_0$ be the residue of $d$ modulo $p_0$. Conjecture~\ref{conjecture} holds when 
	\begin{equation}\label{a weaker condition}
	d\geq\begin{cases}
 4p_0^{\frac{3}{2}}\Big[\ln 3+\frac{27}{4}+(2+\frac{3}{\ln 2})\ln p_0\Big]+13p_0 &\textrm{if}\ h\geq\frac{1}{4},\\
 4p_0^{\frac{5-2h}{3}}\Big[\ln 3+\frac{27} 4+(2+\frac{3}{\ln 2})\ln p_0\Big]+13p_0 & \textrm{if}\ h<\frac{1}{4},
	\end{cases}
		\end{equation}
	where $h:=\log_{p_0} (p_0-d_0).$
\end{theorem}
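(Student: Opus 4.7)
The plan is to combine Theorem~\ref{main} with a non-vanishing argument for a single leading coefficient. By Theorem~\ref{main} and Lemma~\ref{coincide then unit}, it suffices to exhibit, for a Zariski open dense set of polynomials $f$ with $\Delta_f = \Delta$, a nontrivial finite character $\chi_1$ such that $\NP(f,\chi_1)_C$ meets $\IHP(\Delta)$ at the single point $x = \mathbbm{x}_1$; by Lemma~\ref{coincide then unit}~(2), the latter holds exactly when $u_{\mathbbm{x}_1, nh(\TT_1)}(f)$ is a $\ZZ_p$-unit. Treating the coefficients $\{A_P\}_{P\in\TT_1'}$ of the universal polynomial $f_{\univ}$ as indeterminates, the quantity $u_{\mathbbm{x}_1, nh(\TT_1)}(f_{\univ})$ becomes a polynomial $\Phi_d \in \ZZ_p[\{A_P\}]$, and it will suffice to show $\Phi_d \not\equiv 0 \pmod{p}$: the non-vanishing locus of $\Phi_d$ provides the desired open dense subset, and specialization of $T$ to $\chi_1(1)-1$ for any nontrivial $\chi_1$ of conductor $p$ then yields the required coincidence at $x=\mathbbm{x}_1$.

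\textbf{Identification of the leading coefficient.} By Proposition~\ref{important corollary}, $\Phi_d$ is the coefficient of $T^{nh(\TT_1)}$ in $\prod_{j=0}^{n-1}\sigma_\Frob^j\bigl(\det(\TT_1)_{f_{\univ}}\bigr)$, where $\det(\TT_1)_{f_{\univ}}$ is the sum from Definition~\ref{det TTk}. The $T$-valuation bound $v_T(e_Q)\geq\lceil w(Q)\rceil$ of Lemma~\ref{L:estimate of Ef(x)}, together with Proposition~\ref{Thm 2}, forces only the minimal permutations $\tau$ of $\TT_1$ to contribute to the coefficient of $T^{h(\TT_1)}$ inside $\det(\TT_1)_{f_{\univ}}$. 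For our triangle with weight $w(x,y)=(x+y)/d$ and $p=qd+p_0$, Lemma~\ref{S} and Proposition~\ref{Thm 2} pin down the minimal permutations very concretely: each such $\tau$ arises from a greedy matching that keeps $p\tau(P)-P$ inside a narrow ``slab'' of $\MM(\Delta)$ of prescribed weights. Unwinding the Artin--Hasse expansion \eqref{E:Ef(x)}, the contribution of any minimal $\tau$ to $\Phi_d$ is a sum of monomials in $\{A_P\}$ whose multinomial coefficients are $p$-adic units (since all relevant exponents are at most $p-1$, which the hypothesis on $d$ will guarantee).

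\textbf{Non-vanishing and the main obstacle.} The heart of the proof is to show $\Phi_d\not\equiv0\pmod p$, and the strategy is to single out a distinguished minimal permutation $\tau_0$ and a specific monomial $M_{\tau_0}$ in its contribution which is hit by no other minimal permutation (and by no other expansion inside $\tau_0$'s own contribution). Once this ``isolated witness'' is found, cancellation is impossible and $\Phi_d\bmod p$ carries $M_{\tau_0}$ with unit coefficient $\sgn(\tau_0)$, completing the reduction to Theorem~\ref{main}. The hypothesis \eqref{a weaker condition} is calibrated precisely to guarantee the existence of such a $\tau_0$: largeness of $d$ relative to $p_0$ ensures that $\TT_1$ contains enough ``generic'' points whose images under $P\mapsto p\tau(P)-P$ admit unique preimage decompositions, and the exponent $h=\log_{p_0}(p_0-d_0)$ enters because the number of competing minimal matchings swells when $d_0$ is close to $p_0$. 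The main technical difficulty will therefore be a careful inductive construction of $\tau_0$ along diagonals of $\TT_1$, followed by a case analysis — organized by the residues of coordinates modulo $d$ and modulo $p_0$ — verifying that no alternative minimal permutation can reproduce $M_{\tau_0}$; this combinatorial verification, and the derivation of the explicit threshold in \eqref{a weaker condition} from it, is what will occupy the remainder of the paper.
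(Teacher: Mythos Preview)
Your reduction is correct and matches the paper: by Proposition~\ref{reduce to Ef} (which packages Theorem~\ref{main}, Lemma~\ref{coincide then unit}, and Proposition~\ref{important corollary}), the whole theorem reduces to showing that the universal leading coefficient $\widetilde v_{h(\TT_1)}\in\ZZ_p[\underline{\widetilde a}]$ is not divisible by~$p$, and your identification of this coefficient via Lemma~\ref{combo} as a signed sum over minimal permutations (equivalently, optimal combos) is exactly right.

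The gap is in your ``isolated witness'' strategy. You propose to find a distinguished minimal permutation $\tau_0$ and a monomial $M_{\tau_0}$ hit by \emph{no other} minimal permutation. The paper does not do this, and it is not clear such an isolated monomial exists: in fact several minimal permutations typically contribute to the same monomial. What the paper does instead is a two-stage refinement. First, it restricts attention to \emph{special} combos --- those that simultaneously maximize the exponents of $\widetilde a_{Q_1}$ and $\widetilde a_{Q_2}$ (the coefficients at the vertices $(d,0)$ and $(0,d)$) --- and shows (Lemma~\ref{maximum of exponent}, Definition~\ref{maximal combo to YY}) that these are in bijection with certain ``special bijections'' $\beta\colon \YY_0\to m(\YY_0)$ between explicit subsets of the fundamental parallelogram. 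Second, and this is the crucial point, the paper does \emph{not} isolate a monomial contributed by a single $\beta$; rather it constructs (Section~5) a specific $\widetilde\beta$ such that the entire equivalence class of special bijections related to $\widetilde\beta$ (i.e.\ contributing the same monomial) has exactly $2^i$ elements, \emph{all of the same sign} (Proposition~\ref{core proposition}, Proposition~\ref{thm 2k}). The resulting coefficient is then $2^i$ times a $p$-adic unit, hence nonzero mod~$p$ since $p$ is odd.

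So the combinatorial core is not ``uniqueness'' but ``coherent sign and power-of-two multiplicity,'' achieved through a symmetry argument on the bijections $\beta$ (the mirror $m$, the symmetric closure $\mathbbm s$, and the diagonal/off-diagonal decomposition $\widetilde\beta_1,\widetilde\beta_2,\widetilde\beta_3$). The explicit threshold \eqref{a weaker condition} arises not from counting competing permutations per se, but from the requirement that the off-diagonal piece $\widetilde\beta_2$ can be landed inside finitely many shifted copies of a small block $\KK_2$ without overflowing $\square_\Delta$; the optimization over the parameter $u$ in Construction~\ref{Construction of tilde beta2} is what produces the two cases $h\ge\tfrac14$ and $h<\tfrac14$. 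Your sketch of ``inductive construction along diagonals followed by case analysis on residues'' is in the right spirit for $\widetilde\beta_1$, but misses this second layer of structure that actually drives both the non-cancellation and the numerical bound.
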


In particular, the condition $d \geq 24(2p_0^2+p_0)$ implies \eqref{a weaker condition}; so Theorem~\ref{generic newton polygon} follows from this. We will complete the proof at the end of this section.
 \subsection{An interpretation of Theorem~\ref{Thm for 4}.}

First, we consider the ``\emph{universal polynomial}''  $$f_{\univ}(x_1,x_2)=\sum\limits_{P\in \Delta\cap \MM(\Delta)}\widetilde{a}_{P}x_1^{P_x}x_2^{P_y}$$ whose coefficients are treated as variables. 
%Similar to Definition~\ref{Banach space}, we put 
%	$$\textbf{B}_\univ=\Big\{\sum\limits_{P\in \MM(\Delta_f)}b_{P}(T x_1)^{P_x}(T x_2)^{P_y}\;\Big|\;b_{P}\in \ZZ_q\llbracket T^{1/D}\rrbracket, v_T(b_{P})\to +\infty, \textrm{when}\ w(P)\to \infty\Big\}.$$
%

\begin{notation}\label{definition for det}

Recall the infinite matrix $N$ defined in Proposition~\ref{IHP is below NPchi}. Let $\widetilde N$ be the matrix given by substituting $\hat a_P$ in $N$ by $\widetilde a_P$. More rigorously, we put $$ E_{f_{\univ}}(x_1,x_2):=\sum\limits_{P\in \ZZ^2_{\geq 0}}\widetilde{e}_{P}(\underline{\widetilde a}, T)x_1^{P_x}x_2^{P_y}\in \ZZ_p[ \underline{\widetilde a}] \llbracket T,x_1,x_2\rrbracket,$$ and write $$\widetilde{e}_{P}:=\widetilde{e}_{P}(\underline{\widetilde a}, T).$$ 

Similar to Lemma~\ref{L:estimate of Ef(x)}, we have
\begin{equation}\label{ei}
\widetilde{e}_{P} \in T^{\lceil w(P)\rceil} \ZZ_p[\underline{\widetilde a}]\llbracket T\rrbracket\quad \textrm{and}\quad \widetilde{e}_O=1.
\end{equation}
Then, using the list $(P_1, P_2, \dots)$ of points in $\MM(\Delta)$ given in Notation~\ref{Pi}, we define $\widetilde N$ to be the infinite matrix whose $(i,j)$ entry is $\widetilde e_{pP_i-P_j}\in \ZZ_p[\underline{\widetilde a}]\llbracket T \rrbracket.$

%Recall that $\mathscr M_\ell$ consists of all subsets of $\MM(\Delta)$ of cardinality $\ell$. 

	Similar to Definition~\ref{det TTk}, we put \[\begin{split}
	\det(\TT_1)_\univ=&\sum_{\tau\in \Iso(\TT_1)}\sgn(\tau)\prod_{P\in \TT_1}\widetilde{e}_{p\tau(P)-P}\\
	=&\sum_{i=h(\TT_1)}^{\infty}\widetilde{v}_iT^i\in \ZZ_p[\underline{\widetilde a}]\llbracket T \rrbracket,
	\end{split}\]
	where $\sgn(\tau)$ is the sign of $\tau$ as a permutation.
\end{notation}

Since all results in Section 3 for the ``fixed'' $f$ actually hold for on general polynomials $f(x_1,x_2)\in \overline\FF_p[x_1, x_2]$, we have the following.

% Then by Theorem~\ref{main}, for Theorem~\ref{Thm for 4} it is enough to show that $\widetilde{u}_{\mathbbm{x}_1, nh(\TT_1)}$ is not divisible by $p$ as a polynomial in $\ZZ_p[\underline{\widetilde a}, \sigma_\Frob(\underline{\widetilde a}),\dots, \sigma^{n-1}_\Frob(\underline{\widetilde a})] .$
%
%Ideally, the problem would be solved, if we were able to write down $\widetilde{u}_{\mathbbm{x}_1, nh(\TT_1)}$ explicitly for varying $p$ and $d$. But
%this task turns out to be extremely hard, since even though for a fixed pair of $(p, d)$ we know that $\widetilde{u}_{\mathbbm{x}_1, nh(\TT_1)}$ is very complicated already. Therefore, we turn our attention to find a specific monomial in it with coefficient not divisible by $p$.

%The full version of this statement is Theorem~\ref{thm 2i}. We will spend the following entire two sections on it.

\begin{proposition}\label{reduce to Ef}
	The polygons $\GNP(\Delta)$ and $\IHP(\Delta)$ coincide at $\big(\mathbbm x_1, h(\TT_1)\big)$ if and only if $\widetilde{v}_{h(\TT_1)}$ is not divisible by $p$.
	
	 Moreover, when either condition holds, Conjecture~\ref{conjecture} holds for that $\Delta$.
\end{proposition}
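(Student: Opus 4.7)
The plan is to identify the universal coefficient $\widetilde v_{h(\TT_1)}$ with the leading $T$-coefficient of $u_{\mathbbm{x}_1}(T)$ under specialization, use Lemma~\ref{coincide then unit} together with Proposition~\ref{IHP is below NPchi} to convert the unit condition into coincidence of Newton polygons, and finally propagate the coincidence from $\mathbbm{x}_1$ to all of the points prescribed by Conjecture~\ref{conjecture} using Theorem~\ref{main}.

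For a polynomial $f$ with $\Delta_f = \Delta$, denote by $v_{h(\TT_1)}(f) \in \ZZ_q$ the image of $\widetilde v_{h(\TT_1)}$ under the specialization $\widetilde a_P \mapsto \hat a_P$, which is the leading $T$-coefficient of $\det(\TT_1)_f$. By Proposition~\ref{important corollary} with $k=1$, the coefficient of $T^{nh(\TT_1)}$ in $\prod_{j=0}^{n-1}\sigma_{\Frob}^{\,j}(\det(\TT_1)_f)$ equals $u_{\mathbbm{x}_1, nh(\TT_1)}(f)$, so
\[
u_{\mathbbm{x}_1, nh(\TT_1)}(f) \;=\; N_{\ZZ_q/\ZZ_p}\!\bigl(v_{h(\TT_1)}(f)\bigr).
\]
Consequently $u_{\mathbbm{x}_1, nh(\TT_1)}(f)$ is a $\ZZ_p$-unit if and only if $v_{h(\TT_1)}(f)$ is a $\ZZ_q$-unit, which in turn is equivalent to the mod-$p$ reduction of the polynomial $\widetilde v_{h(\TT_1)} \in \ZZ_p[\underline{\widetilde a}]$ not vanishing at the point $(\hat a_P)$.

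For the forward direction, assume $p \nmid \widetilde v_{h(\TT_1)}$. Then the mod-$p$ reduction is a nonzero polynomial in the coefficients $\widetilde a_P$, so its non-vanishing locus $U$ is Zariski-open dense in the moduli space of polynomials with convex hull $\Delta$. For every $f \in U$ and every nontrivial conductor-$p$ character $\chi_1$, the identification above, combined with a direct valuation computation of $u_{\mathbbm{x}_1}(\chi_1(1)-1)$ (using that $v_p(\chi_1(1)-1) = \tfrac{1}{p-1}$) and the universal lower bound of Proposition~\ref{IHP is below NPchi}, shows that $\NP(f,\chi_1)_C$ meets $\IHP(\Delta)$ at $(\mathbbm{x}_1, h(\TT_1))$. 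Theorem~\ref{main} then propagates this coincidence to every point $\mathbbm{x}_k + i_k$ with $k\geq 1$, $0 \leq i_k \leq \mathbbm{x}'_k - \mathbbm{x}_k$, and every finite character $\chi$. Translating from $C$ to $L^{-1}$ via \eqref{C(chi, s)} confirms both that $\GNP(\Delta)$ and $\IHP(\Delta)$ coincide at $(\mathbbm{x}_1, h(\TT_1))$ and that Conjecture~\ref{conjecture} holds for $\Delta$.

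For the converse, assume $p \mid \widetilde v_{h(\TT_1)}$. Then $v_{h(\TT_1)}(f) \in p\ZZ_q$ for every $f$, so $u_{\mathbbm{x}_1, nh(\TT_1)}(f)$ is never a $\ZZ_p$-unit. Lemma~\ref{coincide then unit} therefore precludes $\NP(f,\chi_1)_C$ from meeting $\IHP(\Delta)$ at $\mathbbm{x}_1$ for any $f$ and any conductor-$p$ character $\chi_1$, so no specific member of the family $\NP(f,\chi)_{L^{-1}}$ attains $\IHP(\Delta)$ at $(\mathbbm{x}_1, h(\TT_1))$, and $\GNP(\Delta)$ is strictly above $\IHP(\Delta)$ there. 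The main obstacle is not in any of the logical steps above, each of which is essentially a routine consequence of the preceding machinery; rather, the genuine difficulty lies in verifying, for a specific family of triangles $\Delta$, that $\widetilde v_{h(\TT_1)}$ is actually nonzero modulo $p$. This combinatorial verification, which requires isolating a non-cancelling monomial in the sum defining $\det(\TT_1)_\univ$ modulo $p$, is precisely the content of Theorem~\ref{Thm for 4}.
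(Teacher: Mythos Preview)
Your proof is correct and follows essentially the same approach as the paper: both arguments hinge on Proposition~\ref{important corollary} to identify the leading $T$-coefficient of $u_{\mathbbm x_1}$ with a Frobenius product of specializations of $\widetilde v_{h(\TT_1)}$, then use Zariski-density of the non-vanishing locus for the forward direction and a valuation/unit obstruction for the converse, finally invoking Theorem~\ref{main} for the ``moreover'' clause. Your explicit norm identification $u_{\mathbbm x_1, nh(\TT_1)}(f)=N_{\ZZ_q/\ZZ_p}(v_{h(\TT_1)}(f))$ is a clean way to package what the paper leaves as a direct valuation estimate, and your appeal to Lemma~\ref{coincide then unit} in the converse is the contrapositive of what the paper establishes by hand.

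Two minor points worth tightening. First, in the converse you restrict to conductor-$p$ characters $\chi_1$, but Lemma~\ref{coincide then unit} is stated for arbitrary nontrivial finite characters; you should simply drop the conductor restriction, since you need the conclusion for all $\chi$ in order to bound $\GNP(\Delta)$. Second, both directions involve passing between $\NP(f,\chi)_C$ (where $\IHP$ is the proven lower bound) and $\NP(f,\chi)_{L^{-1}}$ (which defines $\GNP$); you gesture at this with ``translating from $C$ to $L^{-1}$ via \eqref{C(chi, s)}'', but it would be cleaner to note explicitly that the smallest $\mathbbm x_1$ normalized slopes of $C^*_f(\chi,s)$ come from the $j=0$ factor $L^*_f(\chi,s)^{-1}$ in \eqref{C(chi, s)} (all other factors contribute slopes $\geq 1$, while the first $\mathbbm x_1$ slopes of $\IHP$ are strictly below $1$ by Lemma~\ref{vertex for IHP}), so coincidence at $\mathbbm x_1$ transfers in both directions. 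The paper is equally informal on this point.
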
	
\begin{proof}
	We first prove the ``only if'' part. Suppose that $\widetilde{v}_{h(\TT_1)}$ is divisible by $p$. For any pair of a two-variable polynomial $\mathring f(x_1,x_2)\in \overline\FF_p[x_1,x_2]$ with convex hull $\Delta$ and a finite character $\mathring\chi$ of conductor $p^{m_{\mathring\chi}}$,  we write $\mathring v_{h(\TT_1)} \in \ZZ_{p^{n(\mathring f)}}[\zeta_{p^{m_{\mathring \chi}}}]$ as the specialization of $\widetilde{v}_{h(\TT_1)}$ at $T = \mathring \chi(1)-1$ and at $\widetilde{a}_P$ equals to the Teichmuller lifts of the coefficients of $f$, where  $\zeta_{p^{m_{\mathring \chi}}}$ is a primitive $p^{m_{\mathring \chi}}$-th root of unity.
	Then we have $$v_{p}(\mathring v_{h(\TT_1)})\geq \frac{h(\TT_1)}{p^{m_{\mathring \chi}-1}(p-1)}+1.$$ 
	As in \eqref{C(chi, s)}, we denote $$C^*_{\mathring f}(\mathring\chi, s)=\prod\limits_{j=0}^{\infty}L^*_{\mathring f}(\mathring\chi, p^{jn(\mathring f)}s)^{-(j+1)}=\sum_{k=0}^\infty\mathring u_ks^k\in \ZZ_{p}[\zeta_{p^{m_{\mathring \chi}}}]\llbracket s \rrbracket.$$

	By Proposition~\ref{important corollary}, we know that $$v_p\Big(\prod_{i=0}^{n(f)-1}\sigma_{\Frob}^{i}(\mathring v_{h(\TT_1)})-\mathring u_{\mathbbm x_1}\Big)\geq \frac{n(\mathring f)h(\TT_1)+1}{p^{m_{\mathring \chi}-1}(p-1)},$$
	where $\sigma_\Frob$ represents the arithmetic Frobenius acting on the coefficients.
	
Combining the equalities above, we get that
	$$v_{p}(\mathring u_k)
	\geq \frac{n(\mathring f)h(\TT_1)+1}{p^{m_{\mathring \chi}-1}(p-1)}.$$ Since we choose $\mathring f$ and $\mathring\chi$ arbitrarily, 
	we know that $\GNP(\Delta)$ is strictly above $\IHP(\Delta)$ at $x=\mathbbm x_1$, a contradiction.

We prove the ``if'' part.
Let $\overline{\widetilde{v}}_{h(\TT_1)}$ be the image of $\widetilde{v}_{h(\TT_1)}$ in the quotient ring $$\ZZ_p[ \underline{\widetilde a}] / p\ZZ_p[ \underline{\widetilde a}] \cong \FF_p[ \underline{\widetilde a}].$$
Since $\widetilde{v}_{h(\TT_1)}$ is not divisible by $p$, we know that $\overline{\widetilde{v}}_{h(\TT_1)}\neq 0$.

  Recall that we defined $\TT_1'=\{P\in \MM(\Delta)\;|\; w(P)\leq 1\}$ and $\mathbbm{x}_1'$ to be its cardinality in Notation~\ref{cone}.
Let $f_1(x_1,x_2)=\sum\limits_{P\in \TT_1'}b_Px_1^{P_x}x_2^{P_y}\in \overline{\FF}_p[x_1,x_2]$ be a polynomial satisfies that
\begin{itemize}
	\item $f_1$ has convex hull $\Delta$.
	\item  $\overline{\widetilde{v}}_{h(\TT_1)}|_{\widetilde a_P=b_P}\neq 0$.
\end{itemize}  
It is easy to check that for any finite character $\chi$ of conductor $p$, $\NP(f_1, \chi)_L$ coincides with $\IHP(\Delta)$ at $x=\mathbbm x_1$. Since $\GNP(\Delta)$ is not below $\IHP(\Delta)$ and the set of polynomials which satisfy these conditions forms a Zariski open subset in the affine space $\AAA_{\FF_p}^{\mathbbm x'_1}$, we complete the proof.
\end{proof}
Now we are left to show that $\widetilde{v}_{h(\TT_1)}$ is not divisible by $p$.

\begin{definition}
	We label the elements in $\TT_1'$ as $$\TT_1':=\big\{Q_1, Q_2,\dots,Q_{\mathbbm{x}_1'}\big\}$$ such that $Q_1:=(d,0)$ and $Q_2:=(0,d)$. Each point $P\in \MM(\Delta)$ can be written as a linear combination of points in $\TT_1'$ with non-negative integer coefficients, namely $$P=\sum_{i=1}^{\mathbbm{x}'_1}b_{P,i}Q_i.$$ We call the vector $\vec{b}_P\in \ZZ_{\geq 0}^{\mathbbm{x}'_1}$ (or the linear combination)  \emph{$P$-minimal} if it satisfies
	\[
	 \sum_{i=1}^{\mathbbm{x}'_1}b_{P,i}=\big\lceil w(P)\big\rceil.
	\]
\end{definition}
\begin{definition}\label{definition for optimal combo}
A \emph{combo}, denoted by $(\tau, \vec{b}_{\bullet, \tau})$, is a pair consisting of an arbitrary permutation $\tau$ of $\TT_1$ together with, for each $P \in \TT_1$, a vector $\vec{b}_{P, \tau}\in \ZZ_{\geq 0}^{\mathbbm{x}'_1}$ such that 
\begin{equation}\label{linear combo}
\sum_{i=1}^{\mathbbm{x}'_1}b_{P,\tau,i}Q_i=p\tau(P)-P.
\end{equation}

	A combo $(\tau, \vec{b}_{\bullet, \tau})$ is called \emph{optimal} if $\tau$ is minimal and for each $P\in\TT_1$ vector $\vec{b}_{P, \tau}$ is $\big(p\tau(P)-P\big)$-minimal.
	
		A combo $(\tau, \vec{b}_{\bullet, \tau})$ is optimal if and only if $$\sum_{P\in \TT_1}\sum_{i=1}^{\mathbbm x_1'}b_{P,\tau,i}=h(\TT_1).$$
\end{definition}

% \begin{definition}
% 	Every point in $\MM(\Delta)$, say $P$, can be decomposed into many different linear combinations of points in $\TT_k$. We put set $\SS(P)$ to be the combinations who is of the least sum of coefficients. i.e. 
% 	$\mathscr D(P)=\{()\}$ 
% 	
% 	We define a sets of decomposition of a point in $\MM(\Delta)$ as a sum of points in $\TT_1$,
% \end{definition}

% Applying Lemma~\ref{T-adic bound for det} to the universal polynomial, we get $$\widetilde{v}_T(\det(\TT_1)_\univ)\geq h(\TT_1).$$
% 

%	We construct a correspondence between optimal combos and monomials in $\widetilde{v}_{h(\TT_1)}$ as follows.

%\begin{lemma}
%	For each optimal combo $(\tau, \vec{b}_{\bullet, \tau})$, we can construct a monomial 
%		 \begin{equation}\label{one term}
%\sgn(\tau)\prod_{P\in \TT_1}\prod\limits_{i=1}^{\mathbbm{x}'_1}\frac{(\hat{a}_{Q_i})^{b_{P,i}}}{b_{P,i}!}\in \ZZ_q\llbracket\underline{\widetilde a}, T \rrbracket.	\end{equation}
%We have that this monomial is in $\widetilde{v}_{h(\TT_1)}$.
%\end{lemma}
%\begin{proof}
%	By the definition of optimal combo, we easily check that the monomial in \eqref{one term} is of degree $h(\TT_1)$.
%	 
%	Consider the expansion of $\sum\limits_{i,j\geq 0} E_{f_{\univ}}(x_1,x_2)$. It follows from Definition~\ref{definition for det} that \newline
%	$\sgn(\tau)\prod_{P\in \TT_1}\prod\limits_{i=1}^{\mathbbm{x}'_1}\frac{(\hat{a}_{Q_i})^{b_{P,i}}}{b_{P,i}!}$ is of $h(\TT_1)$ is indeed in $\widetilde{v}_{h(\TT_1)}$.
%\end{proof}

We have the following explicit expression of the leading coefficient $\widetilde{v}_{h(\TT_1)}$.
\begin{lemma}\label{combo}
We have
	 \begin{equation}\label{Ux1}
	\widetilde{v}_{h(\TT_1)}=\sum_{(\tau, \vec{b}_{\bullet, \tau}) \textrm{ optimal}}\sgn(\tau)\prod_{P\in \TT_1} \prod\limits_{i=1}^{\mathbbm{x}'_1}\frac{(\widetilde a_{Q_i})^{b_{P,\tau,i}}}{b_{P,\tau,i}!},
	\end{equation}
	where the sum runs over all optimal combos, and $\sgn(\tau)$ is the sign of $\tau$.
\end{lemma}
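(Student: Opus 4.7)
My plan is to match the coefficient of $T^{h(\TT_1)}$ in
$$\det(\TT_1)_\univ=\sum_{\tau\in \Iso(\TT_1)}\sgn(\tau)\prod_{P\in \TT_1}\widetilde{e}_{p\tau(P)-P}$$
directly against the right-hand side of the claimed formula. From $v_T(\widetilde e_Q)\geq \lceil w(Q)\rceil$ (Lemma~\ref{L:estimate of Ef(x)}), the $\tau$-summand has $T$-adic valuation $\geq h(\TT_1,\tau)\geq h(\TT_1)$, with the last inequality strict unless $\tau$ is minimal. So only minimal permutations contribute to $\widetilde v_{h(\TT_1)}$, reducing the problem to computing, for each minimal $\tau$, the leading $T$-coefficient of $\prod_P \widetilde e_{p\tau(P)-P}$.

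The leading $T$-coefficient of a single $\widetilde e_Q$ is determined by unwinding the Artin--Hasse product defining $E_{f_\univ}$. Expanding
$$E_{f_\univ}(x_1,x_2)=\prod_{i=1}^{\mathbbm x_1'}E\bigl(\widetilde a_{Q_i}\pi x_1^{(Q_i)_x}x_2^{(Q_i)_y}\bigr)$$
and collecting the $x_1^{Q_x}x_2^{Q_y}$-coefficient produces $\widetilde e_Q=\sum_{\vec b:\ \sum_i b_iQ_i=Q}\bigl(\prod_i c_{b_i}\widetilde a_{Q_i}^{b_i}\bigr)\pi^{\sum_i b_i}$, where $c_k:=[\pi^k]E(\pi)$. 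Since $E(\pi)\in 1+\pi+\pi^2\ZZ_p\llbracket\pi\rrbracket$ and $E(\pi)=1+T$, one has $\pi\equiv T\pmod{T^2}$, hence $\pi^k\equiv T^k\pmod{T^{k+1}}$. The $T^{\lceil w(Q)\rceil}$-coefficient therefore picks out exactly the $Q$-minimal decompositions (those with $\sum_i b_i=\lceil w(Q)\rceil$), with contribution $\prod_i c_{b_i}\widetilde a_{Q_i}^{b_i}$ from each.

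The main substantive step, and the one likely to cause the reader pause, is the identification $c_{b_i}=1/b_i!$. From the factorization $E(\pi)=\exp(\pi)\cdot\exp(\pi^p/p)\cdot\exp(\pi^{p^2}/p^2)\cdots$, the factors beyond $\exp(\pi)$ contribute only at degrees $\geq p$, so $c_k=1/k!$ for $k<p$. It therefore suffices to verify $b_i<p$ for every $b_i$ occurring in a $(p\tau(P)-P)$-minimal decomposition. For $P,\tau(P)\in\TT_1$ in our isosceles triangle case, $w(p\tau(P)-P)=pw(\tau(P))-w(P)\leq p(d-1)/d=p-p/d$. The standing hypothesis from Section~3, namely $p>2(a_2b_1-a_1b_2)/\gcd(a_1-a_2,b_1-b_2)+1$, specializes to $p>2d+1$, so $p/d>2$ and $\lceil p-p/d\rceil\leq p-2$. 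Since $b_i\leq\sum_j b_j=\lceil w(p\tau(P)-P)\rceil\leq p-2<p$, the identification goes through.

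With $c_{b_i}=1/b_i!$ in hand, substituting back gives
$$\widetilde v_{h(\TT_1)}=\sum_{\tau\text{ minimal}}\sgn(\tau)\prod_{P\in\TT_1}\sum_{\vec b_{P,\tau}\text{ is }(p\tau(P)-P)\text{-minimal}}\prod_i\frac{\widetilde a_{Q_i}^{b_{P,\tau,i}}}{b_{P,\tau,i}!}.$$
Distributing the product over $P$ through the inner sums converts this into a single sum indexed by tuples $(\vec b_{P,\tau})_{P\in\TT_1}$, which together with the outer choice of minimal $\tau$ is precisely the data of an optimal combo $(\tau,\vec b_{\bullet,\tau})$ by the equivalent characterization $\sum_{P,i} b_{P,\tau,i}=h(\TT_1)$. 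This yields the stated formula; everything past the $c_k=1/k!$ reduction is routine bookkeeping.
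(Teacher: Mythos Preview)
Your proof is correct and follows essentially the same approach as the paper's: both reduce to minimal permutations via the valuation bound, then replace the Artin--Hasse exponential by the ordinary exponential (equivalently, identify $c_k=1/k!$ for $k<p$) using the bound $b_{P,\tau,i}\leq p-1$. You actually justify this bound explicitly via $\lceil w(p\tau(P)-P)\rceil\leq p-2$ under the standing hypothesis $p>2d+1$, whereas the paper simply asserts it, so your version is slightly more complete.
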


\begin{proof}
	
	We let $$\prod\limits_{Q\in\TT_1'} e^{\widetilde{a}_Q\pi x_1^{Q_x}x_2^{Q_y}}=\sum\limits_{p\in \ZZ^2_{\geq 0}}\widetilde{e}'_Px_1^{P_x}x_2^{P_y}.$$
	
	For any optimal combo $(\tau, \vec{b}_{\bullet, \tau})$, we have $$b_{P,\tau,i}\leq p-1\quad \textrm{for each}\ P\in \TT_1\ \textrm{and}\ 1\leq i\leq \mathbbm x_1',$$ which implies that
	\begin{equation}\label{det TT}
	\begin{split}
	\det(\TT_1)_\univ=&	\sum_{\tau\in \Iso(\TT_1)}\sgn(\tau)\prod_{P\in \TT_1}\widetilde{e}'_{p\tau(P)-P}+ O(T^{h(\TT_1)+1})\\
	=&\sum_{(\tau, \vec{b}_{\bullet, \tau})}\Big(\sgn(\tau)\prod_{P\in \TT_1} \prod\limits_{i=1}^{\mathbbm{x}'_1}\frac{(\widetilde a_{Q_i})^{b_{P,\tau,i}}}{b_{P,\tau,i}!}\Big)T^{\sum\limits_{P\in \TT_1}\sum\limits_{i=1}^{\mathbbm x_1'}b_{P,\tau,i}}+O(T^{h(\TT_1)+1}),
	\end{split}
\end{equation}
	where $(\tau, \vec{b}_{\bullet, \tau})$ runs over all combos for $\TT_1$ and $O(T^{h(\TT_1)+1})$ represents for a power series in $\ZZ_p[\underline{\widetilde{a}}]\llbracket T \rrbracket$ of $T$-adic valuation greater than or equal to $h(\TT_1)+1$.
Then this lemma follows from the last statement in Definition~\ref{definition for optimal combo}.
\end{proof}

\begin{definition}	\label{construction of monomial}
	Lemma~\ref{combo} gives an explicit expression of $\widetilde{v}_{h(\TT_1)}$ as the sum of terms labeled by optimal combos. For a combo $(\tau, \vec{b}_{\bullet, \tau})$, we call $$\sgn(\tau)\prod_{P\in \TT_1} \prod\limits_{i=1}^{\mathbbm{x}'_1}\frac{(\widetilde a_{Q_i})^{b_{P,\tau,i}}}{b_{P,\tau,i}!}$$ its \emph{corresponding monomial}.
	
	Two combos $(\tau, \vec{b}_{\bullet, \tau})$ and $(\tau', \vec{b}'_{\bullet, \tau'})$ have a same corresponding monomial (with possibly different coefficients) if and only if $$\sum_{P\in \TT_1}b_{P,\tau, i}=\sum_{P\in \TT_1}b'_{P,\tau, i}$$ for all $1\leq i\leq \mathbbm{x}'_1.$
\end{definition}

Recall that our task is to prove that $\widetilde{v}_{h(\TT_1)}$ is not divisible by $p$. For this it is enough to show that $\widetilde{v}_{h(\TT_1)}$ has a monomial
 whose coefficient is not divisible by $p$. To this end, we restrict our study to those monmials corresponding to some ``extreme'' optimal combos. 

\begin{lemma}
	For each combo $(\tau, \vec{b}_{\bullet, \tau})$ for $\TT_1$, we have \begin{equation}\label{upper bound for bP12}
	\sum_{P\in \TT_1}b_{P,\tau, 1}\leq \sum_{P\in \TT_1} \Big\lfloor\frac{pP_x}{d}\Big\rfloor\quad\textrm{and}\quad \sum_{P\in \TT_1}b_{P,\tau, 2}\leq \sum_{P\in \TT_1} \Big\lfloor\frac{pP_y}{d}\Big\rfloor,
	\end{equation}
	where $P_x$ and $P_y$ are the $x,y$-coordinates of $P$.
\end{lemma}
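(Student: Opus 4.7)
The strategy is to extract the inequality directly from the defining relation \eqref{linear combo} of a combo by projecting onto a single coordinate, and then sum over $P$ using only that $\tau$ is a permutation of $\TT_1$. I will prove the bound on $\sum_{P\in\TT_1} b_{P,\tau,1}$; the bound on $\sum_{P\in\TT_1} b_{P,\tau,2}$ then follows by the symmetry $x\leftrightarrow y$ of the isosceles right triangle $\Delta$.

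First, I read off the $x$-component of \eqref{linear combo}. Since $Q_1=(d,0)$ and $Q_2=(0,d)$, and for every $i\geq 3$ the point $Q_i$ lies in $\TT_1'\subset \Delta$ so that its $x$-coordinate $(Q_i)_x$ is a non-negative integer, the equation
\[
\sum_{i=1}^{\mathbbm{x}'_1} b_{P,\tau,i}\,Q_i \;=\; p\tau(P)-P
\]
forces
\[
d\,b_{P,\tau,1} \;\leq\; \sum_{i=1}^{\mathbbm{x}'_1} b_{P,\tau,i}\,(Q_i)_x \;=\; p\,(\tau(P))_x - P_x \;\leq\; p\,(\tau(P))_x,
\]
since all $b_{P,\tau,i}\geq 0$, $(Q_i)_x\geq 0$, and $P_x\geq 0$. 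Because $b_{P,\tau,1}$ is a non-negative integer, this yields
\[
b_{P,\tau,1} \;\leq\; \Big\lfloor \frac{p\,(\tau(P))_x}{d} \Big\rfloor.
\]

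Finally, I sum over $P\in\TT_1$ and use the fact that $\tau$ is a permutation of $\TT_1$, so that re-indexing $P'=\tau(P)$ gives
\[
\sum_{P\in \TT_1} b_{P,\tau,1} \;\leq\; \sum_{P\in\TT_1}\Big\lfloor\frac{p\,(\tau(P))_x}{d}\Big\rfloor \;=\; \sum_{P'\in\TT_1}\Big\lfloor\frac{p\,P'_x}{d}\Big\rfloor,
\]
which is the desired bound. The argument for the $y$-coordinate is identical, using $(Q_2)_y=d$, $(Q_1)_y=0$, and $(Q_i)_y\geq 0$ for $i\geq 3$.

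There is no serious obstacle here: the content of the lemma is simply that, among the $Q_i$'s, only $Q_1=(d,0)$ contributes to the $x$-direction at the maximal rate $d$, so the coefficient $b_{P,\tau,1}$ is a priori bounded by the $x$-budget $p(\tau(P))_x-P_x$ divided by $d$. The only mild subtlety is to remember that the $b_{P,\tau,i}$ are integers (so we may take the floor) and that $\tau$ permutes $\TT_1$ (so the summation re-indexing is legitimate); both are immediate from Definition~\ref{definition for optimal combo}.
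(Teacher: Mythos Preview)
Your proof is correct and follows essentially the same approach as the paper: project \eqref{linear combo} onto the $x$-coordinate to bound $b_{P,\tau,1}$ by $\lfloor p(\tau(P))_x/d\rfloor$, then sum and re-index using that $\tau$ is a permutation. The paper phrases the pointwise bound as $b_{\tau^{-1}(P),\tau,1}\leq \lfloor pP_x/d\rfloor$, which is just your inequality with $P$ replaced by $\tau^{-1}(P)$.
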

\begin{proof}
	We will prove the first inequality, and the proof of the second is similar
	
	Recall that $Q_1=(d,0)$. By equality~\eqref{linear combo}, we get 
	\begin{equation}\label{bound for coefficient of (d,0)}
	b_{\tau^{-1}(P),\tau,1}\leq \Big\lfloor \frac{pP_x}{d}\Big\rfloor.
	\end{equation} 
	Hence, we have that
	\[
	\sum_{P\in \TT_1}b_{P,\tau, 1}=\sum_{P\in \TT_1}b_{\tau^{-1}(P),\tau, 1}\leq \sum_{P\in \TT_1} \Big\lfloor\frac{pP_x}{d}\Big\rfloor.\qedhere
	\]
\end{proof}
\begin{definition}\label{special combo}
	 We call a combo \emph{special} if it is optimal and both inequalities in \eqref{upper bound for bP12} are equalities.
\end{definition}
 Notice that these two sums are the exponents of $\widetilde a_{Q_1}$ and $\widetilde a_{Q_2}$ in the corresponding monomial; so special combos contribute to terms in $\widetilde{v}_{h(\TT_1)}$ with maximal degrees in the coefficients $\widetilde a_{Q_1}$ and $\widetilde a_{Q_2}$ at the two vertices of $\Delta$.
%Since there does not always exists a combo satisfying two maximal conditions in 
%Definition~\ref{special combo}. We need the following lemma.

Recall that for each point $P$ in $\MM(\Delta)$, we denoted by $P\%$ its residue in $\square_\Delta$. 

\begin{notation}
	We put $\TT_1=\TT_{1,1}\sqcup\TT_{1,2}$, where $\TT_{1,1}=\{P\in \TT_1\;|\;(pP)\%\in \TT_1\}$ and $\TT_{1,2}$ is the complement of $\TT_{1,1}$ in $\TT_1$. In other words, we have $\TT_{1,2}=\{P\in \TT_1\;|\;(pP)\%\notin \TT_1\}$.
\end{notation}

\begin{example}\label{example}
	When $d=7$ and $p=17$, the following graph shows the distribution of $\TT_{1,1}$ and $\TT_{1,2}$ in $\TT_1$, where ``$\times$'' and ``$\bullet$'' represent points in $\TT_{1,1}$ and $\TT_{1,2}$ respectively.
	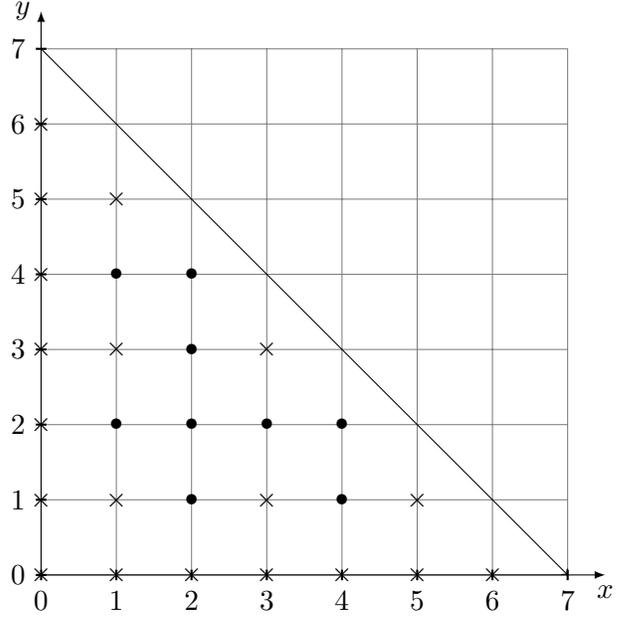
\begin{figure}[h]
		\begin{tikzpicture}
		\tkzInit[xmax=7,ymax=7,xmin=0,ymin=0]
		\tkzGrid
		\tkzAxeXY
		\foreach \Point in {(0,0), (0,1), (0,2), (0,3), (0,4), (0,5), (0,6), (1,0), (1,1),  (1,3), (1,5), (2,0), (3,0), (3,1), (3,3), (4,0), (5,0),(5,1), (6,0)}{
			\node at \Point {$\times$};
		}
		\foreach \Point in {(1,2), (1,4), (2,1), (2,2), (2,3), (2,4), (3,2), (4,1),(4,2)}{
			\node at \Point {$\bullet$};
		}
		\foreach \Point in {}{
			\node at \Point {$\circ$};
		}
		\draw[->] (0,7) -- (7,0);
		\end{tikzpicture},
		\caption{The distributions of $\TT_{1,1}$ and $\TT_{1,2}$ when $d=7$ and $p=17$.} \label{fig: triangle1}
	\end{figure}

\end{example}

\begin{lemma}\label{maximum of exponent}
	 A combo $(\tau, \vec{b}_{\bullet, \tau})$ is special if and only if it satisfies the following two conditions.
	\begin{itemize}
		\item[$(1)$]	For each $P\in \TT_{1,1}$, we have $$\tau^{-1}(P)=(pP)\%$$ and	all other $b_{\tau^{-1}(P),\tau,*}$'s are zero except $b_{\tau^{-1}(P),\tau,1}$ and $b_{\tau^{-1}(P),\tau,2}$ which are equal to $i_{P,1}$ and $i_{P,2}$.

		\item[$(2)$] For each $P\in \TT_{1,2}$, assume that $Q_{j_P}=(pP)\%-\tau^{-1}(P)$, then we have \begin{equation*}
		(pP)\%-\tau^{-1}(P)\in \TT_1'
		\end{equation*}\
		and	all other $b_{\tau^{-1}(P),\tau,*}$'s are zero except $b_{\tau^{-1}(P),\tau,1}, b_{\tau^{-1}(P),\tau,2}$ and $b_{\tau^{-1}(P),\tau,j_{P}} $ which are equal to $i_{P,1}$, $i_{P,2}$ and $1$.
	\end{itemize}

	In particular, if $(\tau, \vec{b}_{\bullet,\tau})$ is a special combo, then $\tau$ uniquely determines $\vec{b}_{\bullet,\tau}$. 
\end{lemma}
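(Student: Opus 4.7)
I would prove the biconditional in two directions.

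For the $(\Leftarrow)$ direction, given $\tau$ and $\vec{b}_{\bullet,\tau}$ defined as in conditions (1) and (2), I would directly verify that $\tau$ is a permutation of $\TT_1$, that \eqref{linear combo} holds, that each $\vec{b}_{\tau^{-1}(P),\tau}$ is $(pP-\tau^{-1}(P))$-minimal (the sum of its entries equals $\lceil w(pP-\tau^{-1}(P))\rceil$), that $\tau$ is a minimal permutation of $\TT_1$, and that both sum equalities defining a special combo are saturated. The minimality of $\tau$ follows because each $P\in\TT_{1,1}$-contribution is $\lfloor pP_x/d\rfloor+\lfloor pP_y/d\rfloor$ (an integer, so zero residue) and each $P\in\TT_{1,2}$-contribution is $\lfloor pP_x/d\rfloor+\lfloor pP_y/d\rfloor+1$ (the smallest value attainable at each such $P$). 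As a byproduct this yields
\[
h(\TT_1) \;=\; \sum_{P\in\TT_1}\bigl(\lfloor pP_x/d\rfloor + \lfloor pP_y/d\rfloor\bigr) + |\TT_{1,2}|.
\]

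For the $(\Rightarrow)$ direction, suppose $(\tau,\vec{b}_{\bullet,\tau})$ is special. The per-$P$ upper bound $b_{\tau^{-1}(P),\tau,1}\leq\lfloor pP_x/d\rfloor$ (and analogously for $y$) derived in the proof of \eqref{upper bound for bP12}, combined with the specialness equalities, force term-wise equality $b_{\tau^{-1}(P),\tau,1}=\lfloor pP_x/d\rfloor$ and $b_{\tau^{-1}(P),\tau,2}=\lfloor pP_y/d\rfloor$ for every $P\in\TT_1$. Substituting into \eqref{linear combo} gives $\sum_{i\geq 3}b_{\tau^{-1}(P),\tau,i}\,Q_i=(pP)\%-\tau^{-1}(P)$, whence $\tau^{-1}(P)\leq(pP)\%$ componentwise by non-negativity. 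The $(pP-\tau^{-1}(P))$-minimality of $\vec{b}$, together with the identity $w(pP-\tau^{-1}(P)) = \lfloor pP_x/d\rfloor + \lfloor pP_y/d\rfloor + (r_x+r_y-s)/d$ for $(r_x,r_y)=(pP)\%$ and $s=\tau^{-1}(P)_x+\tau^{-1}(P)_y$, yields $\sum_{i\geq 3}b_{\tau^{-1}(P),\tau,i}=\lceil(r_x+r_y-s)/d\rceil$. This is $\geq 0$ on $\TT_{1,1}$ (with equality only if $\tau^{-1}(P)=(pP)\%$) and $\geq 1$ on $\TT_{1,2}$ (with equality only if $(pP)\%-\tau^{-1}(P)\in\TT_1'$).

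The main obstacle is to convert these pointwise inequalities into pointwise equalities via a global counting. Since $\tau$ is optimal, $h(\TT_1,\tau)=h(\TT_1)$, and summing the minimality identity $\sum_i b_{\tau^{-1}(P),\tau,i}=\lceil w(pP-\tau^{-1}(P))\rceil$ over $P\in\TT_1$, together with the already-determined values of $b_{\tau^{-1}(P),\tau,1}$ and $b_{\tau^{-1}(P),\tau,2}$, yields
\[
\sum_{P\in\TT_1}\sum_{i\geq 3}b_{\tau^{-1}(P),\tau,i} \;=\; h(\TT_1) - \sum_{P\in\TT_1}\bigl(\lfloor pP_x/d\rfloor + \lfloor pP_y/d\rfloor\bigr) \;=\; |\TT_{1,2}|,
\]
invoking the formula established in the $(\Leftarrow)$ step. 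Combined with the pointwise lower bounds above, this forces each $\TT_{1,2}$-summand to equal exactly $1$ (giving condition (2)) and each $\TT_{1,1}$-summand to equal exactly $0$ (giving condition (1)). The ``in particular'' clause, that $\tau$ alone determines $\vec{b}_{\bullet,\tau}$, is immediate once conditions (1) and (2) are in force, since they specify every coordinate of the vector explicitly.
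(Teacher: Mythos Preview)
Your proof is correct. For $(\Leftarrow)$ you and the paper argue identically: the given $\tau$ achieves the pointwise lower bound $\lceil w(pP-\tau^{-1}(P))\rceil=\lfloor pw(P)\rfloor$ at every $P$, so it is minimal and, as a byproduct, $h(\TT_1)=\sum_P\lfloor pw(P)\rfloor$. (The paper's displayed formula in this step contains a typo; it should read $\lfloor pw(\tau(P))\rfloor$, not $\lfloor w(P)\rfloor$.)

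For $(\Rightarrow)$ the paper offers only a one-sentence contrapositive (``if a condition fails then either the degree exceeds $h(\TT_1)$ or an exponent of $\widetilde a_{Q_1}$ or $\widetilde a_{Q_2}$ is not maximal''), whereas you give a direct argument: the specialness equalities pin down $b_{\cdot,1}$ and $b_{\cdot,2}$ term by term, and then the global count $\sum_P\sum_{i\ge3}b_i=h(\TT_1)-\sum_P(\lfloor pP_x/d\rfloor+\lfloor pP_y/d\rfloor)=|\TT_{1,2}|$ squeezes the pointwise lower bounds to equalities. Your route is more transparent and makes explicit the reliance on the identity $h(\TT_1)=\sum_P\lfloor pw(P)\rfloor$; the paper's contrapositive needs the same identity (to conclude ``degree $>h(\TT_1)$'') but leaves it implicit. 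One small logical wrinkle: your derivation of that identity in the $(\Leftarrow)$ step is conditional on being handed a combo satisfying (1)--(2), so invoking it inside $(\Rightarrow)$ is, strictly speaking, a forward reference to the existence statement (Lemma~\ref{like term of special combo}). This is harmless here, since the unconditional pointwise bound $\lceil w(pP-R)\rceil\ge\lfloor pw(P)\rfloor$ for all $R\in\TT_1$, together with an easy Hall-type matching on weight levels, establishes the identity outright; but it is worth flagging since the paper glosses over the same point.
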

\begin{proof}
	
	``$\Longleftarrow$''. Let $(\tau, \vec{b}_{\bullet, \tau})$ be any combo for $\TT_1$ which satisfies these two conditions.

	We easily see that for $(\tau, \vec{b}_{\bullet, \tau})$ to be special, it is enough to show that $\tau$ is minimal, which follows directly from $$h(\TT_1)\geq \sum_{P\in \TT_1}\lfloor w(P)\rfloor\quad \textrm{and}\quad \lceil w(p\tau(P)-P)\rceil=\lfloor w(P)\rfloor.$$

	``$\Longrightarrow$''. Assume that $(\tau, \vec{b}_{\bullet, \tau})$ fails one of these conditions. Then it is easy to check that the monomial corresponding to this combo either has degree greater than equal to $h(\TT_1)$ or the exponent of $\widetilde a_{Q_1}$ or $\widetilde a_{Q_2}$ is not maximal, a contraction to $(\tau, \vec{b}_{\bullet, \tau})$ being special.
\end{proof}
\begin{example}\label{an example for minimal permutation}
	The following gives an example of a minimal permutation of $\TT_1$ in the case of Example~\ref{example}.
Let \begin{eqnarray*}
 \tau^{-1}(0, 0)=(0, 0), &  \tau^{-1}(0, 1)=(0, 3), & \tau^{-1}(0, 2)=(0, 6),\\
 \tau^{-1}(0, 3)=(0, 2), &  \tau^{-1}(0, 4)=(0, 5), &  \tau^{-1}(0, 5)=(0, 1),\\
 \tau^{-1}(0, 6)=(0, 1), & \tau^{-1}(1, 0)=(3, 0),& \tau^{-1}(1, 1)=(3, 3),\\
   \tau^{-1}(1, 5)=(3, 1),& \tau^{-1}(2, 0)=(6, 0), &  \tau^{-1}(3, 0)=(2, 0), \\
\tau^{-1}(3, 1)=(2, 3), &\tau^{-1}(3, 3)=(2, 2), &  \tau^{-1}(4, 0)=(5, 0), \\
\tau^{-1}(5, 0)=(1, 0),&\tau^{-1}(5, 1)=(1, 3), &  \tau^{-1}(6, 0)=(4, 0), \\
 \tau^{-1}(1, 2)=(2, 1),& \tau^{-1}(1, 4)=(1, 1), &  \tau^{-1}(2, 1)=(4, 1).  \\
\tau^{-1}(2, 2)=(2, 4), &  \tau^{-1}(2, 3)=(5, 1), & \tau^{-1}(2, 4)=(1, 4),\\
\tau^{-1}(3, 2)=(1, 2), &  \tau^{-1}(4, 1)=(4, 2), & \tau^{-1}(4, 2)=(1, 5).\\
\end{eqnarray*}
 From the last statement in Lemma~\ref{maximum of exponent}, it determines a unique special combo. We leave it to the reader to complete its corresponding special combo.
\end{example}

\begin{lemma}\label{like term of special combo}
	There is at least one special (optimal) combo among all combos for $\TT_1$. 
\end{lemma}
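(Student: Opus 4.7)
\medskip

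The plan is to recast the lemma as a bipartite matching problem and apply Hall's marriage theorem. By Lemma~\ref{maximum of exponent}, producing a special combo is equivalent to constructing a bijection $\phi=\tau^{-1}\colon\TT_1\to\TT_1$ such that $\phi(P)=\eta(P)$ for $P\in\TT_{1,1}$ and $\phi(P)=\eta(P)-Q_P$ for $P\in\TT_{1,2}$, where $Q_P\in\TT_1'\setminus\{O,Q_1,Q_2\}$ and $\phi(P)\in\TT_1$. The restriction $\phi|_{\TT_{1,1}}$ is forced and, by Lemma~\ref{counting points in a parallelogram}, maps $\TT_{1,1}$ bijectively onto $\eta(\TT_{1,1})\subseteq\TT_1$. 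Setting $B:=\TT_1\setminus\eta(\TT_{1,1})$, the same lemma yields $|B|=|\TT_{1,2}|$. Thus the task reduces to building a bijection $\phi_0\colon\TT_{1,2}\to B$ satisfying $\eta(P)-\phi_0(P)\in\TT_1'\setminus\{O\}$ for every $P\in\TT_{1,2}$; the exclusion of $Q_1$ and $Q_2$ is automatic because $\eta(P)$ has both coordinates in $[0,d-1]$, so $\eta(P)-Q_1$ and $\eta(P)-Q_2$ would have a negative coordinate.

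The plan is to produce $\phi_0$ by applying Hall's marriage theorem to the bipartite graph on $\TT_{1,2}\sqcup B$ with $P\sim R$ iff $\eta(P)-R\in\TT_1'\setminus\{O\}$. For $P\in\TT_{1,2}$ with $\eta(P)=(a,b)$ (so $a,b\in[1,d-1]$ and $a+b\geq d$), the neighborhood is
$$N(P)=\{R\in B:\,0\leq R_x\leq a,\ 0\leq R_y\leq b,\ a+b-d\leq R_x+R_y\leq d-1\}.$$
This "shifted triangle" intersected with $B$ is substantial, with cardinality that grows quadratically in $d$ away from the boundary. To verify Hall's condition $|N(S)|\geq|S|$ for every $S\subseteq\TT_{1,2}$, I would exploit the dominance partial order on $\eta(\TT_{1,2})\subseteq\square_{\Delta}^{\Int}$: if $P,P'\in\TT_{1,2}$ and $\eta(P)\leq\eta(P')$ componentwise, then the rectangular parts of $N(P)$ and $N(P')$ are nested, so $\bigcup_{P\in S}N(P)$ is governed by the componentwise-maximal elements of $\eta(S)$. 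A double-counting argument, combined with the identity $|\TT_{1,2}|=|B|$, then yields the inequality.

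The main obstacle is the implicit description of $B$: a point $R\in\TT_1$ lies in $B$ iff $\eta^{-1}(R)=(p'R_x\!\bmod d,\,p'R_y\!\bmod d)$, where $p'p\equiv 1\pmod d$, has coordinate sum $\geq d$. This condition interacts subtly with the shifted triangles that define each $N(P)$, so directly counting $|N(S)|$ requires understanding how the $\eta$-action shuffles $\TT_1$ and $\square_{\Delta}^{\Int}\setminus\TT_1$. If the direct Hall verification proves too delicate, a cleaner alternative is a greedy construction: linearly order $\TT_{1,2}$ by a total order refining the lexicographic order on $\eta(P)$, and at each stage assign $\phi_0(P)$ to any element of $N(P)$ not yet used; correctness then reduces to a localized Hall check at each stage. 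Once $\phi_0$ is produced, the bijection $\phi$ and the forced $\vec b_{\bullet,\tau}$ from Lemma~\ref{maximum of exponent} immediately yield the desired special combo.
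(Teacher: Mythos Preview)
Your reformulation is correct and in fact coincides with the paper's: the paper introduces (in Definition~\ref{maximal combo to YY}) the notion of a \emph{special bijection} $\beta:\YY_0\to m(\YY_0)$, where $\YY_0=\eta(\TT_{1,2})$ and $m(\YY_0)=\TT_1\setminus\eta(\TT_{1,1})$ is exactly your set $B$, and shows that special combos correspond bijectively to special bijections. Your map $\phi_0$ is precisely $\beta\circ\eta$.

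The gap is that you never verify Hall's condition. You assert that a ``double-counting argument, combined with $|\TT_{1,2}|=|B|$, then yields the inequality,'' but you give no such argument, and you immediately acknowledge that the implicit description of $B$ (via the inverse permutation $\eta^{-1}$) is the main obstacle. The dominance/nesting observation you sketch only controls the rectangular part of each $N(P)$, not its intersection with $B$, and the greedy alternative you propose still requires the same localized Hall check you have not carried out. As written, this is a plan, not a proof.

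The paper's route is entirely different: it does not attempt an abstract existence argument at all. Instead it defers to Section~5, where an explicit special bijection $\widetilde\beta$ is built by hand in three pieces $\widetilde\beta_1,\mathbbm s(\widetilde\beta_2),\widetilde\beta_3$. That construction occupies the whole section and relies essentially on the standing hypotheses $p>2d+1$ and $p_0<d/6$ (and ultimately the quantitative bound~\eqref{a weaker condition} on $d$ versus $p_0$): the periodicity of $\YY_0$ with period $p_0$ (Corollary~\ref{period}) is what makes the explicit placement of images possible. Your proposal makes no use of these hypotheses, so even if a Hall-type argument can be pushed through, you would need to identify where the size constraint on $p_0$ enters---for small $d$ relative to $p_0$ the neighborhoods $N(P)$ can be quite thin, and it is not clear that Hall's condition holds unconditionally.
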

\begin{proof}
	In Definition~\ref{maximal combo to YY}, we will give a correspondence between the set of special combos and the set of special bijections (See Definition~\ref{maximal combo to YY}); and in \eqref{Construction of beta}, we  construct an explicit special bijection $\widetilde \beta$. This lemma follows from an easy check that the combo corresponding to $\widetilde \beta$  is special.  

	Since the construction of $\widetilde \beta$ requires nothing but $p>2d+1$ and $d$ to be relatively large with respect to $p_0$ (the residue of $p$ modulo $d$), this is not a circular argument.
\end{proof}

\begin{definition}
	We write $\widetilde{v}_{h(\TT_1)}^\sp$ for $$\sum_{(\tau, \vec{b}_{\bullet, \tau})\ \textrm{special}}\sgn(\tau)\prod_{P\in \TT_1} \prod\limits_{i=1}^{\mathbbm{x}'_1}\frac{( \widetilde a_{Q_i})^{b_{P,\tau,i}}}{b_{P,\tau,i}!},$$
	where the sum runs over all special combos.
\end{definition}
By Lemma~\ref{like term of special combo}, for Theorem~\ref{Thm for 4} to hold, it is enough to prove the following.
\begin{proposition}\label{subcombo}

There is a monomial in $\widetilde{v}_{h(\TT_1)}^\sp$ with coefficient not divisible by $p$.
\end{proposition}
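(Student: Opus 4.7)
The goal is to exhibit a single monomial appearing in $\widetilde{v}_{h(\TT_1)}^\sp$ whose coefficient is a $p$-adic unit. My plan is to (a) reduce the coefficient of any such monomial to a signed count of special combos, (b) choose a ``rigid'' monomial coming from the concrete special combo $\widetilde\beta$ constructed in \eqref{Construction of beta}, and (c) carry out the resulting combinatorial sign computation.

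\textbf{Step 1 (reduction to a sign sum).} By Lemma~\ref{maximum of exponent}, a special combo $(\tau,\vec b_{\bullet,\tau})$ is determined by $\tau$, and $\tau$ is rigid on $\TT_{1,1}$ (forced by $\tau^{-1}(P)=(pP)\%$). On $\TT_{1,2}$ the only freedom is the choice of a function $P\mapsto j_P$ such that $P\mapsto (pP)\%-Q_{j_P}$ is a bijection from $\TT_{1,2}$ onto $\TT_1\setminus\{(pP)\%:P\in\TT_{1,1}\}$. For every $P\in\TT_1$ the exponents $i_{P,1}=\lfloor pP_x/d\rfloor$ and $i_{P,2}=\lfloor pP_y/d\rfloor$ are bounded above by $p-1$, so each factorial $b_{P,\tau,i}!$ occurring in \eqref{Ux1} is a $p$-adic unit. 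Moreover the monomial produced by a special combo, as well as its factorial denominator, depends only on the multiset $Y:=\{j_P\}_{P\in\TT_{1,2}}$. Hence the coefficient of the corresponding monomial $\mathfrak m_Y$ in $\widetilde v_{h(\TT_1)}^\sp$ is a $p$-unit multiple of
\[
\Sigma(Y)\;:=\;\sum_{\tau\text{ special with shifts }Y}\sgn(\tau),
\]
and Proposition~\ref{subcombo} follows provided some $Y$ satisfies $\Sigma(Y)\not\equiv 0\pmod p$.

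\textbf{Step 2 (choice of a rigid monomial).} I will take $Y_0$ to be the shift-multiset of the explicit special combo $\widetilde\beta$ produced in \eqref{Construction of beta}. The construction in Section~4 is designed so that almost every point $P\in\TT_{1,2}$ admits only one value of $j_P$ which both keeps $(pP)\%-Q_{j_P}$ inside $\TT_1$ and is compatible with the global bijectivity constraint. The ``ambiguous'' points, i.e.\ those $P$ where $j_P$ can be altered while still yielding a special combo with the same multiset $Y_0$, form a small controlled subset $A\subseteq\TT_{1,2}$, and the allowed reassignments on $A$ organise into short swap orbits.

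\textbf{Step 3 (evaluation and main obstacle).} Freezing the rigid part, $\Sigma(Y_0)$ becomes the determinant of a small $0/1$ matrix indexed by $A$, or equivalently a product of determinants of the blocks associated to each swap orbit. Computing these block determinants and showing that their product is congruent modulo $p$ to $\pm 1$ (up to a product of integers bounded by $p_0$ and thus coprime to $p$) is the heart of the argument, and the main obstacle: one must rule out systematic sign cancellation among competing special combos producing $\mathfrak m_{Y_0}$. This is exactly where the numerical hypothesis \eqref{a weaker condition} enters: the bound on $d$ relative to $p_0$ (and $d_0$) ensures that $|A|$ is small and that each swap orbit is of a simple type whose contribution to the determinant is visibly a $p$-unit. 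Combining Steps~1--3 yields a monomial in $\widetilde v_{h(\TT_1)}^\sp$ with coefficient not divisible by $p$, which by Proposition~\ref{reduce to Ef} implies Theorem~\ref{Thm for 4}.
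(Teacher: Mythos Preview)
Your Steps 1 and 2 are correct and track the paper closely: the reduction to a signed count over special combos sharing a fixed shift-multiset is exactly Lemma~\ref{same terms} together with Lemma~\ref{monomial} and Definition~\ref{definition for congruence}, and choosing the monomial attached to the explicit $\widetilde\beta$ of \eqref{Construction of beta} is precisely what the paper does.

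The gap is in Step 3. You frame the remaining problem as evaluating a block determinant over an ambiguous set $A$ and argue that the hypothesis \eqref{a weaker condition} keeps $|A|$ small enough for each block to be ``visibly a $p$-unit.'' This is not the mechanism, and as stated it does not rule out cancellation. The paper's point is structural rather than numerical: the map $\widetilde\beta$ is not an arbitrary special bijection but is built to be \emph{maximal} for the orders $\prec_1$ (on the diagonal part $\widetilde\beta_1$) and $\prec_2$ (on $\widetilde\beta_2$). Proposition~\ref{thm 2k} uses these maximality properties to show that every $\beta'$ related to $\widetilde\beta$ is forced to be \emph{symmetric} in the sense of Definition~\ref{symmetric}, and that the only freedom is, for each $P\in\LL_2$ with $P-(P-\widetilde\beta_2(P))^\vee\in m(\YY_0)$, an independent binary choice between $P-\widetilde\beta(P)$ and its reflection. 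Symmetry then pins down the sign: flipping one such choice replaces a $2$-cycle $P\leftrightarrow m(Q)$ of $m\circ\beta'$ by the $2$-cycle $P\leftrightarrow m(Q')$ while swapping which of $m(Q),m(Q')$ is a fixed point, so $\sgn(\beta')$ is unchanged. Hence all related $\beta'$ have the \emph{same} sign and the signed count is exactly $\pm 2^k$, not merely ``$\pm 1$ up to factors bounded by $p_0$.'' Since $p$ is odd and the factorials $b_{P,\tau,i}!$ are all $<p!$, the coefficient $\pm 2^k/\prod b_{P,\tau,i}!$ is a $p$-unit.

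You also misplace the role of \eqref{a weaker condition}. It is not used to bound $|A|$ or to control a determinant; it enters only in the construction of $\overline\beta_2$ (see \eqref{estimate the order}--\eqref{estimate the order1}), guaranteeing that the various shifts of $\KK_2$ needed to define $\widetilde\beta_2$ actually fit inside $\square_\Delta$. Once $\widetilde\beta$ exists with its maximality properties, the no-cancellation argument above goes through without further appeal to the size of $d$.
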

Its proof will be given later.

%\Ren{\begin{lemma}\label{sigma'}
%		When $\Delta$ is an isosceles right triangle, we have $$h(\TT_1)=\sum_{P\in \TT_1}\lfloor w(P)\rfloor.$$
%	\end{lemma}

%	\Ren{For the notation issue, we will use $\tau$ to represent $\tau^{-1}$. Therefore, constructing $\tau$ is equal to construct $\tau$.}
%	
%	\begin{proof}
%		Since all the points in $\TT_1$ has weight less than $1$, we have $$h(\TT_1, \tau)=\sum_{P\in \TT_1}\big\lceil w(p\tau(P)-P)\big\rceil\geq \sum_{P\in \TT_1}\lfloor w(P)\rfloor\quad \textrm{for all}\ \tau \in \Iso(\TT_1).$$
%		
%		
%		On the other hand, for any point $P=(a, b)$ in $\TT_1$, if $(pP)\%$ is in $\TT_1$, we put $\tau'(P)=(pP)\%.$ Otherwise, from Lemma \ref{construction of sigma}, we know that $(a, 2d-b-a)\in T_0$. Put $\tau'(P)=m((a, 2d-b-a))=(a+b-d, d-a)$. We know that $$P-\tau'(P)=(d-b, a+b-d)\in \TT_1.$$ Then we construct $\tau$ satisfying $h(\TT_1, \tau')= \sum\limits_{P\in \TT_1}\lfloor w(P)\rfloor$.
%\end{proof}}

	By the last statement of Lemma~\ref{maximum of exponent}, we are reduced to studying minimal permutations in special combos, which will be further reduced by the correspondence given in Definition~\ref{maximal combo to YY} soon.

	\begin{definition}
		For each point $P\in \square_\Delta$, we call point $(d,d)-P$ its \emph{mirror reflection} and denote it by $m(P)$.
	\end{definition}
	\begin{notation}
		Let $\YY$ be the set consisting of all lattice points strictly inside the upper right triangle of $\square_\Delta$.
		We put $$\YY_0:=\Big\{(pQ)\%\;\big|\;Q\in \TT_{1,2}\Big\}$$ to be a subset of $\YY$.
	\end{notation}

%	The following lemma simplifies the problem of studying maximal combos to a more accessible one, which is mainly dealt with in the next subsection. 

\begin{lemma}\label{symmetric to y=d-x}
	We have $$\Big\{(pP)\%\
	\;\big|\;P\in \TT_1\Big\}=\YY_0\sqcup (\TT_1\backslash m(\YY_0)).$$
\end{lemma}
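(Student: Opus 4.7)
The plan is to decompose the left-hand side according to the partition $\TT_1 = \TT_{1,1} \sqcup \TT_{1,2}$, and then identify each piece separately. By the very definitions of $\TT_{1,1}$, $\TT_{1,2}$, and $\YY_0$, one has
\[
\{(pP)\% \mid P \in \TT_1\} = \eta(\TT_{1,1}) \;\sqcup\; \YY_0,
\]
where $\eta$ is the permutation of $\square_\Delta^{\Int}$ from Lemma~\ref{counting points in a parallelogram}, and the union is genuinely disjoint since $\eta(\TT_{1,1}) \subset \TT_1$ while $\YY_0 \subset \YY$ lives strictly above the hypotenuse $x+y=d$ (a quick check using $\gcd(p,d)=1$ and $0 < P_x+P_y < d$ for $P \in \TT_{1,2}$ rules out the equality $(pP_x\%) + (pP_y\%) = d$). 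So the whole content of the lemma is the identity $\eta(\TT_{1,1}) = \TT_1 \setminus m(\YY_0)$, and for this I would use a cardinality argument.

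The key mechanism is that the mirror reflection $m(x,y) = (d-x, d-y)$ commutes with $\eta$ on the subset $\{P \in \square_\Delta^{\Int} : pP_x \not\equiv 0, pP_y \not\equiv 0 \pmod d\}$. Indeed a direct calculation shows $\eta(m(Q)) = ((-pQ_x)\%, (-pQ_y)\%)$ and $m(\eta(Q)) = (d - pQ_x\%, d - pQ_y\%)$, and these agree exactly when neither $pQ_x$ nor $pQ_y$ is divisible by $d$. For any $Q \in \TT_{1,2}$, both coordinates are strictly positive (an axis point $P \in \TT_1$ has $\eta(P)$ on the same axis, hence $\eta(P) \in \TT_1$, so $P \in \TT_{1,1}$), so the commutation applies.

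Now I verify the three ingredients needed: (i) $m(\YY_0) \subset \TT_1$: for $(a,b) \in \YY_0 \subset \YY$ one has $0 < a,b < d$ and $a+b > d$, whence $0 < d-a, d-b < d$ and $(d-a)+(d-b) < d$; (ii) $m(\YY_0) \cap \eta(\TT_{1,1}) = \emptyset$: writing an element of $m(\YY_0)$ as $m(\eta(Q))$ for $Q \in \TT_{1,2}$, the commutation gives $\eta^{-1}(m(\eta(Q))) = m(Q)$, and since $Q \in \TT_1$ has positive coordinates with $Q_x + Q_y < d$, one gets $m(Q) \in \square_\Delta^{\Int}$ with $m(Q)_x + m(Q)_y > d$, so $m(Q) \notin \TT_1 \supset \TT_{1,1}$; (iii) the cardinality check $|m(\YY_0)| = |\YY_0| = |\TT_{1,2}| = |\TT_1| - |\TT_{1,1}| = |\TT_1 \setminus \eta(\TT_{1,1})|$, where the last equality uses that $\eta$ is a bijection of $\square_\Delta^{\Int}$ and $\eta|_{\TT_{1,1}}$ lands in $\TT_1$. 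Combining (i)--(iii), the containment $m(\YY_0) \subset \TT_1 \setminus \eta(\TT_{1,1})$ is forced to be an equality, giving $\eta(\TT_{1,1}) = \TT_1 \setminus m(\YY_0)$.

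The only genuinely delicate point is the commutativity $\eta \circ m = m \circ \eta$, which fails at axis points — so the main care in writing the proof goes into verifying that every point one applies this identity to lies away from the axes (and the origin). Everything else is bookkeeping: set partitions, the bijectivity of $\eta$, and a cardinality count.
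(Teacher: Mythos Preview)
Your proof is correct and follows essentially the same strategy as the paper's: both arguments hinge on the commutativity of the mirror reflection $m$ with the multiplication-by-$p$ (or by $p'$) map modulo $d$, used to show that $\eta(\TT_{1,1})$ and $m(\YY_0)$ are disjoint inside $\TT_1$, with the equality then forced by a cardinality count. The paper's version is terser---it phrases the key step as ``$(p'P_0)\%$ and $(p'm(P_0))\%$ are mirror reflections, a contradiction''---and leaves both the cardinality argument and the axis-point subtleties implicit, whereas you spell these out carefully; but the underlying mechanism is the same.
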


\begin{proof}
	Suppose that there exists a point $P_0\in \TT_1$ such that $P_0 = (pQ_0)\%$ and $m(P_0) = pQ_1\%$ for two points $Q_0, Q_1\in \TT_1$.
Let $p'$ be an integer such that $p'p\equiv1\pmod d$. We know that $(p'P_0)\%=Q_0$ and $[p'm(P_0)]\%=Q_1$ are mirror reflections, a contradiction.
\end{proof}

Figure~\ref{picture for distribution of dp} shows the distribution of $\{(pP)\%\
|P\in \TT_1\}$ in the case of Example~\ref{example}, where ``$\bullet$'' and ``$\times$'' represent points in $\YY_0$ and $\{(pP)\%\
|P\in \TT_1\}\backslash \YY_0$ respectively.

	\begin{figure}[h]
		\begin{tikzpicture}
		\tkzInit[xmax=7,ymax=7,xmin=0,ymin=0]
		\tkzGrid
		\tkzAxeXY
		\foreach \Point in {(0,0), (0,1), (0,2), (0,3), (0,4), (0,5), (0,6), (0,1), (1,0) , (1,3), (2,0), (2,2), (2,3), (3,0), (3,1), (3,2), (3,3), (4,0), (5,0), (6,0)}{
			\node at \Point {$\times$};
		}
		\foreach \Point in {(2,6), (3,5), (3,6), (5,3), (5,6), (6,2), (6,3), (6,5), (6,6)}{
			\node at \Point {$\bullet$};
		}
		\foreach \Point in {(1,1), (1,2), (1,4), (1,5), (2,1), (2,4), (4,1), (4,2), (5,1)}{
			\node at \Point {$\circ$};
		}
		\draw[-] (0,7) -- (7,0);
		\end{tikzpicture}.
		\caption{The distributions of $\{(pP)\%\
			|P\in \TT_1\}$ when $d=7$ and $p=17$.}\label{picture for distribution of dp}
	\end{figure}
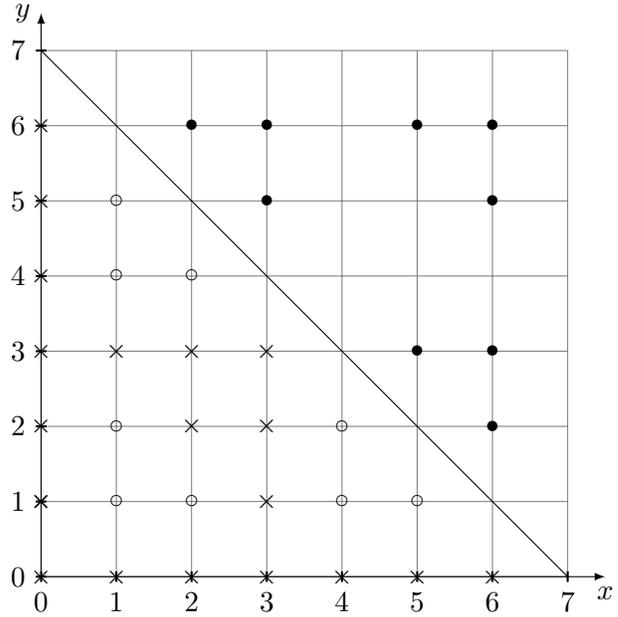

\begin{definition}\label{maximal combo to YY}
	A bijection $\beta:\YY_0\to m(\YY_0)$ is called \emph{special} if $P-\beta(P)\in \TT_1'$ for each point $P\in \YY_0$.
	
	We define a one-to-one correspondence between special combos and special bijections $\beta:\YY_0\to m(\YY_0)$ is defined as follows:

	For a special combo $\big(\tau, \vec{b}_{\bullet, \tau}\big)$, we assign it a special bijection $\beta$ from $\YY_0$ to $m(\YY_0)$ given by	$$\beta(P)=\tau^{-1}((p'P)\%)$$
	for each $P\in \YY_0$.

	In the opposite direction, for a special bijection $\beta: \YY_0\to m(\YY_0)$, we assign to it a special combo $\tau$ given by
	$$\tau^{-1}(P)=\begin{cases}
	(pP)\%& \textrm{if}\ P\in  \TT_{1,1},\\
	\beta((pP)\%)& \textrm{if}\  P\in  \TT_{1,2}.
	\end{cases}$$
	
	Since the composite of these map is the identity map, it is truly a one-to-one correspondence.
\end{definition}
\begin{example}
	The special bijection $\beta: \YY_0\to m(\YY_0)$ corresponding to the special combo in Example~\ref{an example for minimal permutation} is given by
\begin{eqnarray*}
\beta(2, 6)=(1,2),&\beta(3,5)=(1,1),&\beta(3,6)=(2,1),\\
\beta(5, 3)=(4,2),&\beta(5,6)=(1,5),&\beta(6,2)=(5,1),\\
\beta(6, 3)=(4,1),&\beta(6,5)=(1,4),&\beta(6,6)=(2,4).\\
\end{eqnarray*}
\end{example}

%	It is easily seen that the maximal combo $(\tau, \vec{b}_{p\tau P-P})$ satisfies the following:
%\begin{enumerate}[label={[\arabic*]}]
%	\item 	$\begin{cases}
%	[p\tau(P)]\%=P& \textrm{when}\ [p\tau(P)]\%\in \TT_1\\
%	[p\tau(P)]\%-P\in \TT_1'& \textrm{when}\  [p\tau(P)]\%\notin \TT_1,
%	\end{cases}$
%	\item For each $P\in \TT_1$, the only linear decomposition of $p\tau(P)-P$ as in Step 2 is
%	$\begin{cases}
%	p\tau(P)-P= \lfloor\frac{ip}{d}\rfloor(d, 0)+\lfloor\frac{jp}{d}\rfloor(0,d) & \textrm{when}\  [p\tau(P)]\%\in \TT_1,\\
%	p\tau(P)-P= \lfloor\frac{ip}{d}\rfloor(d, 0)+\lfloor\frac{jp}{d}\rfloor(0,d)+1\times([p\tau(P)]\%-P)
%	&\textrm{when}\  [p\tau(P)]\%\notin \TT_1,
%	\end{cases}$
%\newline where $P=p\tau(P)$.
%\end{enumerate}

\begin{notation} 
	\begin{enumerate}
		\item For a special bijection $\beta$, we let $\tau(\beta) \in \Iso(\TT_1)$ denote the minimal permutation of the corresponding special combo. In view of Lemma~\ref{maximum of exponent}, $\tau(\beta)$ uniquely determines $\beta$.
		\item The composite $m\circ \beta$ can be viewed as a permutation of $\YY_0$. Then we denote by $\sgn(\beta)$ the sign of this permutation and also call it the \emph{sign of $\beta$}.
	\end{enumerate}
\end{notation}

\begin{lemma}\label{same terms}
	We have \begin{equation}\label{vsp}
	\widetilde{v}_{h(\TT_1)}^\sp=\sum_{\beta\ \textrm{special}}\sgn(\beta)\prod_{P\in  \TT_{1}} \prod\limits_{i=1}^{\mathbbm{x}'_1}\frac{(\widetilde a_{Q_i})^{b_{P,\tau(\beta),i}}}{b_{P,\tau(\beta),i}!}.
	\end{equation}
\end{lemma}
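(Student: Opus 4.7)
The plan is to use the one-to-one correspondence between special combos and special bijections from Definition~\ref{maximal combo to YY} to rewrite the sum defining $\widetilde v_{h(\TT_1)}^{\sp}$. By Lemma~\ref{maximum of exponent}, each special bijection $\beta$ determines $\tau(\beta)$ uniquely, and in turn the vector $\vec b_{\bullet,\tau(\beta)}$ is uniquely determined; so the monomial $\prod_{P\in\TT_1}\prod_i(\widetilde a_{Q_i})^{b_{P,\tau(\beta),i}}/b_{P,\tau(\beta),i}!$ depends only on $\beta$ and matches term-by-term with the monomial appearing in the definition of $\widetilde v_{h(\TT_1)}^{\sp}$. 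The content of the lemma is therefore the sign identity $\sgn(\tau(\beta))=\sgn(\beta)=\sgn(m\circ\beta)$ for each special $\beta$.

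To establish the sign identity, I would decompose $\tau(\beta)^{-1}$ as a product $\sigma_\beta\circ\widetilde\pi$ of two permutations of $\TT_1$, where $\widetilde\pi$ is independent of $\beta$---defined by $\widetilde\pi(P)=(pP)\%$ for $P\in\TT_{1,1}$ and $\widetilde\pi(P)=m((pP)\%)$ for $P\in\TT_{1,2}$---and $\sigma_\beta$ is the identity on $\TT_1\setminus m(\YY_0)$ while on $m(\YY_0)$ it sends $Q$ to $\beta(m(Q))$. The decomposition is checked directly by examining the two cases in Definition~\ref{maximal combo to YY}. Then $\sgn(\tau(\beta))=\sgn(\sigma_\beta)\sgn(\widetilde\pi)$; since $\sigma_\beta|_{m(\YY_0)}=\beta\circ m$ is conjugate via $m\colon\YY_0\to m(\YY_0)$ to the permutation $m\circ\beta$ of $\YY_0$, one has $\sgn(\sigma_\beta)=\sgn(m\circ\beta)=\sgn(\beta)$. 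The identity thus reduces to the $\beta$-free statement $\sgn(\widetilde\pi)=+1$.

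For $\sgn(\widetilde\pi)=+1$, I would split $\TT_1=A\sqcup\TT_1^\circ$ where $A$ denotes the axis points (those with $P_x=0$ or $P_y=0$). Because $\pi$ preserves both coordinate axes of $\square_\Delta^\Int$, the set $A$ sits inside $\TT_{1,1}$, and $\widetilde\pi|_A=\pi|_A$ decomposes as the product $\pi|_{A_x}\cdot\pi|_{A_y}$ of two commuting, isomorphic permutations (each a copy of multiplication by $p$ on $\ZZ/d\ZZ$), so $\sgn(\widetilde\pi|_A)=+1$. The remaining and hardest step is to show $\sgn(\widetilde\pi|_{\TT_1^\circ})=+1$, which I would attack by analyzing, cycle by cycle of the $\pi$-action on $\TT_1^\circ\cup\YY$, how $\widetilde\pi|_{\TT_1^\circ}$ arises via the folding map $\phi\colon\TT_1^\circ\cup\YY\to\TT_1^\circ$ given by the identity on $\TT_1^\circ$ and by $m$ on $\YY$. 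I expect the parity computation to reduce to showing that each mixed $\pi$-orbit, meeting both $\TT_1^\circ$ and $\YY$, contributes an even permutation to $\widetilde\pi|_{\TT_1^\circ}$; this is the main obstacle in the proof.
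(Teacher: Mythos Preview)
Your decomposition $\tau(\beta)^{-1}=\sigma_\beta\circ\widetilde\pi$ with $\sgn(\sigma_\beta)=\sgn(\beta)$ is correct and considerably more explicit than the paper's argument. The paper's proof is essentially two sentences: it invokes the bijection of Definition~\ref{maximal combo to YY}, observes that $\tau(\beta)|_{\TT_1\setminus m(\YY_0)}$ is independent of $\beta$ and symmetric about $y=x$, and concludes that $\sgn(\tau(\beta))$ depends only on $\sgn(\beta)$, then asserts $\sgn(\tau(\beta))=\sgn(\beta)$.

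Your ``main obstacle'' $\sgn(\widetilde\pi|_{\TT_1^\circ})=+1$, however, is not provable in general. For $d=5$, $p=13$ one has $\TT_1^\circ=\{(1,1),(1,2),(2,1),(1,3),(3,1),(2,2)\}$ and $\widetilde\pi|_{\TT_1^\circ}$ is the product of the three transpositions $\big((1,1)\,(2,2)\big)\big((1,2)\,(3,1)\big)\big((2,1)\,(1,3)\big)$, hence has sign $-1$. So the exact identity $\sgn(\tau(\beta))=\sgn(\beta)$ fails here; it holds only up to the global constant $c=\sgn(\widetilde\pi)\in\{\pm1\}$. The paper's one-line justification via symmetry about $y=x$ does not actually pin down this constant either (a permutation commuting with an involution need not be even).

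The point you are missing is that this global sign is irrelevant. Your decomposition already gives
\[
\widetilde v_{h(\TT_1)}^{\sp}=\sgn(\widetilde\pi)\cdot\sum_{\beta\ \text{special}}\sgn(\beta)\prod_{P\in\TT_1}\prod_{i=1}^{\mathbbm x_1'}\frac{(\widetilde a_{Q_i})^{b_{P,\tau(\beta),i}}}{b_{P,\tau(\beta),i}!},
\]
and the only use of Lemma~\ref{same terms} downstream (Proposition~\ref{core proposition} and equation~\eqref{vh}) is to exhibit a monomial of $\widetilde v_{h(\TT_1)}^{\sp}$ whose coefficient is a $p$-unit times a power of $2$; a factor of $\pm1$ does not affect that. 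So rather than attempting the cycle-by-cycle analysis you outline, simply stop after establishing $\sgn(\tau(\beta))=\sgn(\widetilde\pi)\cdot\sgn(\beta)$ with $\widetilde\pi$ independent of $\beta$: that is the full content required, and it matches what the paper's phrase ``depends only on $\sgn(\beta)$'' actually delivers.
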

\begin{proof}
	First, by the one-to-one correspondence in Definition~\ref{maximal combo to YY}, we know that the sum of $\widetilde{v}_{h(\TT_1)}^\sp$ over all special combos is the same as the sum over all special $\beta$'s.
	Let $\beta$ be special and let $\tau(\beta)$ be the corresponding minimal permutation of $\TT_1$. Since the restriction of $\tau(\beta)$ to $\tau(\beta)^{-1}(\TT_{1,1})=\TT_1\backslash m(\YY_0)$ is symmetric about $y=x$, we know that $\sgn(\tau(\beta))$ depends only on $\sgn(\beta)$. More precisely, we have \[\sgn(\beta)=\sgn\big(\tau(\beta)\big),\qedhere\]
	which completes the proof of this lemma.
\end{proof}
%Since the first component of the product is independent to $\beta$, we are left to discuss the second component. 
\begin{lemma}\label{monomial}
	$(1)$ The contribution to $\widetilde{v}_{h(\TT_1)}^\sp$ in \eqref{vsp} of terms coming from $P \in \TT_{1,1}$ is same for all special bijections, namely, for two special bijections $\beta_1,\beta_2: \YY_0\to m(\YY_0)$, we have $$\prod_{P\in  \TT_{1,1}} \prod\limits_{i=1}^{\mathbbm{x}'_1}\frac{(\widetilde a_{Q_i})^{b_{\tau(\beta_1)^{-1}(P),\tau(\beta_1),i}}}{b_{\tau(\beta_1)^{-1}(P),\tau(\beta_1),i}!}=\prod_{P\in \TT_{1,1}} \prod\limits_{i=1}^{\mathbbm{x}'_1}\frac{(\widetilde  a_{Q_i})^{b_{\tau(\beta_2)^{-1}(P),\tau(\beta_2),i}}}{b_{\tau(\beta_2)^{-1}(P),\tau(\beta_2),i}!}.$$
\noindent$(2)$ For the contributions of terms coming from $P \in \TT_{1,2}$, we have that the equality \begin{equation*}
\prod_{P\in  \TT_{1,2}} \prod\limits_{i=1}^{\mathbbm{x}'_1}\frac{(\widetilde  a_{Q_i})^{b_{\tau(\beta_1)^{-1}(P),\tau(\beta_1),i}}}{b_{\tau(\beta_1)^{-1}(P),\tau(\beta_1),i}!}=\prod_{P\in  \TT_{1,2}} \prod\limits_{i=1}^{\mathbbm{x}'_1}\frac{(\widetilde a_{Q_i})^{b_{\tau(\beta_2)^{-1}(P),\tau(\beta_2),i}}}{b_{\tau(\beta_2)^{-1}(P),\tau(\beta_2),i}!}
\end{equation*}  holds if and only if we have the following equality 
\begin{equation}\label{congruence}
\Big\{P-\beta_1(P)\;\big |\;P\in \YY_0\Big\}^\star=\Big\{P-\beta_2(P)\;\big |\;P\in \YY_0\Big\}^\star\end{equation}
	 as multisets.
\end{lemma}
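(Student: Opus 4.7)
The plan is to read off, from Lemma~\ref{maximum of exponent} together with the $\beta\mapsto \tau(\beta)$ correspondence of Definition~\ref{maximal combo to YY}, exactly which pieces of the combo data $(\tau(\beta),\vec{b}_{\bullet,\tau(\beta)})$ depend on $\beta$ and which do not, and then to translate that dichotomy into the monomial factors appearing on each side.

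For part (1), I will use that if $P\in \TT_{1,1}$ then Lemma~\ref{maximum of exponent} gives $\tau(\beta)^{-1}(P)=(pP)\%$ independently of $\beta$, while the defining relation~\eqref{linear combo} reads $\sum_{i}b_{(pP)\%,\tau(\beta),i}Q_i = pP-(pP)\%\in \Lambda$. Since $\Lambda=\ZZ Q_1+\ZZ Q_2$, the only nonzero entries of $\vec{b}_{(pP)\%,\tau(\beta),\bullet}$ sit at $i=1,2$, with values $i_{P,1}=\lfloor pP_x/d\rfloor$ and $i_{P,2}=\lfloor pP_y/d\rfloor$. Both the preimage $\tau(\beta)^{-1}(P)$ and the vector $\vec{b}_{\tau(\beta)^{-1}(P),\tau(\beta),\bullet}$ therefore depend only on $P$, and each factor of the displayed $\TT_{1,1}$-product is $\beta$-independent.

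For part (2), I will carry out the analogous unpacking for $P\in \TT_{1,2}$. Here $\tau(\beta)^{-1}(P)=\beta((pP)\%)$, and Lemma~\ref{maximum of exponent} tells us that the only nonzero entries of $\vec{b}_{\beta((pP)\%),\tau(\beta),\bullet}$ are $b_1=i_{P,1}$, $b_2=i_{P,2}$, and $b_{j_P(\beta)}=1$, where $j_P(\beta)$ is the unique index with $Q_{j_P(\beta)}=(pP)\%-\beta((pP)\%)\in \TT_1'$. Consequently the $\TT_{1,2}$-factor of the monomial splits as a $\beta$-independent piece (powers of $\widetilde a_{Q_1},\widetilde a_{Q_2}$ with the factorials $i_{P,1}!\,i_{P,2}!$, together with the trivial $1!$'s) times $\prod_{P\in \TT_{1,2}}\widetilde a_{(pP)\%-\beta((pP)\%)}$. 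Reindexing via the bijection $\TT_{1,2}\to \YY_0$, $P\mapsto (pP)\%$ (which is a bijection by construction of $\YY_0$), converts the latter product into $\prod_{Q\in \YY_0}\widetilde a_{Q-\beta(Q)}$.

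The remaining step is purely formal: the indeterminates $\widetilde a_R$ for distinct lattice points $R$ are algebraically independent in $\ZZ_p[\underline{\widetilde a}]$, so the equality $\prod_{Q\in \YY_0}\widetilde a_{Q-\beta_1(Q)}=\prod_{Q\in \YY_0}\widetilde a_{Q-\beta_2(Q)}$ holds if and only if the multisets $\{Q-\beta_1(Q)\}^\star$ and $\{Q-\beta_2(Q)\}^\star$ coincide, which is precisely~\eqref{congruence}. I do not anticipate a genuine obstacle here; everything is a direct consequence of Lemma~\ref{maximum of exponent} and the bijection of Definition~\ref{maximal combo to YY}. The only mild bookkeeping point is to check that the factorials line up on both sides, which they do because each ``new'' entry $b_{j_P(\beta)}$ equals $1$ and $1!=1$, so the factorial prefactors are themselves $\beta$-independent.
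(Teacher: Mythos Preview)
Your proposal is correct and follows essentially the same route as the paper's proof: both arguments invoke Lemma~\ref{maximum of exponent} to see that the $\TT_{1,1}$-factors are entirely $\beta$-independent, then split the $\TT_{1,2}$-product into a $\beta$-independent $Q_1,Q_2$-part times $\prod_{P\in\TT_{1,2}}\widetilde a_{(pP)\%-\beta((pP)\%)}=\prod_{Q\in\YY_0}\widetilde a_{Q-\beta(Q)}$, and conclude via algebraic independence of the $\widetilde a_R$. Your write-up is slightly more explicit than the paper's (you spell out $i_{P,1}=\lfloor pP_x/d\rfloor$, $i_{P,2}=\lfloor pP_y/d\rfloor$ and note that the extra factorial is $1!=1$), but the substance is identical.
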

\begin{proof}

The first statement directly follows from condition (1) for a special permutation in Lemma~\ref{maximum of exponent}.

For any special bijection $\beta$, we have \[\begin{split}
&\prod_{P\in  \TT_{1,2}} \prod\limits_{i=1}^{\mathbbm{x}'_1}\frac{(\widetilde  a_{Q_i})^{b_{\tau(\beta)^{-1}(P),\tau(\beta),i}}}{b_{\tau(\beta)^{-1}(P),\tau(\beta),i}!}\\
=&\prod_{P\in  \TT_{1,2}} \prod\limits_{i=1}^{2}\frac{(\widetilde  a_{Q_i})^{b_{\tau(\beta)^{-1}(P),\tau(\beta),i}}}{b_{\tau(\beta)^{-1}(P),\tau(\beta),i}!}\times \prod_{P\in  \TT_{1,2}}\widetilde a_{(pP)\%-\tau(\beta)^{-1}(P)}\\
=&\prod_{P\in  \TT_{1,2}} \prod\limits_{i=1}^{2}\frac{(\widetilde  a_{Q_i})^{b_{\tau(\beta)^{-1}(P),\tau(\beta),i}}}{b_{\tau(\beta)^{-1}(P),\tau(\beta),i}!}\times \prod_{P\in  \YY_0}\widetilde a_{P-\beta(P)}.
\end{split}\]
Since $$\prod_{P\in  \TT_{1,2}} \prod\limits_{i=1}^{2}\frac{(\widetilde  a_{Q_i})^{b_{\tau(\beta)^{-1}(P),\tau(\beta),i}}}{b_{\tau(\beta)^{-1}(P),\tau(\beta),i}!}$$ is same to all special permutations, we complete the proof of the second statement.
\end{proof}

\begin{definition}\label{definition for congruence}
	We call $\beta, \beta':\YY_0\to m(\YY_0)$ \emph{related} if they satisfy equality~\eqref{congruence}.
\end{definition}

\begin{corollary}\label{same monomial}
	If $\beta_1,\beta_2: \YY_0\to m(\YY_0)$ are two related special bijections and $\sgn(\beta_1)=\sgn(\beta_2)$, then they contribute to a same monomial in $\widetilde{v}_{h(\TT_1)}$.
\end{corollary}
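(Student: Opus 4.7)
The plan is very short, since this corollary is essentially a packaging of the preceding Lemma~\ref{monomial} with the hypothesis on signs. First I would use Lemma~\ref{same terms} to express, for each special bijection $\beta$, its contribution to $\widetilde{v}_{h(\TT_1)}^\sp$ as the single term
\[
\sgn(\beta)\prod_{P\in \TT_1}\prod_{i=1}^{\mathbbm{x}'_1}\frac{(\widetilde a_{Q_i})^{b_{P,\tau(\beta),i}}}{b_{P,\tau(\beta),i}!},
\]
and then split the inner product over $\TT_1$ along the partition $\TT_1=\TT_{1,1}\sqcup \TT_{1,2}$ fixed earlier.

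Next I would handle the two pieces of the product separately. For the factor indexed by $P\in \TT_{1,1}$, Lemma~\ref{monomial}~(1) shows directly that the contribution is independent of the chosen special bijection, so the $\TT_{1,1}$-factors for $\beta_1$ and $\beta_2$ coincide. For the factor indexed by $P\in \TT_{1,2}$, Lemma~\ref{monomial}~(2) asserts that equality of the two contributions is equivalent to the multiset identity \eqref{congruence}, which is precisely the hypothesis that $\beta_1$ and $\beta_2$ are related (Definition~\ref{definition for congruence}).

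Multiplying the two pieces back together, the unsigned monomials attached to $\beta_1$ and $\beta_2$ agree. The additional hypothesis $\sgn(\beta_1)=\sgn(\beta_2)$ then guarantees that the signed contributions of $\beta_1$ and $\beta_2$ are literally equal, so in particular they contribute to a same monomial in $\widetilde{v}_{h(\TT_1)}^\sp$, hence in $\widetilde{v}_{h(\TT_1)}$. There is no genuine obstacle here: the only bookkeeping point is to remember that the product in Lemma~\ref{same terms} contains the combinatorial denominators $b_{P,\tau(\beta),i}!$, but these are already absorbed into the statement of Lemma~\ref{monomial}, so no further care is required.
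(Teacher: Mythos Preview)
Your proposal is correct and matches the paper's intent: the paper states this corollary without proof, treating it as an immediate consequence of Lemma~\ref{monomial} and Definition~\ref{definition for congruence}, which is exactly the argument you spell out. Your splitting along $\TT_1=\TT_{1,1}\sqcup\TT_{1,2}$ and invocation of parts (1) and (2) of Lemma~\ref{monomial}, together with the sign hypothesis, is precisely the intended reasoning.
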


\begin{proposition}\label{core proposition}
	 $(1)$ There exists a special $\widetilde\beta:\YY_0\to m(\YY_0)$ such that every $\beta'$ related to $\widetilde\beta$ is even, i.e. $\sgn(\beta')=1$, and the number of such $\beta'$ is equal to $2^i$ for some integer $i$. 
	 
	 $(2)$ Therefore, there exists a monomial in $\widetilde v_{\mathbbm{x}_1, h(\TT_1)}$ such that its coefficient is in the form of $\frac{2^i}{\mathscr N_1}$, where $N_1$ is an integer which is not divisible by $p$.
\end{proposition}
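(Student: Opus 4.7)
The plan is to first obtain a clean factorization of $\widetilde v_{h(\TT_1)}^{\sp}$ that isolates a $\beta$-independent monomial factor, then reduce part (2) to the combinatorial statement of part (1).

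\textbf{Factorization.} For any special combo corresponding to a special bijection $\beta$, Lemma~\ref{maximum of exponent} implies that $b_{\tau^{-1}(P),\tau,1}$ and $b_{\tau^{-1}(P),\tau,2}$ equal the integers $i_{P,1}, i_{P,2}$ defined by $pP - (pP)\% = i_{P,1}Q_1 + i_{P,2}Q_2$, which depend only on $P \in \TT_1$ and not on the choice of $\beta$. Moreover, when $P \in \TT_{1,2}$, the extra index $j_P$ with $Q_{j_P} = (pP)\% - \beta((pP)\%)$ must satisfy $j_P \geq 3$, because the coordinates of $(pP)\%$ and $\beta((pP)\%)$ both lie in $\{1,\dots,d-1\}$, so their difference cannot be $Q_1 = (d,0)$ or $Q_2 = (0,d)$. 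Regrouping the product over $P$ (rather than over $\tau^{-1}(P)$), Lemma~\ref{same terms} together with Lemma~\ref{monomial} yields
\[
\widetilde v_{h(\TT_1)}^{\sp} \;=\; M_0 \cdot \sum_{\beta\ \text{special}} \sgn(\beta) \prod_{P' \in \YY_0} \widetilde a_{P' - \beta(P')},
\]
where $M_0 := \prod_{P \in \TT_1} \frac{(\widetilde a_{Q_1})^{i_{P,1}} (\widetilde a_{Q_2})^{i_{P,2}}}{i_{P,1}!\,i_{P,2}!}$. Since $P \in \TT_1$ satisfies $w(P) < 1$, hence $0 \leq P_x, P_y \leq d - 1$, we have $i_{P,j} = \lfloor pP_j / d \rfloor \leq p - 1$, so $\mathscr N_1 := \prod_{P \in \TT_1} i_{P,1}!\, i_{P,2}!$ is a $p$-adic unit.

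\textbf{Construction and related bijections.} For part (1), I take $\widetilde\beta$ to be the explicit special bijection constructed in \eqref{Construction of beta} (referenced in Lemma~\ref{like term of special combo}). A bijection $\beta'$ related to $\widetilde\beta$ in the sense of Definition~\ref{definition for congruence} satisfies $\{P - \beta'(P)\}^\star = \{P - \widetilde\beta(P)\}^\star$. The strategy is to show that every such $\beta'$ arises from $\widetilde\beta$ by a sequence of swap operations on admissible pairs $\{P_1, P_2\} \subset \YY_0$ for which both $P_1 - \widetilde\beta(P_2)$ and $P_2 - \widetilde\beta(P_1)$ still lie in $\TT_1'$. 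Each such swap, viewed inside the permutation $m \circ \beta$ of $\YY_0$, also induces a mirror swap on $\{m(P_1), m(P_2)\}$ (since $m$ exchanges the upper-right and lower-left triangles of $\square_\Delta$), so it contributes an even number of transpositions and preserves $\sgn(\beta)$. If the admissible swaps decouple into $i$ independent binary choices, one obtains exactly $2^i$ related bijections, all of the same sign, and $\sgn(\widetilde\beta) = +1$ can be verified directly from the explicit construction.

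\textbf{Deduction of part (2).} From the factorization and part (1), the coefficient of the monomial $M_0 \cdot \prod_Q \widetilde a_Q^{m_Q}$ in $\widetilde v_{h(\TT_1)}^{\sp}$, where $(m_Q)$ is the multiplicity profile of the multiset $\{P - \widetilde\beta(P)\}^\star$, equals
\[
\frac{1}{\mathscr N_1} \cdot \sum_{\beta'\ \text{related to}\ \widetilde\beta} \sgn(\beta') \;=\; \frac{2^i}{\mathscr N_1},
\]
which is of the required form since $\mathscr N_1$ is a $p$-adic unit.

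\textbf{Main obstacle.} The hard part is part (1): identifying the admissible swap pairs and proving they decouple into independent binary choices, so that the total count is exactly $2^i$ and no further rearrangements arise. This demands a careful geometric analysis of which elements of $\TT_1'$ can serve as differences $P - \beta(P)$ for $P \in \YY_0$, together with ruling out longer cycle rearrangements; the hypothesis that $d$ is large relative to $p_0 = p \% d$ will be used to ensure that admissible swaps remain geometrically localized and mutually independent.
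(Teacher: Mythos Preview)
Your factorization and the deduction of part~(2) from part~(1) are correct and match the paper's argument (Lemma~\ref{same terms}, Lemma~\ref{monomial}, and the final computation in the proof of Proposition~\ref{core proposition}).

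The sketch for part~(1), however, misidentifies the combinatorial mechanism. The related bijections $\beta'$ do \emph{not} arise by swapping the images of admissible pairs $\{P_1,P_2\}\subset\YY_0$. The paper's characterization (Proposition~\ref{thm 2k}) is that any $\beta'$ related to $\widetilde\beta$ must (i)~agree with $\widetilde\beta$ on $\LL_1$, (ii)~for each single point $P\in\LL_2$ satisfy $P-\beta'(P)\in\{\vec v_P,\vec v_P^{\,\vee}\}$ where $\vec v_P=P-\widetilde\beta(P)$, (iii)~send every remaining point to its mirror $m(P)$, and (iv)~be symmetric in the sense of Definition~\ref{symmetric}. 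The binary choice is therefore attached to individual points of $\LL_2$, not to pairs, and the $2^i$ count comes from the subset of $\LL_2$ where the reflected target $P-\vec v_P^{\,\vee}$ actually lies in $m(\YY_0)$. Establishing (i)--(iv) is the real work: it uses the extremality of $\widetilde\beta_1$ with respect to $\prec_1$ and of $\widetilde\beta_2$ with respect to $\prec_2$, together with the geometric containment $\LL_2\subset\KK_1$ (Proposition~\ref{inclusion}) and the separation Lemma~\ref{no inter}, to rule out any other rearrangement preserving the multiset of difference vectors.

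Your sign argument also does not go through as written: $m(P_1),m(P_2)$ lie in $m(\YY_0)$, not in $\YY_0$, so there is no ``mirror swap'' on them inside the permutation $m\circ\beta$. The correct reason sign is preserved is property~(iv): each flip $\vec v_P\rightsquigarrow\vec v_P^{\,\vee}$ at $P_0\in\LL_2$ forces, via symmetry, a compensating change at the preimage of $m(P_0)$. Concretely, under $m\circ\widetilde\beta$ one has the $2$-cycle $(P_0,\,m(\widetilde\beta(P_0)))$ and the fixed point $m(P_0-\vec v_P^{\,\vee})$; after the flip one has the $2$-cycle $(P_0,\,m(P_0-\vec v_P^{\,\vee}))$ and the fixed point $m(\widetilde\beta(P_0))$. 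The cycle type, hence the sign, is unchanged.
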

Its proof will be completed in section 5.

Theorems~\ref{Thm for 4}, \ref{generic newton polygon} and \ref{theorem for L} would follow from this proposition.
	\begin{proof}[proof of Theorem~\ref{Thm for 4} assuming Proposition~\ref{core proposition}]
This theorem follows directly from Proposition~\ref{core proposition}, Proposition~\ref{reduce to Ef} and Theorem~\ref{main}.
\end{proof}

\begin{proof}[Proof of Theorem~\ref{generic newton polygon}]
	It is easy to check that $d\geq 24(2p_0^2+p_0)$ satisfies \eqref{a weaker condition}. Therefore, the only task left is to compute $\mathbbm x_k$, $\mathbbm x'_k$ and $h(\mathbbm x_k)$ explicitly. It follows directly from applying Lemma~\ref{xk and hk} to this specific $\Delta$. 
\end{proof}

\begin{proof}[Proof of Theorem~\ref{theorem for L}]
	Its proof follows from Theorem~\ref{generic newton polygon} and a consideration of Poincar\'e duality.
\end{proof}

\section{The case when $\Delta$ is an isosceles right triangle II.}

\subsection{Overview} 
The goal of this section is to prove Proposition~\ref{core proposition} by constructing explicitly the special bijection $\widetilde\beta: \YY_0 \to m(\YY_0)$. This is done in several steps. First, for a large subset $\LL_1$ of $\YY_0$, we shall define a bijection $\widetilde\beta_1\ (\textrm{i.e. }\widetilde\beta|_{\LL_1}): \LL_1 \to m(\LL_1)$ which is ``diagonal'', namely the line segment $\overline{\widetilde\beta_1(P)P}$ is parallel to the line $y=x$. For the remaining points in $\YY_0$, we divide them into two subsets $\LL_2$ and $\LL_3$ as in \eqref{L2}, where  $\LL_2$ is contained in $\KK_1$ (see the blue region in Figure~\ref{K1 and K2}) as will be proved in Proposition~\ref{inclusion}, and $\LL_3$ is contained in the green region in Figure~\ref{K1 and K2} by definition.

The map  $\widetilde\beta_2\ (\textrm{i.e. }\widetilde{\beta}|_{\LL_2})$  will map $\LL_2$ into the union of appropriate shifts of the subset $\KK_2$ (see the yellow region in Figure~\ref{K1 and K2}). More precisely, we write $\LL_2$ as the disjoint union $\LL_{2, i_1}\sqcup \cdots \sqcup\LL_{2, i_r}$ (for some non-negative integers $i_1,\dots,i_r$) and $\widetilde \beta_2$ is the union of maps $L_{2, i_k} \to (\KK_2+(i_kp_0,-i_kp_0))\cap m(\YY_0)$  such that the line segments $\overline{\widetilde \beta_2(P) P}$ are parallel for all points $P$ in a fixed $L_{2, i_k}$.
We extend $\widetilde \beta_2$ to a map $\mathbbm s(\widetilde \beta_2)$ on $\LL_2 \sqcup m(\ran(\LL_2))$ by requiring $\mathbbm s(\widetilde \beta_2)(P)=m\circ\beta^{-1}\circ m(P)$ for any point $P\in m(\ran(\beta))$ and hence determine the preimages of points in $m(\LL_2)$ under the map $\widetilde{\beta}$ (see the pink region in Figure~\ref{K1 and K2}).  
At last, we write $\widetilde\beta_3\ (\textrm{i.e.\ } \widetilde{\beta}|_{\YY_0\backslash (\LL_1\cup \dom(\mathbbm s(\widetilde\beta_2))})$ for the unique diagonal symmetric bijection from $\YY_0\backslash \Big(\LL_1\cup \dom(\mathbbm s(\widetilde\beta_2))\Big)$ to $m(\YY_0)\backslash \Big(m(\LL_1)\cup \ran(\mathbbm s(\widetilde\beta_2))\Big)$. Finally we will show that $\widetilde\beta_1, \mathbbm s(\widetilde\beta_2)$ and $\widetilde\beta_3$ altogether define the needed special bijection $\widetilde\beta: \YY_0 \to m(\YY_0)$.

 \subsection{Construction of $\widetilde \beta_1$.}

% Since Theorem \ref{main} is based on the assumption that $\NP(f, T)_C$ and $\NP(f,\chi)_C$ coincide at $(\mathbbm{x}_k, h(\TT_k))$ for some $\chi$ and $k$ (or equivalent to $\widetilde{u}_{\mathbbm{x}_k, h(\TT_k)}$ is a $\ZZ_q$-unit for some $k$), we want to construct a $f$ satisfying this condition. The most natural idea is finding some $f$ whose corresponding $\widetilde{u}_{\mathbbm{x}_1, h(\TT_1)}$ is a $\ZZ_q$-unit. 

%
%
% In fact, it also proves that, for such triangles, their generic Newton polygons pass points $\widetilde{u}_{\mathbbm{x}_k, h(\TT_k)}$ for all $k$. It is stated in the Theorem \ref{generic newton polygon}.
% 
%
% 
% Recall $h$ is a function on the subsets of $\MM(\Delta)$ defined by \eqref{h function}.
%
%
%
%
% 
% \begin{remark}
% 	The reason why we ignore expansion of terms like $\exp(\frac{\pi^{p^k}}{p^k})$ for $k\geq 1$ is as follows:
% 	
% 	If $\{a_{ij}\}$ is the weight of points satisfying the condition in \ref{explicit expression for the leading term}, then every $a_{ij}$ has to be less then $p$. It implies that we do not need to consider the expansion of $\exp(\frac{\pi^{p^k}}{p^k})$ in $\widetilde{u}_{x_k, h(\TT_k)}$.
% 	
% \end{remark}
%
%We hope to prove that $\widetilde{u}_{x_k, h(\TT_k)}$ is not divisible by $p$. 
% 
 
% 
% Recall the minimal bijection for $(S_1,S_1)$ is the $\tau\in \Iso(S_1)$ such that $h(S_1,\tau)=h(S_1)$.

 \begin{hypothesis}\label{first hypothesis}
 Recall that we put $p_0=p\%d$. From now on we assume that $p>2d+1$ and	$p_0< \frac{d}{6}$.
 \end{hypothesis}

 \begin{notation}\label{basic notations}
 Here is a list of symbols:
 	\begin{itemize}
 		\item ${\mathscr D}_k$: the set consisting of all lattice points on the diagonal line $y=x+k$.
 		\item ${\mathscr W}_k$: the set consisting of all lattice points on the anti-diagonal line $x+y=k$.
 		\item For an interval $I \subseteq \RR$, we write $\mathscr D_I:=\coprod_{i \in I \cap \ZZ} \mathscr D_i$ and  ${\mathscr W}_I:=\coprod_{i \in I \cap \ZZ} {\mathscr W}_i$.		
 		\item $\KK_1:={\mathscr W}_{[2d-3p_0, 2d]}\cap {\mathscr D}_{[-p_0, p_0)}\cap \YY$ (see an example in Figure~\ref{K1 and K2}).
% 		$R_1$ is the shadow region on the graph. 
 	\end{itemize}
 	\begin{figure}[h]
 	\begin{tikzpicture}
 	\filldraw[color=blue!50] 
 (8,8) -- (7.25,8) -- (6.5,7.25)--(7.25,6.5)--(8,7.25)--cycle;
 	\filldraw[color=yellow] 
 (6,2) -- (4.875,0.875) -- (5.625,0.175)--(6.75,1.25)--cycle;
 	\filldraw[color=green!20] 
 (8,0) -- (8,4) -- (4,8)--(0,8)--cycle;
 \filldraw[color=pink] 
 (0,0) -- (0.75,0) -- (1.5,0.75)--(0.75,1.5)--(0,0.75)--cycle;
 
 \draw[->] (-0,0) -- (10,0) node[right] {$x$};
 \draw[->] (0,-0) -- (0,10) node[above] {$y$};
 \draw[-] (0,8) -- (8,8)--(8,0)--(0,8);
 \draw(7.5,7.5)node[below]{$\KK_1$};
  \draw(0.5,0.3)node[above]{$m(\KK_1)$};
 \draw(6,1.5)node[below]{$\KK_2$};
 
 	\end{tikzpicture}.
 	\caption{Regions $\KK_1$ and $\KK_2$ when $d=16$, $p=19$.} \label{K1 and K2}
 \end{figure}
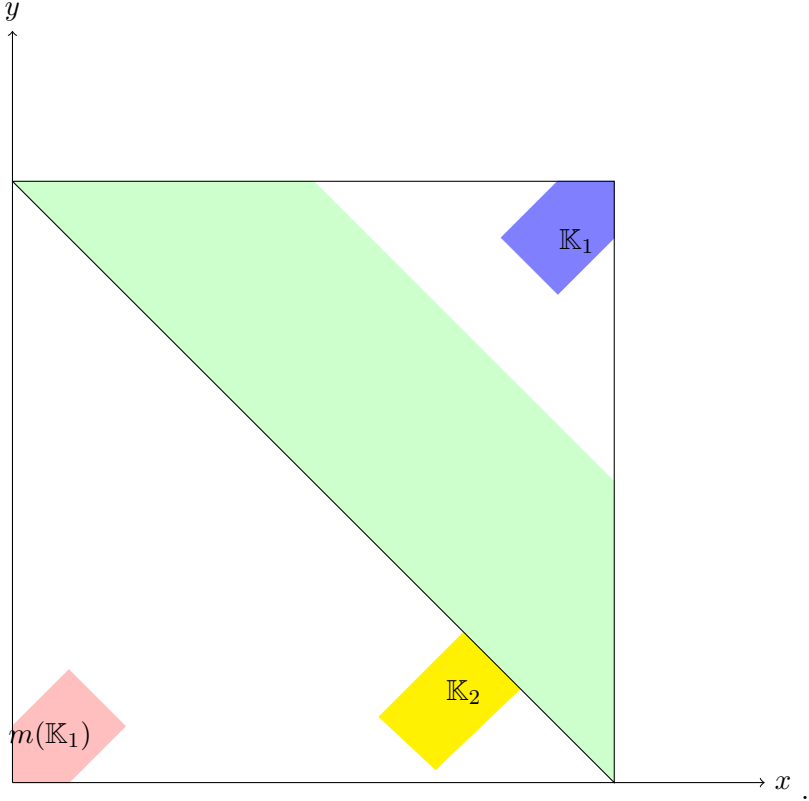

  \end{notation}
		\begin{definition}\label{symmetric}
			Let $\beta$ be an injection from a subset of $\YY_0$ to $m(\YY_0)$. 
			
		\begin{enumerate}
				\item We call $\beta$ \emph{weakly symmetric} if its domain and image are symmetric about the line $y=d-x$; i.e. $\dom(\beta) = m(\dom(\beta))$ and $\ran(\beta) = m(\ran(\beta))$.
				\item We call $\beta$ \emph{symmetric} if each point $P\in \dom(\beta)$ satisfies $$\beta\big(m(\beta(P))\big)=m(P).$$
				\item 	
				If there exists some symmetric map $\beta'$ with domain included in $\YY_0$ such that  $$\dom(\beta')=\dom(\beta)\cup m\big(\ran(\beta)\big)\quad\textrm{and}\quad \beta'\big|_{\dom(\beta)}=\beta,$$then we call $\beta'$ the \emph{symmetric closure} of $\beta$ and denoted it by $\mathbbm s(\beta)$.
		\end{enumerate}	
		\end{definition}

\begin{lemma}\label{symmetric closure}
	Let $\beta$ be an injection from a subset of $\YY_0$ to $m(\YY_0)$. If $\dom(\beta)\cap m(\ran(\beta))=\emptyset$, then  $\mathbbm s(\beta)$ exists.
\end{lemma}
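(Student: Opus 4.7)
The construction is forced. On $\dom(\beta)$ I would set $\mathbbm s(\beta):=\beta$. For each $R\in m(\ran(\beta))$, the hypothesis that $\beta$ is an injection together with $m$ being an involution guarantees a unique $P\in\dom(\beta)$ with $R=m(\beta(P))$, and I would declare
\[
\mathbbm s(\beta)(R):=m(P)=m\big(\beta^{-1}(m(R))\big).
\]
This is the only possible extension: if $\mathbbm s(\beta)$ is to be symmetric and to restrict to $\beta$ on $\dom(\beta)$, then applying the symmetry relation at any $P\in\dom(\beta)$ forces $\mathbbm s(\beta)(m(\beta(P)))=m(P)$.

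The hypothesis $\dom(\beta)\cap m(\ran(\beta))=\emptyset$ enters exactly here: it ensures that the two prescriptions do not compete on any common input, so the two pieces glue together unambiguously to give a well-defined map on $\dom(\beta)\cup m(\ran(\beta))$. Since $\dom(\beta)\subseteq\YY_0$, the values $m(P)$ produced by the second piece lie in $m(\YY_0)$, while the values from the first piece lie in $\ran(\beta)\subseteq m(\YY_0)$; in particular the domain lies in $\YY_0\cup m(\YY_0)$ and $m$ can be applied to every image, so the symmetry identity is meaningful at every point.

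It then remains to verify that the candidate $\mathbbm s(\beta)$ satisfies $\mathbbm s(\beta)\big(m(\mathbbm s(\beta)(R))\big)=m(R)$ at every $R\in\dom(\mathbbm s(\beta))$. I would split into two cases. If $R\in\dom(\beta)$, then $\mathbbm s(\beta)(R)=\beta(R)$ so $m(\mathbbm s(\beta)(R))=m(\beta(R))\in m(\ran(\beta))$; since this point lies in the second piece of the domain (again by the disjointness hypothesis), the defining formula of the extension returns exactly $m(R)$. If $R\in m(\ran(\beta))$, write $R=m(\beta(P))$ with $P\in\dom(\beta)$; then $\mathbbm s(\beta)(R)=m(P)$, and $m(\mathbbm s(\beta)(R))=m(m(P))=P\in\dom(\beta)$, so $\mathbbm s(\beta)(P)=\beta(P)=m(R)$.

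Both identities reduce to bookkeeping once one uses $m\circ m=\mathrm{id}$. The only subtle point of the whole argument is the consistency check between the two pieces of the definition, and this is precisely what the hypothesis $\dom(\beta)\cap m(\ran(\beta))=\emptyset$ is tailored to eliminate; I therefore expect no genuine obstacle in the proof.
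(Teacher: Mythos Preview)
Your construction is exactly the one the paper gives: set $\mathbbm s(\beta)=\beta$ on $\dom(\beta)$ and $\mathbbm s(\beta)=m\circ\beta^{-1}\circ m$ on $m(\ran(\beta))$, with the disjointness hypothesis guaranteeing consistency. The paper simply declares the verification a ``trivial check,'' whereas you have spelled out the two cases of the symmetry identity; one small tightening is that since $\ran(\beta)\subseteq m(\YY_0)$ and $m$ is an involution, $m(\ran(\beta))\subseteq\YY_0$, so the full domain actually lies in $\YY_0$ (not just $\YY_0\cup m(\YY_0)$), as the definition of symmetric closure requires.
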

\begin{proof}
	Let \begin{itemize}
		\item $\mathbbm s(\beta)\big|_{\dom(\beta)}=\beta$, and
		\item for any point $P\in m(\ran(\beta))$ let $\mathbbm s(\beta)(P)=m\circ\beta^{-1}\circ m(P).$
	\end{itemize} 
	It is a trivial check that $\mathbbm s(\beta)$ is the symmetric closure of $\beta$.
\end{proof}

\begin{definition}
	A vector is called \emph{diagonal} if it is parallel to the line $y=x$. Let $V^\star$ be a multiset of vectors. We call it \emph{diagonal} if each $\vec{v}\in V^\star$ is diagonal.
	We write $$\mathscr V:=\Big\{V^\star\;\big|\;V^\star\ \textrm{is a diagonal multiset}\Big\}.$$
	We define the weight on vectors on $\RR^2$ so that $w(\overrightarrow{OP}) = w(P).$

	For each $V^\star\in \mathscr V$ and each $r\in \RR$, we write $$(V^\star)^{\geq r}:=\Big\{\vec{v}\in V^\star\;\big|\; w(\vec{v})\geq r\Big\}.$$
%	\quad \textrm{and}\quad(V^\star)^{=r}:=\{\vec{v}\in V^\star\;|\; w(\vec{v})=r\}.
\end{definition}
We define a total order ``$\prec_1$'' on $\mathscr V$ as follows:
\begin{definition}\label{partial order for diagonal vectors}
	For any two sets $V^\star_1, V^\star_2\in \mathscr V$, we denote $V^\star_1\prec_1 V^\star_2$ if one of the following cases happens:
	\begin{itemize}
		\item[Case 1:] $\#V^\star_1<\#V^\star_2;$
		\item[Case 2:] $\#V^\star_1=\#V^\star_2$ and there exists a real number $r_0$ such that $\#(V^\star_1)^{\geq r}=\#(V^\star_2)^{\geq r}$	for all $ r>r_0$ but  $\#(V^\star_1)^{\geq r_0}<\#(V^\star_2)^{\geq r_0}$.
	\end{itemize}
\end{definition}
To construct $\widetilde \beta$ needed for Proposition~\ref{core proposition}, we shall construct it so that for a largest possible subset $\LL_1 \subseteq \YY_0$, $\overrightarrow{ \widetilde \beta(P)P}$ is diagonal for all $P \in \LL_1$, or equivalently, the set $\{P-\beta(P)\;|\; P \in \YY_0\}$ contains as many diagonal vectors (and as highest weight) as possible.
\begin{definition}\label{Vstar}
	Let $\SS$ be an arbitrary subset of $\YY_0$, and let $\beta:\SS\to m(\YY_0)$ be an injection. We set $$V^\star(\beta):=\Big\{P-\beta(P)\;\big |\;P\in \SS\Big\}^{\star}.$$ If $V^\star(\beta)$ is diagonal, then we also call $\beta$ \emph{diagonal}.
\end{definition}

\begin{definition}\label{definition of beta1}
	We call a pair $(P, Q)$ in $\YY_0\times m(\YY_0)$ \emph{eligible} if it satisfies the following two conditions:
	\begin{itemize}
		\item [(a)]$\overrightarrow{QP}$ is diagonal with weight less than or equal to $1$, and
		\item [(b)]either $w(P)>\frac{3}{2}$ or $w(Q)<\frac{1}{2}$.
	\end{itemize}
\end{definition}

We write $$\mathscr E_1:=\bigcup_{\SS\subset \YY_0}\Big\{\beta:\SS \to m(\YY_0)\;|\; (P, \beta(P))\ \textrm{is eligible for each}\ P\in \SS\Big\},$$where $\SS$ runs over all subsets of $\YY_0$. For simplicity of notation, we put $$V(\mathscr E_1):=\{V^\star(\beta)\;\big|\;\beta\in \mathscr E_1\}.$$
\begin{definition}\label{definition for beta1}
Define $\widetilde \beta_1$ to be an element in $\mathscr E_1$ such that $V^\star(\widetilde \beta_1)$ is a maximal element in $\big(V(\mathscr E_1), \prec_1\big)$.
\end{definition}
In fact, $\widetilde \beta_1$ can be constructed in the following way:
 
 	Assume that $\widetilde \beta_1$ has been defined on some subset of $\YY_0$, say $\YY_0'$. If there is no eligible pair in $\YY_0\backslash(\YY_0)'\times m(\YY_0)\backslash \widetilde \beta_1(\YY_0')$, then we call the definition of $\widetilde \beta_1$ is completed. Otherwise, we choose an eligible pair $(P_0, Q_0)$ in $\YY_0\backslash\YY_0'\times m(\YY_0)\backslash \widetilde \beta_1(\YY_0')$ which maximizes the weight $w(P_0-Q_0)$, and define $\widetilde \beta_1(P_0)=Q_0$. 
\begin{lemma}
	The map $\widetilde \beta_1$ is the unique maximal element in the totally ordered set \\$(V(\mathscr E_1), \prec_1)$.
\end{lemma}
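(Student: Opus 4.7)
The plan is to observe that since $\prec_1$ is a total order on the finite set $V(\mathscr E_1)$, a unique maximum exists automatically by trichotomy; the content of the lemma is therefore that the greedy construction of $\widetilde\beta_1$ attains this maximum. I would establish this by a greedy-exchange argument: take any $\beta^* \in \mathscr E_1$ for which $V^\star(\beta^*)$ is maximal in $\prec_1$ (existence by finiteness), and show that $\beta^*$ can be successively modified, without decreasing $V^\star$ in $\prec_1$, so as to agree with each pair chosen by the greedy.

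First I would fix the first pair $(P_0, Q_0)$ chosen by the greedy, of maximum eligible weight $w_0$, and split into four cases according to whether $P_0 \in \dom(\beta^*)$ and/or $Q_0 \in \ran(\beta^*)$: (a) if neither holds, then $\beta^* \cup \{(P_0, Q_0)\} \in \mathscr E_1$ has strictly larger cardinality, contradicting maximality via Case~1 of $\prec_1$; (b)--(c) if exactly one holds, replace the unique conflicting pair of $\beta^*$ by $(P_0, Q_0)$, noting that $w_0$ dominates the replaced weight by greedy's choice, so $V^\star$ cannot decrease in $\prec_1$ (and if it strictly increased we contradict maximality); (d) if both hold with $\beta^*(P_0) = Q'$ and $\beta^*(P') = Q_0$, perform a double swap to $(P_0, Q_0)$ and $(P', Q')$. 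The vector $\overrightarrow{Q'P'} = \overrightarrow{Q'Q_0} + \overrightarrow{Q_0P'}$ is automatically diagonal as a sum of diagonal vectors, and linearity of $w$ gives the conservation identity $w(P_0 - Q_0) + w(P' - Q') = w(P_0 - Q') + w(P' - Q_0)$, so $w(P' - Q') \in (0, 1]$ and eligibility condition (a) holds. A short bookkeeping argument then yields condition (b) for $(P', Q')$, and the same conservation identity forces the multiset $V^\star$ to be preserved (else $\beta^*$ was not maximal). Iterating this exchange through each successive step of the greedy shows $V^\star(\widetilde\beta_1) = V^\star(\beta^*)$.

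The main technical point will be verifying eligibility condition (b) for $(P', Q')$ in case (d). Here the key observation is that the weight bound $w(P_0 - Q_0) \leq 1$ makes the two alternatives $w(P_0) > 3/2$ and $w(Q_0) < 1/2$ mutually exclusive for the eligible pair $(P_0, Q_0)$. In the first subcase one has $w(Q_0) > 1/2$, so eligibility of $(P', Q_0)$ forces $w(P') > 3/2$, giving (b) for $(P', Q')$; in the second, $w(P_0) < 3/2$, so eligibility of $(P_0, Q')$ forces $w(Q') < 1/2$, again yielding (b). Uniqueness of the maximum is then immediate from $\prec_1$ being a total order.
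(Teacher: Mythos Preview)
Your exchange argument is correct and is in fact more complete than the paper's own proof. The paper argues only that the greedy construction is \emph{well-defined}: at any stage, if two distinct eligible pairs $(P_0,Q_0)$ and $(P_0',Q_0')$ both realise the maximum remaining weight, then because a diagonal vector is determined by its weight one must have $P_0\neq P_0'$ and $Q_0\neq Q_0'$, so the pairs do not conflict and may both be added in either order. From this the paper concludes that $\widetilde\beta_1$ is independent of the choices made; uniqueness of the maximum then follows from $\prec_1$ being total. The identification of the greedy output with the $\prec_1$-maximum is asserted just before the lemma but never argued.

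Your proof supplies precisely this missing step via the standard greedy--exchange mechanism. The only nontrivial point is case~(d), and your verification there is sound: the conservation identity $w(P_0-Q_0)+w(P'-Q')=w(P_0-Q')+w(P'-Q_0)$ together with maximality of $w_0$ forces $\max\{w(P_0-Q'),w(P'-Q_0)\}=w_0$ (else the swap would strictly increase $V^\star$ in $\prec_1$, contradicting maximality of $\beta^*$), and then the multiset is literally preserved. Your treatment of eligibility condition~(b) for $(P',Q')$ is also correct; the dichotomy is genuine because, as you note, $w(P_0-Q_0)\le 1$ rules out $w(P_0)>\tfrac32$ and $w(Q_0)<\tfrac12$ holding simultaneously. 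So your route is self-contained, whereas the paper's relies on the reader supplying the greedy-optimality intuition.
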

\begin{proof}
	Assume that we have defined $\beta_1$ on the subset $\YY_0'$ by the construction above and $(P_0, Q_0)$ and $(P'_0, Q'_0)$ are two eligeble pairs in $\YY_0\backslash(\YY_0)'\times m(\YY_0)\backslash \widetilde \beta_1(\YY_0')$ which maximize $$w(P_0-Q_0)=w(P'_0-Q'_0).$$ Since $\overrightarrow{Q_0P_0}$ and $\overrightarrow{Q_0'P_0'}$ are required to be diagonal, we know that $P_0\neq P_0'$ and $Q_0\neq Q_0'$. Therefore, the definition of $\widetilde\beta_1$ is independent of the choices of pairs.
\end{proof}
 	Write $\LL_1$ for the domain of $\widetilde \beta_1$. Since we require $\widetilde \beta_1$ to be the maximal element in $V(\mathscr E_1)$, tt is easily known that $\widetilde \beta_1$ is symmetric. Put 
 	\begin{equation}\label{L2}
 	\LL_2:=\Big\{P\in \YY_0\backslash \LL_1 \;\big|\;w(P)>\frac{3}{2}\Big\}\quad \textrm{and}\quad \LL_3:=\Big\{P\in \YY_0\backslash \LL_1\;\big |\;w(P)\leq\frac{3}{2}\Big\}.
 	\end{equation} Then we obtain a disjoint decomposition of $\YY_0$ as 
$$\YY_0=\LL_1\sqcup \LL_2 \sqcup \LL_3.$$ 
We next will\begin{enumerate}
	\item define a map $\widetilde \beta_2$ on $\LL_2$,
	\item find its symmetric closure $\mathbbm s(\widetilde \beta_2)$,
	\item define a map $\widetilde \beta_3$ on the complement of $\LL_1 \cup \dom(\mathbbm s(\widetilde \beta_2))$ in $\YY_0$, and
	\item put together the maps $\widetilde \beta_1, \mathbbm s(\widetilde \beta_2),$ and $\widetilde \beta_3$ to get a bijection $\widetilde\beta:\YY_0\to m(\YY_0)$ which satisfies the conditions in Proposition~\ref{core proposition}.
\end{enumerate}
%	\begin{lemma}
%	 Map $\widetilde \beta_1$ does not depend on the choice of pair when there are at least two eligible pairs maximizing $w(\bullet_1-\bullet_2)$ simultaneously.
%	\end{lemma}
%\begin{proof}
%	This is based on the fact that we cannot have two eligible pairs $(P_1, Q)$ and $(P_2, Q)$ satisfying $w(P_1-Q)=w(P_2-Q)$; or $(P, Q_1)$ and $(P, Q_2)$ satisfying $w(P-Q_1)=w(P-Q_2)$.
%\end{proof}

  \subsection{Study of $\LL_2$.}
  Recall that $\LL_2$ defined in \eqref{L2} is the subset where we cannot define $\widetilde\beta$ diagonally and where the weight of the points is strictly bigger than $\frac 3 2$."

 In this subsection, we will complete the definition of $\widetilde \beta_2:\LL_2\to m(\YY_0)$. We start with a proposition about the distribution of its domain $\LL_2$ in $\YY_0$, which plays an important role in its construction.
 
 \begin{proposition}\label{inclusion}
 The subset $\LL_2$ is included in $\KK_1$ (See Notation~\ref{basic notations}). 
 \end{proposition}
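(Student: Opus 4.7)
The plan is to argue by contradiction: assume $P\in\LL_2\setminus\KK_1$ and produce an unused $Q\in m(\YY_0)$ for which the pair $(P,Q)$ is eligible in the sense of Definition~\ref{definition of beta1}. This will contradict $P\notin\LL_1$, since the maximality of $V^\star(\widetilde\beta_1)$ under $\prec_1$ (Definition~\ref{definition for beta1}) would then force $(P,Q)$ into $\widetilde\beta_1$.

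The first step is to give an arithmetic description of $\YY_0$ and $m(\YY_0)$. Since $\gcd(p_0,d)=1$, let $p_0'$ denote the inverse of $p_0$ modulo $d$. From the identity $(pR)\%=(p_0R_x\%d,\,p_0R_y\%d)$ one deduces
\[R\in\YY_0\iff R\in\YY\ \text{and}\ (p_0'R_x)\%d+(p_0'R_y)\%d<d,\]
and, by applying $m$, $S\in m(\YY_0)$ iff $S\in m(\YY)$ and $(p_0'S_x)\%d+(p_0'S_y)\%d>d$. Thus membership in the two sets is controlled entirely by residues modulo $d$. The second step exploits the diagonal nature of $\widetilde\beta_1$: every vector in $V^\star(\widetilde\beta_1)$ is diagonal, so $\widetilde\beta_1$ restricts, on each diagonal $\mathscr D_k$, to a partial matching between $\YY_0\cap\mathscr D_k$ and $m(\YY_0)\cap\mathscr D_k$. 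The maximality of $V^\star(\widetilde\beta_1)$ under $\prec_1$ forces this restricted matching to be greedy in $t=P_x-Q_x$, pairing with larger $t$ (equivalently higher $w(\overrightarrow{QP})$) first. For $P\in\LL_2$ condition (b) of eligibility is automatic since $w(P)>3/2$, and the admissible $t$'s are the integers in $I_P=\bigl(\tfrac{(w(P)-1)d}{2},\tfrac{d}{2}\bigr]$. As $t$ runs over consecutive integers, the residue pair $((p_0'(P_x-t))\%d,(p_0'(P_y-t))\%d)$ shifts by $(-p_0',-p_0')\pmod d$ and traces the affine line $\{(a,b)\in(\ZZ/d)^2:a-b\equiv -p_0'k\pmod d\}$; an elementary count shows that roughly $d/2$ of its $d$ points satisfy $a+b>d$, hence yield $Q\in m(\YY_0)$.

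Next, split on why $P\notin\KK_1$. In case (A), $P_x+P_y<2d-3p_0$, so $|I_P|>3p_0/2$ and the density estimate produces on the order of $3p_0/4$ candidate $Q$'s in $m(\YY_0)\cap I_P$. In case (B), $|P_y-P_x|\geq p_0$; by the $x\leftrightarrow y$ symmetry of $\Delta$ we may assume $P_y-P_x\geq p_0$, and then the diagonal $\mathscr D_k$ with $k\geq p_0$ intersects $\YY$ in a shorter segment, bounding the number of competing $P'\in\YY_0\cap\mathscr D_k$ with $w(P')>w(P)$. In each case, one compares the supply of $Q\in m(\YY_0)\cap I_P$ against the demand from higher-priority $P'$---by the greediness of $\widetilde\beta_1$ these are the only points whose pairings could have absorbed candidates of $P$ first---and shows that the supply strictly exceeds the demand; this exhibits the required unused $Q$ and completes the contradiction.

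The main obstacle is precisely this supply-versus-demand count on each diagonal: one must rule out that higher-weight $P'$ have exhausted every admissible $Q\in m(\YY_0)\cap I_P$ before $P$ is processed. The $\tfrac12$-density estimate from the second step and the hypothesis $p_0<d/6$ from Hypothesis~\ref{first hypothesis} supply the slack needed---a long $I_P$ in case (A) and a short diagonal segment $\mathscr D_k\cap\YY$ in case (B)---allowing the counting to close the gap.
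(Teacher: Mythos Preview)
Your overall strategy matches the paper's: argue by contradiction on each diagonal $\mathscr D_k$, use that $\widetilde\beta_1$ is greedy in the weight of $\overrightarrow{QP}$, and compare the supply of available $Q\in m(\YY_0)$ against the demand from higher-weight competitors $P'\in\YY_0$. The paper carries this out via Proposition~\ref{Class self resolve} (your case~(B)) and Proposition~\ref{second restriction} (your case~(A) restricted to $|k|<p_0$), using Corollary~\ref{Pin L1} and Lemma~\ref{Pin L11} to formalize the supply-vs-demand criterion.

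There are, however, two genuine gaps in your execution. First, your density argument is framed with period $d$: you track residues $(p_0'R_x\%d,p_0'R_y\%d)$ and assert that ``roughly $d/2$'' of the $d$ points on the affine line lie in $m(\YY_0)$. But your interval $I_P$ can be as short as $3p_0/2$, far below $d$, and a density-$\tfrac12$ statement over a full period of length $d$ says nothing about such short windows. The correct tool is the $p_0$-periodicity of $\YY_0$ (Lemma~\ref{periodic as mathscr C} and Corollary~\ref{period}): any $p_0$ consecutive lattice points on $\mathscr D_k\cap\YY$ contain exactly $\lfloor p_0/2\rfloor$ points of $\YY_0$ (Lemma~\ref{Pigeonhole principle}). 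This is what makes the counting work on intervals of length $O(p_0)$, and the paper packages it into the step functions $g_1,g_2$ with the key inequality $g_1(k+p_0)\ge g_2(k)$.

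Second, even with the correct $p_0$-periodic density, case~(A) does not close by a crude comparison. When $|k|<p_0$ and $w(P)$ is only slightly above $\tfrac32$, both the supply $|I_P|/2$ and the demand (the number of $P'\in\YY_0\cap\mathscr D_k$ with $w(P')>w(P)$) are approximately $(2-w(P))d/4$, and the naive difference is only $|k|/4<p_0/4$. The paper's Proposition~\ref{second restriction} avoids this by a finer argument: it locates the translate $P_0'=P_0+(ip_0,ip_0)\in\mathscr C_0$, splits the candidates into $\widetilde\beta_1(\BB(P_0))$ and a disjoint block of $i\lfloor p_0/2\rfloor$ points, and matches this against a decomposition~\eqref{deomposition 2} of the competitors. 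Your sketch does not supply this refinement, and without it the inequality~\eqref{interesting equation1} cannot be verified.
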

\begin{proof}														
	The proof will occupy the entire Section 5.2 and it will follow from Propositions~\ref{Class self resolve} and \ref{second restriction} below.
\end{proof}

%  We first need to learn more about the distribution of $\YY_0$ in $\YY$. 

 \begin{lemma}\label{periodic as mathscr C}
 	Recall that $p_0 = p\%d$. Let $P$ be any point in $\YY_0$. If $P+p_0(i, j)$ is contained in  $\YY$ for a pair of integers $(i, j)$, then it is also contained in $\YY_0$.
 \end{lemma}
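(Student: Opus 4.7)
The plan is to write down an explicit preimage of $P+p_0(i,j)$ under the permutation $\eta\colon\square_\Delta^{\Int}\to\square_\Delta^{\Int}$, $R\mapsto (pR)\%$ from Lemma~\ref{counting points in a parallelogram}, and verify that this preimage lies in $\TT_{1,2}$. Concretely, if $Q\in\TT_{1,2}$ is the unique point with $\eta(Q)=P$, I claim that $Q':=Q+(i,j)$, with no lattice reduction needed, again lies in $\TT_{1,2}$ and satisfies $\eta(Q')=P+p_0(i,j)$, which will give $P+p_0(i,j)\in\YY_0$.

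First I would reduce to the residue $p_0$: since $(p-p_0)Q\in\Lambda$, we have $(pQ)\%=(p_0Q)\%$. Write $p_0Q_x=P_x+k_xd$ and $p_0Q_y=P_y+k_yd$ with $k_x,k_y\in\{0,1,\dots,p_0-1\}$, the range coming from $Q_x,Q_y,P_x,P_y\in[0,d)$. The assumption $Q\in\TT_{1,2}\subseteq\TT_1$ gives $Q_x+Q_y<d$, while $P\in\YY_0\subseteq\YY$ gives $P_x+P_y>d$; multiplying the first by $p_0$ and subtracting yields $(k_x+k_y)d=p_0(Q_x+Q_y)-(P_x+P_y)<(p_0-1)d$, so $k_x+k_y\le p_0-2$. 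This inequality will do all the work at the end.

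Next I would use the hypothesis $P+p_0(i,j)\in\YY$, in particular $P_x+p_0i,\,P_y+p_0j\in(0,d)$, to show that no lattice wrap-around is triggered, i.e.\ $Q_x+i$ and $Q_y+j$ already lie in $[0,d)$. If $Q'_x\in[0,d)$ denotes the residue of $Q_x+i$ modulo $d$, write $Q'_x=Q_x+i+\ell_xd$. Then $p_0Q'_x=(P_x+p_0i)+(k_x+p_0\ell_x)d$, and requiring both $k_x$ and $k_x+p_0\ell_x$ to lie in $[0,p_0)$ forces $\ell_x=0$; the same argument gives $\ell_y=0$. Hence $Q':=Q+(i,j)\in\square_\Delta^{\Int}$ and $\eta(Q')=P+p_0(i,j)$ by direct computation.

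Finally, to conclude $Q'\in\TT_{1,2}$ it remains to show $Q'\in\TT_1$, since $\eta(Q')=P+p_0(i,j)\in\YY$ is automatically not in $\TT_1$. Multiplying by $p_0$, I would compute $p_0(Q'_x+Q'_y)=(P_x+p_0i)+(P_y+p_0j)+(k_x+k_y)d<2d+(p_0-2)d=p_0d$, where the first two summands are each bounded by $d$ because $P+p_0(i,j)\in\YY\subseteq\square_\Delta$, and the last uses $k_x+k_y\le p_0-2$ from the previous paragraph. Hence $Q'_x+Q'_y<d$, so $Q'\in\TT_1$, then $Q'\in\TT_{1,2}$, and $P+p_0(i,j)=\eta(Q')\in\YY_0$. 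There is essentially no serious obstacle; the only slightly delicate point is ruling out $\ell_x,\ell_y\ne 0$, which is precisely what the narrow range $k_x,k_y\in[0,p_0)$ is designed to enable.
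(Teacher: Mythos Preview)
Your proof is correct. The key inequality $k_x+k_y\le p_0-2$, extracted from $Q\in\TT_1$ and $P\in\YY$, is exactly what is needed, and your argument that $\ell_x=\ell_y=0$ (so no wrap-around occurs) is clean.

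The paper takes a different route: it uses the convexity of the upper-right triangle $\YY$ to reduce to the six unit-step directions $(i,j)\in\{(\pm1,0),(0,\pm1),(1,-1),(-1,1)\}$, then treats one representative case and asserts the rest are similar. Your approach handles arbitrary $(i,j)$ uniformly by tracking the base-$d$ carries $k_x,k_y$ directly, which makes the argument self-contained and avoids the ``handled similarly'' handwave. The cost is a slightly longer computation; the benefit is that nothing is left to the reader. Both arguments ultimately rest on the same observation that the candidate preimage $Q+(i,j)$ cannot cross the hypotenuse $x+y=d$, but you make this quantitative via the bound on $k_x+k_y$ rather than case-by-case geometry.
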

 \begin{proof}
 	Before proving the lemma, we refer to Figure\ref{picture for distribution of dp}, where the bullet points in the upper-right triangle are periodic with period $3$.
 	Since the upper-right triangle in $\square_\Delta$ is convex, it is enough to show that the lemma holds for $(i,j) = (1,0),  (-1, 0), (0, 1), (0,-1), (1, -1)$, and $(-1, 1).$
 	We will just prove the case when $i=1$ and $j=-1$, and the rest can be handled similarly.
 	
 	Let $Q$ be the point in $\TT_1$ such that $(pQ)\%=P$. It is easy to check that  \[\begin{split} 
 	P+p_0(1, -1)	&\equiv pQ+p_0(1, -1)\\&\equiv pQ+p(1, -1)\\&=p(Q+(1,-1))\pmod d.
 	\end{split}\]
 	In fact, the point $Q+(1, -1)$ is strictly contained in $\Delta_f$, for otherwise $P+p_0(1, -1)$ is on the boundary of $\square_{\Delta_f}$, which is a contradiction to $P+p_0(1, -1)\in \YY$. Then by the definition of $\YY_0$, we know that $P+p_0(1, -1)$ belongs to  $\YY_0$.
 \end{proof}
\begin{notation}
	We call the square with vertices $(d-p_0, d-p_0)$, $(d-p_0, d-1)$, $(d-1, d-p_0)$ and $(d-1, d-1)$ \emph{the fundamental cell}, denoted by $\mathscr C$, and write $$\mathscr C_0:=\mathscr C\cap \YY_0.$$
\end{notation}
Back to the example in Figure~\ref{picture for distribution of dp}, the corresponding subset $$\mathscr C_0=\{(5,6),(6,5),(6,6)\}.$$

 \begin{corollary}\label{period}
 We know that $\YY_0$ distributes periodically in $\YY$ of period $p_0$.
 	More precisely, each point in $\YY_0$ is a shift of some point in $\mathscr C_0$ by $(ip_0, jp_0)$, where $(i, j)$ is a pair of integers. 
 \end{corollary}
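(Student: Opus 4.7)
The plan is to use Lemma~\ref{periodic as mathscr C} together with the observation that $\mathscr C$ is a complete set of representatives for the $p_0\ZZ\times p_0\ZZ$-translation classes that meet $\YY$. Concretely, given $Q=(q_1,q_2)\in\YY_0$, I would first locate a unique translate of $Q$ inside $\mathscr C$. Since the interval $[d-p_0,d-1]$ consists of exactly $p_0$ consecutive integers, for each coordinate $q_k$ there is a unique integer $i_k$ with $q_k-i_kp_0\in[d-p_0,d-1]$. Setting $Q':=Q-(i_1p_0,i_2p_0)$ we get $Q'\in\mathscr C$.

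Next I would check that $\mathscr C\subseteq\YY$, i.e.\ that every point of $\mathscr C$ lies strictly inside the upper-right triangle of $\square_\Delta$. A point $(x,y)\in\mathscr C$ satisfies $x,y\in[d-p_0,d-1]$, so $x,y\leq d-1$ and $x+y\geq 2(d-p_0)$. The standing Hypothesis~\ref{first hypothesis} gives $p_0<d/6<d/2$, hence $2(d-p_0)>d$, so $x+y>d$. Thus $\mathscr C\subseteq\YY$, and in particular $Q'\in\YY$.

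Finally, since $Q\in\YY_0$ and the shift $Q+(-i_1,-i_2)p_0=Q'$ lies in $\YY$, Lemma~\ref{periodic as mathscr C} (applied with $(i,j)=(-i_1,-i_2)$) forces $Q'\in\YY_0$. Hence $Q'\in\mathscr C\cap\YY_0=\mathscr C_0$, and $Q=Q'+(i_1p_0,i_2p_0)$ exhibits $Q$ as a shift of a point of $\mathscr C_0$ by an integer multiple of $p_0$ in each coordinate, which is exactly the claim. There is really no serious obstacle here: the only non-trivial input is the previous lemma, and the rest is routine bookkeeping together with the mild inequality $p_0<d/2$ to ensure $\mathscr C$ actually sits in $\YY$.
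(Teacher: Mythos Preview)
Your proof is correct and is precisely the argument the paper has in mind: the paper's own proof is the single sentence ``It follows directly from Lemma~\ref{periodic as mathscr C},'' and you have simply spelled out the routine details (translating into the fundamental cell and using $p_0<d/2$ to ensure $\mathscr C\subseteq\YY$) that make this immediate.
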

 \begin{proof}
 	It follows directly from Lemma~\ref{periodic as mathscr C}.
 \end{proof}
 \begin{corollary}\label{period1}
 	Let $k_1,k_2$, $j_1$ and $j_2$ be integers satisfying $k_1,k_2\geq d$ and $p_0|(k_2-k_1)$. If both ${\mathscr W}_{k_1}\cap {\mathscr D}_{[j_1, j_1+2p_0)}$ and  ${\mathscr W}_{k_2}\cap {\mathscr D}_{[j_2, j_2+2p_0)}$ are contained in $\square_\Delta$, then\begin{enumerate}
 		\item  $\#\Big(\YY_0\cap {\mathscr W}_{k_1}\cap {\mathscr D}_{[j_1, j_1+2p_0)}\Big)=\#\Big(\YY_0\cap {\mathscr W}_{k_2}\cap {\mathscr D}_{[j_2, j_2+2p_0)}\Big).$
 	
 	\item If moreover we have $p_0|(j_2-j_1)$, we have the following equality of sets $$\YY_0\cap {\mathscr W}_{k_1}\cap {\mathscr D}_{[j_1, j_1+2p_0)}=\YY_0\cap {\mathscr W}_{k_2}\cap {\mathscr D}_{[j_2, j_2+2p_0)}+(k_1-k_2)(1,1)+(j_1-j_2)(-1,1).$$
 	\end{enumerate}
 \end{corollary}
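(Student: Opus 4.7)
The plan is to reduce both claims to the periodicity of $\YY_0$ established in Lemma~4.6 (and restated in Corollary~4.7): for any integers $i,j$, if $P\in\YY_0$ and $P+p_0(i,j)\in\YY$, then $P+p_0(i,j)\in\YY_0$. Since the lemma applies equally well to $P+p_0(i,j)$ with shift $-p_0(i,j)$, translation by any pair of integer multiples of $p_0$ induces a bijection on $\YY_0$ between its intersections with two translate regions both lying in $\YY$.

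For part~(1), I would first observe that $\mathscr W_k\cap\mathscr D_{[j,\,j+2p_0)}$ consists of exactly $p_0$ consecutive lattice points on the antidiagonal $\mathscr W_k$: successive lattice points on $\mathscr W_k$ differ by $(1,-1)$ and their $j$-values step by $2$, so a $j$-window of length $2p_0$ catches exactly $p_0$ points. Applying Lemma~4.6 to the shift $p_0(1,-1)$, which lies along $\mathscr W_k$, shows that the $\YY_0$-pattern on $\mathscr W_k$ is periodic with period $p_0$ consecutive lattice points; hence the count $c(k):=\#\bigl(\YY_0\cap\mathscr W_k\cap\mathscr D_{[j,\,j+2p_0)}\bigr)$ is independent of $j$ as long as the strip lies in $\YY$. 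Next, because $p_0\mid k_2-k_1$, iterating the shift $p_0(1,0)$ (which sends $\mathscr W_k$ to $\mathscr W_{k+p_0}$) a total of $(k_2-k_1)/p_0$ times transports the first strip onto a strip on $\mathscr W_{k_2}$, and Lemma~4.6 then gives $c(k_1)=c(k_2)$.

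For part~(2), the added divisibility $p_0\mid j_2-j_1$ lets me assemble a single translation $\sigma$ carrying $\mathscr W_{k_2}\cap\mathscr D_{[j_2,\,j_2+2p_0)}$ bijectively onto $\mathscr W_{k_1}\cap\mathscr D_{[j_1,\,j_1+2p_0)}$, with both components of $\sigma$ divisible by $p_0$: unwinding the stated expression $(k_1-k_2)(1,1)+(j_1-j_2)(-1,1)$ (up to the factor of $1/2$ needed to actually land on the right antidiagonal and diagonal), one checks by direct arithmetic that both coordinates of $\sigma$ lie in $p_0\cdot\ZZ$ thanks to the two divisibility hypotheses. Lemma~4.6 then yields the claimed equality of sets, point by point.

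The only subtle point, and the step I would take most care with, is verifying that $\YY$ and $\square_\Delta$ interact correctly at the boundary case $k_i=d$: here $\YY_0\cap\mathscr W_d$ is empty because $\YY$ consists only of the strict interior of the upper-right triangle of $\square_\Delta$, so both sides of the claimed equality vanish trivially. For $k_i>d$, the strips lie strictly inside $\YY$, as does every intermediate translate arising in the iteration of part~(1), so Lemma~4.6 can be invoked without obstruction. Beyond this routine bookkeeping, no real obstacle arises.
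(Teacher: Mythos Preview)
Your approach is the same as the paper's—both reduce to the $p_0$-periodicity of $\YY_0$ inside $\YY$ from the preceding lemma, and the paper's own proof is a single sentence to that effect. Two small remarks: iterating the shift $p_0(1,0)$ is unnecessary since the periodicity lemma already applies to an arbitrary shift $p_0(i,j)$ in one go (this also spares you from verifying that every intermediate translate stays in $\YY$, a claim that is not automatic for strips starting near the right edge of $\square_\Delta$); and your parenthetical about the missing factor of $\tfrac{1}{2}$ in the displayed translation of part~(2) is well taken, as the printed shift does not carry $\mathscr W_{k_2}$ onto $\mathscr W_{k_1}$.
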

\begin{proof}
	In fact, this corollary follows directly from previous corollary.
\end{proof}
Since $\YY_0$ is distributed periodically of period $p_0$, by Corollary~\ref{period}, it is enough for us to understand $\mathscr C_0 $.
The following two lemmas show the details.

 \begin{lemma}\label{fundamental cell}
 	The distribution of $\mathscr C_0$ in $\mathscr C$ has the following properties:
 	\begin{itemize}
 		\item[(1)] There is no point of $\YY_0$ (or $\mathscr C_0$) on the top row or the first column of $\mathscr C$.
 		\item[(2)] If the point $(i,j)$ is in $\mathscr C$ and $i+j=2d-p_0$, then it is also in $\mathscr C_0$.
 		\item[(3)] For each point $P$ in $\mathscr C$, either $P$ or $(2d-p_0, 2d-p_0)-P$ is contained in $\mathscr C_0$.
 
 	\end{itemize}
 \end{lemma}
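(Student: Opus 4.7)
The plan is to prove all three parts by direct calculation with the inverse bijection $\eta^{-1}:\square_{\Delta}^\Int\to\square_{\Delta}^\Int$ from Lemma~\ref{counting points in a parallelogram}. Since $p\equiv p_0\pmod d$, the map $\eta$ is multiplication by $p_0$ modulo $d$ coordinatewise, while $\eta^{-1}$ is multiplication by $q_0:=p_0^{-1}\%d$. Hence a point $P\in\YY$ belongs to $\YY_0$ iff $\eta^{-1}(P)\in\TT_1$, that is, iff the pair
\[
(a,b):=\big((q_0 P_x)\%d,\,(q_0 P_y)\%d\big)
\]
satisfies $a+b\le d$. The single algebraic identity that drives the entire argument is $q_0(d-p_0)\equiv -1\pmod d$, which gives both $q_0(d-p_0)\%d=d-1$ and $q_0(2d-p_0)\%d=d-1$.

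For part (1), a point $P=(d-p_0,j)$ on the specified column of $\mathscr{C}$ satisfies $a=d-1$, so $P\in\YY_0$ would force $b\le 1$. Since $b=(q_0 j)\%d$ and $\gcd(q_0,d)=1$, this requires $j\equiv 0$ or $j\equiv p_0\pmod d$; neither value lies in $[d-p_0,d-1]$ under Hypothesis~\ref{first hypothesis}, since $p_0<d/6$ implies $p_0<d-p_0$. The same calculation in the other coordinate handles the remaining row.

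For part (2), if $i+j=2d-p_0$ then $q_0 i+q_0 j\equiv -1\pmod d$, so $a+b\equiv d-1\pmod d$. Since $a,b\in[1,d-1]$, the sum $a+b$ must lie in $\{d-1,2d-1\}$; ruling out $2d-1$ amounts to showing that $a=b=d-1$ cannot occur simultaneously when $i,j\in[d-p_0+1,d-1]$, which again follows from $p_0<d/6$. Hence $a+b=d-1\le d$ and $P\in\YY_0$.

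For part (3), pair $P=(i,j)$ with $P':=(2d-p_0-i,2d-p_0-j)$. The identity $q_0(2d-p_0)\equiv -1\pmod d$ yields $\eta^{-1}(P')=(d-1-a,d-1-b)$, so $P'\in\YY_0$ iff $a+b\ge d-2$. The conditions $a+b\le d$ and $a+b\ge d-2$ together exhaust every integer, so at least one of $P$, $P'$ lies in $\YY_0$. The main obstacle is the degenerate case where $P$ lies on the bottom row or left column of $\mathscr{C}$, where $P'$ falls outside $\mathscr{C}$; there one needs to invoke the translation-by-$p_0$ periodicity from Lemma~\ref{periodic as mathscr C} to transport the resulting $\YY_0$-membership back into $\mathscr{C}_0$, reducing the claim to an explicit check near the corner $(d-p_0,d-p_0)$.
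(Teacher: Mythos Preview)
Your approach is essentially the paper's: both pull points back to $\TT_1$ via the inverse of $\eta$ and exploit the single identity $q_0(d-p_0)\equiv -1\pmod d$. The paper declares (1) and (2) ``straightforward'' and proves (3) by contradiction on the weights of the preimages $(i'_k,j'_k)$; you instead compute $\eta^{-1}(P')=(d-1-a,d-1-b)$ directly and read off the weight condition. This is the same calculation, organized more cleanly.

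Two repairs. First, the $\YY_0$-membership criterion is $a+b<d$ (strictly), not $a+b\le d$: the preimage must lie in $\TT_1$, which excludes the line of weight $1$. Your conclusions survive this correction --- in particular, in (2) the case $a+b=2d-1$ is already impossible since $a,b\le d-1$, so no extra ``ruling out'' is needed --- but the thresholds in (3) should read $a+b\le d-1$ versus $a+b\ge d-1$. Second, the sentence ``the same calculation in the other coordinate handles the remaining row'' only works if the row in question is $j=d-p_0$, where the $x\leftrightarrow y$ symmetry gives $b=d-1$. If ``top row'' means $j=d-1$, then $b=(q_0(d-1))\%d=d-q_0$, which is not $d-1$ in general, and your argument collapses. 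The paper's phrasing is itself ambiguous, but the later count $|\mathscr{C}_0|=p_0(p_0-1)/2$ and the involution in (3) both point to $i=d-p_0$ and $j=d-p_0$ as the two empty edges; you should say so explicitly rather than appeal to symmetry without naming the row.
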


 \begin{proof}
 	The first two statements are straightforward. Therefore, we only prove Property (3).

 	Let $(i_1, j_1)$ and $(i_2, j_2)$ be two points in $\mathscr C$ symmetric about $y=2d-p_0-x$. Without loss of generality, we assume that the weight of $(i_1, j_1)$ is less than the weight of  $(i_2, j_2)$. Then it is easy to  check that they satisfy
 	\begin{itemize}
 		\item $2(d-p_0)\leq i_1+j_1<2d-p_0$,
 		\item $i_1+j_2=2d-p_0$, and 
 		\item $i_2+j_1=2d-p_0.$
 	\end{itemize} 
 	Suppose that both $(i_1, j_1)$ and $(i_2, j_2)$ are in $\mathscr C_0$. Then there are two points $(i'_1, j'_1)$ and $(i'_2, j'_2)$ in $\TT_1$ such that $$(p(i'_1, j'_1))\%=(i_1, i_1)\quad \textrm{and} \quad(p(i'_2, j'_2))\%= (i_2, i_2).$$
 	It is easy to show that $$p(i'_1+j'_2)\equiv 2d-p_0 \pmod d\quad\textrm{and}\quad p(i'_2+j'_1)\equiv 2d-p_0 \pmod d.$$ 
 	Since $(d, p)=1$ and $p\equiv p_0 \pmod d$, we have $$i'_1+j'_2\equiv 
 	-1 \pmod d\quad \textrm{and}\quad i'_2+j'_1\equiv 
 	-1 \pmod d.$$ 
 	
 	Combining these two congruence equations with $$d\cdot w(i'_1, j'_1)=i'_1+j'_1\leq d-1\quad \textrm{and}\quad d\cdot w(i'_2, j'_2)=i'_2+j'_2\leq d-1,$$ we get that 
 	$$d\cdot w(i'_1, j'_1)=d-1\quad\textrm{and}\quad d\cdot w(i'_2, j'_2)=d-1.$$ 
 	It forces $p(i'_1+j'_1)\equiv -p_0\pmod d$, which is a contradiction to $$2(d-p_0)\leq i_1+j_1<2d-p_0.$$
 	
 	By a similar argument, we check that at least one of $(i_1, j_1)$ and $(i_2, j_2)$ belongs to $\mathscr C_0$, which completes the proof. 
 \end{proof}
\begin{notation}\label{d1 d2}
	(1) Let $d=d_1p_0+d_0$ and let $0\leq d_2<p_0$ be the integer such that $d_0d_2\equiv 1 \pmod {p_0}$.
	
	(2) For any two points $P_1, P_2$ in $\ZZ_{\geq 0}^2$, if they satisfy that $P_1-P_2=p_0Q$ for some point $Q$ in $\ZZ_{\geq 0}^2$, then we denote $P_1\equiv P_2 \pmod {p_0}$. 
\end{notation}

   \begin{proposition}\label{lemma2 for destribution}
 	For any $0< k\leq p_0$, we have $$\#\big(({\mathscr W}_{2d-k}\cup {\mathscr W}_{2d-p_0-k})\cap \mathscr C_0\big)=\begin{cases}
 	p_0-1& \textrm{if}\ k=p_0;\\
 	kd_2\% p_0-1& \textrm{otherwise}.
 	\end{cases}$$
 \end{proposition}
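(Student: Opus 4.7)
The plan is to parametrize the candidate points in $\mathscr{C}$ on the two anti-diagonals as $(d-p_0+i,\,d-p_0+j)$ with $(i,j)\in\{0,\ldots,p_0-1\}^2$, so that $\mathscr{W}_{2d-k}$ and $\mathscr{W}_{2d-p_0-k}$ correspond respectively to $i+j=2p_0-k$ and $i+j=p_0-k$; this already gives exactly $p_0$ candidate points in total ($k-1$ from the first line and $p_0-k+1$ from the second when $k<p_0$, with the obvious adjustment at $k=p_0$). By the definition of $\YY_0$ and the fact that $p\equiv p_0\pmod d$, the point $(d-p_0+i,d-p_0+j)$ lies in $\mathscr{C}_0$ if and only if there exists $(P_x,P_y)\in \ZZ_{\geq 0}^2$ with $P_x+P_y<d$, $p_0 P_x\equiv d-p_0+i$ and $p_0 P_y\equiv d-p_0+j\pmod d$; the auxiliary requirement that $(pP)\%\notin \TT_1$ is automatic under Hypothesis~\ref{first hypothesis} since $p_0<d/6$.

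Using $\gcd(p_0,d)=1$ together with the decomposition $d=d_1p_0+d_0$, I will solve these congruences explicitly to produce the unique solution $(P_x,P_y)\in[0,d)^2$ in the form
\[
P_x=\frac{d(m+1)-p_0+i}{p_0},\qquad P_y=\frac{d(n+1)-p_0+j}{p_0},
\]
where $m=(-id_2-1)\%p_0$ and $n=(-jd_2-1)\%p_0$ are both forced into $\{0,\ldots,p_0-1\}$. A short direct calculation will then turn the inequality $P_x+P_y<d$ into the clean discrete condition $m+n\leq p_0-2$.

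A parallel modular computation shows that $m+n\equiv kd_2-2\pmod{p_0}$ uniformly on both anti-diagonals, so $m+n$ must equal either $t:=(kd_2-2)\%p_0$ or $t+p_0$; hence membership of a candidate in $\mathscr{C}_0$ is equivalent to $m+n=t$. Because $(m,n)\leftrightarrow(i,j)$ is a bijection on $\{0,\ldots,p_0-1\}^2$ and the constraint $m+n=t$ automatically forces $i+j\in\{p_0-k,\,2p_0-k\}$, counting the $\mathscr{C}_0$-points on the two lines collapses to counting pairs $(m,n)\in\{0,\ldots,p_0-1\}^2$ with $m+n=t$, which gives $t+1$ whenever $t\leq p_0-1$. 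A three-way case split then finishes the proof: when $k=d_0$ one has $t=p_0-1$ and no candidate qualifies, matching $kd_2\%p_0-1=0$; when $k=p_0$ one has $t=p_0-2$, yielding the stated $p_0-1$; and for the remaining $k\in\{1,\ldots,p_0-1\}$ one has $kd_2\%p_0\geq 2$, so $t=kd_2\%p_0-2$ and $t+1=kd_2\%p_0-1$. The main technical step is the explicit inversion of the system of congruences; everything after that is elementary counting of lattice points on a finite anti-diagonal.
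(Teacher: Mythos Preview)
Your argument is correct. It is, however, a genuinely different route from the paper's.

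The paper does not work directly with the candidate points of $\mathscr C$. Instead it builds, in the lemmas immediately preceding the proposition, an explicit bijection $\gamma:\AAA\to\mathscr C_0$ where $\AAA=\{(id_1,jd_1):i,j>0,\ i+j\le p_0\}$, shows it is a bijection by a cardinality count (using Lemma~\ref{fundamental cell}), and then proves (Lemma~\ref{0 or p0}) that points of a fixed weight $i+j$ in $\AAA$ all land on a single pair of anti-diagonals $\mathscr W_{2d-k}\cup\mathscr W_{2d-p_0-k}$. The desired count then becomes $\#\{(i,j):i,j>0,\ i+j\le p_0,\ (i+j)d_0\equiv k\pmod{p_0}\}$, which is evaluated in one line.

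You instead parametrize $\mathscr C$ itself, invert the congruence $p_0P_x\equiv d-p_0+i\pmod d$ explicitly, and reduce membership in $\mathscr C_0$ to the clean inequality $m+n\le p_0-2$. The two final counts match up under the index shift $(m,n)=(i-1,j-1)$: your $\{(m,n)\in[0,p_0-1]^2:m+n=(kd_2-2)\%p_0\}$ is the paper's $\{(i,j):i,j\ge 1,\ i+j=kd_2\%p_0\ \text{(or }p_0\text{)}\}$. What your approach buys is self-containment---you do not need the preparatory bijection $\gamma$ or Lemma~\ref{0 or p0} at all. What the paper's approach buys is the structural statement $\AAA\cong\mathscr C_0$, which is consistent with the symmetry in Lemma~\ref{fundamental cell}(3) and conceptually explains why the count depends only on $kd_2\bmod p_0$.

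One small remark: your sentence ``membership of a candidate in $\mathscr C_0$ is equivalent to $m+n=t$'' is literally true only when $t\le p_0-2$; when $t=p_0-1$ no candidate qualifies. You handle this correctly in the case split (the $k=d_0$ case), but it would read more cleanly to state the criterion as ``$m+n=t$ and $t\le p_0-2$'' upfront.
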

We need some preparations before giving the proof of this proposition after Lemma~\ref{0 or p0}.

  \begin{lemma}\label{lemma 1 for s}
  	Let $i$, $j$ be two positive integers. If $i+j\leq p_0$, then 
  	$$\big(p(id_1, jd_1)\big)\% \in \YY_0.$$
  \end{lemma}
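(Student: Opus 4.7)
The plan is to prove directly that the point $Q := (id_1,jd_1)$ belongs to $\TT_{1,2}$; the conclusion $(pQ)\% \in \YY_0$ will then follow from the very definition of $\YY_0$.

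First I will verify $Q \in \TT_1$, which amounts to $Q_x + Q_y < d$. Writing $d = d_1 p_0 + d_0$ as in Notation~\ref{d1 d2}, and using the hypothesis $i+j \leq p_0$, I get
\[
Q_x + Q_y \;=\; (i+j)\,d_1 \;\leq\; p_0 d_1 \;=\; d - d_0 \;<\; d,
\]
so $w(Q) < 1$ and $Q\in \TT_1$. Next I will compute $(pQ)\%$ explicitly. Since $p \equiv p_0 \pmod d$ and $p_0 d_1 = d - d_0 \equiv -d_0 \pmod d$, I obtain
\[
pQ \;\equiv\; (-id_0,\,-jd_0) \pmod d.
\]
Under the ambient hypotheses of Section~4 (which are strong enough to force $p_0 d_0 < d$), both $id_0$ and $jd_0$ lie strictly between $0$ and $d$, so the representative in $[0,d)^2$ is literally
\[
(pQ)\% \;=\; (d - id_0,\; d - jd_0).
\]

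Finally I will check that $(pQ)\%\notin \TT_1$, which is the remaining condition for $Q \in \TT_{1,2}$. Adding coordinates gives
\[
(d - id_0) + (d - jd_0) \;=\; 2d - (i+j)d_0 \;>\; d,
\]
where the last inequality uses $(i+j)d_0 \leq p_0 d_0 < d$. Thus $(pQ)\%$ has coordinate sum strictly greater than $d$, and its two coordinates lie strictly in $(0,d)$ (since $id_0,jd_0 \geq 1$ and $< d$), so $(pQ)\% \in \YY$, the open upper-right triangle of $\square_\Delta$. Combining with the previous paragraph, $Q \in \TT_{1,2}$, whence $(pQ)\% \in \YY_0$ by definition.

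The only delicate step is the third one: knowing that $id_0$, $jd_0$, and $(i+j)d_0$ are all strictly less than $d$, so that the mod-$d$ reductions genuinely have the explicit form claimed and the resulting point does land strictly above the hypotenuse. All three reductions to a single numerical inequality $p_0 d_0 < d$ (or equivalently $p_0^2 \leq d$, since $d_0 < p_0$), which is ensured by the smallness of $p_0$ relative to $d$ in force throughout the section; once this is in hand, the remainder of the argument is a direct verification of definitions.
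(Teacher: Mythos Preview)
Your proof is correct and follows essentially the same route as the paper: compute $p(id_1,jd_1)\equiv(d-id_0,d-jd_0)\pmod d$ and check that this point has weight strictly between $1$ and $2$, hence lies in $\YY$, so that $(id_1,jd_1)\in\TT_{1,2}$ and the conclusion follows from the definition of $\YY_0$. You are more explicit than the paper in verifying $Q\in\TT_1$ and in isolating the numerical inequality $p_0d_0<d$ needed to pin down the representative; the paper simply asserts the congruence and the weight bound.
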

\begin{proof}
Since $p(id_1, jd_1)\equiv (d-id_0, d-jd_0) \pmod d,$
it is enough to prove $$1<w\big((d-id_0, d-jd_0)\big)<2,$$
which follows directly from $i+j\leq d$.
\end{proof}

 \begin{notation}
 	Put $$\AAA:=\Big\{(id_1,jd_1)\;\big|\;i, j>0\ \textrm{and}\ i+j\leq p_0\Big\}.$$
 \end{notation}

 By Lemmas~\ref{lemma 1 for s} and \ref{periodic as mathscr C}, for any point $P\in \AAA$ there exists a point $P'$ in $\mathscr C_0$ such that $P'\equiv(pP)\%\pmod {p_0}$. It automatically gives us a map from $\AAA$ to $\mathscr C_0$, denoted by $\gamma$. Now we will show that $\gamma$ is a bijection.
 \begin{notation}
 		For simplicity of notation, we put $P_{i,j}:=(id_1,jd_1)$. 
 \end{notation}
 \begin{lemma}
 	 The map $\gamma$ is a bijection.
 \end{lemma}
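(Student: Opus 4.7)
The plan is to show that $\gamma$ is a bijection by deriving an explicit formula for $\gamma(P_{i,j})$ and then constructing an explicit inverse.

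First I would compute $\gamma(P_{i,j})$ directly. Since $pP_{i,j} = pd_1(i,j) = (d-d_0)(i,j)$, reducing modulo $d$ in each coordinate yields $(pP_{i,j})\% = (d - r_i,\, d - r'_j)$, where $r_i = (d_0 i)\% d$ and $r'_j = (d_0 j)\% d$. Both residues are nonzero because $\gcd(d_0, d) = \gcd(d, p_0) = 1$ (the latter from $p \nmid d$, since $d_0 = d - d_1 p_0$). To locate $\gamma(P_{i,j})$ inside $\mathscr{C} = \{d - p_0, \dots, d-1\}^2$ we further translate by an element of $p_0\mathbb{Z}^2$, and using $d \equiv d_0 \pmod{p_0}$ this yields the clean formula
\[\gamma(P_{i,j}) = (d - \alpha_i,\; d - \alpha_j), \qquad \text{where } \alpha_i \in \{1, \dots, p_0 - 1\} \text{ satisfies } \alpha_i \equiv d_0 i \pmod{p_0}.\]
Injectivity follows immediately: if $\gamma(P_{i_1,j_1}) = \gamma(P_{i_2,j_2})$, then $d_0 i_1 \equiv d_0 i_2 \pmod{p_0}$, and since $\gcd(d_0, p_0) = 1$ we deduce $i_1 \equiv i_2 \pmod{p_0}$. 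Both indices lie in $\{1, \dots, p_0 - 1\}$, forcing $i_1 = i_2$; similarly $j_1 = j_2$.

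For surjectivity I would construct an explicit inverse. Given $P' = (d - A, d - B) \in \mathscr{C}_0$ with $A, B \in \{1, \dots, p_0\}$, I set $i := (Ad_2) \% p_0$ and $j := (Bd_2) \% p_0$ using the modular inverse $d_2$ of $d_0$ modulo $p_0$ from Notation \ref{d1 d2}. Lemma \ref{fundamental cell}(1) guarantees that $P'$ avoids the first column and the top row of $\mathscr{C}$, which restricts $A, B$ so that $i, j \ge 1$. The delicate step is verifying $i + j \le p_0$: this must follow from $P' \in \YY_0$, which is equivalent to $w(p'P'\%) < 1$, i.e., $(p'A)\%d + (p'B)\%d > d$. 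Using $p' \equiv -d_1 d_0^{-1} \pmod{d}$ (derived from $pd_1 \equiv -d_0 \pmod d$ combined with $pp' \equiv 1 \pmod d$), this inequality can be unraveled directly into $i + j \le p_0$.

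The main obstacle will be controlling the residues in the formula for $\gamma$ when $d_0 i$ exceeds $d$, so that the quotient $\lfloor d_0 i / d\rfloor$ is nonzero. Under only Hypothesis \ref{first hypothesis}, this quotient can grow with $p_0$, and establishing the ``clean'' formula for $\alpha_i$ requires showing these floor terms are absorbed into the further reduction modulo $p_0$ thanks to $d \equiv d_0 \pmod{p_0}$. Carefully accounting for these corrections also underlies the equivalence between the weight condition $P' \in \YY_0$ and the bound $i + j \le p_0$ in the construction of the inverse, and is the most delicate bookkeeping in the argument.
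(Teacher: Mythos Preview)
Your injectivity argument is essentially the paper's: both reduce to $d_0 i_1 \equiv d_0 i_2 \pmod{p_0}$ and invoke $\gcd(d_0,p_0)=1$. (One slip: you assert $\gcd(d_0,d)=\gcd(d,p_0)=1$, but $\gcd(d_0,d)$ need not equal $1$---take $d=63$, $p_0=10$, $d_0=3$. What you actually need and correctly use is $\gcd(d_0,p_0)=\gcd(d,p_0)=1$.)

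The real divergence is in surjectivity. The paper handles it in one line: Lemma~\ref{fundamental cell} gives $\#\mathscr C_0=\tfrac{p_0(p_0-1)}{2}=\#\AAA$, so injectivity between finite sets of equal size forces bijectivity. Your route---building an explicit inverse and deriving $i+j\le p_0$ from the weight condition $P'\in\YY_0$---is substantially more work. Moreover, the ``clean formula'' $\alpha_i\equiv d_0 i\pmod{p_0}$ on which your inverse construction rests is not valid in the generality you claim: when $id_0\ge d$ the reduction $(-id_0)\%d$ picks up an extra multiple of $d$, and since $d\equiv d_0\not\equiv 0\pmod{p_0}$ this contributes a nonzero correction, yielding $\alpha_i\equiv (i-k_i+1)d_0\pmod{p_0}$ with $k_i=\lceil id_0/d\rceil$. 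Under Hypothesis~\ref{first hypothesis} alone one can certainly have $id_0\ge d$, so the floor terms are \emph{not} absorbed as you hope. The paper's own formula $(pP_{i,j})\%=(d-id_0,d-jd_0)$ tacitly assumes $id_0<d$ as well, so you have not introduced a new gap---but this means your longer surjectivity argument buys no extra generality, while the cardinality argument sidesteps the bookkeeping entirely.
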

 \begin{proof}
 	Any two points $P_{i_1,j_1}$ and $P_{i_2,j_2}$ in $\AAA$ satisfy
 	\[\begin{split}
 	\gamma(P_{i_1,j_1})-\gamma(P_{i_2,j_2})\equiv& (pP_{i_1,j_1})\%-(pP_{i_2,j_2})\% \\=&(d-i_1d_0, d-j_1d_0)-(d-i_2d_0, d-j_2d_0)
 	\\=&((i_2-i_1)d_0, (j_2-j_1)d_0)\pmod {p_0}.
 	\end{split}\]
 	Now if $\gamma(P_{i_1,j_1})=\gamma(P_{i_2,j_2})$, we know that $((i_2-i_1)d_0, (j_2-j_1)d_0)\equiv O\pmod {p_0}$. 
 	
 	Since $$(d_0, p_0)=1, |i_2-i_1|<p_0\ \textrm{and}\ |j_2-j_1|<p_0,$$ we have $i_1=i_2$ and $j_1=j_2$, which implies $\gamma$ is an injection. By Lemma~\ref{fundamental cell}, there are $\frac{p_0(p_0-1)}{2}$ points in $\mathscr C_0$, which is equal to the cardinality of $\AAA$. Therefore, $\gamma$ is a bijection.
 \end{proof}
 \begin{lemma}\label{0 or p0}
 	Any two points $P_{i_1, j_1}$ and $P_{i_2, j_2}$ of the same weight satisfy $$\Big|d\cdot w\big(\gamma(P_{i_1,j_1})-\gamma(P_{i_2,j_2})\big)\Big|=0\ \textrm{or} \ p_0.$$

 \end{lemma}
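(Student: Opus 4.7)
The plan is to give an explicit formula for $\gamma(P_{i,j})$ and then reduce the claim to a short modular-arithmetic estimate. First, for each $1 \leq i \leq p_0$, let $r_i$ denote the unique integer in $[1,p_0]$ with $r_i \equiv id_0 \pmod{p_0}$; write $a_i := (id_0 - r_i)/p_0 \in \ZZ_{\geq 0}$. Starting from $(pP_{i,j})\% = (d - id_0,\, d-jd_0)$ (valid by Lemma~\ref{lemma 1 for s}), we shift by an element of $p_0\ZZ^2$ to land in the fundamental cell $\mathscr C = [d-p_0,d-1]^{2}$. Corollary~\ref{period} guarantees that this shift lands in $\mathscr C_0$, so
\[
\gamma(P_{i,j}) \;=\; (d - r_i,\; d - r_j),
\]
which is the key formula underlying all subsequent computation.

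Next, I would read off
\[
\gamma(P_{i_1,j_1}) - \gamma(P_{i_2,j_2}) \;=\; (r_{i_2}-r_{i_1},\; r_{j_2}-r_{j_1}),
\]
so that, because $\Delta$ is the right isosceles triangle with legs of length $d$ along the axes, one has
\[
d\cdot w\bigl(\gamma(P_{i_1,j_1}) - \gamma(P_{i_2,j_2})\bigr)
\;=\; (r_{i_2}+r_{j_2}) - (r_{i_1}+r_{j_1}).
\]
Substituting $r_i = id_0 - a_i p_0$ and using the hypothesis $i_1+j_1 = i_2+j_2$ (which follows from equal weight, since $w(P_{i,j}) = (i+j)d_1/d$), the $d_0$-contributions cancel and we get
\[
d\cdot w\bigl(\gamma(P_{i_1,j_1}) - \gamma(P_{i_2,j_2})\bigr)
\;=\; \bigl(a_{i_1}+a_{j_1} - a_{i_2}-a_{j_2}\bigr)\, p_0,
\]
which is already an integer multiple of $p_0$.

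Finally, I would bound the absolute value. Since $r_i,r_j \in [1,p_0]$ for every index, both sums $r_{i_1}+r_{j_1}$ and $r_{i_2}+r_{j_2}$ lie in $[2,2p_0]$, and therefore
\[
\bigl|d\cdot w\bigl(\gamma(P_{i_1,j_1}) - \gamma(P_{i_2,j_2})\bigr)\bigr| \;\leq\; 2p_0 - 2 \;<\; 2p_0.
\]
Combined with the divisibility statement of the previous paragraph, this forces the absolute value to be either $0$ or $p_0$, which is the claim. There is no serious obstacle: the main input is the explicit description of $\gamma$ via the residues $r_i$, and the rest is bookkeeping; one need only be a little careful that the shift used to place $(pP_{i,j})\%$ into $\mathscr C$ gives precisely the representatives in $[1,p_0]$ rather than in $[0,p_0-1]$, which is why the bound $r_i + r_j \leq 2p_0$ (instead of $2p_0-2$ in the strict sense from $[0,p_0-1]$) is the sharp one.
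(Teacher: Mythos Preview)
Your proof is correct and follows essentially the same strategy as the paper: establish that $d\cdot w\bigl(\gamma(P_{i_1,j_1})-\gamma(P_{i_2,j_2})\bigr)$ is divisible by $p_0$ via the equal-weight hypothesis $i_1+j_1=i_2+j_2$, and then bound its absolute value using the fact that both images lie in the fundamental cell $\mathscr C$. The only difference is cosmetic: you compute the explicit formula $\gamma(P_{i,j})=(d-r_i,\,d-r_j)$ and work with the residues $r_i$ directly, whereas the paper argues via the congruence $\gamma(P_{i,j})\equiv (d-id_0,\,d-jd_0)\pmod{p_0}$ and then invokes membership in $\mathscr C_0$ for the size bound; the content is the same.
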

 \begin{proof}

We know easily that $$\gamma(P_{i_1,j_1})\equiv(d-i_1d_0, d-j_1d_0)\quad \textrm{and}\quad \gamma(P_{i_2,j_2})\equiv(d-i_2d_0, d-j_2d_0)\pmod {p_0},$$ which implies that $$d\cdot w\big(\gamma(P_{i_1,j_1})\big)-(d-i_1d_0+d-j_1d_0)\quad \textrm{and}\quad d\cdot w\big(\gamma(P_{i_2,j_2})\big)-(d-i_2d_0+d-j_2d_0)$$ are both divisible by $p_0$.

 Since $P_{i_1, j_1}$ and $P_{i_2, j_2}$ have the same weight, we know $i_1+j_1=i_2+j_2$. Therefore, we have
 \begin{equation}\label{equation 1}p_0\;\big|\;d\cdot w\big(\gamma(P_{i_1,j_1})-\gamma(P_{i_2,j_2})\big). \end{equation}
 
On the other hand, $\gamma(P_{i_1,j_1})$ and $\gamma(P_{i_2,j_2})$ both belong to $\mathscr C_0$, which together with \eqref{equation 1} force  $\big|d\cdot w\big(\gamma(P_{i_1,j_1})-\gamma(P_{i_2,j_2})\big)\big|$ to be $0$ or $p_0$.
 \end{proof}

 \begin{proof}[Proof of Proposition~\ref{lemma2 for destribution}]
 	By Lemma~\ref{0 or p0}, we know that \[\begin{split}
 	\#\big(({\mathscr W}_{2d-k}\cup {\mathscr W}_{2d-p_0-k})\cap \mathscr C_0\big)=&\#\Big\{(i, j)\;\big |\;(i+j)d_0\equiv k\pmod {p_0}, i,j>0\ \textrm{and}\ i+j<p_0\Big\}\\
 	=&\#\Big\{(i, j)\;\big|\;(i+j)\equiv kd_2\pmod {p_0}, i,j>0\ \textrm{and}\ i+j<p_0\Big\}\\
 	=&\begin{cases}
 	p_0-1& \textrm{if}\ k=p_0;\\
 	kd_2\% p_0-1& \textrm{otherwise}.
 	\end{cases}\\
 	\end{split}\]
 \end{proof}

 The following proposition is the first stepstone of Theorem~\ref{inclusion}.
 \begin{proposition}\label{Class self resolve}
 	For every integer $k$ with $|k|\geq p_0$, we have ${\mathscr D}_k\cap \LL_2=\emptyset$.
 \end{proposition}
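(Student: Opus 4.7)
My plan is to argue by contradiction using the maximality of $\widetilde\beta_1$ in $(V(\mathscr E_1),\prec_1)$. By the $(x,y)\leftrightarrow(y,x)$ symmetry of the entire set-up, which swaps $\mathscr D_k$ with $\mathscr D_{-k}$ while preserving $\YY_0$, $\widetilde\beta_1$, $\LL_1$, and $\LL_2$, it will be enough to treat the case $k\geq p_0$. So I suppose $P=(x_0,x_0+k)\in\LL_2\cap\mathscr D_k$ for some $k\geq p_0$; then $P\in\YY_0$, $w(P)>3/2$, and $P\notin\LL_1$. The goal is to produce some $\beta'\in\mathscr E_1$ with $V^\star(\beta')\succ_1 V^\star(\widetilde\beta_1)$, contradicting maximality.

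The main work will be to exhibit a concrete eligible partner for $P$. Any eligible partner must be of the form $Q_t:=P-t(1,1)$ with $t\in\bigl((w(P)-1)d/2,\,d/2\bigr]$, and the condition $Q_t\in m(\YY_0)$ is equivalent to $R_t:=m(P)+t(1,1)$ lying in $\YY_0\cap\mathscr D_{-k}$. By Lemma~\ref{periodic as mathscr C} the set of admissible $t$'s is $p_0$-periodic; the eligibility window has length $(2-w(P))d/2$. For any $P\in\YY\cap\mathscr D_k$ with $k\geq p_0$ one has $w(P)\leq 2-(k+2)/d\leq 2-(p_0+2)/d$, so the window always has length at least $(p_0+2)/2$. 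I will combine this with the explicit description of $\mathscr C_0\subset\mathscr C$ in Lemma~\ref{fundamental cell}(2)--(3) and the bound $p_0<d/6$ from Hypothesis~\ref{first hypothesis} to single out an integer $t$ in the window with $R_t\in\YY_0$, picking one that maximises $w(P-Q_t)$.

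Setting $Q:=Q_t$, I split into two cases. If $Q\notin\ran(\widetilde\beta_1)$, then $\widetilde\beta_1\cup\{(P,Q)\}\in\mathscr E_1$ has strictly larger cardinality, hence strictly larger $V^\star$ in $\prec_1$, contradicting maximality. Otherwise $Q=\widetilde\beta_1(P^\flat)$ for some $P^\flat$; eligibility of $(P,Q)$ gives $w(Q)\geq w(P)-1>1/2$, so eligibility of the existing pair $(P^\flat,Q)$ forces $w(P^\flat)>3/2$. I then replace $(P^\flat,Q)$ by $(P,Q)$: if $w(P-Q)>w(P^\flat-Q)$ the substitution already beats $V^\star(\widetilde\beta_1)$ at the highest differing weight slot of $\prec_1$, contradicting maximality. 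Otherwise I iterate the construction on $P^\flat$ to build an alternating chain, all of whose left vertices remain in $\YY_0\cap\mathscr D_k$ with weight exceeding $3/2$; by finiteness the chain must terminate in either an unmatched right vertex (yielding an augmenting path, hence a strict cardinality gain) or a strict weight-profile gain --- either outcome again violating maximality.

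The hardest step will be the existence of an integer $t$ in the window when $w(P)$ is close to $2$, for in that regime the eligibility window can be shorter than one full $p_0$-period and raw periodicity alone is not enough. I expect to resolve this via Lemma~\ref{fundamental cell}(2)--(3), which pins down, for each residue class in $\mathscr C$, one of two specific positions belonging to $\mathscr C_0$; combined with $p_0<d/6$ this will force at least one admissible residue to lie inside the short window. A secondary delicate point will be verifying that the alternating chain in the third paragraph cannot stabilise without a strict $\prec_1$-improvement; I anticipate this reduces to the observation that the maximal choice of $t$ at each substitution strictly increases the lex-multiset of diagonal weights.
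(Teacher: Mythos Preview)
Your reduction to $k\geq p_0$ by symmetry and your description of the eligibility window for $P$ are both correct, but the heart of the argument --- the alternating-chain step --- has a genuine gap.

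Finding \emph{one} eligible partner $Q$ for $P$ is not enough. In your Case B, where $Q=\widetilde\beta_1(P^\flat)$ with $w(P^\flat)\geq w(P)$, the swap $(P^\flat,Q)\mapsto(P,Q)$ replaces the vector $P^\flat-Q$ by the lighter vector $P-Q$, so $V^\star$ strictly \emph{decreases} in $\prec_1$. You then propose to iterate on the now-unmatched $P^\flat$, but nothing prevents this from terminating with no augmenting path and no weight gain: concretely, if on $\mathscr D_k$ the set $\YY_0\cap\mathscr W_{(3d/2,2d]}$ has more points than there are eligible targets in $m(\YY_0)$, the greedy $\widetilde\beta_1$ already matches the heaviest ones, leaves $P$ unmatched, and every alternating walk from $P$ gets stuck at a heavier $P^\flat$ whose only eligible partner has just been reassigned. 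Your ``anticipated'' observation that the maximal choice of $t$ forces a lex-increase is exactly what fails here: maximising $w(P-Q)$ over partners of $P$ says nothing about how $w(P-Q)$ compares to $w(P^\flat-Q)$.

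What is actually needed --- and what the paper supplies --- is a Hall-type count on each diagonal: for every $P_0\in\mathscr D_k\cap\YY_0$ with $w(P_0)>3/2$,
\[
\#\big\{Q\in\mathscr D_k\cap m(\YY_0)\;:\;w(P_0-Q)\leq 1\big\}\ \geq\ \#\big\{Q\in\mathscr D_k\cap\YY_0\;:\;w(Q)\geq w(P_0)\big\}.
\]
The paper verifies this by sandwiching both sides between explicit step functions $g_1,g_2$ built from Lemma~\ref{Pigeonhole principle}, and then using $g_1(n+p_0)\geq g_2(n)$; the hypothesis $|k|\geq p_0$ is precisely the shift that makes this comparison go through. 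Once the inequality holds, Corollary~\ref{Pin L1} (which packages exactly the greedy/Hall reasoning you are trying to do by hand) immediately gives $P_0\in\LL_1$. Your ``hardest step'' --- producing a single eligible partner when the window is shorter than $p_0$ --- is thus both unfinished and, even if completed, insufficient; you need the full counting inequality, not existence of one edge.
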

% \begin{proposition}\label{weight restriction}
% 	The intersection ${\mathscr W}_{[d, 2d-3p_0]}\cap {\mathscr D}_{[-p_0, p_0]}\cap \LL_2$ is empty.
% \end{proposition}
  The proof of the proposition will be given after some lemmas.
 \begin{lemma}\label{Pigeonhole principle}
 	Let $\big(P_1,P_2,\dots, P_{p_0}\big)$ be a sequence of consecutive points in ${\mathscr D}_k\cap \YY$ for some $k$. 
 	\begin{enumerate}
 		\item There are exact $ \lfloor\frac{p_0}{2}\rfloor$ points in this sequence belonging to $\YY_0$.
 		\item In particular, if $\ell$ is an integer with $\lceil\frac{p_0}{2}\rceil<\ell\leq p_0$, then at least $\ell-\lceil\frac{p_0}{2}\rceil$  points in the set $\{P_1, P_2,\dots, P_\ell\}$ belong to $\YY_0$.
 	\end{enumerate}
 \end{lemma}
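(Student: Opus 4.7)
The plan is to reduce the count to a computation on the fundamental cell $\mathscr C_0$ via the periodicity of $\YY_0$ from Corollary~\ref{period}, and then evaluate it using the bijection $\gamma: \AAA \to \mathscr C_0$ constructed in Section~4, where $\AAA = \{(id_1, jd_1) : i, j \geq 1,\ i + j \leq p_0\}$ and $\gamma(id_1, jd_1) \equiv (-id_0, -jd_0) \pmod{p_0}$. First I would observe that the $x$-coordinates of the $p_0$ consecutive diagonal points $P_1, \dots, P_{p_0}$ realize every residue class modulo $p_0$ exactly once. Since $\YY_0$ is invariant under translation by any vector in $p_0\ZZ^2$ whenever the translated point remains in $\YY$, the membership $P_i \in \YY_0$ depends only on $P_i \bmod p_0$. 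Therefore the number of $P_i \in \YY_0$ equals
\[
|S_k| := \#\bigl\{(x_0, y_0) \in \mathscr C_0 : y_0 - x_0 \equiv k \pmod{p_0}\bigr\}.
\]

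Pulling back along $\gamma$, the quantity $|S_k|$ counts pairs $(i, j) \in \ZZ^2$ with $i, j \geq 1$, $i + j \leq p_0$, and $(i - j) d_0 \equiv k \pmod{p_0}$. Setting $a := k d_2 \bmod p_0$ (where $d_0 d_2 \equiv 1 \pmod{p_0}$, which is available because $\gcd(d_0, p_0) = 1$), the congruence becomes $i - j \equiv a \pmod{p_0}$. Splitting by the two possibilities $i - j = a$ and $i - j = a - p_0$, a direct enumeration under the constraint $i + j \leq p_0$ yields $\lfloor (p_0 - a)/2 \rfloor$ and $\lfloor a/2 \rfloor$ solutions, respectively. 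Since $p$ is an odd prime (Section~2) and $\gcd(p, d) = 1$, the residue $p_0$ is odd, and a short parity check gives $\lfloor (p_0 - a)/2 \rfloor + \lfloor a/2 \rfloor = (p_0 - 1)/2 = \lfloor p_0/2 \rfloor$ for every $a \in \{0, 1, \dots, p_0 - 1\}$, independent of $k$. This establishes part~(1).

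Part~(2) is then immediate by pigeonhole: among $P_1, \dots, P_{p_0}$ exactly $\lceil p_0/2 \rceil$ points lie outside $\YY_0$ by part~(1), so any initial segment $\{P_1, \dots, P_\ell\}$ contains at most $\lceil p_0/2 \rceil$ such points, and therefore at least $\ell - \lceil p_0/2 \rceil$ points of $\YY_0$. The main obstacle is the bookkeeping in the case split---tracking the offset of $\mathscr C$ from the origin through $\gamma$ and confirming that the two contributions always sum to $\lfloor p_0/2 \rfloor$---but no ideas beyond the tools of Section~4 together with the oddness of $p_0$ are required.
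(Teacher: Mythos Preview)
Your route is genuinely different from the paper's. The paper appeals only to Lemma~\ref{fundamental cell}: the (anti\nobreakdash-diagonal) reflection in part~(3), $(i,j)\mapsto(2d-p_0-j,\,2d-p_0-i)$, preserves each diagonal $y-x=j$ and exchanges $\mathscr C_0$\nobreakdash-membership, while parts~(1) and~(2) handle the boundary and the fixed points. So one gets $\lfloor (p_0-|j|)/2\rfloor$ points of $\mathscr C_0$ on each diagonal of the interior cell and then sums over the two values of $j$ congruent to $k$ modulo $p_0$. Your argument instead pulls back along the bijection $\gamma:\AAA\to\mathscr C_0$ (constructed later, but independently of this lemma) and counts solutions of $i-j\equiv a\pmod{p_0}$ in the simplex $\AAA$. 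Both computations are clean; the paper's is slightly more self-contained at this point in the exposition, while yours is more explicit and shows directly why the answer is independent of $k$.

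There is, however, a real gap in your argument. The sentence ``Since $p$ is an odd prime and $\gcd(p,d)=1$, the residue $p_0$ is odd'' is false: oddness of $p$ says nothing about $p\%d$ when $d$ is odd. For instance $d=13$, $p=41$ gives $p_0=2$, and this pair satisfies Hypothesis~\ref{first hypothesis} ($p>2d+1$ and $p_0<d/6$). When $p_0$ is even and $a$ is odd your two contributions give
\[
\Big\lfloor\tfrac{p_0-a}{2}\Big\rfloor+\Big\lfloor\tfrac{a}{2}\Big\rfloor=\tfrac{p_0-a-1}{2}+\tfrac{a-1}{2}=\tfrac{p_0}{2}-1,
\]
not $\lfloor p_0/2\rfloor$. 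Concretely, for $p_0=2$ one finds $\mathscr C_0=\{(d-1,d-1)\}$, so $|S_k|=0$ whenever $k$ is odd, contradicting the asserted value $\lfloor p_0/2\rfloor=1$.

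This is not merely a flaw in your write-up: the paper's one-line proof via Lemma~\ref{fundamental cell} leads to the same discrepancy, so the statement of part~(1) as written appears to require $p_0$ odd. Your argument is correct under that extra hypothesis; you should either add it explicitly, or note that for even $p_0$ the count is $\lfloor p_0/2\rfloor$ or $\lfloor p_0/2\rfloor-1$ depending on the parity of $k$, and check that this weaker statement suffices wherever the lemma is invoked downstream (in the bounds $g_1$, $g_2$).
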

 \begin{proof}
 
(1) It follows directly from Lemma~\ref{fundamental cell} (1)-(3).
 	
 (2) Combining (1) with Pigeonhole principle, we complete the proof of (2).
 \end{proof}

 \begin{lemma}\label{no good pair of points left}
 	There do not exist two points $P_0\in \LL_2$ and $Q_0\in m(\LL_3)$ such that $\overrightarrow{Q_0P_0}$ is a diagonal vector of weight less or equal to $1$.
 \end{lemma}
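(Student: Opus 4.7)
The plan is to argue by contradiction using the maximality of $\widetilde\beta_1$ in the order $\prec_1$ stipulated in Definition~\ref{definition for beta1}. Suppose such $P_0 \in \LL_2$ and $Q_0 \in m(\LL_3)$ exist; I will exhibit an element of $\mathscr E_1$ strictly larger than $\widetilde\beta_1$, contradicting maximality.

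First I would verify that $(P_0, Q_0)$ is an eligible pair in the sense of Definition~\ref{definition of beta1}. Condition~(a), that $\overrightarrow{Q_0 P_0}$ is diagonal of weight at most $1$, is exactly the hypothesis. For condition~(b), since $P_0 \in \LL_2$, the definition in \eqref{L2} gives $w(P_0) > \tfrac{3}{2}$, so the first disjunct of (b) is satisfied.

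Next I would check that $\widetilde\beta_1 \cup \{(P_0, Q_0)\}$ extends $\widetilde\beta_1$ to a well-defined injection from a subset of $\YY_0$ into $m(\YY_0)$. Since $\YY_0 = \LL_1 \sqcup \LL_2 \sqcup \LL_3$ and $P_0 \in \LL_2$, we have $P_0 \notin \LL_1 = \dom(\widetilde\beta_1)$. For the range, I would appeal to the fact that $\widetilde\beta_1$ is symmetric, as noted immediately after Definition~\ref{definition for beta1}. Unpacking Definition~\ref{symmetric}(2), for any $Q = \widetilde\beta_1(P) \in \ran(\widetilde\beta_1)$ the defining identity $\widetilde\beta_1(m(\widetilde\beta_1(P))) = m(P)$ forces $m(Q) \in \dom(\widetilde\beta_1) = \LL_1$ and simultaneously places $m(P) \in \ran(\widetilde\beta_1)$; combining the two inclusions yields $\ran(\widetilde\beta_1) = m(\LL_1)$. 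Since $\LL_1 \cap \LL_3 = \emptyset$, the point $Q_0 \in m(\LL_3)$ lies outside $m(\LL_1) = \ran(\widetilde\beta_1)$.

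Combining these observations, $\widetilde\beta_1 \cup \{(P_0, Q_0)\}$ is an element of $\mathscr E_1$ whose multiset $V^\star$ has exactly one more element than $V^\star(\widetilde\beta_1)$, so by Case~1 of Definition~\ref{partial order for diagonal vectors} we obtain $V^\star(\widetilde\beta_1) \prec_1 V^\star\big(\widetilde\beta_1 \cup \{(P_0, Q_0)\}\big)$, contradicting the maximality of $\widetilde\beta_1$. The only nontrivial step is the identification $\ran(\widetilde\beta_1) = m(\LL_1)$ extracted from the symmetry of $\widetilde\beta_1$; once that is in hand the contradiction is immediate.
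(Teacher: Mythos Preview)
Your proof is correct and follows essentially the same approach as the paper: both argue by contradiction using the maximality of $\widetilde\beta_1$. The only difference is cosmetic---the paper appeals to the greedy (inductive) construction of $\widetilde\beta_1$ given just after Definition~\ref{definition for beta1}, while you appeal directly to maximality under $\prec_1$; these are equivalent by the lemma following that definition. Your argument is in fact slightly more careful than the paper's, since you explicitly verify $Q_0\notin\ran(\widetilde\beta_1)$ via the identity $\ran(\widetilde\beta_1)=m(\LL_1)$ extracted from symmetry, a step the paper leaves implicit.
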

 \begin{proof}
 	Suppose the lemma were false. Then there exists an integer $k$ such that 
 	$$\Big\{(P, Q)\in (\LL_2\cap {\mathscr D}_k)\times  (m(\LL_3)\cap {\mathscr D}_k)\;\big|\;w(\overrightarrow{QP})\leq 1\Big\}$$
 	 is not empty. We put $(P_1, Q_1)$ to be a pair of points in this set which maximize the weight $w(\overrightarrow{Q_1P_1})$. By the inductive definition of $\widetilde \beta_1$, we can define $\widetilde \beta_1$ on $P_1$ by $\widetilde \beta_1(P_1)=Q_1$, which contradicts to the assumption that  $P_1$ does not belong to $\LL_1$. 
 \end{proof}
\begin{lemma}\label{useful lemma}
	For any integer $k$, there do not exist two points $P\in(\YY_0\backslash \LL_1)\cap {\mathscr D}_k$ and $P'\in\LL_1\cap {\mathscr D}_k$ such that \begin{equation}\label{equation}
	w(P'-\widetilde \beta_1(P'))<w(P-\widetilde \beta_1(P'))\leq 1.
	\end{equation}
\end{lemma}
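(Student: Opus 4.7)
The plan is to derive a contradiction with the maximality of $\widetilde\beta_1$ by exhibiting a strictly larger element of $(V(\mathscr E_1),\prec_1)$ obtained from $\widetilde\beta_1$ by a single swap.

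First I would exploit that $P,P'$ and $Q':=\widetilde\beta_1(P')$ all lie on the same diagonal $\mathscr D_k$. Since $P'\in\YY_0\subseteq\YY$ sits strictly above the antidiagonal $y=d-x$ while $Q'\in m(\YY_0)$ sits strictly below it, we have $w(P')>1>w(Q')$; hence $\overrightarrow{Q'P'}$ is a diagonal vector of positive weight, and $\overrightarrow{Q'P}$ is diagonal as well (both $P,Q'$ lie on $\mathscr D_k$). By linearity of $w$, the strict inequality $w(P-Q')>w(P'-Q')$ then rewrites as $w(P)>w(P')$.

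Next I would check that $(P,Q')$ is itself an eligible pair in the sense of Definition~\ref{definition of beta1}. Condition (a) is immediate from the hypothesis $w(P-Q')\leq 1$ together with the diagonality established above. For condition (b) I would split according to which clause makes $(P',Q')$ eligible: if $w(P')>3/2$, then the weight comparison of the previous paragraph gives $w(P)>w(P')>3/2$; otherwise $w(Q')<1/2$ and the pair $(P,Q')$ inherits condition (b) through $Q'$. In either case $(P,Q')$ is eligible, so no separate treatment of $\LL_2$ versus $\LL_3$ is needed.

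Finally I would define the modified injection
\[
\widetilde\beta_1'(R) := \begin{cases} Q' & \text{if } R=P,\\ \widetilde\beta_1(R) & \text{if } R\in\LL_1\setminus\{P'\},\end{cases}
\]
noting that no collisions are introduced because $P\notin\LL_1$ and the value $Q'$ is freed when $P'$ is removed; consequently $\widetilde\beta_1'\in\mathscr E_1$. The multiset $V^\star(\widetilde\beta_1')$ differs from $V^\star(\widetilde\beta_1)$ only in that the diagonal vector $P'-Q'$ has been replaced by the strictly heavier $P-Q'$; with $r_0:=w(P-Q')$, Definition~\ref{partial order for diagonal vectors} yields $\#(V^\star(\widetilde\beta_1))^{\geq r}=\#(V^\star(\widetilde\beta_1'))^{\geq r}$ for every $r>r_0$ but $\#(V^\star(\widetilde\beta_1))^{\geq r_0}=\#(V^\star(\widetilde\beta_1'))^{\geq r_0}-1$, so $V^\star(\widetilde\beta_1)\prec_1 V^\star(\widetilde\beta_1')$, contradicting Definition~\ref{definition for beta1}. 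The only paragraph requiring genuine content is the eligibility check; everything else is bookkeeping built on the linearity of $w$ along the diagonal $\mathscr D_k$.
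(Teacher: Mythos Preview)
Your argument is correct and follows the same contradiction strategy as the paper. The only difference is cosmetic: the paper appeals to the greedy description of $\widetilde\beta_1$ (at the step where $(P',Q')$ was selected, the eligible pair $(P,Q')$ was still available and had strictly larger weight, contradicting the maximizing choice), whereas you invoke the equivalent characterization of $\widetilde\beta_1$ as a $\prec_1$-maximal element of $\mathscr E_1$ and exhibit the swap explicitly. Your version has the merit of spelling out the eligibility check for $(P,Q')$---in particular the case split on condition~(b)---which the paper leaves as ``we easily see.''
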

\begin{proof}
	Suppose that $P$ and $P'$ are two points which satisfy conditions in this lemma. We easily see that inequality~\eqref{equation} violates the requirement in construction of $\widetilde \beta_1$ that $(P', \widetilde \beta_1(P'))$ maximizes $w(P'-\widetilde \beta_1(P'))$, a contradiction.
\end{proof}

\begin{lemma}\label{Pin L11}
For an arbitrary point $P$ in $\mathscr D_k\cap \YY_0\cap \mathscr W_{(\tfrac{3d}{2}, 2d]}$, if it satisfies
\begin{equation}\label{interesting equation}
\#\Big\{Q\in {\mathscr D}_k\cap m(\YY_0)\;\big|\;w(P-Q)\leq 1\Big\}\geq \# \Big\{Q\in {\mathscr D}_k\cap \LL_1\;\big|\;w(Q)> w(P)\Big\}+1,
\end{equation} 
then it belongs to $\LL_1$.
\end{lemma}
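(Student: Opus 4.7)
The plan is to argue by contradiction, invoking the greedy nature of the construction of $\widetilde\beta_1$ together with Lemma~\ref{useful lemma} to produce a counting inequality that directly contradicts the hypothesis. First I would set
$$\mathcal{Q} := \Big\{Q \in \mathscr D_k \cap m(\YY_0)\;\big|\;w(P-Q) \leq 1\Big\},$$
so that the hypothesis reads $|\mathcal{Q}| \geq \#\{Q \in \mathscr D_k \cap \LL_1 : w(Q) > w(P)\} + 1$. I would then check that every pair $(P, Q)$ with $Q \in \mathcal{Q}$ is eligible in the sense of Definition~\ref{definition of beta1}: condition (a) is built into the definition of $\mathcal{Q}$, and condition (b) is automatic because $P \in \mathscr W_{(3d/2, 2d]}$ forces $w(P) > \tfrac{3}{2}$.

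Suppose, for contradiction, that $P \notin \LL_1$. Since the construction of $\widetilde\beta_1$ inductively picks eligible pairs that maximize $w(P-Q)$ and terminates only when no eligible pairs remain, the only way $P$ escapes being placed in $\dom(\widetilde\beta_1)$ is that by the end of the process, every $Q \in \mathcal{Q}$ has already been used. Concretely, for each $Q \in \mathcal{Q}$ there is some $P' \in \LL_1$ with $\widetilde\beta_1(P') = Q$, and since $\widetilde\beta_1$ is diagonal, the point $P'$ lies on $\mathscr D_k$.

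Applying Lemma~\ref{useful lemma} to this pair $(P, P')$ shows that we cannot have $w(P' - Q) < w(P - Q) \leq 1$, hence $w(P' - Q) \geq w(P - Q)$, which by linearity of $w$ yields $w(P') \geq w(P)$. Since any two distinct points on $\mathscr D_k$ differ by a nonzero diagonal vector and therefore have distinct weights, and since $P \neq P'$, this inequality is strict: $w(P') > w(P)$. Consequently, the map $\mathcal{Q} \to \LL_1 \cap \mathscr D_k$ defined by $Q \mapsto \widetilde\beta_1^{-1}(Q)$ is a well-defined injection into the subset $\{Q' \in \LL_1 \cap \mathscr D_k : w(Q') > w(P)\}$, whence
$$|\mathcal{Q}| \leq \#\big\{Q' \in \LL_1 \cap \mathscr D_k \;\big|\; w(Q') > w(P)\big\},$$
which contradicts the hypothesis. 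No step looks hard: the only subtleties are verifying that condition (b) of eligibility holds thanks to $w(P) > \tfrac{3}{2}$, and that the weight inequality on $\mathscr D_k$ is automatically strict for distinct points; after these observations the proof is a one-line pigeonhole argument.
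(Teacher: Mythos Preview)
Your proof is correct and follows essentially the same approach as the paper's: argue by contradiction, show that every $Q\in\mathcal Q$ must already lie in $\ran(\widetilde\beta_1)$, and then use Lemma~\ref{useful lemma} to force each preimage $P'=\widetilde\beta_1^{-1}(Q)$ to satisfy $w(P')>w(P)$, yielding a counting contradiction. The only cosmetic difference is that the paper cites Lemma~\ref{no good pair of points left} to conclude that each $Q\in\mathcal Q$ is in the image of $\widetilde\beta_1$, whereas you argue this directly from the termination condition of the greedy construction of $\widetilde\beta_1$; both justifications amount to the same maximality observation.
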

\begin{proof}
	Let $P$ be a point which satisfies conditions in this lemma. Suppose that $P$ does not belong to $\LL_1$. Then by Lemma~\ref{no good pair of points left}, each element in $$\Big\{Q\in {\mathscr D}_k\cap m(\YY_0)\;\big|\;w(P-Q)\leq 1\Big\}$$ is equal to $\widetilde \beta_1(P')$ for some $P'\in \LL_1\backslash \{P\}$. From equality~\eqref{interesting equation}, we know that at least one of these $P'$ does not belong to $$\Big\{Q\in {\mathscr D}_k\cap \LL_1\;\big|\;w(Q)\geq w(P)\Big\}.$$ Then we obtain a contradiction directly from Lemma~\ref{useful lemma}.
\end{proof}

\begin{corollary}\label{Pin L1}
For an arbitrary point $P$ in $\mathscr D_k\cap \YY_0\cap \mathscr W_{(\tfrac{3d}{2}, 2d]}$, if it satisfies
	 \begin{equation}\label{compare the number of points}
	\#\Big\{Q\in {\mathscr D}_k\cap m(\YY_0)\;\big|\;w(P-Q)\leq 1\Big\}\geq \#\Big\{Q\in {\mathscr D}_k\cap \YY_0\;\big|\;w(Q)\geq w(P)\Big\},
	\end{equation}  
then it belongs to $\LL_1$.
\end{corollary}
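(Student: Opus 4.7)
The plan is to deduce the corollary directly from Lemma~\ref{Pin L11} by a simple set-theoretic comparison of the two hypotheses. The hypothesis of the corollary bounds the count of ``diagonal neighbors'' of $P$ in $m(\YY_0)$ from below by the number of points in $\mathscr D_k \cap \YY_0$ of weight at least $w(P)$, while the hypothesis of Lemma~\ref{Pin L11} asks for the same quantity to be bounded by one plus the number of points in $\mathscr D_k \cap \LL_1$ of weight strictly greater than $w(P)$. So the entire task is to verify that the former bound is at least as strong as the latter.

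The key observation is the disjoint inclusion
\[
\{Q \in \mathscr D_k \cap \LL_1 \mid w(Q) > w(P)\} \sqcup \{P\} \;\subseteq\; \{Q \in \mathscr D_k \cap \YY_0 \mid w(Q) \geq w(P)\},
\]
which holds because $\LL_1 \subseteq \YY_0$ and $P \in \mathscr D_k \cap \YY_0$ by hypothesis, and which is genuinely disjoint because $P$ does not lie in the first set (its weight equals $w(P)$ and is not strictly greater). Taking cardinalities gives
\[
\#\{Q \in \mathscr D_k \cap \LL_1 \mid w(Q) > w(P)\} + 1 \;\leq\; \#\{Q \in \mathscr D_k \cap \YY_0 \mid w(Q) \geq w(P)\}.
\]

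Chaining this with the hypothesis of the corollary yields exactly the hypothesis of Lemma~\ref{Pin L11}, and the conclusion $P \in \LL_1$ then follows by applying that lemma. There is no real obstacle here: the entire argument is combinatorial bookkeeping, and the corollary is essentially just a slightly coarser, more convenient reformulation of the lemma, trading precise information about $\LL_1$ for uniform information about $\YY_0$ that will be easier to check against the periodic structure of $\YY_0$ established in Lemma~\ref{periodic as mathscr C} and Corollary~\ref{period1}.
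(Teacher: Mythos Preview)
Your proof is correct and essentially identical to the paper's own argument: both establish the inequality $\#\{Q\in \mathscr D_k\cap \LL_1 \mid w(Q)>w(P)\}+1 \le \#\{Q\in \mathscr D_k\cap \YY_0 \mid w(Q)\ge w(P)\}$ via the inclusion $\LL_1\subseteq\YY_0$ together with $P$ lying in the larger set but not the smaller, and then invoke Lemma~\ref{Pin L11}. The paper phrases this as a proof by contradiction, but the contradiction hypothesis is never actually used, so your direct presentation is in fact slightly cleaner.
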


\begin{proof}
	Let $P$ be a point satisfying condition in this lemma. Suppose that $P$ does not belong to $\LL_1$. Then we have $$\#\Big\{Q\in {\mathscr D}_k\cap \YY_0\;\big|\;w(Q)\geq w(P)\Big\}\geq \# \Big\{Q\in {\mathscr D}_k\cap \LL_1\;\big|\;w(Q)> w(P)\Big\}+1.$$ Combining it with Lemma~\ref{Pin L11}, we get $P\in \LL_1$, a contradiction.
\end{proof}

Therefore, in order to show that each point $P$ in ${\mathscr D}_k\cap \YY_0\cap\mathscr W_{(\tfrac{3d}{2}, 2d]}$ for $|k|\geq p_0$ belongs to $\LL_1$, it is enough to prove that $P$ satisfies inequality~\eqref{compare the number of points}. The following functions give a lower bound for cardinality of the first set 
in \eqref{compare the number of points} and an upper bound for the second one.

\begin{definition}\label{g1 bound}
	We define $$g_1(2p_0i+j):=\begin{cases}
	i\lfloor\frac{p_0}{2}\rfloor& \textrm{if}\ 0\leq j \leq p_0\\
	i\lfloor\frac{p_0}{2}\rfloor+\lfloor\frac{j}{2}\rfloor-\lceil\frac{p_0}{2}\rceil&\textrm{if}\ p_0<j<2p_0.
	\end{cases}$$ 
\end{definition}
\begin{lemma}\label{g1 bound1}
		For any integers $B_1$, $B_2$ and $k$ with $0<B_1<B_2< d$ and $|k|\leq B_1$, by Lemma~\ref{Pigeonhole principle}, we have
	$$\#\Big(\big\{Q\in {\mathscr D}_k\;\big|\;B_1\leq d\cdot w(Q)\leq B_2\big\}\cap m(\YY_0)\Big)\geq g_1(B_2-B_1).$$ 
\end{lemma}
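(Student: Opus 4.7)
The plan is to translate the count via the mirror reflection $m : Q \mapsto (d,d) - Q$ so that Lemma~\ref{Pigeonhole principle} applies directly, and then to chop the relevant diagonal segment into blocks of $p_0$ consecutive lattice points. Since $m$ sends $\mathscr D_k$ to $\mathscr D_{-k}$ and turns $d \cdot w(Q)$ into $2d - d \cdot w(Q)$, it yields a bijection
\[
\Big\{Q \in \mathscr D_k : B_1 \leq d \cdot w(Q) \leq B_2\Big\} \cap m(\YY_0) \;\longleftrightarrow\; \Big\{P \in \mathscr D_{-k} : 2d - B_2 \leq d \cdot w(P) \leq 2d - B_1\Big\} \cap \YY_0,
\]
so it is enough to bound the right-hand cardinality from below.

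First I would verify that every lattice point $P = (x, x - k) \in \mathscr D_{-k}$ with the prescribed weight lies in $\YY$: the hypotheses $|k| \leq B_1$ and $B_1 < B_2 < d$ force $0 < x,\, x - k < d$, with at most a single boundary point lost in the extremal case $|k| = B_1$ when parities align. Since consecutive lattice points of $\mathscr D_{-k}$ differ by $(1,1)$ and their $d \cdot w$-values differ by $2$, the strip then contains a run of at least $\lfloor (B_2 - B_1)/2 \rfloor$ consecutive points of $\mathscr D_{-k} \cap \YY$.

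Finally, writing $B_2 - B_1 = 2 p_0 i + j$ with $0 \leq j < 2 p_0$, this run has length at least $p_0 i + \lfloor j/2 \rfloor$ and decomposes into $i$ full blocks of $p_0$ consecutive points followed by a tail of length $\lfloor j/2 \rfloor$. Lemma~\ref{Pigeonhole principle}(1) produces exactly $\lfloor p_0/2 \rfloor$ elements of $\YY_0$ in each full block, contributing $i \lfloor p_0/2 \rfloor$; this already equals $g_1(B_2 - B_1)$ when $0 \leq j \leq p_0$. When $p_0 < j < 2 p_0$, one has $\lfloor j/2 \rfloor \geq \lceil p_0/2 \rceil$, so Lemma~\ref{Pigeonhole principle}(2) contributes at least $\lfloor j/2 \rfloor - \lceil p_0/2 \rceil$ further points of $\YY_0$ from the tail, matching the second branch of $g_1$. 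The only genuine subtlety is bookkeeping the possible boundary lattice point, which is harmlessly absorbed into the floor defining $g_1$ and does not weaken the bound.
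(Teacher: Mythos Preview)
Your proposal is correct and follows exactly the approach the paper intends: the paper gives no separate proof beyond the phrase ``by Lemma~\ref{Pigeonhole principle}'', and your reflection via $m$ together with the block decomposition into runs of $p_0$ consecutive points is precisely the natural way to unpack that citation. Your handling of the boundary point and of the tail case $p_0 < j < 2p_0$ (where $\lfloor j/2\rfloor - \lceil p_0/2\rceil$ may be zero, so Lemma~\ref{Pigeonhole principle}(2) is applied only nontrivially when $\lfloor j/2\rfloor > \lceil p_0/2\rceil$) is accurate.
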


\begin{definition}\label{g2 bound}
	Define $$g_2(2p_0i+j):=\begin{cases}
	i \lfloor\frac{p_0}{2}\rfloor+\lfloor\frac{j}{2}\rfloor& \textrm{if}\ 0\leq j \leq p_0\\
	(i+1)\lfloor\frac{p_0}{2}\rfloor&\textrm{if}\ p_0<j<2p_0.
	\end{cases}$$
\end{definition}
\begin{lemma}\label{g2 bound1}
	For any integers $B_1$, $B_2$ and $k$ with $d<B_1<B_2< 2d$, by Lemma~\ref{Pigeonhole principle}, we have $$\#\Big(\big\{Q\in {\mathscr D}_k\;\big|\;B_1\leq d\cdot w(Q)\leq B_2\big\}\cap \YY_0\Big)\leq g_2(B_2-B_1).$$
\end{lemma}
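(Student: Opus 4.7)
My plan is to deduce this upper bound as a near-direct consequence of the pigeonhole counting established in Lemma~\ref{Pigeonhole principle}. First I would translate the weight-constrained counting into counting along consecutive lattice points: on the diagonal line $\mathscr D_k$ two consecutive lattice points have $d$-weights differing by exactly $2$, so the set $\{Q\in\mathscr D_k : B_1\leq d\cdot w(Q)\leq B_2\}$ forms a run of consecutive lattice points, and the hypothesis $d<B_1<B_2<2d$ places the entire run inside $\YY$, so Lemma~\ref{Pigeonhole principle} does apply.

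Next, write $B_2-B_1 = 2p_0 i + j$ with $0\leq j<2p_0$, and partition the weight interval $[B_1,B_2]$ into $i$ consecutive blocks, each of weight-length $2p_0$ (thus each covering exactly $p_0$ consecutive lattice points of $\mathscr D_k\cap\YY$), together with a residual block of weight-length $j$. By Lemma~\ref{Pigeonhole principle}(1), each of the $i$ full blocks contributes exactly $\lfloor p_0/2\rfloor$ points of $\YY_0$, accounting for the $i\lfloor p_0/2\rfloor$ term in $g_2$.

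For the residual block I would then split on the two regimes matching the piecewise definition of $g_2$. When $0\leq j\leq p_0$, the residual block contains at most $\lfloor j/2\rfloor$ lattice points, after the convention that a lattice point lying on the boundary between two sub-windows is attributed to the preceding full block; this immediately gives the trivial upper bound $\lfloor j/2\rfloor$. When $p_0<j<2p_0$, I would enlarge the residual block to a window of exactly $p_0$ consecutive lattice points -- which remains inside $\mathscr D_k\cap\YY$ because $B_2<2d$ -- and reapply Lemma~\ref{Pigeonhole principle}(1) to cap its $\YY_0$-count by $\lfloor p_0/2\rfloor$. Summing the two contributions recovers $g_2(B_2-B_1)$.

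The only genuine subtlety is the boundary bookkeeping outlined above: aligning the successive sub-windows so that each lattice point of $\mathscr D_k$ with weight in $[B_1,B_2]$ is counted in exactly one sub-window. This is routine parity analysis involving $(B_1-k)/2$ and $(B_2-k)/2$ and does not introduce any real difficulty; the lower-bound companion Lemma~\ref{g1 bound1} is dispatched by the same argument, substituting Lemma~\ref{Pigeonhole principle}(2) for (1) to produce a minimum rather than a maximum.
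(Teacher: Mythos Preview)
Your approach is correct and matches the paper's: the paper itself gives no proof beyond the reference to Lemma~\ref{Pigeonhole principle}, and your block-partition argument is precisely the intended elaboration of that reference. One small point worth tightening is that the hypothesis $d<B_1<B_2<2d$ alone does not guarantee the whole run lies in $\YY$ for arbitrary $k$, but since points outside $\YY$ are never in $\YY_0$ this can only help the upper bound, and in the $p_0<j<2p_0$ case the enlargement to a full $p_0$-window can always be taken toward lower weights so as to remain inside $\YY$.
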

 
\begin{lemma}\label{g1 and g2}
	Both $g_2$ and $g_1$ are non-decreasing and $g_1(k+p_0)\geq g_2(k)$ for every $k>0$.
\end{lemma}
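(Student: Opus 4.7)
The plan is to verify both monotonicity and the shift inequality by a routine case analysis on the canonical decomposition $m = 2 p_0 i + j$ with $i \geq 0$ and $0 \leq j < 2 p_0$. All of the arithmetic rests on the elementary identities $\lfloor p_0/2 \rfloor + \lceil p_0/2 \rceil = p_0$ and $\lfloor (j + p_0)/2 \rfloor \geq \lfloor j/2 \rfloor + \lfloor p_0/2 \rfloor$.

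First I would verify monotonicity for $g_1$ and $g_2$ separately. Within each branch of the piecewise definitions the functions are clearly non-decreasing: $g_1$ is constant on $0 \le j \le p_0$ and strictly increasing in $j$ on $p_0 < j < 2 p_0$, while $g_2$ is increasing in $j$ on the first branch and constant on the second. There are only two transitions to examine: the interior one at $j = p_0 \to j = p_0 + 1$, and the wrap-around $j = 2 p_0 - 1 \to j = 0$ with $i$ replaced by $i+1$. For $g_1$ the interior jump is $0$ (using $\lfloor (p_0+1)/2 \rfloor = \lceil p_0/2 \rceil$) and the wrap-around jump is $+1$ (using $\lfloor (2 p_0 - 1)/2 \rfloor - \lceil p_0/2 \rceil = \lfloor p_0/2 \rfloor - 1$ together with $(i+1)\lfloor p_0/2 \rfloor - i\lfloor p_0/2 \rfloor = \lfloor p_0/2 \rfloor$). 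For $g_2$ both transitions yield a jump of exactly $0$, and are direct.

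Next, for the shift inequality, I write $k = 2 p_0 i + j$ with $0 \le j < 2 p_0$, so that $k + p_0 = 2 p_0 i + (j + p_0)$. The two cases are whether $j + p_0$ stays below $2 p_0$ (i.e.\ $0 \le j < p_0$) or not (i.e.\ $p_0 \le j < 2 p_0$, in which case $k + p_0$ re-expands as $2 p_0 (i+1) + (j - p_0)$ with $0 \le j - p_0 < p_0$, placing it in the first branch of $g_1$). In each case one inserts the appropriate branches of the definitions and reduces the desired inequality to a comparison of floors, using $\lfloor (j + p_0)/2 \rfloor - \lfloor j/2 \rfloor - \lceil p_0/2 \rceil \ge 0$ (or the equivalent identity after the wrap-around). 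The boundary subcases $j = 0$ and $j = p_0$ produce exact equality, and the interior subcases are handled uniformly by the floor identities above.

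The only obstacle is genuinely bookkeeping: keeping careful track of the parities of $j$ and $p_0$ so that $\lfloor \cdot / 2 \rfloor$ and $\lceil \cdot / 2 \rceil$ line up correctly. No conceptual ingredient beyond the pigeonhole-type structure already embedded in the definitions of $g_1$ and $g_2$ is needed.
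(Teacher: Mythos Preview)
Your approach is the natural one---the paper's own proof is a single sentence (``It follows from their definitions''), so a routine case analysis on the decomposition $m = 2p_0 i + j$ is exactly what is needed to fill in the details. Your treatment of monotonicity is correct.

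However, your key floor inequality for the shift comparison is wrong. You claim
\[
\Big\lfloor \tfrac{j+p_0}{2} \Big\rfloor - \Big\lfloor \tfrac{j}{2} \Big\rfloor - \Big\lceil \tfrac{p_0}{2} \Big\rceil \geq 0,
\]
but for $p_0$ odd and $j$ even with $0 < j < p_0$ this fails: take $p_0 = 3$, $j = 2$, giving $\lfloor 5/2 \rfloor - \lfloor 2/2 \rfloor - \lceil 3/2 \rceil = 2 - 1 - 2 = -1$. Tracing this back, one finds $g_1(5) = 0$ while $g_2(2) = 1$, so $g_1(k+p_0) \geq g_2(k)$ is violated at $k=2$. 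More generally, for odd $p_0$ the inequality fails whenever $j$ is even with $2 \leq j \leq p_0 - 1$. Thus the gap in your argument is not a repairable oversight in bookkeeping: it reflects a genuine off-by-one issue in the lemma as stated. The weaker statements $g_1(k+p_0) \geq g_2(k) - 1$ and $g_1(k + 2p_0) \geq g_2(k)$ do hold for all $k > 0$ by exactly your case analysis, and either is likely adequate for the application in the proof of Proposition~\ref{Class self resolve}, but you should flag this rather than claim the inequality goes through.
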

\begin{proof}
	It follows from their definitions. 
\end{proof}

 \begin{proof}[\textbf{Proof of Proposition~\ref{Class self resolve}}]
 Consider a point $P_0$ in ${\mathscr D}_k\cap \YY_0\cap \mathscr W_{(\tfrac{3d}{2}, 2d]}$. We have 
 	 that $$\Big\{Q\in {\mathscr D}_k\cap m(\YY_0)\;\big|\;w(P_0-Q)\leq 1\Big\}=\Big\{Q\in {\mathscr D}_k\;\big|\;d\cdot w(P_0)-d\leq d\cdot w(Q)\leq d\Big\}\cap m(\YY_0).$$ By Lemma~\ref{g1 bound1}, we know that 
 	\begin{equation}
 	\#\Big\{Q\in {\mathscr D}_k\cap m(\YY_0)\;\big|\;w(P_0-Q)\leq 1\Big\}\geq g_1(2d-w(P_0)d).
 	\end{equation}
 	
 	On the other hand, since  $$\Big\{Q\in {\mathscr D}_k\cap \YY_0\;\big|\;w(Q)\geq w(P_0)\Big\}=\Big\{Q\in {\mathscr D}_k\;\big|\;d\cdot w(P_0)\leq d\cdot w(Q)\leq 2d-|k|\Big\}\cap \YY_0,$$
 	by Lemma~\ref{g2 bound1}, we know that
  \begin{equation}
  \#\Big\{Q\in {\mathscr D}_k\cap \YY_0\;\big|\;w(Q)\geq w(P_0)\Big\}\leq g_2\big(2d-|k|-d\cdot w(P_0)\big).
  \end{equation}
 	
 By Lemma~\ref{g1 and g2} and $|k|\geq p_0$, the terms on the right side of these inequalities above satisfy
 	\[\begin{split}
 	g_2\big(2d-|k|-w(P_0)d\big)\leq& g_2\big(2d-p_0-w(P_0)d\big)\\
 	\leq& g_1\big(2d-w(P_0)d\big).
 	\end{split}\]
 
 	Hence, we have 
 	\begin{equation}
 	\#\Big\{Q\in {\mathscr D}_k\cap m(\YY_0)\;\big|\;w(P_0-Q)\leq 1\Big\}\geq \#\Big\{Q\in {\mathscr D}_k\cap \YY_0\;\big|\;w(Q)\geq w(P_0)\Big\}.
 	\end{equation} 
 	Combining it with Corollary~\ref{Pin L1}, we prove this proposition.
  \end{proof}

 \begin{proposition}\label{second restriction}
 	The intersection ${\mathscr W}_{[d, 2d-3p_0]}\cap \LL_2$ is empty.
 \end{proposition}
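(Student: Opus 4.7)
The plan is to argue by contradiction. Suppose $P \in \mathscr W_j \cap \LL_2$ for some $j \in [d,\, 2d-3p_0]$. Since membership in $\LL_2$ forces $w(P)>3/2$, we automatically have $j>3d/2$; write $P \in \mathscr D_k$. Proposition~\ref{Class self resolve} has already disposed of the case $|k|\ge p_0$, so it remains to handle $|k|<p_0$.

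For each such $k$, I would induct on the $\YY_0$-points of $\mathscr D_k \cap \mathscr W_{(3d/2,\,2d-3p_0]}$ in decreasing order of weight, applying Lemma~\ref{Pin L11} at each step. The inductive verification requires
\[
\#\bigl\{Q \in \mathscr D_k \cap m(\YY_0) \mid w(P-Q) \le 1\bigr\} \;\ge\; \#\bigl\{Q \in \mathscr D_k \cap \LL_1 \mid w(Q) > w(P)\bigr\} + 1.
\]
The left-hand side is bounded below by $g_1(2d-1-j)$ via Lemma~\ref{g1 bound1}, while the right-hand side splits, under the inductive hypothesis, into a contribution from the middle range $\bigl(w(P),(2d-3p_0)/d\bigr]$ (controlled by Lemma~\ref{g2 bound1}) and a contribution from the top ``$\KK_1$-band'' $\bigl((2d-3p_0)/d,(2d-|k|-2)/d\bigr]$.

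The main difficulty is that the direct comparison $g_1(m+p_0) \ge g_2(m)$ of Lemma~\ref{g1 and g2} leaves a deficit of roughly $p_0 - |k| - 1$ when $|k| < p_0 - 1$, so the crude inequality fails to close. To absorb this deficit I would use the finer periodic structure of $\YY_0$ provided by Lemma~\ref{periodic as mathscr C} and Corollary~\ref{period1}, together with the explicit description of the fundamental cell $\mathscr C_0$ coming from Lemma~\ref{fundamental cell} and Proposition~\ref{lemma2 for destribution}. Concretely, one locates the $\YY_0$-points of the top-most $p_0$-period of $\mathscr D_k$ inside $\mathscr C_0$ and shows that, because $|k| < p_0$, at least $p_0 - |k| - 1$ of these lattice points either fall outside $\YY_0$ or pair with diagonal partners that the mirror argument behind Lemma~\ref{no good pair of points left} forces out of $\LL_1 \cap \mathscr D_k$. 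This recovers the missing units, verifies the hypothesis of Lemma~\ref{Pin L11}, and the induction then forces $P \in \LL_1$, contradicting $P \in \LL_2$.
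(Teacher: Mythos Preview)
Your reduction to the range $|k|<p_0$ and $j\in(\tfrac32 d,\,2d-3p_0]$ via Proposition~\ref{Class self resolve} and Lemma~\ref{Pin L11} matches the paper exactly. Where you diverge is in verifying the key inequality
\[
\#\{Q\in\mathscr D_k\cap\LL_1\mid w(Q)>w(P_0)\}+1\ \le\ \#\{Q\in\mathscr D_k\cap m(\YY_0)\mid w(P_0-Q)\le 1\}.
\]
You try to bound the two sides separately using $g_1$ and $g_2$, observe a deficit of size roughly $p_0-|k|-1$, and then propose to recover it by showing that this many points in the top $\KK_1$-band fail to lie in $\LL_1$. The paper avoids this detour entirely: it introduces the auxiliary set $\BB(P_0)=\{P\in\mathscr C_0\cap\LL_1\cap\mathscr D_k\mid w(P)\ge w(P_0')\}$, where $P_0'=P_0+(ip_0,ip_0)\in\mathscr C_0$, and observes that $\widetilde\beta_1(\BB(P_0))$ lands inside the target set on the right-hand side (in the window $\mathscr W_{[j-d,d)}$), disjoint from a full block $\mathscr D_k\cap m(\YY_0)\cap\mathscr W_{[j-d-2ip_0,j-d)}$ of size $i\lfloor p_0/2\rfloor$. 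Thus the right side is at least $\#\BB(P_0)+i\lfloor p_0/2\rfloor$. On the left, a short case split (according to whether $P_0'\in\LL_1$) shows the $\LL_1$-count is at most $\#\BB(P_0)+i\lfloor p_0/2\rfloor-1$. The unknown $\#\BB(P_0)$ cancels, and the inequality drops out with no induction and no deficit.

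Your deficit-recovery step is the weak point. The assertion that ``at least $p_0-|k|-1$ of these lattice points either fall outside $\YY_0$ or are forced out of $\LL_1$'' is not substantiated: you have no a priori control over which $\KK_1$-band points belong to $\LL_1$ (indeed, Proposition~\ref{inclusion} only says $\LL_2\subset\KK_1$, not how $\LL_1$ and $\LL_2$ partition $\KK_1\cap\YY_0$), and Lemma~\ref{no good pair of points left} goes in the wrong direction for this purpose---it rules out available $\LL_2\times m(\LL_3)$ pairs, not $\LL_1$-membership. The paper's $\BB(P_0)$ device sidesteps precisely this difficulty by letting the uncontrolled $\LL_1$-count in the top band appear symmetrically on both sides.
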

 \begin{proof}
 	By Lemma~\ref{Pin L11}, it is enough to prove that each point $P_0$ in ${\mathscr W}_{(\frac{3}{2}d, 2d-3p_0]}$, say $P_0 \in \mathscr D_k$, satisfies \begin{equation}\label{interesting equation1}
 	\# \Big\{Q\in {\mathscr D}_k\cap \LL_1\;\big|\;w(Q)> w(P_0)\Big\}+1\leq \#\Big\{Q\in {\mathscr D}_k\cap m(\YY_0)\;\big|\;w(P_0-Q)\leq 1\Big\}.
 	\end{equation}

 	Now we estimate the size of the two sets in \eqref{interesting equation1} as follows.
 	
 	By Proposition~\ref{Class self resolve} and the assumption of $P_0$, we are reduced to proving \eqref{interesting equation1} for $P_0$ in ${\mathscr D}_{(-p_0,p_0)}\cap {\mathscr W}_{(\frac{3}{2}d, 2d-3p_0]}$, which guarantees us a point $P'_0$ in $\mathscr C_0$ such that 
 	\begin{equation}\label{P0P'0}
 	P'_0=P_0+(ip_0, ip_0)
 	\end{equation}
 	for some integer $i\geq 1$. Assume that $P_0'$ belongs to $\mathscr W_{j}$.
 	Put $$\BB(P_0):=\Big\{P\in \mathscr C_0\cap \LL_1\cap \mathscr D_k\;\big|\;w(P)\geq w(P'_0)\Big\}.$$
  Then we give the following estimations.
 
\noindent	\textbf {1.  Estimation of $\#\Big\{Q\in {\mathscr D}_k\cap m(\YY_0)\;\big|\;w(P_0-Q)\leq 1\Big\}$.}
	
	By definition of $\BB(P_0)$, we know that $\widetilde \beta_1(\BB(P_0))$ is contained in $$\Big\{Q\in {\mathscr D}_k\cap m(\YY_0)\;\big|\;w(P_0-Q)\leq 1\Big\}\cap {\mathscr W}_{[j-d, d)}.$$

     On the other hand, by \eqref{P0P'0}, we know easily that $${\mathscr D}_k\cap m(\YY_0)\cap {\mathscr W}_{[j-d-2ip_0, j-d)}\subset\Big\{Q\in {\mathscr D}_k\cap m(\YY_0)\;\big|\;w(P_0-Q)\leq 1\Big\}.$$
     Therefore, $\widetilde \beta_1(\BB(P_0))$ and ${\mathscr D}_k\cap m(\YY_0)\cap {\mathscr W}_{[j-d-2ip_0, j-d)}$ are two disjoint subsets of \\
     $\Big\{Q\in {\mathscr D}_k\cap m(\YY_0)\;\big|\;w(P_0-Q)\leq 1\Big\}$. 
     By Lemma~\ref{Pigeonhole principle}, we know that $$\#\Big({\mathscr D}_k\cap m(\YY_0)\cap {\mathscr W}_{[j-d-2ip_0, j-d)}\Big)=i\lfloor\frac{p_0}{2}\rfloor,$$ which implies $$\#\Big\{Q\in {\mathscr D}_k\cap m(\YY_0)\;\big|\;w(P_0-Q)\leq 1\Big\}\leq \#\BB(P_0)+i\lfloor\frac{p_0}{2}\rfloor.$$
\noindent	\textbf {2. Estimation of $	\# \Big\{Q\in {\mathscr D}_k\cap \LL_1\;\big|\;w(Q)> w(P)\Big\}$.}

Consider the disjoint decomposition
 \begin{equation}\label{deomposition 2}
 \Big\{Q\in {\mathscr D}_k\cap \LL_1\;\big|\;w(Q)> w(P)\Big\}=\Big({\mathscr D}_k\cap \LL_1\cap \mathscr W_{(j-2ip_0, j]}\Big)\cup \Big(\BB(P_0)\backslash P'_0\Big).
 \end{equation}
 We need consider the following two cases:
 
 Case 1: When $P_0'$ belongs to $\LL_1$, we have $\#\Big(\BB(P_0)\backslash P'_0\Big)= \#\BB(P_0)-1$, which implies \[\begin{split}
 &\#\Big\{Q\in {\mathscr D}_k\cap \LL_1\;\big|\;w(Q)> w(P)\Big\}\\
 \leq& \#\Big({\mathscr D}_k\cap \mathscr W_{(j-2ip_0, j]}\Big)+\#\Big(\BB(P_0)\backslash P'_0\Big)\\
 = & i\lfloor\frac{p_0}{2}\rfloor+\#\BB(P_0)-1.
 \end{split}\]

 Case 2: When $P_0'$ does not belong to $\LL_1$, we have $$\#\Big({\mathscr D}_k\cap \LL_1\cap \mathscr W_{(j-2ip_0, j]}\Big)\leq \#\Big({\mathscr D}_k\cap \mathscr W_{(j-2ip_0, j]}\Big)-1,$$ which implies 
\[\begin{split}
&\#\Big\{Q\in {\mathscr D}_k\cap \LL_1\;\big|\;w(Q)> w(P)\Big\}\\
\leq& \#\Big({\mathscr D}_k\cap \mathscr W_{(j-2ip_0, j]}\Big)-1+\#\Big(\BB(P_0)\backslash P'_0\Big)\\
= & i\lfloor\frac{p_0}{2}\rfloor+\#\BB(P_0)-1.
\end{split}\]

In either case, it is easy to check \eqref{interesting equation1}, which completes this proposition.
 \end{proof}

% The construction falls into two parts.
%
%\noindent \textbf{Step 1.} We first construct a symmetric $\overline{\beta}_2: \LL_2\cup m(\overline{\beta}_2(\LL_2))\to m(\LL_3)$ such that every symmetric $\beta'_2: \LL_2\cup m(\beta'_2(\LL_2))\to m(\LL_3)$ satisfying $$\big\{ \beta'_2(P)-P\big| P\in \LL_2\}\subset \big\{ \overline{\beta}_2(P)-P|P\in \LL_2\big\}$$ is symmetric.
% 
% 
% \noindent \textbf{Step 2.} Construct a special weak symmetric $\beta_2$ satisfying $$\big\{ \beta_2(P)-P\big| P\in \LL_2\}\subset \big\{ \overline{\beta}_2(P)-P|P\in \LL_2\big\}$$ and some other conditions
 
 \subsection{Definition of $\widetilde{\beta}$.}
We next construct a map $\overline{\beta}_2: \LL_2\to m(\LL_3)$.
Put $\widetilde J=\{d-3p_0,d-3p_0+1,\dots, d-1\}$.  Write \begin{equation}\label{K2}
\KK_2:=\Big\{ P \;\big|\; P\in {\mathscr W}_{\widetilde J}\cap {\mathscr D}_{\big[\lceil \frac{d}{2}\rceil, \lceil \frac{d}{2}\rceil+2p_0\big)}\Big\}\quad \textrm{and}\quad \KK_2^0:=\KK_2\cap m(\YY_0).
\end{equation}

The general idea of constructing $\overline \beta_2$ is to map $\LL_2$ to disjoint sets $(\KK_2+(i_kp_0,-i_kp_0))\cap m(\YY_0)$, where $(i_1, \dots,i_{\#\LL_2})$ is a certain sequence of numbers in some range, such that for any two points $P_1$ and $P_2$ in $\LL_2$ if  $\overline{\beta}_2(P_1)$ and  $\overline{\beta}_2(P_2)$ belong to $(\KK_2+(i_kp_0,-i_kp_0))\cap m(\YY_0)$ for a same $k$, then $P_1-\overline{\beta}_2(P_1)=P_2-\overline{\beta}_2(P_2)$.

\begin{remark}
	An easy computation shows that ${\mathscr W}_{d-1}\cap m(\YY_0)$ is not empty, say that $Q$ is a point in it. The most naive construction of $\overline{\beta}_2$ is to make an injection from $\LL_2$ to $\{Q +(2ip_0,-2ip_0)\}_{i=1}^{\#\LL_2}$. However, the construction requires a very stronge condition that $d=O(p_0^3)$. In order to weaken this condition, we need a more detailized construction (see Construction~\ref{Construction of tilde beta2}).
\end{remark}
We start the construction of $\overline{\beta}_2$ with giving more details of its codomain. Recall that we defined the numbers $d_0$, $d_1$ and $d_2$ in Notation~\ref{d1 d2}. 
\begin{lemma}\label{distribution of R2}
 		We have $$\#(\KK_2^0\cap {\mathscr W}_{d-i})=\begin{cases}
 		p_0-1& \textrm{if}\ i\equiv d_0\pmod{p_0};\\
 		\big(i(p_0-d_2)\big)\%p_0& \textrm{otherwise}
 		\end{cases}$$

 		for all $1\leq i\leq p_0-1$.
\end{lemma}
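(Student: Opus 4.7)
The plan is to convert the count $\#(\KK_2^0 \cap \mathscr W_{d-i})$ into the count appearing in Proposition~\ref{lemma2 for destribution} via two successive moves: the mirror involution $m$ and the periodicity of $\YY_0$ from Corollary~\ref{period1}. First, since $m$ carries the anti-diagonal $\mathscr W_{d-i}$ to $\mathscr W_{d+i}$ (as $x+y=d-i$ becomes $(d-x)+(d-y)=d+i$) and reflects the diagonal strip $\mathscr D_{[\lceil d/2\rceil, \lceil d/2\rceil + 2p_0)}$ to a strip of the same width on the opposite side, one obtains
\[
\#(\KK_2^0 \cap \mathscr W_{d-i}) = \#\bigl(\YY_0 \cap \mathscr W_{d+i} \cap \mathscr D_{(-\lceil d/2\rceil - 2p_0,\,-\lceil d/2\rceil]}\bigr),
\]
and the hypothesis $p_0 < d/6$ (Hypothesis~\ref{first hypothesis}) ensures this strip is contained in $\square_\Delta$.

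Next, I apply Corollary~\ref{period1}(1) to translate $\mathscr W_{d+i}$ to an anti-diagonal of the form $\mathscr W_{2d - k_0}$ with $k_0 \in \{2,\dots,2p_0\}$ satisfying $k_0 \equiv d_0 - i \pmod{p_0}$ (equivalently $2d-k_0 \equiv d+i \pmod{p_0}$), and simultaneously translate the diagonal strip to one that covers the fundamental cell $\mathscr C$. The key combinatorial observation is that a diagonal strip of width $2p_0$ captures exactly one period of $\YY_0$ along the anti-diagonal, because the shift $(p_0,-p_0)$ is a periodicity by Lemma~\ref{periodic as mathscr C}; reducing this period modulo the $p_0\ZZ^2$-lattice traces out precisely the union $(\mathscr W_{2d - k} \cup \mathscr W_{2d - p_0 - k}) \cap \mathscr C$, where $k := (d_0 - i)\%p_0$ (or $k := p_0$ when $i \equiv d_0 \pmod{p_0}$), and a direct count shows that these two anti-diagonals together contain exactly $(k-1) + (p_0 - k + 1) = p_0$ lattice points of $\mathscr C$, matching the length of one period along $\mathscr W_{d+i}$. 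Hence
\[
\#(\KK_2^0 \cap \mathscr W_{d-i}) = \#\bigl((\mathscr W_{2d - k} \cup \mathscr W_{2d - p_0 - k}) \cap \mathscr C_0\bigr).
\]

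Finally, Proposition~\ref{lemma2 for destribution} evaluates the right-hand side as $p_0 - 1$ when $k = p_0$ (matching the first case of the lemma), and as $kd_2\%p_0 - 1$ otherwise. In the latter case, using $d_0 d_2 \equiv 1 \pmod{p_0}$ gives $kd_2 \equiv (d_0 - i)d_2 \equiv 1 - id_2 \pmod{p_0}$, whence $kd_2\%p_0 - 1 = (-id_2)\%p_0 = (i(p_0 - d_2))\%p_0$, matching the desired formula (for $1 \leq i \leq p_0 - 1$ and $i \not\equiv d_0 \pmod{p_0}$ one checks that $id_2\%p_0 \notin \{0,1\}$, so the arithmetic is unambiguous). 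The main obstacle lies in the second step, namely rigorously establishing the bijection between the $p_0$ lattice points of one period of $\mathscr W_{d+i}$ and the $p_0$ lattice points in $(\mathscr W_{2d - k} \cup \mathscr W_{2d - p_0 - k}) \cap \mathscr C$ after reduction modulo $p_0\ZZ^2$; this requires carefully tracking how successive $(1,-1)$-shifts along the anti-diagonal interact with the wrap-around of the reduction map, and how a shift causes the image to jump from one of the two anti-diagonals in $\mathscr C$ to the other.
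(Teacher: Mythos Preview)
Your proposal is correct and follows essentially the same three-step strategy as the paper: mirror to pass from $m(\YY_0)$ to $\YY_0$, use the $p_0$-periodicity of $\YY_0$ to reduce to the fundamental cell $\mathscr C$, and then invoke Proposition~\ref{lemma2 for destribution} followed by the arithmetic simplification via $d_0 d_2\equiv 1\pmod{p_0}$. The only cosmetic difference is in the first step: you apply the point reflection $m$, which sends $\mathscr D_k$ to $\mathscr D_{-k}$ and hence flips the diagonal strip, whereas the paper uses the line reflection across $y=d-x$ (which also swaps $\YY_0$ and $m(\YY_0)$ because $\YY_0$ is invariant under coordinate swap) and thereby keeps the diagonal strip $\mathscr D_{[\lceil d/2\rceil,\lceil d/2\rceil+2p_0)}$ unchanged; since Corollary~\ref{period1}(1) makes the count independent of which width-$2p_0$ diagonal strip is used, this has no effect on the argument.
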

 \begin{proof}
 Since $\YY_0$ and $m(\YY_0)$ are symmetric about $y=d-x$, we have $$\#\Big(\KK^0_2\cap {\mathscr W}_{d-i}\Big)=\#\Big({\mathscr W}_{d+i}\cap \YY_0\cap {\mathscr D}_{\big[\lceil \frac{d}{2}\rceil, \lceil \frac{d}{2}\rceil+2p_0\big)}\Big).$$
 Find $p_0< j\leq 2p_0$ such that $2d-j\equiv d+i \pmod {p_0}$. By Corollary~\ref{period1}, we have $$\#\Big({\mathscr W}_{d+i}\cap \YY_0\cap {\mathscr D}_{\big[\lceil \frac{d}{2}\rceil, \lceil \frac{d}{2}\rceil+2p_0\big)}\Big)=\#\Big({\mathscr W}_{2d-j}\cap {\mathscr D}_{[-p_0,p_0)}\cap\YY_0\Big).$$
 From Corollary~\ref{period}, we know $$\#\Big({\mathscr W}_{2d-j}\cap {\mathscr W}_{[-p_0,p_0)}\Big)=\#\Big(({\mathscr W}_{2d-j}\cup {\mathscr W}_{2d-p_0-j})\cap \mathscr C_0\Big).$$
 Therefore, by Proposition~\ref{lemma2 for destribution}, if $j=p_0$, then we have $$\#(({\mathscr W}_{2d-j}\cup {\mathscr W}_{2d-p_0-j})\cap \mathscr C_0)=p_0-1.$$
 It is not hard to see from the relation between $i$ and $j$ that $i\equiv d_0 \pmod {p_0}$. Combining all these equlities above, we get $\#(\KK^0_2\cap {\mathscr W}_{d-i})=p_0-1$.
 
 For the case that $j\neq p_0$, we have 
 	\[\begin{split}
 &\#(({\mathscr W}_{2d-j}\cup {\mathscr W}_{2d-p_0-j})\cap \mathscr C_0)\\=&jd_2\% p_0-1\\
 =&(d_0-i)d_2\% p_0-1\\
 =&i(p_0-d_2)\%p_0.
 \end{split}\]
 By a similar argument, we complete the proof immediately.
\end{proof}

In order to support our construction of $\overline{\beta}_2$, we need several technical lemmas. 
 \begin{notation}
 	For any subset $\KK_2'$ of $\KK_2$ and any integer $k\in [0, p_0-1]$, we put 
 	$$\KK_2'(k):=\begin{cases}
 P+(-k, k)& \textrm{if} \ P+(-k, k)\in \KK_2;\\
 P+( p_0-k, -(p_0-k))&\textrm{otherwise.}
 	\end{cases}$$

 \end{notation}

%   \begin{lemma}
%  	For any $0< k\leq p_0$, we have $$\#(({\mathscr W}_{2d-k}\cup {\mathscr W}_{2d-p_0-k})\cap \mathscr C)=\begin{cases}
%  	p_0-1& \textrm{if}\ k=p_0;\\
%  	kd_2\% p_0-1& \textrm{otherwise};
%  	\end{cases}$$
%  \end{lemma}
% 
% \begin{proof}
%Since $\YY_0$ and $m(\YY_0)$ are symmetric about $y=d-x$, we know that 
% 	
% 	
%By the previous lemma, we know that \[\begin{split}
%\#(({\mathscr W}_{2d-k}\cup {\mathscr W}_{2d-p_0-k})\cap \mathscr C)=&\#\{(i, j)|(i+j)d_0\equiv k\pmod {p_0}, i,j>0\ \textrm{and}\ i+j<p_0\}\\
%=&\#\{(i, j)|(i+j)\equiv kd_2\pmod {p_0}, i,j>0\ \textrm{and}\ i+j<p_0\}\\
%=&\begin{cases}
%p_0-1& \textrm{if}\ k=p_0;\\
%kd_2\% p_0-1& \textrm{otherwise};
%\end{cases}
%\end{split}\]
% \end{proof}
%

 \begin{lemma}\label{drawer}
 	Let $J$ be a subset of $\widetilde J$. Suppose that there are at least $\mathbbm n$ points in ${\mathscr W}_j\cap \KK_2'$ for each $j\in J$. Then
 	
 	\noindent $(1)$ for every subset $\SS$ of ${\mathscr W}_J\cap \KK_2$ of cardinality $\mathbbm m$, there exists at least an integer $i$ in $[0,p_0-1]$ such that 
 	$$\#\Big(\SS\cap \KK_2'(i)\Big)\geq \big\lceil \frac{\mathbbm m\mathbbm n}{p_0}  \big\rceil.$$
 	
 	\noindent $(2)$ For the set of lattice points ${\mathscr W}_J\cap \KK_2$, there exists a subset $I$ of $\{1,2, \dots, p_0\}$ of cardinality less than or equal to $\big\lceil-\log_{(1-\frac{\mathbbm n}{p_0})}\big(p_0(\#J)\big)\big\rceil$ such that 
 	\begin{equation}\label{union}
 	\bigcup_{i\in I}\KK_2'(i)\cap {\mathscr W}_J ={\mathscr W}_J\cap \KK_2.
 	\end{equation}

 \end{lemma}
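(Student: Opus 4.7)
The plan is to reduce both parts to a single double-counting statement about cyclic shifts on each row $\mathscr W_j$. \textbf{First}, I will analyze $\KK_2\cap\mathscr W_j$: its elements are lattice points $(x,y)$ with $x+y=j$ and $y-x\in[\lceil d/2\rceil,\lceil d/2\rceil+2p_0)$, and the integrality constraint $y-x\equiv j\pmod 2$ forces exactly $p_0$ points per row. The two shifts in the definition of $\KK_2'(k)$ change $y-x$ by $+2k$ and by $-2(p_0-k)$ respectively, and both are $\equiv 2k\pmod{2p_0}$; exactly one of the two keeps the new $y-x$ inside the interval of length $2p_0$. Hence $Q\mapsto Q(k)$ is a cyclic permutation of $\KK_2\cap\mathscr W_j$ by $+2k$ in the $(y-x)$-coordinate modulo $2p_0$, and as $k$ ranges over $[0,p_0-1]$ it realizes every element of the cyclic group acting transitively on this row.

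\textbf{For part (1)}, I will double-count triples $(P,Q,k)$ with $P\in\SS$, $Q\in\KK_2'$, both in the same row $\mathscr W_j$ for some $j\in J$, and $P=Q(k)$. By the row-shift structure, for each pair $(P,Q)$ in a common row there is exactly one such $k\in[0,p_0-1]$. Therefore
$$\sum_{k=0}^{p_0-1}|\SS\cap\KK_2'(k)|=\sum_{j\in J}|\SS\cap\mathscr W_j|\cdot|\KK_2'\cap\mathscr W_j|\geq \mathbbm n\sum_{j\in J}|\SS\cap\mathscr W_j|=\mathbbm n\mathbbm m,$$
using the hypothesis $|\KK_2'\cap\mathscr W_j|\geq\mathbbm n$. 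A pigeonhole over the $p_0$ summands then yields an index $k$ with $|\SS\cap\KK_2'(k)|\geq\lceil\mathbbm m\mathbbm n/p_0\rceil$.

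\textbf{For part (2)}, I will iterate part (1) greedily. Set $\SS_0=\mathscr W_J\cap\KK_2$, so $|\SS_0|=p_0\cdot\#J$. Having constructed $\SS_r$, apply part (1) with $\SS=\SS_r$ to choose $i_{r+1}\in[0,p_0-1]$ with $|\SS_r\cap\KK_2'(i_{r+1})|\geq\mathbbm n|\SS_r|/p_0$, and set $\SS_{r+1}:=\SS_r\setminus\KK_2'(i_{r+1})$. Then $|\SS_{r+1}|\leq(1-\mathbbm n/p_0)|\SS_r|$, so inductively $|\SS_r|\leq(1-\mathbbm n/p_0)^r\cdot p_0\cdot\#J$. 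The iteration terminates as soon as this bound drops below $1$, which happens for $r\leq\lceil-\log_{(1-\mathbbm n/p_0)}(p_0\#J)\rceil$; taking $I=\{i_1,\dots,i_r\}$ yields the desired covering.

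\textbf{The main obstacle} is the row-shift identification in the first step: one must confirm that the two piecewise cases in the definition of $\KK_2'(k)$ glue together into a single cyclic shift in the $(y-x)$-coordinate modulo $2p_0$, and that the parity constraint $y-x\equiv j\pmod 2$ leaves precisely $p_0$ positions per row so the shifts act transitively. Once this structural observation is in hand, the two parts follow respectively from pigeonhole and from a standard greedy covering estimate.
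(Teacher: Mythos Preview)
Your proof is correct and follows essentially the same approach as the paper: the paper's one-line justification ``$\biguplus_{i=0}^{p_0-1}(\mathscr W_J\cap\KK_2'(i))^\star$ covers $\SS$ at least $\mathbbm n$ times'' is exactly your double-counting argument once the cyclic-shift structure on each row is made explicit, and your greedy iteration for part~(2) is identical to the paper's recursion $\mathbbm m_k\le\mathbbm m_{k-1}(1-\mathbbm n/p_0)$. Your write-up is in fact more careful than the paper's in spelling out why the $p_0$ shifts act simply transitively on the $p_0$ points of each row $\mathscr W_j\cap\KK_2$.
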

 \begin{proof}
 	(1). Since $\biguplus\limits_{i=0}^{p_0-1}({\mathscr W}_J\cap \KK_2'(i))^\star$ covers $\SS$ at least $\mathbbm n$ times, by Pigeonhole principle, there exists some $i$ such that  $\SS\cap \KK_2'(i)\geq \big\lceil \frac{\mathbbm m\mathbbm n}{p_0}  \big\rceil$.
 	
 	(2).  By (1), we can choose a sequence $(i_1,i_2,\dots)$ from $\{1,2,\dots, 3p_0\}$ such that 
 	\begin{equation}\label{recursive of m}
 	\mathbbm{m}_k\leq \mathbbm{m}_{k-1}-\big\lceil \frac{\mathbbm n\mathbbm{m}_{k-1}}{p_0}\big\rceil\leq \mathbbm{m}_{k-1}(1-\frac{\mathbbm n}{p_0}),
 	\end{equation}
 	where $\mathbbm{m}_k:=\#({\mathscr W}_J\cap \KK_2-\bigcup\limits_{j=1}^{k} \KK_2'(i_j)\cap {\mathscr W}_J)$.
 	
 	Write $t=\Big\lfloor-\log_{\big(1-\frac{\mathbbm n}{p_0})}(p_0(\#J)\big)\Big\rfloor+1$. Repeated application of \eqref{recursive of m} gives  $$\mathbbm{m}_{t}\leq \mathbbm{m}_0(1-\frac{\mathbbm n}{p_0})^{t}= p_0(\#J)(1-\frac{\mathbbm n}{p_0})^{t}<1.$$
 	It implies $\mathbbm{m}_t=0$. Therefore, the length of this sequence cannot be longer than $t-1$, which completes the proof of (2).
 \end{proof}

Let $u$ be a real number in $(0,1)$. Depending on $u$, we decompose $\widetilde J$ into three groups:
 \begin{enumerate}
 	\item $J_1(u)=\Big\{j\in \widetilde J\;\big|\; j>d-\frac{p_0^u}{p_0-d_2}\Big\}$,
 	\item $J_2(u)=\Big\{j\in \widetilde J\;\big|\; \#({\mathscr W}_j\cap \KK_2^0)\geq p_0^u\Big\}$, and
 	\item $J_3(u):=\widetilde J\big\backslash \Big(J_2(u)\cup J_1(u)\Big)$.
\end{enumerate}
 By Lemma~\ref{distribution of R2}, we know that \begin{equation}\label{card of J2}
 3p_0-3p_0^u\leq \#J_2(u) \leq 3p_0.
 \end{equation}
% 
% Put $H_i(u):=\bigcup\limits_{j \in J_i}{\mathscr W}_j\cap \KK_2^0$ for $i=1,2,3$. Then we obtain a disjoint decomposition of $\KK_2^0$ as 
% $$\KK_2^0=\coprod_{i=1}^3 H_i(u).$$
% 
%The basic idea of constructing $\overline{\beta}_2$ is to map $\LL_2$ into disjoint sets $\{\KK_2+(2ip_0,-2ip_0)\}$ such that for any $P_1, P_2\in \LL_2$ if  $\overline{\beta}_2(P_1)$ and  $\overline{\beta}_2(P_1)$ are in a same $\KK_2+(2ip_0,-2ip_0)$ for some $i$, then $\overline{\beta}_2(P_1)-P_1$=$\overline{\beta}_2(P_2)-P_2$.

\begin{notation}
	Set $h = \log_{p_0}(p_0-d_2)$.
\end{notation}

 \begin{construction}[Construction of $\overline{\beta}_2$]\label{Construction of tilde beta2}
We construct $\overline{\beta}_2$ in three steps:
 
 \noindent \textbf{Step 1.} Lemma~\ref{distribution of R2} shows that $\KK_2^0\cap {\mathscr W}_{d-1}$ is not empty, say that it contains a point $Q_1$.  We put 
 $${\mathscr W}_{J_1+d}\cap \LL_2:=\{P_1, P_2,\dots,P_{t_1}\},$$ where $t_1$  is its cardinality, and define $\overline{\beta}_2$ on ${\mathscr W}_{J_1+d}\cap \LL_2$ as 
 \[\begin{split}
 \overline{\beta}_2: &{\mathscr W}_{J_1+d}\cap \LL_2\to m(\LL_3)\\
 &	P_i\mapsto Q_1+\big(p_0(i-1), -p_0(i-1)\big).
 \end{split}\]
 Namely, $\overline{\beta}_2$ maps ${\mathscr W}_{J_1+d}\cap \LL_2$ into a disjoint union of $\KK_2+(p_0i, -p_0i)$ for $0\leq i\leq t_1-1$.

By the definition of $J_1$, we know that $\#J_1=\lfloor\frac{p_0^u}{p_0-d_2}\rfloor$. Since ${\mathscr W}_{J_1+d}\cap \LL_2$ is in an isosceles right triangle with side lengths $\#J_1$, we have $t_1\leq\frac{1}{2}(\lfloor\frac{p_0^u}{p_0-d_2}\rfloor+1)\lfloor\frac{p_0^u}{p_0-d_2}\rfloor.$
It is easily check that $t_1\leq \frac{1}{2}({p_0}^{2(u-h)}+ {p_0}^{u-h})$. 
  
 \noindent \textbf{Step 2.}  We denote by $\theta$ the  unique map from $\KK_1$ to $\KK_2$ given by parallel transform. By Lemma~\ref{drawer}~(2), there is a sequence $(i_1,i_2,\dots,i_{t_2})$ such that $$\bigcup\limits_{k=1}^{t_2} \KK_2^0(i_k)\cap {\mathscr W}_{J_2}= \KK_2\cap {\mathscr W}_{J_2}\quad\textrm{and}\quad t_2\leq \big\lfloor-\log_{(1-\frac{p_0^u}{p_0})}\big(p_0(\#J_2)\big)\big\rfloor.$$
Since $\bigcup\limits_{k=1}^{t_2} \KK_2^0(i_k)$ depends only on the elements in set $\{i_1,i_2,\dots,i_{t_2}\}$, we can in fact require $(i_1,i_2,\dots,i_{t_2})$ to be increasing.

 It is easily seen that 
  $$t_2\leq \big\lfloor-\log_{(1-\frac{p_0^u}{p_0})}\big(p_0(\#J_2)\big)\big\rfloor\leq \big\lfloor-\log_{(1-\frac{p_0^u}{p_0})}(3p_0^2)\big\rfloor.$$ 
 
% Then we define $\overline{\beta}_2$ on ${\mathscr W}_{J_2+d}\cap \LL_2$ as follows:
 For each point $P$ in ${\mathscr W}_{J_2+d}\cap \LL_2$, we put $k(P)$ to be the smallest number such that $\KK_2^0(i_{k(P)})\cap {\mathscr W}_{J_2}$ contains $\theta(P)$. Then 
 we define \[\overline{\beta}_2(P):=\theta(P)+(x_2(P), -x_2(P)), \]
 where $x_2(P)=t_1p_0+i_{k(P)}+k(P)p_0$.x
  Namely, $\overline{\beta}_2$ maps ${\mathscr W}_{J_2+d}\cap \LL_2$ into a disjoint union of $$\KK_2+\big[t_1p_0+i_{k}+kp_0\big](1,-1)\quad \textrm{for}\ 1\leq k\leq t_2.$$ 
 
 \noindent \textbf{Step 3.} 
Write $J_3(u)=\{j_1,j_2, \dots,j_{s_3}\}$. By \eqref{card of J2}, we know that 
$s_3\leq 3p_0^u$. 
 Let $d_3$ be the largest number in $\widetilde J$ such that  $\#({\mathscr W}_j\cap \KK_2^0)\geq \tfrac{p_0}{2}$. By Lemma~\ref{distribution of R2}, we have $$J_3\in (d-3p_0, d_3]\quad\textrm{and}\quad \frac{p_0}{2}\leq(d-d_3)(p_0-d_2)<p_0.$$ 
 Replacing $J$ in Lemma~\ref{drawer} by $\{d_3\}$, we obtain a sequence  $(i'_1,i'_2,\dots,i'_{t_3})$ from $\{1,2,\dots, p_0\}$ such that $$\bigcup\limits_{k=1}^{t_3} \KK_2^0(i_k)\cap {\mathscr W}_{d_3}= \KK_2\cap {\mathscr W}_{d_3}\quad \textrm{and}\quad t_3\leq \big\lfloor-\log_{(1-\frac{p_0/2}{p_0})}(p_0)\big\rfloor=\big\lfloor\log_2(p_0)\big\rfloor.$$ 
 Similar to \textbf{Step 2}, we assume that $(i'_1,i'_2,\dots,i'_{t_3})$ is increasing. Then we define $\overline{\beta}_2$ on ${\mathscr W}_{J_3+d}\cap \LL_2$ as follows:
 
Consider each $P\in {\mathscr W}_{J_3+d}\cap \LL_2$. Suppose that $P$ belongs to ${\mathscr W}_{j_l+d}$ for some $j_l\in J_3$. Let $k(P)$ be the smallest number such that $\KK_2^0(i'_{k(P)})\cap {\mathscr W}_{d_3}$ contains $\theta(P)+(d_3-j_l, d_3-j_l)$. Then we define \[\overline{\beta}_2(P):=
 \theta(P)+(d_3-j_l+x_3(P), d_3-j_l-x_3(P)),\]
 where $x_3(P)=p_0[k(P)+(l-1)(t_3+2)+t_1+t_2+2]+i'_{k(P)}.$
 
 	 Namely, $\overline{\beta}_2$ maps ${\mathscr W}_{J_2+d}\cap \LL_2$ into a disjoint union of $$\KK_2+\big\{p_0[k+(l-1)(t_3+2)+t_1+t_2+2]+i'_{k}\big\}(1,-1)$$
 	 for $1\leq k\leq t_3$ and $1\leq l\leq s_3$.
\end{construction}

 Notice that the codomain of $\overline{\beta}_2$ is both a disjoint union of shifts of $\KK_2$ and a subset of $\square_\Delta$.  Then for a fixed $d$, the residue $p_0$ of $p$ modulo $d$ cannot be too large. The following computation gives $p_0$ an upper bound such that the construction for $\overline\beta_2$ above is realizable. In fact, the complicated conditions in Theorem~\ref{Thm for 4} are also from this computation.
 
From the constructing above, we know that the image of $\overline{\beta}_2$ is included in a union of disjoint shifts of $\KK_2$. Moreover, the number of these shifts, denoted by $\mathcal N$, is counted and estimated as follows:
 \begin{equation}\label{estimate the order}
 \begin{split}
 \mathcal N=&t_1+t_2+t_3\times s_3\\
 \leq&\frac{1}{2}({p_0}^{2(u-h)}+ {p_0}^{u-h})+\big\lfloor-\log_{(1-\frac{p_0^u}{p_0})}(3p_0^2)\big\rceil+\big\rfloor\log_2(p_0)\big\rceil\times 3p_0^u\\
 \leq& \frac{3}{4}{p_0}^{2(u-h)}+\frac 1 4+2\ln(p_0)p_0^{1-u}+\ln3p_0^{1-u}+\log_2(p_0)\times3 p_0^u.\\
 \end{split}
 \end{equation}
Recall $\KK_2=\Big\{ P \;\big|\; P\in {\mathscr W}_{\widetilde J}\cap {\mathscr D}_{\big[\lfloor \frac{d}{2}\rfloor, \lfloor \frac{d}{2}\rfloor+2p_0\big)}\Big\}$.
%The above estimation gives that 
It is easy to see that the largest $x$-coordinate of points in the codomain of $\overline{\beta}_2$ is equal to $\frac{3}{4}d+ p_0 (\mathcal N+2s_3+3),$ which obvious is controled by $d$. Then we get a necessary condition:
\begin{equation}\label{estimate the order1}
d\geq 4p_0 (\mathcal N+2s_3+3).
\end{equation}
 
 We write $G(h, u)=\max\{2(u-h), 1-u, u\}$. Recall that $h=\log_{p_0}(p_0-d_2)$ is fixed by $p_0$ and $d$. Therefore, our next goal is to determine the minimum of $G(h, u)$ by varying the value of $u$ inside $[0, 1]$. 
 \begin{definition}
 	We call $u'$ an \emph{optimizer} of $h$ if 
 $G(h, u')=\min\limits_{u\in [0,1]}\big(G(h, u)\big)$.
 \end{definition}
 
In order to get an optimizer of a given $h\in (0, 1]$. We need to consider two cases:.
 \begin{figure}
 	\centering
 	\begin{tikzpicture}[scale=4]
 	\draw[->] (-0,0) -- (1.2,0) node[right] {$x$};
 	
 	\draw[->] (0,-0) -- (0,1.2) node[above] {$y$};
 	\draw[-] (0,1) -- (1,0) node[right] {};
 	\draw[-] (0,0) -- (1.2,1.2) node[right] {};
 	\draw[dashed] (0,-0.5) -- (1,1.5) node[right] {};
 	\draw[dashed] (0,-0.25) -- (1,1.75) node[right] {};
 	\draw[dashed] (0.25,-0.5) -- (1,1) node[right] {};
 	\draw(1.2,1.2)node[right]{$y=x$};
 	\draw(1,1.75)node[right]{$y=2(x-\frac{1}{8})$};
 	\draw(1,1.5)node[right]{$y=2(x-\frac{1}{4})$};
 	\draw(1,1)node[right]{$y=2(x-\frac{1}{2})$};
 	\draw(0,1)node[right]{$y=1-x$};
 	\end{tikzpicture}
 	\caption{Determine the optimizer of $h$.} \label{fig: optimizer}
 \end{figure}
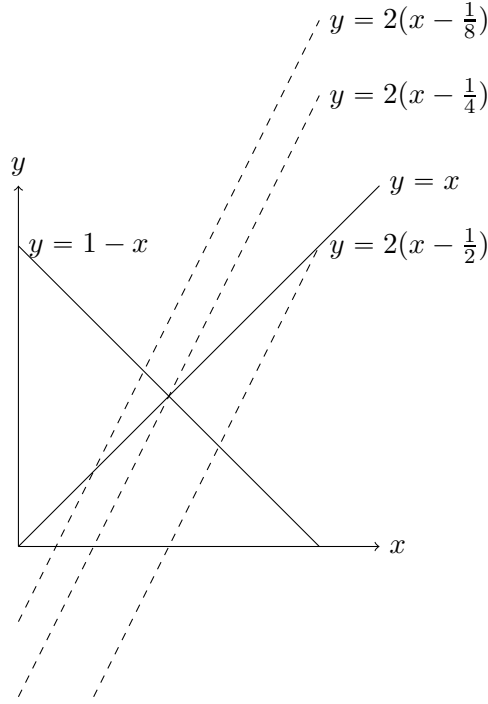

 \noindent \textbf{Case 1.} When $\tfrac{1}{4}\leq h\leq 1$. From Figure~\ref{fig: optimizer}, the optimizer $u$ of $h$ is the $x$-coordinate of the point of intersection of lines $y=x$ and $y=1-x$. Therefore, we know that $u=\tfrac{1}{2}$ is the optimizer of this $h$. Plugging $u=\tfrac{1}{2}$ into equations~\eqref{estimate the order} and \eqref{estimate the order1}, we have
 $$ 4p_0(\mathcal N+2s_3+3)\leq 4p_0^{\frac{3}{2}}\Big[\ln 3+\frac{27}{4}+(2+\frac{3}{\ln 2})\ln p_0\Big]+13p_0.$$
 
 \noindent \textbf{Case 2.} When $0<h<\frac{1}{4}$. Based on the same observation of Figure~\ref{fig: optimizer}, an optimizer $u$ of $h$ is the $x$-coordinate of the point of intersection of lines $y=2(x-h)$ and $y=1-x$. An easy computation shows that $u=\tfrac{1+2h}{3}$. Combining it with \eqref{estimate the order} and \eqref{estimate the order1}, we have $$4p_0(\mathcal N+2s_3+3)\leq 4p_0^{\frac{5-2h}{3}}\Big[\ln 3+\frac{27} 4+(2+\frac{3}{\ln 2})\ln p_0\Big]+13p_0.$$
 
% The next is to induce $\beta_2$ from $\overline{\beta}_2$. 
\begin{notation} 
	
(1)	We write \begin{equation}\label{posible vectors}
 V(\overline{\beta}_2):=\Big\{P- \overline{\beta}_2(P)\;\big|\;P\in \LL_2\Big\}.
 \end{equation}

 (2) The reflection of a vector $\vec{v}$ through a diagonal line $y=x$ is denoted by $\vec{v}^\vee$. Let $V$ be a set of vectors. We put $V^{\vee}:=\{\vec{v}^\vee\;|\;\vec{v}\in V\}$.
 \end{notation}

 \begin{lemma}\label{no inter}
	We know that $$\ran(\widetilde{\beta}_1)\cap\Big\{P-\vec{v}\;\big|\;P\in \LL_2\ \textrm{and}\ \vec{v}\in V(\overline{\beta}_2)\cup V(\overline{\beta}_2)^\vee\Big\}$$  is empty.
\end{lemma}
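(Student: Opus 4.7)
I would proceed by contradiction: suppose $Q := P - \vec{v}$ lies in $\ran(\widetilde{\beta}_1)$ for some $P \in \LL_2$ and $\vec{v} \in V(\overline{\beta}_2) \cup V(\overline{\beta}_2)^{\vee}$. Since the maximality in Definition~\ref{definition for beta1} renders $\widetilde{\beta}_1$ weakly symmetric, there exists $P' \in \LL_1$ with $\widetilde{\beta}_1(P') = Q$, and the pair $(P', Q)$ is eligible in the sense of Definition~\ref{definition of beta1}: $\overrightarrow{QP'}$ is diagonal with $w(\overrightarrow{QP'}) \leq 1$, and either $w(P') > 3/2$ or $w(Q) < 1/2$. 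The plan is to rule out both cases.

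The first step is to extract enough structure on $\vec{v}$. Writing $\vec{v} = P_0 - \overline{\beta}_2(P_0)$ (or its reflection through $y = x$) with $P_0 \in \LL_2 \subset \KK_1$, I exploit the fact that every branch of Construction~\ref{Construction of tilde beta2} shifts a parallel transport $\theta(P_0) \in \KK_2$ in the anti-diagonal $(1,-1)$-direction.  This keeps $\overline{\beta}_2(P_0) \in {\mathscr W}_{[d-3p_0,\,d-1]}$, yielding $w(\vec{v})d \in [d-3p_0+1,\,d+3p_0-2]$, and constrains $\delta(\overline{\beta}_2(P_0))$ through the bound $2s \leq d/2 - O(p_0)$ implied by $4p_0(\mathcal{N} + 2s_3 + 3) \leq d$.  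Refined, substep-specific bounds should then be extracted: in Step~1 of the construction $\overline{\beta}_2(P_0) \in {\mathscr W}_{d-1}$ with $\delta \in [\lceil d/2 \rceil - 2(t_1-1)p_0,\,\lceil d/2 \rceil + 2p_0)$; in Step~2 $\overline{\beta}_2(P_0) \in {\mathscr W}_{J_2}$; and in Step~3 $\overline{\beta}_2(P_0) \in {\mathscr W}_{2d_3 - j_l}$.  Each case yields explicit intervals for $w(\vec{v})d$ and $\delta(\vec{v})$.

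To rule out $w(P') > 3/2$, I use $\delta(P') = \delta(Q) = \delta(P) - \delta(\vec{v})$ together with the substep-specific bounds: if $|\delta(\vec{v})| \geq \lceil d/2 \rceil - Cp_0$ for an explicit constant $C$, then $|\delta(P')| \geq \lceil d/2 \rceil - (C+1)p_0$ (using $\delta(P) \in [-p_0, p_0)$), and combining with $w(P')d > 3d/2$ and the identities $2P'_x = w(P')d - \delta(P')$, $2P'_y = w(P')d + \delta(P')$ forces $\max(P'_x, P'_y) > d - 1$, contradicting $P' \in \YY_0 \subset \square_\Delta$. The other eligibility case $w(Q) < 1/2$ is handled via $w(Q)d = w(P)d - w(\vec{v})d$: combining $w(P)d \geq 2d - 3p_0$ with sharper substep-specific upper bounds on $w(\vec{v})d$ (strictly less than $d+3p_0-2$, since no substep simultaneously attains the minimal $w(\overline{\beta}_2(P_0))d \approx d-3p_0$ and the maximal $w(P_0)d \approx 2d - 2$) gives $w(Q)d > d/2$. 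The main obstacle is handling the sub-regime where the $(s,-s)$-shift in Construction~\ref{Construction of tilde beta2} is close to its maximal value: there $\delta(\overline{\beta}_2(P_0))$ is near $0$ rather than near $\lceil d/2 \rceil$, so the $w(P') > 3/2$ argument fails and must be replaced by a different invocation of the weight bound --- relying on the fact that such large shifts only occur in Step~1 with the target fixed on ${\mathscr W}_{d-1}$, which further sharpens the $w(\vec{v})d$ estimate and pushes $w(Q)d$ above $d/2$ directly.  Closing this last gap in the full regime $d/12 \leq p_0 < d/6$ is the main technical hurdle; the hypothesis $p_0 < d/6$ from Hypothesis~\ref{first hypothesis} combined with the constraints on $d$ in Theorem~\ref{Thm for 4} should suffice.
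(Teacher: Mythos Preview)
Your plan heads in the right direction---the diagonal coordinate $k = Q_y - Q_x$ is exactly the paper's tool---but you make it far harder than necessary, and one step is arithmetically wrong.

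The paper's proof is two sentences. First, every $Q = P - \vec v$ in the set lies on $\mathscr D_k$ with $|k| > d/2$. Second, this alone contradicts $Q \in \ran(\widetilde\beta_1)$, because both eligibility alternatives already force $|k| < d/2$: if $w(P') > 3/2$ then the larger coordinate of $P'$ is at least $(3d/2 + |k|)/2$, which would exceed $d-1$; if $w(Q) < 1/2$ then $|k| = |Q_y - Q_x| \leq Q_x + Q_y < d/2$. A single $\delta$-bound disposes of both cases at once---no separate weight estimate and no substep-by-substep dissection of Construction~\ref{Construction of tilde beta2} is needed.

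The regime you worry about, where a large anti-diagonal shift brings $\delta(\overline\beta_2(P_0))$ back near $0$, does not occur in the intended geometry: as Figure~\ref{K1 and K2} and the ``largest $x$-coordinate'' computation before \eqref{estimate the order1} make clear, $\KK_2$ sits on the side of the main diagonal where every shift $(s,-s)$ with $s \geq 0$ pushes the image \emph{farther} from $y=x$. Hence $|\delta(\overline\beta_2(P_0))| \gtrsim d/2$ uniformly across all three steps of the construction, and your acknowledged gap disappears. Separately, your $w(P') > 3/2$ branch is wrong as written: from $|\delta(P')| \geq \lceil d/2\rceil - (C+1)p_0$ and $P'_x + P'_y > 3d/2$ you only obtain $\max(P'_x, P'_y) > d - (C+1)p_0/2$, not $> d-1$, so no contradiction arises once $p_0 \geq 2$.
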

	\begin{proof}
It is easy to check that any point $Q$ in $$\Big\{P-\vec{v}\;\big|\;P\in \LL_2\ \textrm{and}\ \vec{v}\in V(\overline{\beta}_2)\cup V(\overline{\beta}_2)^\vee\Big\}$$ belongs to $\mathscr D_k$ for some $|k|>\frac{d}{2}$. Then this lemma follows simply from the definition of $\widetilde{\beta}_1$.
\end{proof}
 \begin{construction}[Construction of $\widetilde{\beta}_2$]
 	
 	\textbf{Step 1}. Write 
 	$$\mathscr E_2:=\big\{\beta:\LL_2\hookrightarrow m(\YY_0)\;\big|\;P-\beta(P)\in V(\overline{\beta}_2)\cup V(\overline{\beta}_2)^\vee\ \textrm{for all}\ P\in \LL_2\big\}.$$
 	We know that $\mathscr E_2$ is non-empty, for $\overline{\beta}_2$ is automatically contained in it. 
 	
 	\noindent \textbf{Step 2}. We line up the elements in $V(\overline{\beta}_2)$ to form a sequence, denoted by $(\vec{v}_1, \vec{v}_2,\dots, \vec{v}_{\mathscr N})$.
 
 	\noindent \textbf{Step 3}. Define a partial order over $\mathscr E_2$ as follows: 
 	
	For any two maps $\beta_1, \beta_2\in \mathscr E_2$, we denote $\beta_1\prec_2 \beta_2$, if there exists an integer $1\leq k\leq \mathcal N$ such that 
 	\begin{itemize}
 		\item $\#\{P\;|\;P- \beta_1(P)=\vec{v}_i\ \textrm{or}\ \vec{v}_i^\vee\}=\#\{P\;|\;P-\beta_2(P)=\vec{v}_i \ \textrm{or}\  \vec{v}_i^\vee\}$	for all $ 1\leq i\leq k-1$, and\\
 		\item $\#\{P\;|\;P-\beta_1(P)=v_k \ \textrm{or}\  v_k^\vee\}<\#\{P\;|\;P-\beta_2(P)=v_k \ \textrm{or}\  v_k^\vee\}.$
 	\end{itemize}
    
   \noindent  \textbf{Step 4}. Let $\widetilde \beta_2$ be a maximal element in $\mathscr E_2$.
\end{construction}
By Hypothesis~\ref{first hypothesis}, we know that $m(\LL_2)\cap \widetilde \beta_2(\LL_2)=\emptyset.$ Combining it with Lemma~\ref{symmetric closure}, we can simply prove the existence of $\mathbbm s(\widetilde{\beta}_2)$.
   Since maps $\widetilde \beta_1$ and $\mathbbm s(\widetilde{\beta}_2)$ are symmetric, we define \[\begin{split}
 \widetilde \beta_3: \YY_0\Big\backslash\Big( \dom(\widetilde \beta_1)\cup \dom(\mathbbm s(\widetilde \beta_2))\Big)&\to m(\YY_0)\Big\backslash \Big(\ran(\widetilde \beta_1)\cup \ran(\mathbbm s(\widetilde \beta_2))\Big)\\
P &\mapsto m(P).
 \end{split}\] 
 
 Putting $\widetilde\beta_1, \mathbbm s(\widetilde{\beta}_2)$ and $\widetilde\beta_3$ together, we define  a bijection $\widetilde{\beta}: \YY_0\to m(\YY_0)$ such that 
 \begin{equation}\label{Construction of beta}
 \widetilde{\beta}(P)=\begin{cases}
 \widetilde \beta_1(P)& \textrm{if}\ P\in \LL_1;\\
 \mathbbm s(\widetilde \beta_2)(P)& \textrm{if}\ P\in \dom(\mathbbm s(\widetilde \beta_2)); \\
 \widetilde \beta_3(P)& \textrm{otherwise}.
 \end{cases}
 \end{equation}

Since $ \YY_0\Big\backslash\Big( \dom(\widetilde \beta_1)\cup \dom(\mathbbm s(\widetilde \beta_2))\Big)\subset \LL_3,$ we know that $w(P-m(P))\leq 1$ for each point $P\in \LL_3$. Combining it with the constructive definition of $\widetilde \beta_1$ and $\mathbbm s(\widetilde \beta_2)$, we easily check that $\widetilde{\beta}$ is a special bijection (see Definition~\ref{maximal combo to YY}).

\subsection{Completion of the proofs.}
\begin{notation}
	For a bijection $\beta: \YY_0\to m(\YY_0)$, we write $$\LL_{\beta}:=\LL_2\cup \beta^{-1}(m(\LL_2)).$$
\end{notation}
Recall that we defined the meaning of two bijections  $\beta, \beta':\YY_0\to m(\YY_0)$ to be related in Definition~\ref{definition for congruence}. 
  \begin{proposition}\label{thm 2k}
 	\noindent$(a)$ A bijection $\beta':\YY_0\to m(\YY_0)$ is related to $\widetilde{\beta}$
 	if and only if 
 	\begin{itemize}
 	\item[$(1)$] $P-\beta'(P)=P-\widetilde{\beta}(P)$ or $(P-\widetilde{\beta}(P))^\vee$ for all $P$ in $\LL_2$;
 	\item[$(2)$] $\beta'\big|_{\LL_1}=\widetilde\beta\big|_{\LL_1}$;
 	\item[$(3)$] $\beta'(P)=m(P)$ for $P\in \YY_0\backslash(\LL_1\cup\LL_{\beta'})$;
 	\item[$(4)$] $\beta'$ is symmetric.
 	\end{itemize}

 	\noindent$(b)$ The number of bijections related to $\widetilde{\beta}$ is equal to $2^{k}$, where $$k=\#\big\{P\in \LL_2\;|\; P-(P-\widetilde{\beta}_2(P))^\vee\in m(\YY_0)\big\}.$$
 \end{proposition}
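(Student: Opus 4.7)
The strategy is to establish the characterization in (a) by decomposing $V^\star(\widetilde\beta)$ into three disjoint classes of vectors, and then to count the related bijections in (b) as independent binary ``reflection swap'' choices indexed by the $k$ points of $\LL_2$ admitting such a swap. The three classes are: the diagonal vectors of weight at most $1$ produced by $\widetilde\beta_1$; the non-diagonal vectors from $V(\overline\beta_2)$ produced (each twice, by symmetry) by $\mathbbm s(\widetilde\beta_2)$; and the zero vectors produced by $\widetilde\beta_3$ at points $P$ mapped to $m(P)$. These three classes are mutually disjoint---by direction, magnitude, and the fact that the first two are non-zero while the third is zero---so any related $\beta'$ must match them class by class.

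For the direction ``$\Leftarrow$'' of (a), I would verify directly that a $\beta'$ satisfying (1)--(4) produces the same vector multiset as $\widetilde\beta$. Condition (2) preserves the diagonal class. Condition (3) reproduces the zero class. For the non-diagonal class, condition (1) permits replacing $\vec{v} := P - \widetilde\beta(P)$ by $\vec{v}^\vee$ at $P\in \LL_2$; combined with (4), this swap is coupled with the corresponding one at $m(\widetilde\beta(P))$, and the fact that $V(\overline\beta_2)\cup V(\overline\beta_2)^\vee$ closes under reflection ensures the resulting multiset remains unchanged.

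For the direction ``$\Rightarrow$'', I would use the uniqueness of $\widetilde\beta_1$ as the maximal element of $(V(\mathscr E_1),\prec_1)$ to conclude that any related $\beta'$ must realize the diagonal class exactly as $\widetilde\beta_1$ does, which gives (2). The zero vectors in $V^\star(\widetilde\beta)$ can only be realized by $\beta'(P) = m(P)$ at the remaining points, which gives (3). The non-diagonal class must then be realized over the $\LL_{\beta'}$-pairs, and the only freedom available is the reflection swap at each $P \in \LL_2$, which is (1). Symmetry (4) is then forced by the requirement of bijectivity combined with the multiplicity-matching of vectors.

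For part (b), I would count configurations by enumerating the independent swap choices. At each $P \in \LL_2$, a swap is admissible iff $Q^\vee := P - (P - \widetilde\beta_2(P))^\vee \in m(\YY_0)$, which is precisely the condition defining $k$. By symmetry (4), the swap at $P$ is coupled to that at $m(\widetilde\beta_2(P)) \notin \LL_2$ (the latter has weight near $1$, hence not in $\LL_2$), so each swap corresponds to one binary choice per $P\in \LL_2$ with the admissibility property, yielding $2^k$ configurations. The main obstacle I anticipate is the independence of these choices: verifying that simultaneous swaps at several $P$'s give a valid bijection, with no two $Q^\vee$'s coinciding and with no target overlapping $\ran(\widetilde\beta_1) \cup \ran(\widetilde\beta_3)$. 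This should follow from the disjointness built into Construction~\ref{Construction of tilde beta2} of $\overline\beta_2$, whose images lie in disjoint $p_0$-translates of $\KK_2$, together with Lemma~\ref{no inter}, which separates the swap targets from $\ran(\widetilde\beta_1)$.
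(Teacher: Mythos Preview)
Your overall strategy---separating $V^\star(\widetilde\beta)$ into the contributions from $\widetilde\beta_1$, $\mathbbm s(\widetilde\beta_2)$, and $\widetilde\beta_3$, then matching class by class using extremality---is the right shape and is what the paper does. But two steps in your plan do not go through as written.

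First, your claim that $\widetilde\beta_3$ produces ``zero vectors'' is false. For $P\in\dom(\widetilde\beta_3)\subset\YY_0$ one has $\widetilde\beta_3(P)=m(P)=(d,d)-P\in m(\YY_0)$, and since $\YY_0$ and $m(\YY_0)$ are disjoint, $P-m(P)=2P-(d,d)\neq 0$. Hence your three-class separation ``by direction, magnitude, and zero vs.\ non-zero'' collapses. The paper's version of this step is to argue that the \emph{non-diagonal} vectors in $V^\star(\widetilde\beta)$ number exactly $2\cdot\#\LL_2$ and are forced to come from $\LL_{\beta'}$; the complement $\YY_0\setminus\LL_{\beta'}$ is then handled as a single diagonal block, using that $V^\star(\widetilde\beta|_{\YY_0\setminus\LL_{\widetilde\beta}})$ is the unique $\prec_1$-maximal element among diagonal multisets on that domain. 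That is how (2) and (3) are obtained---not via a separate ``zero'' class.

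Second, your derivation of property~(1) is incomplete. You assert that ``the only freedom available is the reflection swap,'' but you never explain why $\beta'$ at $P_0\in\LL_2$ cannot use some \emph{other} vector in $V(\overline\beta_2)\cup V(\overline\beta_2)^\vee$ (the non-diagonal multiset has many distinct elements). The paper closes this with the $\prec_2$-maximality of $\widetilde\beta_2$ in $\mathscr E_2$, together with Lemma~\ref{no inter} and the geometric fact that for any $Q\in m(\YY_0)\cap\mathscr D_k$ at most one vector in $V(\overline\beta_2)\cup V(\overline\beta_2)^\vee$ sends $Q$ back into $\LL_2$: if $P_0-\beta'(P_0)\notin\{\vec v_{P_0},\vec v_{P_0}^\vee\}$ (where $\vec v_{P_0}=P_0-\widetilde\beta(P_0)$), one can modify $\widetilde\beta_2$ at $P_0$ to obtain a strictly $\prec_2$-larger element of $\mathscr E_2$, a contradiction. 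You invoke $\prec_1$ but never $\prec_2$; without it, (1) does not follow. The paper then derives (4) from weak symmetry of $\LL_{\beta'}$ plus this same ``at most one vector'' uniqueness.

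Your plan for (b) is fine once (a) is secured; the paper itself records (b) as an immediate consequence of (a).
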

 \begin{proof}[Proof of Proposition~\ref{thm 2k}]
 	
 	%	(b)	First, we want to show that, for any point $P\in \LL_2$, either $P-\beta'(P)=P-\widetilde{\beta}(P)$ or $P-\beta'(P)=(P-\widetilde{\beta}(P))^\vee$ holds.
 	%	
 	%	Assume that it is not true. Let $k$ be the least number such that any points in $\UU_k$ satisfy the property in (b), where $\UU_k$ is the inductively constructed set in the definition of $\widetilde{\beta}$. Therefore, we can find some $P_0\in U_{k+1}$, such that $$P_0-\beta'(P_0)\neq P_0-\widetilde{\beta}(P_0)\quad \textrm{and}\quad P_0-\beta'(P_0)\neq (P_0-\widetilde{\beta}(P_0))^\vee. $$
 	%	
 	%	However, As stated in Lemma~\ref{weak symmetric to symmetric}, we know that $\widetilde{\beta}(P_0)\notin \beta'(\UU_k)$.
 	%	It allows us to construct a $$\beta''(P):=\begin{cases}
 	%	m(\beta'(P_0)) &\textrm{if}\ P=\beta'(P_0)\\
 	%	\widetilde{\beta}(P)& \textrm{if}\ P= P_0\ \textrm{or}\ \widetilde{\beta}(P_0)\\
 	%	\beta'(P) &\textrm{else.}
 	%	\end{cases}$$
 	%	Since $\{\beta''(P)-P|P\in \YY_0\backslash\LL_1\}^\star$ contains more $\vec{v_k}$ than $\{\widetilde{\beta}(P)-P|P\in \YY_0\backslash\LL_1\}^\star$ does, we obtain a contradiction to that $\beta_2$ maximizes the size of $\UU_k$.
 	``$\Longrightarrow$''. It is straightforward.
 	
 	``$\Longleftarrow$''. By the construction of $\widetilde \beta_1$, we know that $P-\beta'(P)$ is not diagonal for each point $P$ in $\LL_2$; and ${\beta'}^{-1}(Q)-Q$ is not diagonal for each point $Q$ in $m(\LL_2)$. On the other hand, since $\beta'$ and $\widetilde\beta$ are related, there are exact $2\bullet\#\LL_2$ non-diagonal vectors in $\Big\{P-\beta'(P)\;\big|\; P\in \YY_0\Big\}$. Therefore, we have 
	\begin{equation}\label{equation 3}
	\Big\{ P-\beta'(P)\;\big|\; P\in \LL_{\beta'}\Big\}^\star=\Big\{ P-\widetilde\beta(P)\;\big|\; P\in \LL_{\widetilde\beta}\Big\}^\star.
	\end{equation}

 	Recall that we denote $V(\overline{\beta}_2)=\{\vec{v}_1,\vec{v}_2,\dots\}$. Assume that $\beta'$ does not satisfy Property~(1). We put $i$ be the smallest number such that there exists some point $P_0$ which satisfies $$P_0-\widetilde\beta(P_0)=\vec{v}_i\ \textrm{or}\ \vec{v}_i^\vee\quad \textrm{and}\quad P_0-\beta'(P_0)\neq \vec{v}_i\ \textrm{or}\ \vec{v}_i^\vee.$$ It is easy to see that for each point $Q$ in $m(\YY_0)\cap \mathscr D_k$, there exists at most one vector $\vec{v}$ in $V(\overline{\beta}_2)\cup V(\overline{\beta}_2)^\vee$ such that $P_0+\vec{v}$ belongs to $\LL_2$.
 	Combining it with  Lemma~\ref{no inter} allows us to induce a injection $\beta_2': \LL_2\to m(\YY_0)$ from $\widetilde\beta_2$ such that $$\beta_2'(P)=\begin{cases}
 		\widetilde\beta_2(P_0)&\textrm{if}\ P=P_0;\\
 		\beta'(P)& \textrm{else}.
 	\end{cases}$$
  It is easy to check that $\beta_2'$ is greater than $\widetilde\beta_2$ with respect to ``$\prec_2$'', a contradiction.
 	Therefore, $\beta'$ satisfies Property~(1).
 	
 	Apply the same argument to $m\circ {\beta'}^{-1}\circ m$, we know that $${\beta'}^{-1}(Q)-Q={\widetilde{\beta}}^{-1}(Q)-Q\ \textrm{or}\ (\widetilde{\beta}^{-1}(Q)-Q)^\vee\ \textrm{for all}\ Q\ \textrm{in}\ m(\LL_2).$$
 	Recall that we define the partial order ``$\prec_1$'' and $V^\star(\beta)$ in Definitions~\ref{partial order for diagonal vectors} and \ref{Vstar}.
 	One can check that $V^\star(\widetilde\beta|_{\YY_0\backslash \LL_{\widetilde \beta}})$ is actually the only maximal element in the set $$\Big\{V^\star(\beta)\;\big|\; \beta:\YY_0\backslash \LL_{\widetilde \beta}\to m(\YY_0\backslash \LL_{\widetilde \beta})\ \textrm{and}\ \beta\ \textrm{is diagonal}\Big\}$$
 	 with respect to the partial order $``\prec_1$''.
 
 	As a corollary of Lemma~\ref{no inter}, we know that for each $k$, there does not exist two points in $\mathscr D_k$ such that one is from $\LL_1$ and the other is from $\beta'(\LL_2)$. Therefore, if we put $\beta''$ to be a bijection from $\YY_0$ to $m(\YY_0)$ such that $\beta''$ is an element in $$\Big\{\beta:\YY_0\to m(\YY_0)\;|\; \LL_{\beta}=\LL_{\beta'}\ \&\ \beta\ \textrm{is diagonal}\Big\},$$ which maximizes $V^\star(\beta'')$ with respect to ``$\prec_1$'', then we have 
 	$$\beta''|_{\LL_1}=\widetilde \beta|_{\LL_1}.$$ Moreover, we can check that 
 	$V^\star(\beta'')	\preceq_1 V^\star(\widetilde \beta) $ and the equation hold if and only if $\LL_{\beta'}$ is weakly symmetric and $\beta'(P)=m(P)$ for each $P\in \YY_0\backslash(\LL_1\cup\LL_{\beta'}).$ 
 	
 	On the other hand, since $\beta'$ and $\widetilde{\beta}$ are related, we know that $V^\star(\beta')=_1 V^\star(\widetilde \beta)$. Hence, we have $$\beta'|_{\YY_0\backslash \LL_{\beta'}}=\beta''|_{\YY_0\backslash \LL_{\beta''}},$$ which implies that $\beta'$ satisfies Property~(2) and (3).
 	
 	Then we are left to show that $\beta'|\LL_{\beta'}$ is symmetric. First, from the argument above, we know that it is weakly symmetric. Therefore, for any point $P$ in $\LL_2$, if we put $P':=m(\beta'(P))$, we know that $\beta'(P')\in m(\LL_2)$. Since we checked Property~(1) already, we know that $P'-\beta'(P')\in V(\overline{\beta}_2)\cup V(\overline{\beta}_2)^\vee.$ As the argument above, there are at most one vector $\vec{v}$ in $V(\overline{\beta}_2)\cup V(\overline{\beta}_2)^\vee$ such that $P'-\vec{v}\in m(\LL_2)$, which obviously is $P'-m(P)$. Therefore, we know that $\beta'$ is symmetric. 
 	
	(b) It follows directly from (a).
 \end{proof}

 \begin{proof}[Proof of Proposition~\ref{core proposition}]
 	(1) By Proposition~\ref{thm 2k}, we check that $\widetilde{\beta}$ constructed in  \eqref{Construction of beta} is exactly the needed $\widetilde\beta$ in this proposition. Moreover, we know that the integer $i$ in this proposition is equal to \[\#\big\{P\in \LL_2\;|\; P-(P-\widetilde{\beta}_2(P))^\vee\in m(\YY_0)\big\}.\]
 	
 	(2) From Lemma~\ref{same monomial}, we know that two special bijections 
 	contribute a same monomial to $\widetilde{v}_{h(\TT_1)}^\sp$ in Lemma~\ref{same terms} if and only if they are related. By Corollary~\ref{same monomial} and part (1) of this proposition, we have \begin{equation}\label{vh}
 	\widetilde v_{h(\TT_1)}=	\frac{2^i}{\prod\limits_{P\in  \TT_{1}} \prod\limits_{i=1}^{\mathbbm{x}'_1}b_{P,\tau(\widetilde\beta),i}!}
 	\prod\limits_{i=1}^{\mathbbm{x}'_1}\widetilde  a_{Q_i}^{\sum\limits_{P\in \TT_1}b_{P,\tau(\widetilde\beta),i}}+\textrm{``other terms''},
 	\end{equation}
 	where ``other terms'' is a power series in $\ZZ_p[\underline{\widetilde{a}}]$ which contains no term like $\widetilde  a_{Q_i}^{\sum\limits_{P\in \TT_1}b_{P,\tau(\widetilde\beta),i}}$. Since for any $P\in \TT_1$ and any $1\leq i\leq \mathbbm x_1'$, we know that $b_{P,\tau(\widetilde\beta),i}$ in \eqref{vh} is less than $p$, we complete the proof of this proposition.
 \end{proof}

% 
%
%\begin{proposition}\label{thm 2i}
%	Under the hypothesis of $d$ and $p$ in Theorem~\ref{Thm for 4}, there exists a monomial of $\widetilde v_{\mathbbm{x}_1, h(\TT_1)}$ such that its coefficient is in the form of $\frac{2^i}{\mathscr N_1}$.
%\end{proposition}
% \begin{proof}
% 	Combining Proposition~\ref{core proposition} and Corollary~\ref{same monomial}, 
% 	we know that the monomial corresponding to $\widetilde\beta$ satisfies all conditions in this proposition. Moreover, we know that the integer $k$ and $\mathscr N_1$ in this theorem are equal to $$\#\big\{P\in \LL_2\;|\; P-(P-\widetilde{\beta}_2(P))^\vee\in m(\YY_0)\big\}\quad \textrm{and}\quad \prod_{P\in \TT_1} \Big\lfloor \frac{pP_x}{d}\Big\rfloor!\times \prod_{P\in \TT_1} \Big\lfloor \frac{pP_y}{d}\Big\rfloor!$$ respectively.
% \end{proof}
%

\end{document}